\def\sideremark#1{\ifvmode\leavevmode\fi\vadjust{\vbox to0pt{\vss
\hbox to 0pt{\hskip\hsize\hskip1em%
\vbox{\hsize2cm\tiny\raggedright\pretolerance10000%
\noindent {\color{red}{#1}}\hfill}\hss}\vbox to8pt{\vfil}\vss}}}%
\theoremstyle{plain}
\newtheorem{propn}{Proposition}[section]
\newtheorem{thm}[propn]{Theorem}
\newtheorem{lemma}[propn]{Lemma}
\newtheorem{cor}[propn]{Corollary}
\newtheorem{claim}[propn]{Claim}
\theoremstyle{definition}
\newtheorem{defn}[propn]{Definition}
\newtheorem{exo}[propn]{Example}
\newtheorem{rem}[propn]{Remark}
\def \thetas{\theta^\sharp}
\def \V{\mathcal{V}}
\def \bbR{\mathbb{R}}
\def \bbC{\mathbb{C}}
\def \bbS{\mathbb{S}^1}
\def \del{\partial}
\numberwithin{equation}{section}
\newcommand{\om}{\omega}
\newcommand{\h}{\mathfrak{h}}
\newcommand{\gh}{\mathfrak{h}}
\newcommand{\cn}{\nabla^c}
\newcommand{\tM}{\widetilde{M}}
\newcommand{\tP}{\widetilde{P}}
\newcommand{\tN}{\widetilde{N}}
\newcommand{\tZ}{\widetilde{Z}}
\newcommand{\tQ}{\widetilde{Q}}
\newcommand{\tn}{\widetilde{\nabla}}
\renewcommand{\tilde}{\widetilde}
\newcommand{\1}{\sqrt{-1}}
\newcommand{\cntrct}                
{\hspace{2pt}\raisebox{1pt}{\text{$\lrcorner$}}\hspace{2pt}}
\newcommand{\arrow}{{\:\longrightarrow\:}}
\DeclareMathOperator{\Aut}{Aut}
\DeclareMathOperator{\aut}{\mathfrak{aut}}
\DeclareMathOperator{\Li}{\mathscr{L}}
\DeclareMathOperator{\D}{D}
\DeclareMathOperator{\Ham}{Ham}
\DeclareMathOperator{\Iso}{Iso} 
\DeclareMathOperator{\tw}{Tw}
\DeclareMathOperator{\T}{T\mkern-2mu} 
\DeclareMathOperator{\di}{d\mkern-1.4mu } 
\def \H{\mathcal{H}}
\def \W{\mathcal{W}}
\def \F{\mathcal{F}} 
\def \L{\mathscr{L}} 
\renewcommand{\span}{\mathrm{span}}
\renewcommand{\phi}{\varphi}
\begin{document}

\bibliographystyle{plain}

\title[Conformal foliations, K\"ahler twists, Weinstein construction]{Conformal foliations, K\"ahler twists and the Weinstein construction}
\subjclass[2010]{53C12, 53C43, 53C55}
\keywords{Intrinsic torsion, conformal and homothetic foliations, complex structure, Hamiltonian action, harmonic morphism, balanced metric, K\"ahler metric, twist, constant scalar curvature}
\date{\today}
\author[Paul-Andi Nagy]{Paul-Andi Nagy}
\author[Liviu Ornea]{Liviu Ornea}
\thanks{The research of Paul-Andi Nagy is supported by the Institute for Basic Science (IBS-R032-D1); during the early stages of this work P.-A.N. was partially supported by the Research Institute of the University of Bucharest (ICUB). 
L.O. was partially supported by a grant of Ministry of Research and Innovation, CNCS - UEFISCDI,
	project number PN-III-P4-ID-PCE-2016-0065, within PNCDI III}
	
\address{Paul-Andi Nagy\\
Center for Complex Geometry, Institute for Basic Science (IBS)\\
55 Expo-ro, Yuseong-gu, 34126 Daejeon, South Korea
}
\email{paulandin@ibs.re.kr}

\address[Liviu Ornea]{University of Bucharest, Faculty of Mathematics and Informatics, 14 Academiei Str.,\newline Bucharest, Romania, {\it and}\newline
	Institute of Mathematics ``Simion Stoilow" of the Romanian Academy, 21,
	Calea Grivitei Street,\\ 010702, Bucharest, Romania}
\email{lornea@fmi.unibuc.ro} \email{liviu.ornea@imar.ro}

\begin{abstract}
We classify both local and global K\"ahler structures admitting totally geodesic homothetic foliations with complex leaves. The main building blocks are  related to Swann's twists and are obtained by applying Weinstein's method of constructing symplectic bundles to  K\"ahler data. As a byproduct we obtain new classes of: holomorphic harmonic morphisms with fibres of arbitrary dimension from compact K\"ahler manifolds; non-K\"ahler balanced metrics conformal to K\"ahler ones (but compatible with different complex structures).  
\end{abstract}

\maketitle 

\tableofcontents
\section{Introduction} \label{intro}
A foliation $\F$ on a Riemannian manifold $(M,g)$ is called conformal if
$$\L_V g = \theta(V )g$$
on $\T \F^{\perp}$ for any vector field $V$ tangent to the leaves where $\theta$ is a 1-form on $M$ that vanishes on $\T \F^{\perp}$. Here $\L_V$ denotes the Lie derivative in direction of $V$. The foliation $\F$ is called homothetic if $\theta$ is closed and globally homothetic when $\theta$ is exact. Introduced in \cite{Vais}, these natural classes of foliations occur in the study of harmonic morphisms and conformal submersions \cite{Wood,bg,B1,Svensson}. 

In this work we are interested in instances when the foliated manifold is K\"ahler and $\F$ has complex leaves. In the special case when $\dim_{\bbR}\F=2$, homothetic foliations relate to additional geometric structures $(M,g,J)$ may carry. Those include K\"ahler structures of Calabi-type \cite{hamf1} as well as ambik\"ahler conformal classes of Riemannian metrics \cite{MMP}.

The first question we address is finding sufficient conditions for a conformal $\F$ to be homothetic or globally homothetic. The extreme dimensional cases are well understood. When $\dim_{\bbR}\F=2$  the local structure of K\"ahler structures carrying homothetic foliations by complex curves has been completely described in \cite{BLMS}; in particular any such foliation is either holomorphic or totally geodesic and Riemannian.  When 
$\mathrm{codim}_{\bbR} \F=2$ and $\F$ is a conformal foliation with complex leaves then $\F$ must be holomorphic by well known algebraic reasons. For higher codimensions we prove  
\begin{thm} \label{thm0-int}
Let $\F$ be a holomorphic and conformal foliation, $\mathrm{codim}_{\bbR} \F \geq 4$, on a K\"ahler manifold $Z$. Then
\begin{itemize}
\item[(i)]  $\F$ is homothetic
\item[(ii)] If $Z$ is compact, the foliation $\F$ is globally homothetic.
\end{itemize}
\end{thm}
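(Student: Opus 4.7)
The plan is to combine three ingredients—conformality, holomorphicity, and the parallelism of $J$—to upgrade the conformal equation on $\T\F^\perp$ to a global identity for the horizontal Kähler form, from which $d\theta = 0$ drops out by a Lefschetz argument. Set $V := \T\F$ and $\H := V^\perp$; both are $J$-invariant. The operator $A_X : \H \to \H$, $A_X Y := \pi_\H \nabla_Y X$ (for $X \in V$), has symmetric part $\tfrac12\theta(X)\,\mathrm{Id}_\H$ by the conformal condition, and it satisfies $A_{JX} = J A_X$ because $J$ is parallel and preserves the splitting. An elementary linear-algebra argument using the $g$-skew-symmetry of $J$ then forces
\[
A_X = \tfrac12 \theta(X)\,\mathrm{Id}_\H - \tfrac12 \theta(JX)\,J,
\]
which I would prove first.

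Using this normal form, I would compute $\L_X F = \theta(X)\,F$ for the horizontal Kähler form $F := g(J\pi_\H\cdot, \pi_\H\cdot) = \omega - \omega_V$. Since $X \cntrct F = 0$, Cartan's formula gives $X \cntrct dF = \theta(X)\,F$; combined with the vanishing of the $(3,0)$- and $(0,3)$-components of $dF$ in the $\H$-$V$-bidegree (from $d\omega = 0$ and the Frobenius integrability of $V$), this delivers the global identity $dF = \theta \wedge F$. Differentiating once more yields $d\theta \wedge F = 0$. Now decompose $d\theta$ by $\H$-$V$-bidegree. The $(2,1)$-component of $dF = \theta \wedge F$ identifies the integrability tensor of $\H$ as $\pi_V[Y_1, Y_2] = -F(Y_1, Y_2)\,J\theta^\sharp$, so $(d\theta)^{(2,0)}(Y_1, Y_2) = -\theta(\pi_V[Y_1, Y_2]) = F(Y_1, Y_2)\,g(\theta^\sharp, J\theta^\sharp) = 0$. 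The $(2,2)$-bidegree component of $d\theta \wedge F = 0$ reads $(d\theta)^{(0,2)}(X_1, X_2)\,F(Y_1, Y_2) = 0$, and $F \not\equiv 0$ forces $(d\theta)^{(0,2)} = 0$. For the $(1,1)$-part, each $X \in V$ yields a horizontal 1-form $\phi_X := d\theta(X,\cdot)|_\H$ satisfying $\phi_X \wedge F = 0$; the Lefschetz map $\phi \mapsto \phi \wedge F$ on $\H^*$ is injective precisely when $\dim_\bbC \H \geq 2$, i.e., when $\mathrm{codim}_\bbR \F \geq 4$, so $\phi_X = 0$. Collecting the three bidegrees gives $d\theta = 0$, proving (i).

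For (ii), I would Hodge-decompose $\theta = \theta_h + d\beta$ on compact Kähler $Z$, with $\theta_h$ harmonic. Setting $F_0 := e^{-\beta} F$ converts $dF = \theta \wedge F$ into $dF_0 = \theta_h \wedge F_0$. Since $\theta_h = \alpha + \bar\alpha$ for a holomorphic $(1,0)$-form $\alpha$, matching bidegrees gives $\partial F_0 = \alpha \wedge F_0$; applying $\bar\partial$ and using $\bar\partial \alpha = 0$ together with $\bar\partial F_0 = \bar\alpha \wedge F_0$ yields $\partial\bar\partial F_0 = \alpha \wedge \bar\alpha \wedge F_0$. Wedging with the closed Kähler power $\omega^{n-2}$ and integrating on $Z$ (Stokes, using $\partial \omega^{n-2} = 0$) produces
\[
\int_Z i\alpha \wedge \bar\alpha \wedge F_0 \wedge \omega^{n-2} \;=\; 0.
\]
The integrand is a non-negative top form; expanding $F \wedge \omega^{n-2}$ in a complex frame adapted to $\H \oplus V$ shows it is strictly positive as an $(n-1,n-1)$-form on $Z$ (this again uses $\dim_\bbC \H \geq 2$), whence the integrand is pointwise strictly positive wherever $\alpha \neq 0$. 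Consequently $\alpha \equiv 0$, $\theta_h = 0$, and $\theta = d\beta$ is exact.

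The step I expect to be most delicate is the Lefschetz injectivity in (i): this is where the codimension hypothesis enters in a non-negotiable way (it fails in the excluded codimension-2 case, consistently with the Calabi-type picture referenced in the introduction), and it reappears in disguise as the strict positivity of $F \wedge \omega^{n-2}$ used in (ii). Up to these two invocations of Lefschetz, the reduction to the global identity $dF = \theta \wedge F$ and to $\partial\bar\partial F_0 = \alpha \wedge \bar\alpha \wedge F_0$ is essentially forced by the algebraic normal form of $A_X$, $d\omega = 0$, and standard Kähler bidegree calculus.
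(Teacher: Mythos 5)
Your part (i) follows the same overall route as the paper's (Proposition \ref{cf1}): establish the exterior identity $dF=\theta\wedge F$ for the horizontal K\"ahler form, differentiate to get $d\theta\wedge F=0$, and kill $d\theta$ bidegree by bidegree, with the codimension hypothesis entering through the Lefschetz injectivity of $\phi\mapsto\phi\wedge F$ on horizontal $1$-forms. However, the step where you derive the normal form of $A_X$ is wrong as written. Writing $A_X=\tfrac12\theta(X)\,\mathrm{Id}+S_X$ with $S_X$ skew, the identity $A_{JX}=JA_X$ together with the conformal condition yields only the anticommutator relation $JS_X+S_XJ=\theta(JX)\,\mathrm{Id}$; this pins down the $J$-commuting part of $S_X$ as $-\tfrac12\theta(JX)J$ but leaves the $J$-anticommuting skew part completely free. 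That free part is precisely the tensor $\Psi$ of Lemma \ref{tg-e}, which need not vanish for a merely conformal foliation of codimension $\geq 4$ and vanishes exactly when $\F$ is holomorphic (Lemma \ref{tg-e}\,(ii)); if it survives, you do not get $\L_XF=\theta(X)F$ but only $dF=\theta\wedge F$ up to the extra term $\Phi$ of Lemma \ref{do-ggg}, and the whole argument stalls. The fix is available and cheap --- holomorphy says the $\D_-$-projection of $\L_XJ$ vanishes, i.e.\ $[A_X,J]=0$, which is what actually kills the anticommuting part --- but it must be invoked; the ``elementary linear-algebra argument using the $g$-skew-symmetry of $J$'' alone does not exist. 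The remainder of (i) is sound, and your disposal of the purely horizontal component of $d\theta$ via the integrability tensor $\pi_V[Y_1,Y_2]=-F(Y_1,Y_2)J\theta^{\sharp}$ and $\theta(J\theta^{\sharp})=0$ is a clean pointwise alternative to the paper's use of $d\theta\wedge\omega_{+}^{n}=0$.

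Your part (ii) is correct and genuinely different from the paper's Theorem \ref{glob-loc}. The paper shows $J(\theta-df)$ is simultaneously closed (global $\partial\bar\partial$-lemma applied to $d(J\theta)$) and co-exact (via the identity $d^{\star}\omega_{+}=mJ\theta$ from Lemma \ref{cdff}), hence zero; that argument needs only $d\theta=0$ and works in any codimension, which is what yields Corollary \ref{cor-h}. You instead Hodge-decompose $\theta=\theta_h+d\beta$, rescale to $dF_0=\theta_h\wedge F_0$, split $\theta_h=\alpha+\bar\alpha$ into holomorphic and antiholomorphic parts, obtain $\partial\bar\partial F_0=\alpha\wedge\bar\alpha\wedge F_0$, and integrate against $\omega^{n-2}$; the conclusion $\alpha\equiv 0$ then rests on the strict positivity of $F\wedge\omega^{n-2}$ as an $(n-1,n-1)$-form, which holds exactly when $\dim_{\bbC}\D_{-}\geq 2$. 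Both routes are Hodge-theoretic; yours trades the Kähler identity for a transparent pointwise positivity mechanism, at the cost of consuming the codimension hypothesis a second time and therefore not recovering the codimension-free statement of Corollary \ref{cor-h}.
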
 
To prove this we first observe that for holomorphic foliations the property of being conformal is equivalently described by an exterior differential system, which easily yields (i), see Proposition \ref{cf1}. For the proof of (ii) we combine Hodge theory for the restriction of the K\"ahler form to the leaves of $\F$ with the global $\partial \bar \partial$-Lemma, see Theorem \ref{glob-loc}.

In view of this result we have attempted to classify homothetic foliations with complex leaves on K\"ahler manifolds. It turns out that such structures unify several themes in K\"ahler geometry as described below.

\subsection{K\"ahler structures of Weinstein type}
In order to obtain examples with leaves of arbitrary dimension and codimension 
a natural assumption is to consider foliations $\F$ with totally geodesic leaves. 
These carry a natural cohomological invariant, the twist class $\tw(\F)$ (see Section \ref{tls} for details), which partly encodes the geometry. We prove
\begin{thm} \label{thm1-in}
Let $Z$ be a compact K\"ahler manifold endowed with a totally geodesic, holomorphic and conformal foliation $\F$ ( $\mathrm{TGHH}$ for short). Assume  the twist class $\tw(\F)$ is integral. Then either $\F$ is Riemannian (and $(g,J)$ is locally a Riemannian product) or the universal cover $\tZ$ with the pulled-back K\"ahler structure is 
\begin{itemize}
	\item[(i)]obtained from the Weinstein construction

	\noindent or satisfies
	
	\item[(ii)]the pull-back of the class $\tw(\F)$ vanishes  and $\tZ$ is the product $\tM \times \tN$ of two simply connected complete K\"ahler manifolds with $\tM$ exact. The K\"ahler structure on $\tZ$ is given by the local Weinstein construction.
\end{itemize}
In particular, if $Z$ is simply connected, then $\F$ is obtained by the Weinstein construction.	
\end{thm}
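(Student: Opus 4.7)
The plan is to organize the proof around two dichotomies: first whether the conformal $1$-form vanishes (the Riemannian versus non-Riemannian split), and second, inside the non-Riemannian branch, whether the pull-back of $\tw(\F)$ to $\tZ$ is zero (cases (i) versus (ii)). Since $\F$ is holomorphic and conformal, Theorem \ref{thm0-int} applies and yields that $\F$ is homothetic, so the conformal $1$-form $\theta$ is closed on $Z$; pulling back to the simply connected cover $\tZ$ gives $\tilde\theta = df$ for some $f \in C^\infty(\tZ)$. The two main cases are then $\theta \equiv 0$ and $\theta \not\equiv 0$.

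\textbf{Riemannian case.} If $\theta \equiv 0$, the foliation $\F$ is totally geodesic, holomorphic and Riemannian. For such foliations the orthogonal distribution $\T\F^\perp$ is automatically integrable with totally geodesic leaves (both distributions being $J$-invariant and parallel along themselves), and the second fundamental forms of both foliations vanish. A de Rham-type argument on $\tZ$ then splits $(\tilde g, \tilde J)$ as a Kähler product, so $(g,J)$ is locally a Riemannian product, which is the alternative stated in the theorem.

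\textbf{Non-Riemannian case.} If $\theta \not\equiv 0$, the function $f$ on $\tZ$ is non-constant and plays the role of a Kähler potential in the leaf direction. The integrality of $\tw(\F)$ supplies a principal bundle (a torus bundle, with dimension equal to $\dim_{\bbR}\F$ in the Lagrangian sub-case, or of appropriate rank in general) over a base carrying naturally a transverse Kähler structure coming from the bundle-like metric $e^{-f}g$ on the orthogonal distribution. The connection $1$-form on this bundle is built from $\theta$ together with the components of the canonical bundle-like connection of $\F$, and its curvature realizes the class $\tw(\F)$. Matching this data with the input of Weinstein's construction, I would then identify $\tZ$ with the total space produced by applying Weinstein to the transverse Kähler base and the connection, recovering alternative (i). If the pull-back of $\tw(\F)$ to $\tZ$ vanishes, the principal bundle on $\tZ$ is trivial, and one can choose a global section; this upgrades the identification to a diffeomorphism $\tZ \cong \tM \times \tN$, with the Kähler structure on the $\tM$-factor given by the local Weinstein construction and exact because the Kähler form on $\tM$ is an explicit differential $d\eta$ coming from $f$ and the connection form. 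This is alternative (ii), and the final clause of the theorem follows because simple connectedness of $Z$ means no distinction between global and local Weinstein output is needed.

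\textbf{Main obstacle.} The delicate point is the construction of the transverse Kähler base together with the Hamiltonian action whose moment data feeds Weinstein's construction: the leaf space is in general not Hausdorff, so one has to work with a local slice or with an appropriate torus quotient of $\tZ$ and check that the Kähler form, the connection form, and the moment map assemble into honest global objects precisely under the integrality assumption. Once this global model is in place, verifying that the original Kähler metric on $\tZ$ coincides with the metric produced by Weinstein's construction reduces to a comparison of Kähler potentials, where the function $f$ and the metric in the fiber direction must match the canonical ones on the model side; this is a computation but the conceptual bridge between the foliated picture and the bundle picture is where the real work lies.
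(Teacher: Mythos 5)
Your overall skeleton (Riemannian versus non-Riemannian via $\theta$, then a sub-dichotomy governed by the pull-back of $\tw(\F)$) matches the shape of the statement, and the Riemannian case is handled correctly. But in the non-Riemannian case the step you flag as the ``main obstacle'' is not a technical verification left for later --- it is the entire content of the theorem, and the route you sketch for it would not work. You propose to realise $\tZ$ as the total space of a principal bundle \emph{over the leaf space} of $\F$, carrying a transverse K\"ahler structure from the bundle-like metric $e^{-f}g$. The leaf space is in general not Hausdorff and $\F$ need not be a fibration, so there is no base over which to build this bundle; saying one can ``work with a local slice or an appropriate torus quotient'' does not produce the global objects the theorem asserts. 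Moreover the integrality hypothesis concerns the class $[z^{-1}\omega_-]\in H^2(Z,\mathbb{Z})$ \emph{on $Z$}, not a class on any leaf space, so it naturally polarises a circle bundle over $Z$, not over a quotient. (Your parenthetical about a torus bundle of rank $\dim_{\bbR}\F$ ``in the Lagrangian sub-case'' also suggests a confusion: the relevant bundle is always a circle bundle.)

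The paper's resolution goes in the opposite direction: up rather than down. Compactness first yields (Proposition \ref{detect-k}) a globally defined holomorphic Killing field $K=-zJ\zeta$ with positive momentum map $z$ and $\theta=\di\ln z$ --- an ingredient your proposal never isolates, though it is what feeds the Hamiltonian data into Weinstein's construction. Integrality of $\tw(\F)=[z^{-1}\omega_{-}]$ then produces a principal circle bundle $Q\to Z$ with this Chern class, equipped with a connection $\Theta$ whose curvature is $z^{-1}\omega_{-}$. One endows $Q$ with an explicit metric $g_Q$ (the twist of $g_{+}+z^{-1}g_{-}$), proves that the splitting $\T Q=(\span\{\xi_Q\}\oplus\H_{-})\oplus\H_{+}$ is totally geodesic on both sides, and applies de Rham's theorem to the universal cover $\tQ$, obtaining a product of a Sasakian manifold $\tP$ (whose regular Reeb fibration supplies the polarised base $\tM$) and a K\"ahler manifold $\tN$ with a Hamiltonian action (the fibre). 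This completely bypasses the leaf space. Finally, the dichotomy between (i) and (ii) is not decided by triviality of a bundle on $\tZ$ chosen by a section, but by whether the element $a\in\pi_1(Q)$ --- the image of the generator of $\pi_1(\bbS^1)$ under the fibre inclusion --- has finite or infinite order: finite order makes the lifted group $G_A$ a circle and yields the global Weinstein construction; infinite order makes $G_A=\bbR$, forces $\pi_{\tZ}^{\star}\tw(\F)=0$, trivialises $\tP=\tM\times\bbR$, and yields the local Weinstein model on $\tM\times\tN$. Without the passage to the twist bundle $Q$ and the holonomy/de Rham argument there, your proposal has no mechanism to produce the base, the fibre, or the polarisation.
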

This result is proved in Subsection \ref{actiunii},  Theorem \ref{global-carac} (for details on the action of the fundamental group $\pi_1(Z)$ on $\tZ$ the reader is referred to Section \ref{actiuni} of the paper). 

The Weinstein construction above pins down some of the K\"ahler aspects in the general construction of symplectic bundles with structure group $G$ described in \cite{W}, see also \cite{McD-S}. This involves K\"ahler starting data regarded as fibre and base. The former is endowed with an isometric and Hamiltonian action of the circle whilst the latter is polarised within the K\"ahler class. The desired Weinstein-type manifold is then an associated bundle, naturally equipped with a K\"ahler structure coming from the base and the fibre, and with an holomorphic and Hamiltonian circle action, see Subsection \ref{wei_con}. The conformal setup in Theorem \ref{thm1-in} forces $G=\bbS$ in (i) and $G=\bbR$ in (ii). 

Next we investigate up to which extent a TGHH foliation with respect to a fixed K\"ahler structure
$(g,J)$ on a compact manifold $Z$ must be unique. By Theorem \ref{thm1-in} we can essentially assume $(Z,g,J)$ of Weinstein type for some fibre $N$ and polarised base $M$. We actually show how different couples $N,M$ as above yet produce the same result. The idea is to take $N,M$ obtained by the Weinstein construction. This requires the explicit computation of the isometry Lie algebra of a Weinstein type structure $Z$ which is performed in Subsection \ref{aut}; in fact we also determine the Lie algebra of holomorphic vector fields. Establishing the relation between the Picard groups of the fibre and $Z$ in Subsection \ref{pic} allows exhibiting an explicit polarisation of the latter from a polarisation on the fibre.

Based on these we iterate the Weinstein construction by taking at each step the resulting Wein\-stein type manifold as a new base, whilst keeping the fibre arbitrary though polarised. This produces examples of K\"ahler structures admitting up to $3$ distinct TGHH foliations. In turn, a sign change in $J$ in  direction of these foliations gives rise to up to 3 mutually commuting integrable complex structures, orthogonal w.r.t. the K\"ahler metric, see Subsection \ref{uni}
\subsubsection{\bf Classification results} \label{ssc}
Theorem \ref{thm1-in} is the first step towards the more general classification result below. 
\begin{thm} \label{thm2-int2}
Let $(Z,g,J)$ be K\"ahler and equipped with a totally geodesic, homothetic foliation $\F$ with complex leaves.
Around any smooth point in $Z \slash \F$ the conformal submersion $Z \to Z\slash \F$ factorises as the product of two submersions $\pi_1 \circ \pi_2$ where 
\begin{itemize}
	\item[(i)] $\pi_1:Z_1 \to Q$ where $Z_1$ is K\"ahler and $\pi_1$ is Riemannian with totally geodesic and complex fibers
	\item[(ii)] $\pi_2:Z \to Z_1$ is conformal, holomorphic with totally geodesic fibres which are locally obtained by the Weinstein construction.
\end{itemize}
\end{thm}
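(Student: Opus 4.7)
The plan is to reduce to the TGHH setting of Theorem~\ref{thm1-in} and then split the vertical distribution $\T\F$ into a \emph{Weinstein direction} and a \emph{Riemannian direction}. A preliminary observation is that on a K\"ahler manifold complex leaves already force a holomorphic foliation: $J\T\F=\T\F$ together with $g$-orthogonality of $J$ makes $\T\F^{\perp}$ $J$-invariant as well, and combining the Newlander--Nirenberg integrability of $J$ with the integrability of $\T\F$ makes the $(0,1)$-part of $\T\F\otimes\bbC$ an involutive sub-bundle of $T^{0,1}Z$, so $\F$ is holomorphic. Consequently the setting is TGHH and Proposition~\ref{cf1} applies throughout.

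Working around a smooth point $q\in Z/\F$ on a neighbourhood $U$ on which $\F$ is realised by a holomorphic submersion $\pi\colon U\to V$ with $V$ simply connected, the homothetic hypothesis makes $\theta$ closed, so on $U$ we have $\theta=df$ for some $f\in C^\infty(U)$; this $f$ plays the role of a moment map along the leaves. Introduce the orthogonal, $J$-invariant decomposition
\[
\T\F \;=\; \V_W \oplus \V_R
\]
where $\V_W\subseteq\T\F$ is the sub-distribution generated by the vertical gradient $\nabla^{\T\F}f$, its $J$-translate $J\nabla^{\T\F}f$, and their iterated Lie brackets inside $\T\F$; intuitively $\V_W$ is the tangent bundle of the Hamiltonian Weinstein sub-structure on each leaf. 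Its orthogonal complement $\V_R$ then satisfies $\theta|_{\V_R}\equiv 0$, so the directions in $\V_R$ act on $\T\F^{\perp}$ by isometries.

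Once the constant-rank, integrability and totally geodesic properties of $\V_W$ are established, the induced sub-foliation $\F_W$ is itself TGHH with non-vanishing conformal form, and Theorem~\ref{thm1-in} applied leaf-by-leaf on a simply connected patch (where the twist class automatically vanishes) identifies its leaves as locally Weinstein. Since $\V_W^{\perp}=\V_R\oplus\T\F^{\perp}$ is $J$-invariant, the local quotient $\pi_2\colon U\to Z_1:=U/\F_W$ is holomorphic, and the descent of $\theta$ to $Z_1$ makes $\pi_2$ conformal; this proves~(ii). The residual foliation on $Z_1$ with tangent $\pi_{2*}\V_R$ has vanishing conformal form and totally geodesic complex leaves, hence projects by a Riemannian submersion $\pi_1\colon Z_1\to V$, establishing~(i).

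The main obstacle is the construction of $\V_W$: one must show that $\nabla^{\T\F}f$, $J\nabla^{\T\F}f$ and their iterated brackets in $\T\F$ generate a sub-distribution of constant rank that is integrable in $Z$ and totally geodesic with respect to $g$. This uses the K\"ahler identities, the closedness of $\theta=df$ (to promote $J\nabla^{\T\F}f$ to a Hamiltonian field on each leaf), and the totally geodesic hypothesis on $\F$ (to control the relevant second fundamental forms). Once these structural facts are in place, the rest of the factorisation follows from standard submersion theory applied to the K\"ahler and conformal data.
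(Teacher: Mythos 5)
Your opening reduction is where the argument fails. It is true that $J\T\F=\T\F$ together with the integrability of $\T\F$ and of $J$ makes $\T\F^{0,1}$ involutive, so the leaves are complex submanifolds; but that is precisely the definition of a \emph{complex} foliation, not of a \emph{holomorphic} one. Holomorphy in the sense of Definition~\ref{def-h} is the transverse condition $(\L_VJ)\T Z\subseteq\D_{+}$ for $V\in\D_+$ (equivalently, involutivity of $\T\F\otimes\bbC+T^{0,1}Z$), and it does \emph{not} follow from the leaves being complex. Its failure is measured by the intrinsic-torsion component $\Psi$ of Lemma~\ref{tg-e}(ii), and there exist totally geodesic, homothetic, complex foliations with $\Psi\neq0$ on K\"ahler manifolds --- for instance the one built from the twistor fibration $\mathbb{P}^{2n+1}\to\mathbb{H}P^n$ in Theorem~\ref{tg-no3}(i). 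The whole point of Theorem~\ref{thm2-int2} is that this non-holomorphic part can be split off as the Riemannian factor $\pi_1$, whose base $Q$ is in general \emph{not} K\"ahler precisely because that factor is not holomorphic; your preliminary observation would make $\F$ TGHH outright, collapse the factorisation, and contradict these examples.

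Even leaving that aside, your splitting $\T\F=\V_W\oplus\V_R$ uses the wrong invariant. The correct decomposition (Theorem~\ref{tg-main}) sets $\D_+^2:=\span\{\Psi_XY\}$ and $\D_-^2:=\span\{\Psi_XV\}$, i.e.\ it is governed by $\Psi$, not by the Lee form. While $\theta$ does vanish on $\D^2$ (Lemma~\ref{prol0}(i)) and $\zeta,J\zeta$ do lie in the holomorphic part, the annihilator of $\theta$ inside $\T\F$ has codimension at most one, so the condition $\theta|_{\V_R}\equiv0$ cannot isolate the Riemannian factor; and the bracket-closure of $\nabla^{\T\F}f$ and $J\nabla^{\T\F}f$ need not exhaust the holomorphic part (take the fibre of a Weinstein model to be a product with the circle acting trivially on one factor: those directions would wrongly land in $\V_R$, even though the canonical foliation is entirely holomorphic and the correct $\D^2$ is zero). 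The constant rank, integrability and total geodesy that you defer to the end are obtained in the paper not by bracket generation but by showing that $\span\{\Psi_{\cdot}\cdot\}$ is parallel for the conformal connection $\tn$ of \eqref{nat}, which prolongs the differential system on $(\Psi,\theta)$ through the curvature identities \eqref{prol3}--\eqref{prol5}; this is the substantive step of the proof and has no counterpart in your proposal, so the holomorphy of $\pi_2$ and the Riemannian character of $\pi_1$ are not established.
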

In general, $Q$ is not K\"ahler. Compact examples of foliated K\"ahler manifolds $Z$ as above are obtained using input from the classification of nearly-K\"ahler structures \cite{Na1}. Moreover deforming $(g,J)$ in direction of $\F$ produces non-integrable $\mathcal{G}_1$ structures, see \cite{gh,Fr-Iv,Na} as well as \cite{Strom} for definitions and main properties. These classes of examples have algebraically generic intrinsic torsion.

Theorem \ref{thm2-int2}  is proved in Section \ref{abs-c}. The first step 
consists in constructing an intermediate foliation $\F^1$ which is TGHH and such that $Z\slash \F^1$ is K\"ahler and supports a totally geodesic Riemannian foliation with complex leaves (see Theorem \ref{tg-main}). This is achieved by finding the connection that prolongs the differential system involving the intrinsic torsion tensor of the $\mathcal{G}_1$-structure. It is worth noting this connection is not metric but conformal. The final step consists in using the classification of totally geodesic Riemannian foliation on K\"ahler manifolds from \cite{Na1} and Theorem \ref{thm1-in} in order to recover the original foliation $\F$ together with the desired factorisation.

\subsection{Swann's twist machinery} \label{twmach} 
The twist construction assigns to a manifold $S$ with a circle action a new manifold $Z$ in the following way. Consider a  principal circle bundle $Q \to S$ which admits a principal connection with curvature form $\Omega$ w.r.t. the $\bbS$-action on $S$ is Hamiltonian. This set of data allows lifting  the circle action from $S$ to $Q$; in case this is free the twist $Z$ is the quotient of $Q$ by the lifted action (see \cite{Swann}).   

It turns out that the above Weinstein construction of $Z$ from $N$ and $M$ is a specific instance of twisting $N \times M$, see Proposition \ref{ws-tw}. This allows using the machinery developed by Swann, in particular the algebraic correspondence between invariant metrics or complex structures on $S$ and the same type of object on $Z$.  Via this correspondence it is easy to describe the K\"ahler structure of $Z$, the space of its harmonic forms, and its Picard group. 

More generally we determine in Subsection \ref{l-a} what type of Hermitian structure on $S$ corresponds to a K\"ahler structure on $Z$ under the twist correspondence. 

The twist construction also provides a key idea for proving Theorem \ref{thm1-in}, see Remark \ref{idea}. Having the twist class integral identifies the natural candidate for a twist bundle $Q$. Deforming the initial K\"ahler metric on $Z$ and applying to it the twist correspondence equips $Q$ with a new 
Riemannian metric. We show the latter has reduced holonomy and 
recover the geometry from de Rham's splitting theorem. These considerations do not require an $\bbS$-action on $Z$.

\subsection{Relations to other geometries}
\subsubsection{\bf Riemannian properties} The above construction has several applications to problems with a strong Riemannian flavour.
Indeed, the homothetic and totally geodesic foliation set-up allows the construction of holomorphic harmonic morphisms with arbitrary dimension of the fibre (Subsection \ref{ham}). As far as we know, these are the first such examples between compact K\"ahler manifolds.

Examples of a slightly different nature can be constructed when the  base K\"ahler manifold carries a Riemannian foliation with complex, totally geodesic leaves.  For an appropriate choice of fibre the Weinstein construction yields examples of conformal submersions with totally geodesic, non-holomorphic fibres, see Proposition \ref{fact-ex}.   

\subsubsection{\bf Hermitian properties} 
One can define a new (integrable) complex structure $I$ on the total space by changing the sign of $J$ in direction of the  foliation: $I=-J$ on $\T \F$ and $I=J$ on $\T \F^\perp$. Moreover, families of new  metrics can be defined, which are Hermitian w.r.t to $I$ and can be balanced or K\"ahler, according to the dimension of the foliation and upon different choices of some parameters. In particular,  it is possible to have a K\"ahler metric conformal to a balanced (also known as semi-K\"ahler) metric on the same compact manifold, w.r.t. different complex structures, see Subsection \ref{I-subs}: 

\begin{thm} 
	Let $(Z,g,J)$ be a compact K\"ahler manifold  obtained by the Weinstein construction and $z:Z\to (0,\infty)$ a  momentum map associated to the canonical circle action. Then the Hermitian structure $(z^{-\frac{2m}{m+n-1}}g,I)$ is balanced non-K\"ahler if $n \geq 2$ and K\"ahler if $n=1$.
\end{thm}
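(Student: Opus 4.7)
The plan is to verify the balanced condition $d\tilde\omega^{m+n-1}=0$ and the K\"ahler condition $d\tilde\omega=0$ directly. Since $\F$ is holomorphic, the decomposition $TZ=V\oplus H$ with $V=T\F$ and $H=T\F^\perp$ is both $J$-invariant and $g$-orthogonal, hence the K\"ahler form has no mixed term: $\omega=\omega_\parallel+\omega_\perp$ with $\omega_\parallel\in\Lambda^2V^*$ and $\omega_\perp\in\Lambda^2H^*$. By the very definition $I=-J$ on $V$ and $I=J$ on $H$, one gets $\omega_I=\omega_\perp-\omega_\parallel$, so $\tilde\omega=z^a(\omega_\perp-\omega_\parallel)$ with $a=-\frac{2m}{m+n-1}$.

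From the Weinstein construction one reads off $\omega_\perp=z\,\pi^*\omega_M$, where $\pi:Z\to M$ is the associated-bundle projection, and $dz\in V^*$ because $z$ is fibrewise. Then $d\omega=0$ gives the structure equations $d\omega_\perp=z^{-1}dz\wedge\omega_\perp$ and $d\omega_\parallel=-z^{-1}dz\wedge\omega_\perp$, whence $d\omega_\perp^m=mz^{-1}dz\wedge\omega_\perp^m$. A crucial vertical-dimension observation is $dz\wedge\omega_\parallel^n=0$, since the fibre has real dimension $2n$ and $dz,\omega_\parallel\in\Lambda^\bullet V^*$.

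For the balanced check, expand $\tilde\omega^{m+n-1}=z^{-2m}(\omega_\perp-\omega_\parallel)^{m+n-1}$. The relations $\omega_\perp^{m+1}=0=\omega_\parallel^{n+1}$ leave only two monomials:
\[
T_1=(-1)^{n-1}\binom{m+n-1}{n-1}\omega_\parallel^{n-1}\omega_\perp^m,\qquad T_2=(-1)^n\binom{m+n-1}{n}\omega_\parallel^n\omega_\perp^{m-1}.
\]
Differentiating $z^{-2m}(T_1+T_2)$, all surviving contributions land on $z^{-2m-1}dz\wedge\omega_\parallel^{n-1}\omega_\perp^m$ with total coefficient $-(mc_1+nc_2)$, where $c_i$ are the signed binomials above. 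The classical identity $m\binom{m+n-1}{n-1}=n\binom{m+n-1}{n}$ makes this zero, proving $d\tilde\omega^{m+n-1}=0$ for every $n$.

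For the K\"ahler/non-K\"ahler dichotomy a direct computation yields
\[
d\tilde\omega=(a+2)z^{a-1}dz\wedge\omega_\perp-a\,z^{a-1}dz\wedge\omega_\parallel,
\]
the two components being of distinct bidegrees $(1,2)$ and $(3,0)$ relative to $V^*\oplus H^*$. For $n=1$ one has $a=-2$, so the first term vanishes, and $dz\wedge\omega_\parallel=0$ for the same vertical-dimension reason; hence $d\tilde\omega=0$, K\"ahler. For $n\geq 2$ both $a$ and $a+2$ are nonzero and $dz\wedge\omega_\parallel\not\equiv 0$, so the two bidegree pieces cannot cancel and $d\tilde\omega\neq 0$. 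The main delicate step is identifying the correct structure equations, especially the normalisation $\omega_\perp=z\,\pi^*\omega_M$ forced by the momentum-map property; afterward everything reduces to bidegree bookkeeping plus one binomial identity.
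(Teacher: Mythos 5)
Your proof is correct and follows essentially the same route as the paper: both arguments rest on the structure equations $\di\om_{+}=-z^{-1}\di z\wedge\om_{-}$, $\di\om_{-}=z^{-1}\di z\wedge\om_{-}$, the vanishing $\di z\wedge\om_{+}^{n}=0$ forced by $\dim_{\bbR}\D_{+}=2n$, and the binomial identity $m\binom{m+n-1}{n-1}=n\binom{m+n-1}{n}$ after expanding the top power of the fundamental form. The only difference is that the paper first solves the balanced equation for a general ansatz $\Omega_{\varphi}=\varphi'(z)\om_{+}+z^{-1}\varphi(z)\om_{-}$ (Proposition \ref{I}) and then specialises $\varphi(z)=z^{-1}$, whereas you verify that particular solution directly.
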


\section{Complex homothetic foliations} \label{loc-globs}

\subsection{General observations} \label{lp}
Assume that the K\"ahler manifold $(Z^{2m},g,J), m\geq 2$, is equipped with a 
foliation $\mathcal{F}$ with leaf-tangent distribution $\D_{+}$. Throughout this paper $\mathcal{F}$ is assumed to be {\it{complex}}, that is 
\begin{equation*}
J\D_{+}=\D_{+}.
\end{equation*}
At the purely algebraic level a complex foliation $\mathcal{F}$ induces a $g$-orthogonal splitting  
\begin{equation} \label{sp-F}
\T Z=\D_{+} \oplus \D_{-}
\end{equation}
which certainly satisfies $J\D_{-}=\D_{-}$. The K\"ahler form $\omega=g(J \cdot, \cdot)$ splits accordingly as 
\begin{equation} \label{sp-om}
\omega=\omega_{+}+\omega_{-}.
\end{equation}
Consider the almost Hermitian structure $(g,I)$ on $Z$ where 
\begin{equation}\label{i}
I_{\vert \D_{+}}:=-J_{\vert \D_{+}}, \ I_{\vert \D_{-}}:=J_{\vert \D_{-}}
\end{equation}
together with its canonical Hermitian connection 
\begin{equation} \label{itor}
\nabla^c=\nabla^g+\eta
\end{equation}
where the intrinsic torsion tensor $\eta:=\frac{1}{2}(\nabla^gI)I$ and $\nabla^g$ is the Levi-Civita connection of the metric $g$. The connection $\nabla^c$ is metric and Hermitian, that is $\nabla^cg=\nabla^cI=0$. Because 
$\nabla^gJ=0$ and $IJ=JI$ it follows that 
\begin{equation} \label{com-J}
\eta_UJ=J\eta_U, \ U \in \T Z,
\end{equation}
that is $\eta$ belongs to  $\Lambda^1Z \otimes \mathfrak{u}(\T Z,g,J)$. Here  $\mathfrak{u}(\T Z,g,J)$ denotes skew-symmetric (w.r.t. $g$) endomorphisms of $\T Z$ which commute with $J$; via the metric, $\mathfrak{u}(\T Z,g,J)$ is isomorphic to $\Lambda^{1,1}Z$, according to the convention $F \mapsto g(F \cdot, \cdot)$.

Property \eqref{com-J} has two direct consequences. The first is that $\nabla^c$ preserves the splitting \eqref{sp-F}. Since $\eta_UI+I\eta_U=0, U \in \T Z$ it also follows that 
$$\eta_U \D_{\pm} \subseteq\D_{\mp},\qquad  U \in   \T Z.$$ 
A routine verification which we leave to the reader shows that $\nabla^c$ is the orthogonal projection onto \eqref{sp-F} of the Levi-Civita connection 
$\nabla^g$. Explicitly 
\begin{lemma} 
We have 
\begin{equation} \label{nablac}
\nabla^c:=2\nabla^g+P_{+}\nabla^g P_{+} +P_{-}\nabla^g P_{-},
\end{equation}
where $P_{\pm}: \T Z\rightarrow \D_{\pm}$ are the orthogonal projection maps onto $\D_{\pm}$. 
\end{lemma}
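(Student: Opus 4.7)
The plan is to derive the formula directly from the two structural facts recorded just above the statement: (a) $\nabla^c$ preserves the splitting $\T Z = \D_{+} \oplus \D_{-}$, and (b) the difference tensor $\eta_U = \nabla^c_U - \nabla^g_U$ interchanges the distributions, namely $\eta_U \D_{\pm} \subseteq \D_{\mp}$.

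First I would block-decompose the Levi-Civita connection against the orthogonal splitting, writing
$$\nabla^g_U = (P_{+}+P_{-})\,\nabla^g_U\,(P_{+}+P_{-}) = \sum_{\epsilon,\epsilon' \in \{+,-\}} P_{\epsilon}\,\nabla^g_U\, P_{\epsilon'}.$$
This isolates four pieces, the diagonal ones $P_{\pm}\nabla^g_U P_{\pm}$ and the two off-diagonal ones $P_{\pm}\nabla^g_U P_{\mp}$. The claim will amount to showing that $\eta_U$ cancels the off-diagonal blocks of $\nabla^g_U$.

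To identify $\eta_U$ I would apply both (a) and (b) block by block. For $X \in \D_{+}$, preservation of the splitting by $\nabla^c$ gives $P_{-}\nabla^c_U X = 0$, hence $P_{-}\eta_U X = -P_{-}\nabla^g_U X$; but by (b) we have $\eta_U X \in \D_{-}$, so $P_{-}\eta_U X = \eta_U X$, yielding $\eta_U X = -P_{-}\nabla^g_U X$. The symmetric argument for $X \in \D_{-}$ yields $\eta_U X = -P_{+}\nabla^g_U X$. Combining these two restrictions gives
$$\eta_U = -P_{-}\nabla^g_U P_{+} - P_{+}\nabla^g_U P_{-}.$$
Substituting into $\nabla^c_U = \nabla^g_U + \eta_U$ and rewriting $\nabla^g_U$ via the four-block decomposition, the off-diagonal pieces cancel and one collects the remaining diagonal terms to conclude.

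I do not foresee a serious obstacle: once (a) and (b) are in place, $\eta_U$ is pinned down by its restrictions to $\D_{+}$ and $\D_{-}$, and these restrictions are entirely forced by the splitting-preservation condition. The skew-symmetry of $\eta_U$ with respect to $g$ and its commutation with $J$ play no role in the derivation itself; they are automatic consequences and serve only as consistency checks. The one place where mild care is needed is in invoking (a): it is a direct consequence of $\nabla^c g = \nabla^c I = 0$ together with the fact that $\D_{\pm}$ can be characterised as the $\pm 1$-eigenbundles of the parallel orthogonal involution $-IJ$, so no independent argument is required.
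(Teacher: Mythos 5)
Your block-decomposition argument is exactly the ``routine verification'' the paper leaves to the reader: it gives no written proof of this lemma, only the remark that $\nabla^c$ is the orthogonal projection of $\nabla^g$ onto the splitting $\T Z=\D_+\oplus\D_-$. Your identification $\eta_U X=-P_{\mp}\nabla^g_U X$ for $X$ a section of $\D_{\pm}$ is correct (and tensorial in $X$, since $P_{\mp}X=0$ kills the derivative-of-$f$ term), and it can be confirmed directly from $\eta_U=\tfrac12(\nabla^g_UI)I$ together with $IJ=P_+-P_-$. One small point on your justification of fact (a): for the involution $IJ$ to be $\nabla^c$-parallel you need $\nabla^c J=0$ as well as $\nabla^c I=0$, which is where \eqref{com-J} enters; the paper asserts (a) on exactly this basis, so nothing is missing in substance.

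The one thing you must address is that the formula your computation produces, namely $\nabla^c=P_+\nabla^gP_+ +P_-\nabla^gP_-$, is \emph{not} the displayed equation \eqref{nablac}, which carries an extra summand $2\nabla^g$. The two cannot be reconciled: the operator $2\nabla^g+P_+\nabla^gP_+ +P_-\nabla^gP_-$ satisfies $\bigl(2\nabla^g_U+P_+\nabla^g_UP_+ +P_-\nabla^g_UP_-\bigr)(fX)=3(Uf)X+f(\cdots)$, so it is not even a connection, whereas $P_+\nabla^gP_+ +P_-\nabla^gP_-$ is one and is precisely ``the orthogonal projection of $\nabla^g$'' announced in the sentence preceding the lemma. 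So \eqref{nablac} as printed is a misprint, and your final sentence (``collect the remaining diagonal terms to conclude'') silently passes from the formula you actually derived to the one stated. You should write out the formula you obtain and flag the discrepancy explicitly rather than claim to have reached the displayed identity.
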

Many geometric features of the foliation $\mathcal{F}$ can be read off the algebraic symmetries of the intrinsic torsion tensor $\eta$. For instance the integrability of $\D_{+}$ makes that the restriction of $\eta$ to $\D_{+} \times \D_{+}$ is symmetric
\begin{equation*}
\eta_{V_1}V_2=\eta_{V_2}V_1
\end{equation*}
in particular 
\begin{equation} \label{hol-T0}
\eta_{JV_1}JV_2=-\eta_{V_1}V_2, \quad \eta_{JV}X=-J\eta_VX, \quad X\in \D_{-}
\end{equation} 
by taking into account that $[\eta_U,J]=0,U \in \T Z$. Since $\eta$ can be identified with the second fundamental form of the leaves of $\mathcal{F}$, the first equality in \eqref{hol-T0} yields the following well known
\begin{propn} \label{min}
Any foliation with complex leaves on a K\"ahler manifold is minimal. 
\end{propn}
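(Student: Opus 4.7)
The plan is to express the mean curvature vector of the leaves of $\mathcal{F}$ in terms of the intrinsic torsion $\eta$ and then apply the first identity in \eqref{hol-T0} to conclude that it vanishes pointwise.

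First I would identify the second fundamental form of a leaf with $-\eta$ restricted to $\D_{+}\times \D_{+}$. Since $\D_{+}$ is integrable, the second fundamental form at $p$ is the symmetric bilinear map $h:\D_{+}\times \D_{+}\to \D_{-}$ given by $h(V_1,V_2)=P_{-}(\nabla^g_{V_1}V_2)$ for leaf-tangent vector fields $V_1,V_2$. Now I apply $P_{-}$ to both sides of \eqref{itor}, namely $\nabla^c_{V_1}V_2=\nabla^g_{V_1}V_2+\eta_{V_1}V_2$. Because $\nabla^c$ preserves the splitting \eqref{sp-F} the left hand side lies in $\D_{+}$, so its $\D_{-}$-component is zero; meanwhile $\eta_U$ sends $\D_{+}$ into $\D_{-}$, so $P_{-}(\eta_{V_1}V_2)=\eta_{V_1}V_2$. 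This gives
\[
h(V_1,V_2)=-\eta_{V_1}V_2, \qquad V_1,V_2\in \D_{+}.
\]

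Next I would compute the trace. Since $\D_{+}$ is $J$-stable and even-dimensional, I can pick a $J$-adapted orthonormal frame $\{e_1,Je_1,\dots,e_k,Je_k\}$ of $\D_{+}$ (with $2k=\dim_{\bbR}\D_{+}$). The mean curvature vector is then
\[
H \;=\; \sum_{i=1}^k \bigl( h(e_i,e_i)+h(Je_i,Je_i)\bigr) \;=\; -\sum_{i=1}^k\bigl(\eta_{e_i}e_i+\eta_{Je_i}Je_i\bigr),
\]
which vanishes termwise by the first identity $\eta_{JV_1}JV_2=-\eta_{V_1}V_2$ of \eqref{hol-T0}. Thus the leaves of $\mathcal{F}$ are minimal.

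There is no real obstacle here: the content of the proposition is essentially repackaging the $J$-anti-commutation of $\eta$ on $\D_{+}$ as a trace identity. The only step that requires care is the identification $h=-\eta|_{\D_{+}\times \D_{+}}$, which however is immediate once one notes that $\nabla^c$ splits the decomposition \eqref{sp-F} while $\eta$ swaps the two summands.
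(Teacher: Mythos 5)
Your proof is correct and follows exactly the route the paper intends: the paper simply asserts that the first identity of \eqref{hol-T0} yields minimality, and you have filled in the implicit computation — identifying the second fundamental form with $-\eta|_{\D_{+}\times\D_{+}}$ via the splitting-preserving property of $\nabla^c$, and cancelling the trace pairwise in a $J$-adapted frame. Nothing to add.
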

Yet another preliminary result we will need is the following 
\begin{lemma} \label{do-gen}
We have 
\begin{equation*}
\di \omega_{+}=0 \ \mbox{on} \ \Lambda^3\D_{+} \oplus (\Lambda^2\D_{+} \wedge \Lambda^1 \D_{-}) \oplus \Lambda^3\D_{-}, \ \di \omega_{+}(X,Y,V)=\omega_{+}(\eta_{X}Y-\eta_YX,V),
\end{equation*}
for all $X,Y\in\D_-$ and $V\in \D_+$.
\end{lemma}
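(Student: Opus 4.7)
The plan is to use the canonical Hermitian connection $\nabla^c$ from \eqref{itor} to compute $\di\omega_+$. Since $\omega_+$ is determined by $g$, $J$ and the orthogonal projector $P_+$, all of which are $\nabla^c$-parallel (the splitting \eqref{sp-F} being preserved by $\nabla^c$, as noted just after \eqref{com-J}), we have $\nabla^c\omega_+=0$. Writing $\nabla^c=\nabla^g+\eta$ and unpacking this identity gives
\begin{equation*}
(\nabla^g_U\omega_+)(A,B)=\omega_+(\eta_U A,B)+\omega_+(A,\eta_U B),
\end{equation*}
so that the standard formula $\di\omega_+(X,Y,Z)=\sum_{\mathrm{cyc}}(\nabla^g_X\omega_+)(Y,Z)$ expresses $\di\omega_+$ entirely in terms of $\eta$ and $\omega_+$.

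The rest is case analysis on the placement of $X,Y,Z$ in the summands of \eqref{sp-F}, driven by two algebraic facts: (a) $\eta_U$ swaps $\D_+$ and $\D_-$ since $\eta_U I+I\eta_U=0$; and (b) $\omega_+$ vanishes as soon as one of its arguments lies in $\D_-$. A term $\omega_+(\eta_U A,B)$ can therefore be nonzero only when $A\in\D_-$ and $B\in\D_+$, and symmetrically for $\omega_+(A,\eta_U B)$. If $X,Y,Z$ all lie in the same summand, every term in the cyclic sum vanishes, yielding $\di\omega_+=0$ on $\Lambda^3\D_+$ and on $\Lambda^3\D_-$.

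It remains to treat the two mixed configurations. For $X,Y\in\D_+$, $V\in\D_-$, only two terms in the cyclic sum survive; after invoking the skew-symmetry of $\eta_U$ with respect to $g$ in the form $\omega(\eta_U A,B)=-\omega(A,\eta_U B)$, they collapse to a multiple of $\omega(V,\eta_X Y-\eta_Y X)$, which vanishes because of the symmetry $\eta_X Y=\eta_Y X$ on $\D_+\times\D_+$ recorded just before \eqref{hol-T0}. The last case, $X,Y\in\D_-$ and $V\in\D_+$, is structurally analogous but with no symmetry to kill the answer; the three surviving contributions assemble directly into $\omega_+(\eta_X Y-\eta_Y X,V)$, noting that $\eta_X Y-\eta_Y X\in\D_+$ so $\omega_+$ indeed sees both entries. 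This is the claimed formula, and the entire argument is bookkeeping — no substantive obstacle.
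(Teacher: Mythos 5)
Your argument is correct and is essentially the paper's computation in a more uniform packaging: the paper expands $\di\omega_{+}$ case by case (via the Lie bracket on $\Lambda^3\D_{-}$ and $\Lambda^2\D_{-}\wedge\Lambda^1\D_{+}$, via $\nabla^g$ and $\nabla^g\omega=0$ on the other components), while you run every case through the single identity $\nabla^c\omega_{+}=0$ together with $\eta_U\D_{\pm}\subseteq\D_{\mp}$ and the symmetry of $\eta$ on $\D_{+}\times\D_{+}$, which amounts to the same bookkeeping. The only blemish is the remark that ``three'' contributions survive in the last case --- only the two terms $\omega_{+}(\eta_XY,V)$ and $\omega_{+}(V,\eta_YX)$ do --- but they assemble into $\omega_{+}(\eta_XY-\eta_YX,V)$ exactly as you state, so nothing is affected.
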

\begin{proof}
The component of $\di\omega_{+}$ in $\Lambda^3\D_{+}$ vanishes since $\omega_{+}=\omega$ on $\D_{+}$ and $\D_{+}$ is integrable. Finally $\di\omega_{+}$ has vanishing projection on $\Lambda^3\D_{-}$ since $\omega_{+}(\D_{-})=0$. Now pick $V_1,V_2 \in \D_{+}$ and expand 
\begin{equation*}
(\di\omega_{+})(V_1,V_2,X)=(\nabla^g_{V_1}\omega_{+})(V_2,X)-(\nabla^g_{V_2}\omega_{+})(V_1,X)+
(\nabla^g_X\omega_{+})(V_1,V_2).
\end{equation*}
The last summand vanishes 
since $\omega=\omega_{+}$ on $\D_{+}$ and $\nabla^g \omega=0$. As $\omega_{+}$ vanishes on $\D_{-}$ it follows that $(\nabla^g_{V_1}\omega_{+})(V_2,X)=-\omega_{+}(V_1,\nabla^g_{V_2}X)=\omega(\nabla^g_{V_1}V_2,X)$. Therefore $\di\omega_{+}(V_1,V_2,X)=\omega([V_1,V_2],X)=0$ 
since $\D_{+}$ is integrable. In other words $\di\omega_{+}$ is fully determined by its component on $\Lambda^2\D_{-} \wedge \Lambda^1\D_{+}$ which is computed as follows. Expanding 
the exterior derivative $\di$ by means of the Lie bracket we get 
$$\di\omega_{+}(X,Y,V)=-\omega_{+}([X,Y],V)=\omega_{+}(\eta_XY-\eta_YX,V)$$
and the claim is proved.
\end{proof}
\subsection{Holomorphic and conformal foliations} \label{holcf}
In the remainder of this section we work out the algebraic type of $\eta$ for the two main classes of complex foliations of interest in this paper, as introduced below.
\begin{defn} \label{def-h}
A complex foliation $\mathcal{F}$ is called holomorphic provided it satisfies $(\Li_VJ)\T Z \subseteq \D_{+}$ for all $V \in \D_{+}$. 
\end{defn}
Having $\F$ holomorphic ensures, by the complex Frobenius theorem, that $\D_{+}$ is locally spanned by holomorphic vector fields. The second class of foliations relevant to this work is introduced by the following   
\begin{defn} \cite{Vais} \label{def-cf}
The foliation $\mathcal{F}$ is called conformal provided that 
\begin{equation} \label{cf}
(\mathscr{L}_Vg)_{\vert \D_{-}}=\theta(V) g_{\vert \D_{-}}
\end{equation}
 for all $V \in \D_{+}$, where 
$\theta \in \Lambda^1Z$ satisfies $\theta(\D_{-})=0$. Moreover $\mathcal{F}$ is called
\begin{itemize}
\item[(i)] homothetic if $\di\theta=0$, 
\item[(ii)] globally homothetic if $\theta$ is exact.
\end{itemize}
\end{defn}
The form $\theta$ above is called the Lee form of the conformal foliation; it is required to vanish on $\D_{-}$ in order to render it unique. When $\theta=0$ we recover the more familiar notion of Riemannian foliation. Conformal foliations, not necessarily defined on K\"ahler manifolds, are conformally invariant under 
$C^{\infty}_{\mathcal{F}}(Z):=\{f \in C^{\infty}(Z):\di f(\D_{-})=0\}$ in the following sense. If $\mathcal{F}$ is conformal w.r.t. $g$, it remains so w.r.t. 
$e^fg$, for all $f \in C^{\infty}_{\mathcal{F}}(Z)$, with Lee form given by $\theta+\di f$.

To explain the local structure of conformal foliations recall that a locally defined function $\lambda:Z \to \bbR$ is called a local 
dilation for $\F$ provided that 
$$\theta=-(\di \ln \lambda^2)_{\D_{+}}$$ where the subscript indicates orthogonal projection onto 
$\D_{+}$ w.r.t. $\Lambda^1Z=\Lambda^1\D_{+} \oplus \Lambda^1\D_{-}$. If $\lambda$ is globally defined, it is called a global dilation. Local dilations do exist and are obtained from basic vector fields, that is vector fields $X \in \D_{-}$ such that $[\D_{+},X] \subseteq \D_{+}$. 
Indeed, if $X$ is basic, equation \eqref{cf} entails that $\lambda=(g(X,X))^{-\frac{1}{2}}$ is a local dilation. Choosing a local 
dilation $\lambda $ has the effect that the foliation $\F$ becomes Riemannian w.r.t. the locally defined metric $\lambda^2g$. As $\mathcal{F}$ is locally equivalent with a submersion $Z\rightarrow Z/\mathcal{F}$ on the (local) leaf space it follows that conformal foliations are in local correspondence with conformal submersions, in the sense of the definition below. 
Note that this correspondence will be used systematically in this paper.
\begin{defn} \label{def-subm}
Let $(Z,g_Z), (M,g_M)$ be Riemannian manifolds. We call a submersion $\pi:Z \to M$ conformal provided that 
$$\pi^{\star}g_M=\lambda^2 g_Z \ \mbox{on} \ \D_{-}$$
for some nowhere vanishing function $\lambda:Z \to \bbR$.
\end{defn}

To complete the dictionary between notions record that above $\D_{-}$ denotes the orthogonal complement w.r.t. $g_Z$ of the vertical distribution $\D_{+}=\ker(\di \pi) \subseteq \T Z$ of the submersion 
$\pi$. In this situation $\D_+$ is a conformal foliation with Lee form $\theta=-(\di \ln \lambda^2)_{\D_{+}}$. The function $\lambda$ above is called the dilation function of the conformal submersion. Such objects will be referred to as $\pi:(Z,g_Z,\lambda) 
\to (M,g_M)$ in shorthand notation. 
%

In the rest of this section we restrict attention to complex foliations and begin with determining the algebraic structure of 
the intrinsic torsion tensor $\eta$.
\begin{lemma} \label{tg-e}
Let $(Z,g,J)$ be a K\"ahler manifold equipped with a complex  foliation $\mathcal{F}$. 
\begin{itemize}
\item[(i)] The foliation $\mathcal{F}$ is conformal with Lee form $\theta$ if and only if, for all $X,Y\in\D_-$ and $V\in\D_+$, we have 
\begin{equation} \label{d1}
\begin{split}
& \eta_XY=\Psi_XY+\frac{1}{2}(g(X,Y)\theta^\sharp+\omega(X,Y)J\theta^\sharp)\\
& \eta_XV=\Psi_XV-\frac{1}{2}(\theta(V)X-\theta(JV)JX)
\end{split}
\end{equation} 
for some tensor field $\Psi:\D_{-} \to \mathfrak{u}(TZ,g,J), X \mapsto \Psi_X$ which  satisfies $\Psi_XY+\Psi_YX=0$ and
$\Psi_X(\D_{\pm}) \subseteq \D_{\mp}$.
\item[(ii)] $\mathcal{F}$ is holomorphic if and only if  $\Psi=0$.
\end{itemize}
\end{lemma}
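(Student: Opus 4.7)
The plan is to extract the decomposition in (i) from the algebraic constraints on $\eta$ plus the conformal identity, and then to deduce (ii) by computing the $\D_-$-component of $(\L_VJ)X$. I will use three standing facts about $\eta$: each $\eta_U$ is $g$-skew and commutes with $J$; $\eta_U$ swaps $\D_{\pm}$; and $\nabla^c=\nabla^g+\eta$ preserves \eqref{sp-F}. Together these force the projection identities $(\nabla^g_UV)_{\D_-}=-\eta_UV$ for $V\in\D_+$ and $(\nabla^g_UY)_{\D_+}=-\eta_UY$ for $Y\in\D_-$.

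For (i) I would plug the first projection identity into $(\L_Vg)(X,Y)=g(\nabla^g_XV,Y)+g(\nabla^g_YV,X)=\theta(V)g(X,Y)$ and use the skew-symmetry of $\eta_X$ to rewrite conformality as the purely algebraic identity $\eta_XY+\eta_YX=g(X,Y)\zeta$ for $X,Y\in\D_-$. This identifies the symmetric part of $\eta_XY$; the $\mathfrak{u}(\T Z,g,J)$-valued tensor carrying it is
\[
B_XY:=\tfrac12\bigl(g(X,Y)\zeta+\omega(X,Y)J\zeta\bigr),\qquad B_XV:=-\tfrac12\bigl(\theta(V)X-\theta(JV)JX\bigr),
\]
the second formula being the unique $\mathfrak{u}$-valued extension compatible with the first (and readily checked using $g(V,J\zeta)=-\theta(JV)$). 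Since $B_X\in\mathfrak{u}(\T Z,g,J)$, swaps $\D_\pm$, and satisfies $B_XY+B_YX=g(X,Y)\zeta$, the tensor $\Psi_X:=\eta_X-B_X$ is $\mathfrak{u}$-valued with $\Psi_X(\D_{\pm})\subseteq\D_{\mp}$ and $\Psi_XY+\Psi_YX=0$, so \eqref{d1} is just $\eta_X=\Psi_X+B_X$ evaluated on $Y\in\D_-$ or $V\in\D_+$.

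For (ii) the K\"ahler identity $(\L_VJ)X=J\nabla^g_XV-\nabla^g_{JX}V$ and the projection identities yield $((\L_VJ)X)_{\D_-}=\eta_{JX}V-J\eta_XV$. Substituting the formula for $\eta_XV$ from (i), the two $B$-contributions cancel identically (a short check using $J^2=-1$), leaving $\Psi_{JX}V-J\Psi_XV$. From the antisymmetry $\Psi_YX=-\Psi_XY$ and $[\Psi_X,J]=0$ one first obtains $\Psi_{JX}Y=J\Psi_XY$ on $\D_-$, whence by skew-symmetry of $\Psi_X$ the companion identity $\Psi_{JX}V=-J\Psi_XV$ on $\D_+$. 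Therefore $((\L_VJ)X)_{\D_-}=-2J\Psi_XV$, and holomorphicity of $\F$ is equivalent to $\Psi_XV=0$ for all $V\in\D_+$, $X\in\D_-$, i.e.\ $\Psi=0$. The step I expect to be trickiest is exactly this sign flip between $\Psi_{JX}=J\Psi_X$ on $\D_-$ and $\Psi_{JX}=-J\Psi_X$ on $\D_+$: overlooking it would suggest that the $\D_-$-part of $(\L_VJ)X$ vanishes automatically, missing the factor of $2$ that makes $\Psi=0$ the correct characterisation of holomorphicity.
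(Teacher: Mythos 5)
Your proposal is correct and follows essentially the same route as the paper: for (i) the paper splits $\eta$ on $\D_-\times\D_-$ into its $J$-invariant and $J$-anti-invariant parts and solves for the invariant part $Q$ via the variable change $(V,Y)\mapsto(JV,JY)$ in the conformality identity $g(\eta_XY+\eta_YX,V)=\theta(V)g(X,Y)$, whereas you write down the candidate $B$ and verify that $\Psi:=\eta-B$ has the listed properties — the same algebra packaged as "verify" rather than "derive". Part (ii), including the formula $((\L_VJ)X)_{\D_-}=\eta_{JX}V-J\eta_XV$ and the sign flip $\Psi_{JX}V=-J\Psi_XV$ giving the factor $-2J\Psi_XV$, is exactly the paper's argument.
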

\begin{proof}
(i) 
Split $\eta_XY=\Psi_XY+Q_XY$ into $J$-anti-invariant, respectively $J$-invariant parts, that is $\Psi_{JX}JY=-\Psi_XY$ and $Q_{JX}JY=
Q_XY$. That $\D_{+}$ is conformal reads $g(\eta_XY+\eta_YX,V)=\theta(V)g(X,Y)$; as the $J$-anti-invariant component of the latter vanishes we get 
$g(\Psi_XY+\Psi_YX,V)=0$, thus $\Psi_XY+\Psi_YX=0$ as claimed. Considering the $J$-invariant respectively the $J$-anti-invariant parts  
in $(X,Y)$ of $\eta_XJY=J\eta_XY$ shows that $\Psi_XJY=J\Psi_XY$ and $Q_XJY=JQ_XY$. Therefore changing $(V,Y) \mapsto (JV,JY)$ in 
$g(Q_XY+Q_YX,V)=\theta(V)g(X,Y)$ yields $g(Q_XY-Q_YX,V)=\theta(JV)g(X,JY)$ since $Q_{JY}X=-Q_Y(JX)=-JQ_YX$. It follows that 
$2g(Q_XY,V)=\theta(V)g(X,Y)+\theta(JV)g(X,JY)$ and the first line in \eqref{d1} is proved. The second line therein follows by orthogonality.
The converse follows from the general formula 
\begin{equation}\label{L_ca_eta}
	\Li_Vg(X,Y)=g(\eta_XY+\eta_YX,V)
\end{equation}
(ii) Combining the general formula $((\Li_VJ)X)_{{\D_{-}}}=\eta_{JX}V-J\eta_XV$ with the parametrisation of $\eta$ in \eqref{d1} leads to 
$((\Li_VJ)X)_{{\D_{-}}}=-2J\Psi_XV$ and the claim follows.
\end{proof}

\begin{rem} \label{codim}
When $\dim_{\bbR}\D_{-}=2$ the space $\{\alpha \in \Lambda^2\D_{-}:J\alpha=-\alpha\}=0$ showing that $\Psi=0$ in \eqref{d1}. Thus 
any complex conformal foliation of real codimension  $2$ is holomorphic. The same algebraic reason leads to the well-known result 
that any holomorphic foliation of real codimension  $2$ on a K\"ahler manifold is automatically conformal.
\end{rem}

We now begin the study of the almost complex structure $I$ defined in \eqref{i}. 

\begin{lemma} \label{nij}
Let $(Z,g,J)$ be a K\"ahler manifold equipped with a complex, conformal foliation $\mathcal{F}$.  The Nijenhuis tensor of  
the almost complex structure $I$
is given by 
\begin{equation} \label{nij-e}
\begin{split}
& N^I(V,W)=0, \quad N^I(V,X)=-4\eta_VX+4\Psi_XV, \quad N^I(X,Y)=-8\Psi_XY.
\end{split}
\end{equation}
\end{lemma}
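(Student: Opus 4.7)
The plan is to reduce $N^I$ to a tensorial expression in the intrinsic torsion $\eta$ via the torsion-free connection $\nabla^g$, and then to exploit the algebraic symmetries of $\eta$ coming from the integrability of $\D_+$ and the conformality of $\mathcal{F}$. From $\nabla^c I = 0$ together with $\nabla^c = \nabla^g + \eta$ one obtains $(\nabla^g_U I) = [I, \eta_U]$; because $I = \mp J$ on $\D_{\pm}$ and $\eta_U$ swaps $\D_+$ and $\D_-$, this specialises to $(\nabla^g_U I) W = \pm 2 J \eta_U W$ for $W \in \D_{\pm}$. Substituting into the standard torsion-free identity
\[
N^I(U_1, U_2) = (\nabla^g_{IU_1} I) U_2 - (\nabla^g_{IU_2} I) U_1 - I (\nabla^g_{U_1} I) U_2 + I (\nabla^g_{U_2} I) U_1
\]
turns $N^I$ into a purely algebraic combination of $\eta$ evaluated on $U_i$ and on $IU_i$.

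The three cases are then reduced to three transfer identities describing how $J$ acts on the first slot of $\eta$. For $V, W \in \D_+$, the integrability symmetry $\eta_{V_1} V_2 = \eta_{V_2} V_1$ combined with $[\eta_U, J] = 0$ yields $\eta_{JV} W = J \eta_V W$, which makes every term in the expanded Nijenhuis expression cancel and gives $N^I(V, W) = 0$. For $V \in \D_+$ and $X \in \D_-$, pairing with an arbitrary $W \in \D_+$ and using skew-Hermiticity of $\eta_{JV}$ together with the previous symmetry gives $\eta_{JV} X = -J \eta_V X$; and substituting $X \to JX$ in \eqref{d1} combined with $\Psi_{JX} V = -J \Psi_X V$ produces $\eta_{JX} V = J \eta_X V - 2 J \Psi_X V$, which is the computation already performed in the proof of Lemma \ref{tg-e}(ii). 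Inserting these and isolating the $\D_+$- and $\D_-$-components of $N^I(V, X)$ yields $-4 \eta_V X$ in $\D_+$ and $4 \Psi_X V$ in $\D_-$. For $X, Y \in \D_-$, substituting $X \to JX$ in \eqref{d1} and using $\Psi_{JX} Y = J \Psi_X Y$ on $\D_-$ gives $\eta_{JX} Y = J \eta_X Y + \omega(X, Y) \zeta - g(X, Y) J \zeta$; combining with $\eta_X Y - \eta_Y X = 2 \Psi_X Y + \omega(X, Y) J \zeta$ (immediate from $\Psi_X Y + \Psi_Y X = 0$ and the explicit form of $Q_X Y$) makes the $\zeta$-terms cancel and leaves $-8 \Psi_X Y$.

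The main obstacle I expect is establishing the transfer identity $\Psi_{JX} V = -J \Psi_X V$ for $V \in \D_+$, which is not stated as such in Lemma \ref{tg-e}. It follows by pairing $\Psi_{JX} V$ against an arbitrary $Y \in \D_-$, applying skew-Hermiticity of both $\Psi_X$ and $\Psi_{JX}$ together with $\Psi_X J = J \Psi_X$, and invoking the already-derived relation $\Psi_{JX} Y = J \Psi_X Y$ on $\D_-$ (itself a direct consequence of the symmetry $\Psi_{JX} J Y = -\Psi_X Y$). Beyond this, the proof is routine componentwise bookkeeping; the auxiliary $\nabla^c$-terms (such as $\nabla^c_{JX} V$ and $\nabla^c_X V$) that appear in the expansion cancel identically once the expression is decomposed along $\D_+ \oplus \D_-$.
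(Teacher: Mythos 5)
Your proof is correct and follows essentially the same route as the paper's: both reduce $N^I$ to the algebraic data $(\eta,\Psi,\theta)$ --- the paper by expanding the Lie brackets in $N^I$ through $\nabla^g$ and projecting onto $\D_{\pm}$, you by the equivalent identity $\nabla^g I=[I,\eta_{\cdot}]$ fed into the covariant-derivative formula for $N^I$ --- and then invoke \eqref{hol-T0}, \eqref{d1} and the transfer rules $\Psi_{JX}Y=J\Psi_XY$ and $\Psi_{JX}V=-J\Psi_XV$, the latter of which you rightly isolate as the one non-obvious algebraic input (the paper uses it silently under ``the algebraic properties of $\Psi$''). One caveat: the covariant-derivative formula you quote corresponds to the convention $N^I(A,B)=[IA,IB]-I[IA,B]-I[A,IB]-[A,B]$, whereas the paper's own displayed expansion in this proof shows it uses the opposite sign, $N^I(A,B)=[A,B]-[IA,IB]+I([IA,B]+[A,IB])$; carried through literally, your computation therefore yields \eqref{nij-e} with all signs reversed, which is harmless for every later use of the lemma (only the vanishing and the skew-symmetry type of $N^I$ matter) but should be reconciled if you want to land on the stated signs.
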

\begin{proof}Since $\D_{+}$ is integrable and $N^J=0$ we have ${N^I}_{| \D_{+} \times \D_{+}}=0$. 
We compute  
\begin{equation*}
\begin{split}
N^I(V,X)=&[V,X]+[JV,JX]+I(-[JV,X]+[V,JX])\\
=&2(\nabla^g_VX)_{\D_+}-2(\nabla^g_XV)_{\D_-}-2J(\nabla^g_{JX}V)_{\D_-}+2J(\nabla^g_{JV}X)_{\D_+}
\end{split}
\end{equation*}
where to obtain the last line we have expanded the Lie bracket by means of the Levi-Civita connection of $g$. By \eqref{itor} we get further 
$N^I(V,X)=-2\eta_VX+2\eta_XV+2J\eta_{JX}V-2J\eta_{JV}X$ and the desired final result follows from \eqref{hol-T0} and \eqref{d1}. Taking into account that $N^J=0$ we derive 
\begin{equation*}
\begin{split}
N^I(X,Y)=&[X,Y]-[JX,JY]+I([JX,Y]+[X,JY])\\
=&(I-J)([JX,Y]+[X,JY])=-2J([JX,Y]_{\D_+}+[X,JY]_{\D_+}).
\end{split}
\end{equation*}
However $[JX,Y]_{\D_+}+[X,JY]_{\D_+}=-\eta_{JX}Y+\eta_Y(JX)-\eta_X(JY)+\eta_{JY}X=-4\Psi_X(JY)$ by using successively \eqref{d1} and the algebraic 
properties of $\Psi$. The last part of \eqref{nij-e} now follows.
\end{proof}
As a direct consequence we have the following  
\begin{propn} \label{tg}
With the same hypothesis as in Lemma \ref{nij}, 
the almost complex structure $I$ is integrable if and only if $\D_{+}$ is holomorphic and totally geodesic w.r.t the metric $g$.
\end{propn}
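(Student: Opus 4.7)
\medskip
\noindent\textbf{Proof proposal.} The plan is to use Lemma \ref{nij} to reduce integrability of $I$ to the vanishing of two algebraic objects on $\eta$, and then to translate these into the two geometric conditions in the statement. Since $N^I(V,W)=0$ automatically for $V,W\in\D_+$, we need only the components $N^I(X,Y)$ with $X,Y\in\D_-$ and $N^I(V,X)$ with $V\in\D_+$, $X\in\D_-$ to vanish. By the third equation in \eqref{nij-e}, the $\D_-\times\D_-$ component vanishes iff $\Psi=0$, and by Lemma \ref{tg-e}(ii) this is exactly the condition that $\mathcal F$ be holomorphic. Assuming $\Psi=0$, the middle equation in \eqref{nij-e} reduces to $N^I(V,X)=-4\eta_V X$, so integrability of $I$ becomes equivalent to $\mathcal F$ holomorphic together with
\begin{equation*}
\eta_V X=0 \quad \text{for all } V\in\D_+,\ X\in\D_-.
\end{equation*}

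The remaining step is to identify this last condition with total geodesicity of $\D_+$. For $V,W\in\D_+$, the second fundamental form of $\mathcal F$ is $h(V,W)=(\nabla^g_V W)_{\D_-}$. Since $\nabla^c$ preserves the splitting \eqref{sp-F}, we have $\nabla^c_V W\in\D_+$, and from \eqref{itor} it follows that
\begin{equation*}
h(V,W)=-(\eta_V W)_{\D_-}=-\eta_V W,
\end{equation*}
the last equality because $\eta_V$ sends $\D_+$ into $\D_-$. Hence $\mathcal F$ is totally geodesic iff $\eta_V W=0$ for all $V,W\in\D_+$. Using that $\eta_V$ is $g$-skew one writes $g(\eta_V X,W)=-g(X,\eta_V W)$; since $\eta_V W\in\D_-$ and $\eta_V X\in\D_+$, nondegeneracy of $g$ on each summand gives that the vanishing of $\eta_V X$ for all $V\in\D_+$, $X\in\D_-$ is equivalent to the vanishing of $\eta_V W$ for all $V,W\in\D_+$, hence to $\mathcal F$ being totally geodesic.

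The main (and really only) subtlety is bookkeeping which leg of $\eta$ lives in which distribution, so that the identification of the mixed Nijenhuis block with the second fundamental form is clean; once the invariance of \eqref{sp-F} under $\nabla^c$ and the property $\eta_U\D_\pm\subseteq\D_\mp$ are in place, both implications follow immediately from Lemmas \ref{nij} and \ref{tg-e}.
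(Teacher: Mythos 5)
Your proof is correct and follows essentially the same route as the paper: the paper's proof simply cites \eqref{nij-e} and Lemma \ref{tg-e}(ii) together with the fact that total geodesicity of $\D_+$ is equivalent to $\eta_{\D_+}=0$. You merely spell out the details the paper leaves implicit, namely the identification of the second fundamental form with $-\eta_VW$ and the skew-symmetry argument showing that the vanishing of $\eta_V$ on $\D_-$ is equivalent to its vanishing on $\D_+$.
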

\begin{proof}
Follows from \eqref{nij-e} and (ii) in Lemma \ref{tg-e}, since having $\D_{+}$ totally geodesic is equivalent to $\eta_{|_{\D_{+}\times \D_+}}=0$.
\end{proof}
The next Lemma 
is needed to obtain a simple exterior differential characterisation of  holomorphic conformal foliations on K\"ahler manifolds. Its proof is straightforward from Lemma  \ref{tg-e}.
\begin{lemma} \label{do-ggg}
Assume that the foliation $\F$ is complex and conformal with Lee form $\theta$.  Then:
\begin{equation} \label{do+gg}
\di\omega_{+}=-\theta \wedge \omega_{-}+2\Phi
\end{equation}
where $\Phi \in \Lambda^3Z$ is determined from 
\begin{equation*}
\Phi=0 \ \mbox{on} \ \Lambda^3\D_{+} \oplus (\Lambda^2\D_{+} \wedge \Lambda^1 \D_{-}) \oplus \Lambda^3\D_{-}, \ \Phi(V,X,Y)=g(J\Psi_XV,Y).
\end{equation*}
\end{lemma}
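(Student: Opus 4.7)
The plan is to verify the identity component-by-component using the orthogonal splitting
$$\Lambda^3 Z = \Lambda^3 \D_+ \oplus (\Lambda^2 \D_+ \wedge \Lambda^1 \D_-) \oplus (\Lambda^1 \D_+ \wedge \Lambda^2 \D_-) \oplus \Lambda^3 \D_-.$$
Lemma \ref{do-gen} already pins down $\di\omega_+$: it vanishes on three of these four summands, and on $\Lambda^1 \D_+ \wedge \Lambda^2 \D_-$ it is given by $(V,X,Y) \mapsto \omega_+(\eta_X Y - \eta_Y X, V)$. So the work consists in checking that the RHS $-\theta\wedge \omega_{-}+2\Phi$ exhibits the same type decomposition and agrees with $\di\omega_+$ on the one nontrivial piece.

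For the first three components, $\Phi$ vanishes by definition, and $\theta\wedge\omega_-$ vanishes as well: $\theta$ is zero on $\D_-$ and $\omega_-$ is zero whenever one argument lies in $\D_+$. A routine check of each of the three cases (plugging in $V_1,V_2,V_3 \in \D_+$, or two $\D_+$-vectors and one $\D_-$-vector, or three $\D_-$-vectors) shows both summands drop out. So the identity is automatic there.

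The real content is on $\Lambda^1\D_+ \wedge \Lambda^2\D_-$, and this is mainly bookkeeping. I would substitute the first formula of Lemma \ref{tg-e}(i) for $\eta_X Y$ with $X,Y \in \D_-$. The $g(X,Y)\zeta$ term is symmetric in $(X,Y)$ and drops out; the relation $\Psi_XY+\Psi_YX=0$ converts $\Psi_XY-\Psi_YX$ into $2\Psi_XY$; the remaining piece gives $\omega(X,Y)J\zeta$. Since all of $\Psi_XY$, $J\zeta$ and $V$ live in $\D_+$, the form $\omega_+$ agrees with $\omega$ on these arguments, so
$$\omega_+(\eta_X Y - \eta_Y X, V)=2\omega(\Psi_XY,V)+\omega(X,Y)\,\omega(J\zeta,V).$$
The key algebraic step is to rewrite $\omega(\Psi_XY,V)=g(J\Psi_XY,V)$ as $g(J\Psi_XV,Y)=\Phi(V,X,Y)$, using the two defining properties of $\Psi_X\in \mathfrak u(\T Z,g,J)$, namely skew-symmetry with respect to $g$ and commutation with $J$. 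For the last term, $\omega(J\zeta,V)=-g(\zeta,V)=-\theta(V)$, which matches $(-\theta\wedge\omega_-)(X,Y,V)=-\theta(V)\omega(X,Y)$ once one notes that $\theta$ annihilates $X$ and $Y$.

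The only mild subtlety is verifying that the recipe $\Phi(V,X,Y)=g(J\Psi_XV,Y)$ is actually skew in $(X,Y)$, so that it defines a genuine component of a 3-form. This reduces to showing $g(J\Psi_XV,Y)+g(J\Psi_YV,X)=0$, which is proved by the same combination of $J\Psi=\Psi J$ and the skew-symmetry of $\Psi$, together with $\Psi_XY=-\Psi_YX$. Once this is in place, the component calculation on $\Lambda^1\D_+\wedge\Lambda^2\D_-$ matches and \eqref{do+gg} follows. No step seems genuinely obstructive; the main care needed is tracking the two different symmetries encoded in $\Psi$ and being careful with the $\omega_+$ vs $\omega$ distinction on each argument.
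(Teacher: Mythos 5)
Your proof is correct and follows exactly the route the paper intends (the lemma is stated without proof there): combine the component computation of $\di\omega_{+}$ from Lemma \ref{do-gen} with the parametrisation \eqref{d1} of $\eta$ from Lemma \ref{tg-e}, using the skew-symmetry of $\Psi_X$ and $[\Psi_X,J]=0$ to identify $\omega(\Psi_XY,V)$ with $g(J\Psi_XV,Y)$. All the algebraic steps, including the check that $\Phi$ is genuinely skew in $(X,Y)$ and the matching of the $-\theta(V)\omega_-(X,Y)$ term, are accurate.
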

Note that $\Phi \in \lambda^3_I:=\{\alpha \in \Lambda^3Z:\alpha(IU_1,IU_2,U_3)=-\alpha(U_1,U_2,U_3)\}$; when $\F$ is totally geodesic this is proportional to the torsion form of the canonical $\mathcal{G}_1$-structure (see below, Definition \ref{g1}). 
\begin{propn} \label{cdiff}
Let $(Z,g,J)$ be a K\"ahler manifold equipped with a holomorphic foliation $\mathcal{F}$. The following are equivalent
\begin{itemize}
\item[(i)] We have 
\begin{equation} \label{do+}
\di\omega_{+}=-\theta \wedge \omega_{-} 
\end{equation}
where $\theta \in \Lambda^1Z$ vanishes on $\D_{-}$
\item[(ii)] The foliation $\mathcal{F}$ is conformal, with Lee form $\theta$.
\end{itemize}
\end{propn}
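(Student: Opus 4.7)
The plan is to prove the two implications separately; the direction (ii) $\Rightarrow$ (i) is essentially immediate from existing results, whereas (i) $\Rightarrow$ (ii) is the substantive direction. For (ii) $\Rightarrow$ (i), assuming $\F$ is both holomorphic and conformal, Lemma \ref{tg-e}(ii) gives $\Psi = 0$, whereupon Lemma \ref{do-ggg} forces $\Phi = 0$ and thus $\di\omega_+ = -\theta \wedge \omega_-$.

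The substantive direction is (i) $\Rightarrow$ (ii). Since conformality is not yet at our disposal, Lemma \ref{do-ggg} cannot be invoked, and the plan is to work from Lemma \ref{do-gen}, which only requires $\F$ to be complex. A direct evaluation shows $(\theta \wedge \omega_-)(X, Y, V) = \theta(V)\omega(X, Y)$ for $X, Y \in \D_-$ and $V \in \D_+$, because $\theta$ vanishes on $\D_-$ and $\omega_-$ is supported on $\Lambda^2\D_-$. Matching this with $\di\omega_+(X, Y, V) = \omega_+(\eta_X Y - \eta_Y X, V)$ from Lemma \ref{do-gen} and invoking (i) yields the antisymmetric identity
\begin{equation*}
\eta_X Y - \eta_Y X = \omega(X, Y) J\zeta, \qquad X, Y \in \D_-,
\end{equation*}
where $\zeta := \theta^\sharp$ lies in $\D_+$ since $\theta(\D_-) = 0$.

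To extract the symmetric counterpart of $\eta$ on $\D_- \times \D_-$, I would expand $(\L_V J)X = J\nabla_X^g V - \nabla_{JX}^g V$ via \eqref{itor} and project onto $\D_-$; this yields $((\L_V J)X)_{\D_-} = \eta_{JX}V - J\eta_X V$, which holomorphy (Definition \ref{def-h}) forces to vanish, so $\eta_{JX}V = J\eta_X V$ for $V \in \D_+$, $X \in \D_-$. Transposing this relation via skew-adjointness of $\eta_U$ and the commutation \eqref{com-J} produces the companion identity $\eta_{JX}Y = -J\eta_X Y$ for $X, Y \in \D_-$. Substituting $X \to JX$ in the antisymmetric identity above and combining with $\eta_{JX}Y = -J\eta_X Y$ together with $\eta_Y(JX) = J\eta_Y X$ (from \eqref{com-J}) produces $\eta_X Y + \eta_Y X = g(X, Y)\zeta$.

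Pairing this equality with $V \in \D_+$ and re-using skew-adjointness of $\eta_U$ rewrites it as $(\L_V g)(X, Y) = \theta(V) g(X, Y)$ on $\D_-$, establishing that $\F$ is conformal with Lee form $\theta$. The main obstacle will be the careful bookkeeping in the third step: deriving the symmetric counterpart of the antisymmetric identity requires simultaneously deploying three algebraic constraints on $\eta$ (skew-adjointness, $J$-commutation \eqref{com-J}, and holomorphy), with the substitution $X \to JX$ orchestrating the cancellation; once this identity is in hand, conformality drops out by a routine manipulation.
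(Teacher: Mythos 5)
Your proposal is correct and follows essentially the same route as the paper: for (i)$\Rightarrow$(ii) the paper likewise reads off the antisymmetric identity $g(\eta_XY-\eta_YX,JV)=\theta(V)\omega_-(X,Y)$ from Lemma \ref{do-gen}, uses holomorphy in the form $\eta_{JX}Y=-J\eta_XY$, and performs the substitution $X\mapsto JX$ to produce the symmetric relation $g(\eta_XY+\eta_YX,V)=\theta(V)g(X,Y)$, which is conformality; for (ii)$\Rightarrow$(i) both arguments amount to setting $\Psi=0$ in the expression for $\di\omega_+$. Your version merely spells out more explicitly the derivation of $\eta_{JX}Y=-J\eta_XY$ from Definition \ref{def-h} and the identification of the symmetric relation with $\L_Vg$, steps the paper leaves implicit.
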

\begin{proof}

(i)  $\Rightarrow$ (ii). From \eqref{do+} we get $g(\eta_XY-\eta_YX,JV)=\theta(V)\omega_{-}(X,Y)$; because $\F$ is holomorphic we know that $\Psi=0$, i.e.  $\eta_{JX}Y=-J\eta_XY$ 
thus the variable change $(X,V) \mapsto (JX,JV)$ yields $g(\eta_XY+\eta_YX,V)=\theta(JV)g(X,Y)$. It follows that $2g(\eta_XY,V)=\theta(V)g(X,Y)-
\theta(JV)g(JX,Y)$ thus $\F$ is conformal with Lee form $\theta$ by Lemma \ref{tg-e}.\\
(ii) $\Rightarrow$ (i). Since $\F$ is holomorphic we have $\Psi=0$ from Lemma \ref{tg-e} (ii). Hence $\Phi=0$ in Lemma \ref{do-ggg} and the claim follows.
\end{proof}
\begin{rem} \label{str-eqsn}
\begin{itemize}
\item[(i)] Assume that $\F$ is holomorphic and conformal with Lee form $\theta$. Since $\di\omega=0$ equation \eqref{do+} is further equivalent to 
\begin{equation} \label{do-}
\di\omega_{-}=\theta\wedge\omega_{-}.
\end{equation}
Either of \eqref{do+} or \eqref{do-} will be referred to as the structure equations 
of the holomorphic distribution $\D_{+}$. 
\item[(ii)] Equation  \eqref{do+gg} is not  sufficient to ensure that 
$\F$ is conformal, for it does not fully determine the algebraic type of $\eta$. 

\end{itemize}
\end{rem}
\begin{propn} \label{cf1}
Let $\mathcal{F}$ be a holomorphic and conformal foliation on a K\"ahler manifold. If $\mathrm{codim}_{\bbR} \mathcal{F}\geq 4$, then  
$\mathcal{F}$ is homothetic. 
\end{propn}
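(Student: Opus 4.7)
The strategy is to differentiate the structure equation and extract $\di\theta = 0$ from the resulting identity. Since $\F$ is holomorphic and conformal, Proposition \ref{cdiff} together with Remark \ref{str-eqsn}(i) gives $\di\omega_{-} = \theta \wedge \omega_{-}$. Applying $\di$ and using $\theta \wedge \theta \wedge \omega_{-} = 0$ yields
$$\di\theta \wedge \omega_{-} = 0.$$

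The next step is to decompose $\di\theta$ according to the $J$-invariant splitting $\T Z = \D_{+} \oplus \D_{-}$, writing $\di\theta = A + B + C$ with $A \in \Lambda^2 \D_{+}$, $B \in \D_{+} \wedge \D_{-}$ and $C \in \Lambda^2 \D_{-}$ (in the notation of Lemma \ref{do-gen}). Wedging with $\omega_{-}$ produces three pieces lying in the distinct bi-degree subspaces $\Lambda^2 \D_{+} \wedge \Lambda^2 \D_{-}$, $\D_{+} \wedge \Lambda^3 \D_{-}$ and $\Lambda^4 \D_{-}$, each of which must vanish. Non-degeneracy of $\omega_{-}$ on $\D_{-}$ immediately forces $A = 0$. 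For $B$ I would invoke the Lefschetz-type injectivity of the wedge map $\D_{-} \to \Lambda^3 \D_{-}$, $\beta \mapsto \beta \wedge \omega_{-}$, which holds as soon as $\dim_{\bbC} \D_{-} \geq 2$; this is precisely where the hypothesis $\mathrm{codim}_{\bbR} \F \geq 4$ is used.

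The remaining component $C$ is not controlled by Lefschetz, since the map $\Lambda^2 \D_{-} \to \Lambda^4 \D_{-}$ has primitive 2-forms in its kernel. Here I would compute $\di\theta(X,Y)$ directly for $X, Y \in \D_{-}$. Because $\theta$ vanishes on $\D_{-}$, Cartan's formula reduces this to $-\theta([X,Y]_{\D_{+}})$. The holomorphicity of $\F$ gives $\Psi = 0$ by Lemma \ref{tg-e}(ii), and expanding the bracket via $\nabla^g = \nabla^c - \eta$ together with the torsion formula \eqref{d1} yields $[X,Y]_{\D_{+}} = \eta_{Y}X - \eta_{X}Y = -\omega_{-}(X,Y)\, J\zeta$, where $\zeta = \theta^{\sharp}$. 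Consequently
$$\di\theta(X,Y) = \omega_{-}(X,Y)\, \theta(J\zeta) = \omega_{-}(X,Y)\, g(\zeta, J\zeta) = 0,$$
the last step using the $g$-skewness of $J$. Combining $A = B = C = 0$ gives $\di\theta = 0$, so $\F$ is homothetic.

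The main obstacle I expect is handling the middle-degree component $C$: Lefschetz injectivity is sharp and breaks down exactly there, forcing the concrete torsion computation in place of an abstract algebraic argument. The holomorphicity assumption is essential at this step, for without $\Psi = 0$ extra terms would appear in $[X,Y]_{\D_{+}}$ and the clean reduction to $g(\zeta, J\zeta)$ would be lost.
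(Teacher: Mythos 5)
Your proof is correct, and the first two steps coincide exactly with the paper's: differentiating the structure equation to get $\di\theta\wedge\omega_{-}=0$, killing the $\Lambda^2\D_{+}$ component by non-vanishing of $\omega_{-}$, and killing the mixed component by contracting with $V\in\D_{+}$ and invoking injectivity of $\cdot\wedge\omega_{-}$ on $\Lambda^1\D_{-}$, which is where $\mathrm{codim}_{\bbR}\F\geq 4$ enters. Where you diverge is the residual $\Lambda^2\D_{-}$ component: the paper disposes of it by differentiating the trivial identity $\theta\wedge\omega_{+}^n=0$ (with $2n=\dim_{\bbR}\D_{+}$), which gives $\di\theta\wedge\omega_{+}^n=0$ and hence $\di\theta=0$ once one knows $\di\theta\in\Lambda^2\D_{-}$, since $\omega_{+}^n$ is a volume form on $\D_{+}$. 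You instead compute $\di\theta(X,Y)=-\theta([X,Y]_{\D_{+}})$ directly and use \eqref{d1} with $\Psi=0$ to get $[X,Y]_{\D_{+}}=-\omega_{-}(X,Y)J\zeta$, so that the answer is $\omega_{-}(X,Y)g(\zeta,J\zeta)=0$. Both arguments are sound; the paper's is a one-line exterior-algebra trick needing no further input, while yours isolates the geometric reason the component vanishes (the integrability tensor of $\D_{-}$ is proportional to $J\zeta$, which is $\theta$-isotropic), a fact the paper effectively re-derives later (e.g. the identity $[X,Y]_{\D_+}=z^{-1}\omega(X,Y)K$ in the proof of Theorem \ref{comp-iso}). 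Your parenthetical remark that Lefschetz injectivity "breaks down" on $\Lambda^2\D_{-}$ is accurate as a diagnosis of why a separate argument is needed there, though note the paper's workaround stays entirely algebraic.
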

\begin{proof}
Differentiate in $\di\omega_+=-\theta\wedge\omega_-$ and use the second equation in \eqref{do+} to obtain 
\begin{equation}\label{do--}
\di\theta\wedge\omega_{-}=0.
\end{equation}
As $\Lambda^2Z=\Lambda^2\D_{+} \oplus (\Lambda^1\D_{+} \wedge \Lambda^1\D_{-}) 
\oplus \Lambda^2\D_{-}$ it follows that $\di \theta$ has vanishing component 
on $\Lambda^2\D_{+}$. If $V \in \D_{+}$, equation \eqref{do--} ensures that 
$(V \cntrct  \di \theta) \wedge \omega_{-}=0$. 
But $V \cntrct \di \theta \in \Lambda^1\D_{-}$ and the assumption on the codimension of the foliation implies injectivity of the exterior multiplication with $\omega_-$ on $\Lambda^1\D_{-}$, showing that $\di \theta \in \Lambda^2\D_{-}$. But $\theta \wedge \om_{+}^n=0$ where 
$2n=\dim_{\bbR}\D_{+}$; differentiating and using again \eqref{do+} yields $\di \theta \wedge \om_{+}^n=0$. This entails the vanishing of $\di \theta$, since the latter belongs to $\Lambda^2\D_{-}$.
\end{proof}

\begin{rem} \label{ccurv-1}
If $\mathcal{F}$ is a homothetic foliation by complex curves, i.e. 
$J\D_{+}=\D_{+}$ and $\dim_{\bbR}\D_{+}=2$, much stronger results are available. For, $\mathcal{F}$ is then either holomorphic 
or totally geodesic and Riemannian according to \cite[Proposition 2.1]{BLMS}.
\end{rem}

\subsection{Global aspects} \label{loc-glob}
We investigate the extent up to which homothetic foliations on a compact K\"ahler manifold $(Z,J,g)$ must be globally homothetic. While this is based on the $\partial \overline{\partial}$-lemma for K\"ahler manifolds some preliminaries are required. We denote with $L$ the exterior 
multiplication with $\omega$ and with $L^{\star}$ its adjoint w.r.t. the metric $g$. Explicitly 
\begin{equation*} 
L^{\star}\alpha =\frac{1}{2}\sum \limits_{i} Je_i \cntrct  e_i \cntrct  \alpha
\end{equation*} 
for $\alpha \in \Lambda^{\star}Z$, where $\{e_i\}$ is some local $g$-orthonormal basis in $\T Z$. By straightforward algebraic computation it follows that 
\begin{equation} \label{l-prod}
L^{\star}(\gamma \wedge \alpha)=\gamma \wedge L^{\star}\alpha+J\gamma^{\sharp} \cntrct  \alpha
\end{equation}
with $\gamma \in \Lambda^1Z$ and $\alpha \in \Lambda^{\star}Z$. Below and in the rest of the paper we let the complex structure $J$ act on 
$1$-forms $\alpha \in \Lambda^1Z$ by composition, $J\alpha:=\alpha \circ J$.
\begin{lemma} \label{cdff}
Let $\mathcal{F}$ be conformal, with Lee form $\theta$ and $\dim_{\bbR}\D_{-}=2m$. Then
\begin{equation}\label{cd-1}
\begin{split}
\di^{\star}\omega_{+}&=m J \theta.
\end{split}
\end{equation}
\end{lemma}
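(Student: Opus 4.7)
The plan is to compute $\di^{\star}\omega_+$ pointwise from
$$(\di^{\star}\omega_+)(U)=-\sum_i(\nabla^g_{e_i}\omega_+)(e_i,U),$$
in a local $g$-orthonormal basis respecting \eqref{sp-F}, namely $\{e_a\}\subset\D_+$ together with $\{e_\alpha\}_{\alpha=1}^{2m}\subset\D_-$. First I would expand each summand by Leibniz; using $\nabla^g\omega=0$ and the vanishing of $\omega_+$ whenever a $\D_-$-vector is inserted, the surviving contributions should all be of the form $\omega(P_+W,V)$. Writing $\nabla^g=\nabla^c-\eta$ via \eqref{itor} and exploiting that $\nabla^c$ preserves \eqref{sp-F} while $\eta$ swaps $\D_\pm$ would then replace each $P_+$-projection by a $-\eta$-term.

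Next I would split according to whether $U$ lies in $\D_-$ or $\D_+$. In the case $U\in\D_-$ only the $a$-indices survive; after transposing by skew-symmetry of $\eta_{e_a}$ and using $[\eta_{e_a},J]=0$, the sum reduces to a multiple of $J\!\sum_a\eta_{e_a}e_a$. At this stage Proposition \ref{min} (minimality of $\mathcal{F}$, equivalent to $\sum_a\eta_{e_a}e_a=0$) yields $\di^{\star}\omega_+(U)=0$, which matches the vanishing of $J\theta$ on $\D_-$. In the case $U\in\D_+$ only the $\alpha$-indices survive; specialising the first line of \eqref{d1} with $\omega(e_\alpha,e_\alpha)=0$ gives $\eta_{e_\alpha}e_\alpha=\Psi_{e_\alpha}e_\alpha+\tfrac{1}{2}\zeta$. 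Each $\Psi_{e_\alpha}e_\alpha$ vanishes by the skew-symmetry $\Psi_XY+\Psi_YX=0$, so $\sum_\alpha\eta_{e_\alpha}e_\alpha=m\zeta$. Substituting back I would obtain $\di^{\star}\omega_+(U)=-m\omega(\zeta,U)=m\theta(JU)$, whence $\di^{\star}\omega_+=mJ\theta$ under the convention $(J\theta)(U):=\theta(JU)$.

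No deep obstacle is anticipated: the whole argument is bookkeeping with \eqref{d1}, \eqref{itor}, Proposition \ref{min} and the skew-symmetry of $\Psi$. The points demanding care are accurate sign tracking in the $\D_-$ case while transposing $\eta$, and fixing the convention for the 1-form $J\theta$ consistently with the paper's use of $J\zeta$ in Lemma \ref{tg-e}.
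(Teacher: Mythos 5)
Your proposal is correct, but it takes a genuinely different route from the paper. The paper deduces \eqref{cd-1} in three lines from the K\"ahler identity $[L^{\star},\di]=J\di^{\star}$ on $(1,1)$-forms, the structure equation $\di\omega_{+}=-\theta\wedge\omega_{-}$ and the product rule \eqref{l-prod} for $L^{\star}$ (strictly speaking it quotes \eqref{do+}, which is the holomorphic case; in the merely conformal case Lemma \ref{do-ggg} produces an extra term $2\Phi$, but $L^{\star}\Phi=0$ because $\Phi(V,X,Y)=g(J\Psi_XV,Y)$ traces against $\Psi_{e_\alpha}e_\alpha=0$, so the argument survives). You instead compute $\di^{\star}\omega_{+}$ pointwise from $\nabla^g=\nabla^c-\eta$ and the parametrisation \eqref{d1}: the $\D_{-}$-component vanishes by minimality (Proposition \ref{min}) and the $\D_{+}$-component gives $\sum_\alpha\eta_{e_\alpha}e_\alpha=m\zeta$, hence $\di^{\star}\omega_{+}(V)=-m\,\omega(\zeta,V)=m\,\theta(JV)$. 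This is more elementary (no Hodge-theoretic identities), and it applies verbatim to conformal foliations that are not holomorphic, since only the first line of \eqref{d1} and the skew-symmetry of $\Psi$ enter. Your flagged convention $(J\theta)(U)=\theta(JU)$ is indeed the one under which the paper's identity $[L^{\star},\di]=J\di^{\star}$ and the subsequent use of \eqref{cd-1} in Theorem \ref{glob-loc} are consistent, so your result agrees with the paper's on the nose.
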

\begin{proof}
Recall that on forms of type $(1,1)$ we have the K\"ahler identity $[L^{\star},\di]=J\di^{\star}$. (see e.g. \cite[Lemma 14.5]{Mor})
Therefore
$$J\di^{\star}\omega_+=L^{\star}\di\omega_{+}-\di L^{\star}\omega_{+}=-L^{\star}(\theta \wedge \omega_{-})-\di L^{\star}\omega_{+}
$$
by also using \eqref{do+}. The identity \eqref{l-prod}
yields $L^{\star}(\theta \wedge \omega_{-})=m\theta$ and since $L^{\star}\omega_+=\vert \omega_+ \vert^2=n$ where $\dim_{\bbR}\D_{+}=2n$ the claim is proved.
\end{proof}
\begin{thm} \label{glob-loc}
Let $(Z^{2m},g,J)$ be a compact K\"ahler manifold. A conformal  
and holomorphic foliation of real codimension at least 4 is globally homothetic.
\end{thm}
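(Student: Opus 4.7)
By Proposition~\ref{cf1} the hypotheses already give $\di\theta=0$, so $\F$ is homothetic and the task is to upgrade this to exactness of $\theta$. The plan is to exploit Lemma~\ref{cdff} via $L^2$ Hodge theory on the compact Riemannian manifold $(Z,g)$, using the K\"ahler condition only to control the interaction of $J$ with the Hodge decomposition. Writing $\theta=\theta_H+\di\alpha+\di^{\star}\beta$ with $\theta_H$ harmonic, applying $\di$ and using $\di\theta=0$ gives $\di\di^{\star}\beta=0$, which after pairing with $\beta$ forces $\di^{\star}\beta=0$. Thus $\theta=\theta_H+\di\alpha$, and it suffices to show $\theta_H=0$.

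The key input is Lemma~\ref{cdff}, which reads $J\theta=\tfrac{1}{m}\di^{\star}\omega_{+}$. So $J\theta$ is $\di^{\star}$-exact, hence $L^2$-orthogonal to every harmonic 1-form, and its harmonic component vanishes. I would then argue that on a K\"ahler manifold $J$ preserves the Hodge decomposition of 1-forms component-wise. Two facts are needed: (a) $[\Delta_d,J]=0$ on forms, so $J$ maps harmonic forms to harmonic forms; this follows from $\Delta_d=2\Delta_{\bar\partial}$ preserving the $(p,q)$-type decomposition, on which $J$ acts by the scalar $\1^{p-q}$; and (b) $J$ is skew-adjoint with respect to the $L^2$ pairing on real 1-forms, since $J^2=-\mathrm{Id}$ and $J$ is $g$-orthogonal, so $J$ sends the $L^2$-complement of the harmonic 1-forms into itself. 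Together, (a) and (b) imply that the harmonic part of $J\theta$ equals $J\theta_H$.

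We therefore conclude $J\theta_H=0$; since $J$ is invertible on $\Lambda^1Z$, $\theta_H=0$ and hence $\theta=\di\alpha$ is exact, i.e.\ $\F$ is globally homothetic. I expect the only slightly delicate step to be the compatibility of $J$ with the Hodge decomposition (equivalently, $[\Delta_d,J]=0$ on K\"ahler), but this is a standard consequence of the K\"ahler identities; the rest is a routine chase through $L^2$ orthogonality. In this approach the $\partial\bar{\partial}$-lemma is not invoked directly, although the Hodge-theoretic facts about K\"ahler manifolds underlying the $J$-compatibility above are of the same flavour.
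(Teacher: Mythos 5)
Your proof is correct. It rests on the same two pillars as the paper's: Proposition~\ref{cf1} to get $\di\theta=0$, and Lemma~\ref{cdff} to see that $J\theta=\tfrac{1}{m}\di^{\star}\omega_{+}$ is co-exact. Where you diverge is in how the K\"ahler hypothesis is then used to kill the harmonic part of $\theta$. The paper invokes the global $\partial\bar{\partial}$-lemma: since $\di\theta=0$ forces $\di(J\theta)$ to be of type $(1,1)$, one writes $\di(J\theta)=\di J\di f$, observes that $J(\theta-\di f)$ is then simultaneously closed and co-exact, and concludes that it vanishes by integration by parts. You instead Hodge-decompose $\theta$ directly and use the commutation of the Weil operator $J$ with $\Delta_d=2\Delta_{\bar{\partial}}$ on a compact K\"ahler manifold, hence with the harmonic projection; combined with the co-exactness of $J\theta$ this forces $J\theta_H=0$ and so $\theta_H=0$. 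The two routes are of comparable depth, since both the $\partial\bar{\partial}$-lemma and $[\Delta_d,J]=0$ are consequences of the K\"ahler identities, but yours is marginally more self-contained: on $1$-forms you only need that $\Delta_d$ preserves the splitting into $(1,0)$ and $(0,1)$ parts, on which $J$ acts by scalars, and you bypass the auxiliary potential $f$ entirely. Like the paper's argument, yours also gives Corollary~\ref{cor-h} verbatim once $\di\theta=0$ is assumed instead of derived.
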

\begin{proof}
By Proposition \ref{cf1} we know that $\di \theta=0$. Because $(g,J)$ is K\"ahler it follows that $\di(J\theta) \in \Lambda^{1,1}_JZ$. By the global $\partial \overline{\partial}$-lemma we get 
$\di (J\theta)=\di J\di f$, for some function $f \in C^{\infty}Z$, thus 
$J (\theta-\di f)$ is closed. It is also co-exact since 
\begin{equation*}
J\theta-J\di f=\frac{1}{m}\di^{\star}\omega_{+}-\di^{\star}(f\omega).
\end{equation*}
Indeed, $J\theta=\frac{1}{m}\di^{\star}\omega$ by  \eqref{cd-1}, and also $J\di f=\di^\star(f\omega)$ due to the local expression $\di^\star=-\sum_i e_i\cntrct  \nabla_{e_i}^g$. 
Now integration over $Z$ shows that $J\theta-J\di f=0$ and the claim is proved.
\end{proof}
Without any assumption on the codimension of the foliation, the same argument yields
\begin{cor} \label{cor-h}
An homothetic 
and holomorphic foliation on a compact K\"ahler manifold  $(Z^{2m},g,J)$ is globally homothetic.
\end{cor}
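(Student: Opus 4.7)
The plan is to notice that the proof of Theorem \ref{glob-loc} uses the codimension-$4$ hypothesis only at a single place: invoking Proposition \ref{cf1} in order to pass from ``conformal + holomorphic'' to $\di\theta=0$. Under the hypothesis of Corollary \ref{cor-h}, this is part of the assumption ($\F$ is homothetic means exactly $\di\theta=0$), so the entire codimension restriction can be dropped and the rest of the argument goes through verbatim. I would therefore re-run the three-step argument from Theorem \ref{glob-loc}.

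First, starting from $\di\theta=0$ and the fact that $(g,J)$ is K\"ahler, I would observe that $\di(J\theta)$ is a real, closed form of type $(1,1)$. This is a standard type-count: decompose $\theta=\theta^{1,0}+\theta^{0,1}$; then $\di\theta=0$ separates into $\del\theta^{1,0}=0$ and $\bar\del\theta^{0,1}=0$ by types, and since $J\theta=\1(\theta^{1,0}-\theta^{0,1})$, the $(2,0)$ and $(0,2)$ parts of $\di (J\theta)$ vanish.

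Second, since $\di(J\theta)$ is real, closed, exact and of type $(1,1)$, the global $\partial\bar\partial$-lemma on the compact K\"ahler manifold $Z$ yields $f\in C^{\infty}(Z)$ with $\di(J\theta)=\di J\di f$. Hence $J\theta-J\di f$ is a closed 1-form. Third, using Lemma \ref{cdff} I rewrite $J\theta=\frac{1}{m}\di^{\star}\omega_{+}$, and the K\"ahler identity $\di^{\star}(f\omega)=-J\di f$ (equivalent to $[L^{\star},\di]=J\di^{\star}$ applied to the function $f$) gives
\begin{equation*}
J\theta-J\di f=\frac{1}{m}\di^{\star}\omega_{+}-\di^{\star}(f\omega),
\end{equation*}
so $J\theta-J\di f$ is simultaneously closed and co-exact on the compact manifold $Z$. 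The standard pairing argument $\|\beta\|^2=\langle\di^{\star}\alpha,\beta\rangle=\langle\alpha,\di\beta\rangle=0$ forces $J\theta=J\di f$, hence $\theta=\di f$ and $\F$ is globally homothetic.

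There is no real obstacle; the only point to verify is that the codimension hypothesis of Theorem \ref{glob-loc} was used exclusively via Proposition \ref{cf1}, which under the standing assumption of Corollary \ref{cor-h} is not needed. Everything else---the type argument producing $\di(J\theta)\in\Lambda^{1,1}_J$, the application of the $\partial\bar\partial$-lemma, and the closed + co-exact $\Rightarrow 0$ step---is independent of codimension.
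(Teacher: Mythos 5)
Your proof is correct and is essentially the paper's own argument: the paper explicitly derives Corollary \ref{cor-h} by noting that "the same argument" as in Theorem \ref{glob-loc} applies once $\di\theta=0$ is taken as a hypothesis instead of being deduced from Proposition \ref{cf1}. Your identification of where the codimension assumption enters, and your re-run of the $\partial\bar\partial$-lemma plus closed-and-co-exact argument via Lemma \ref{cdff}, match the intended proof exactly.
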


The above result is specific to K\"ahler geometry. In the Riemannian context, examples of conformal and non-homothetic foliations were given e.g. in \cite{NS}.
\subsection{Factorisation} \label{red-h}
Part of the main focus in this work is on the classification of conformal foliations with totally geodesic and complex leaves.
Thus let the K\"ahler manifold $(Z^{2m},g,J),m \geq 2$, be equipped with a homothetic foliation $\mathcal{F}$ whose leaves are complex and totally geodesic. As above we denote with $\theta$ the Lee form of $\mathcal{F}$.
To pin down the type of almost Hermitian geometry prompted out by this set-up recall the following 
\begin{defn} \label{g1}
An almost Hermitian structure $(h,I)$ on the manifold $Z$ has type $\mathcal{G}_1$ provided the Nijenhuis tensor of $I$ is totally skew-symmetric with respect to $h$, that is $\Phi:=h(N^I(\cdot, \cdot), \cdot) \in \Lambda^3Z$.
\end{defn}

\begin{propn} \label{g1-f}
The almost Hermitian structure $(h:=\frac{1}{2}g_{\vert \D_{+}}+g_{\vert \D_{-}},I)$ has type $\mathcal{G}_1$. 
\end{propn}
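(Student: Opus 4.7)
My plan is to substitute the totally geodesic hypothesis into the Nijenhuis formula of Lemma \ref{nij} and then to verify directly, via a case analysis with respect to the splitting $\T Z = \D_+ \oplus \D_-$, that the resulting $3$-tensor $\Phi := h(N^I(\cdot,\cdot),\cdot)$ is totally skew-symmetric.

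The first step is to observe that $\eta_V \equiv 0$ for every $V \in \D_+$. Indeed, totally geodesic leaves combined with integrability of $\D_+$ force $\eta_V W = 0$ for all $V, W \in \D_+$; since $\eta_V \in \mathfrak{u}(\T Z, g, J)$ is $g$-skew-symmetric and exchanges $\D_\pm$, its block off-diagonal structure means that $\eta_V\vert_{\D_-}$ is the negative $g$-adjoint of $\eta_V\vert_{\D_+}$, hence also vanishes. Substituting into Lemma \ref{nij} leaves
\begin{equation*}
N^I(V, W) = 0, \qquad N^I(V, X) = 4\Psi_X V, \qquad N^I(X, Y) = -8\Psi_X Y
\end{equation*}
for $V, W \in \D_+$ and $X, Y \in \D_-$.

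Next I would check total antisymmetry of $\Phi$ on each homogeneous component of the decomposition $\Lambda^3 \T Z = \Lambda^3 \D_+ \oplus (\Lambda^2 \D_+ \wedge \D_-) \oplus (\D_+ \wedge \Lambda^2 \D_-) \oplus \Lambda^3 \D_-$. The first, second, and fourth pieces are immediate: either $N^I$ vanishes identically on $\D_+ \times \D_+$, or the output of $N^I$ lies in one of the distributions $\D_\pm$ while the remaining slot lies in the orthogonal subspace and $h$ is block diagonal. The only substantive case is $\D_+ \wedge \Lambda^2 \D_-$, where for $V \in \D_+$ and $X, Y \in \D_-$
\begin{equation*}
\Phi(V, X, Y) = 4g(\Psi_X V, Y), \qquad \Phi(X, Y, V) = -4g(\Psi_X Y, V),
\end{equation*}
the factor $\tfrac{1}{2}$ in $h\vert_{\D_+}$ being precisely what reconciles the discrepancy of scale between $N^I(V, X)$ and $N^I(X, Y)$. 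Skew-symmetry of $\Psi_X \in \mathfrak{u}$ then yields $\Phi(V, X, Y) = \Phi(X, Y, V)$, which, combined with the antisymmetry of $N^I$ in its first two arguments, gives full cyclic symmetry; the final identity $\Phi(V, X, Y) + \Phi(V, Y, X) = 0$ follows from $\Psi_X Y + \Psi_Y X = 0$ together with the $g$-skew-symmetry of $\Psi_X$.

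The main point is therefore structural rather than computational: the normalisation $h\vert_{\D_+} = \tfrac{1}{2} g\vert_{\D_+}$ is singled out exactly so as to match the relative weights of the two non-vanishing pieces of $N^I$, and the algebraic relations satisfied by $\Psi$ from Lemma \ref{tg-e} supply the remaining skew-symmetries.
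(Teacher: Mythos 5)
Your proof is correct and follows essentially the same route as the paper: the paper's own proof is the one-line observation that total geodesy gives $\eta_{\D_+}=0$, after which the claim "follows from \eqref{nij-e}". You have simply written out the skew-symmetry verification (including the role of the factor $\tfrac12$ in matching the coefficients $4$ and $8$) that the paper leaves implicit, and your computations check out.
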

\begin{proof}
Since $\D_{+}$ is totally geodesic we have $\eta_{\D_{+}}=0$ and the claim follows from \eqref{nij-e}.
\end{proof}

\begin{rem}
	Almost Hermitian structures of type $\mathcal{G}_1$ can be caracterised \cite{Fr-Iv} as those admitting an Hermitian connection $D$ with totally skew symmetric torsion. The connection $D$ is unique and explicit. In Gray-Hervella's classification (\cite{gh}), the class $\mathcal{G}_1$ corresponds to the class $\W_1+\W_3+\W_4$, where $\W_1$ is the class of nearly-K\"ahler manifolds, $\W_3$ is the class of almost Hermitian structures with vanishing Lee form, characterised by the vanishing of the Lee form, and $\W_4$ is the class of locally conformally almost K\"ahler manifolds. Proposition \ref{g1-f} can be used to construct new examples of $\mathcal{G}_1$ manifolds (see Section \ref{non-h}). As in our case the Lee form does not vanish (and $I$ is not integrable if $\D_+$ is only complex, and not holomorphic), neither the pure type $\W_3$ nor the locally conformally K\"ahler case can appear. 
\end{rem}

\smallskip

The first objective in this subsection is to close the differential system on $(\Psi,\theta)$(see \eqref{d1} for the definition of $\Psi$) in order to obtain structure results pertaining to the foliation $\mathcal{F}$. One way to proceed consists in using a result of Nagy \cite{Na}[Theorem 3.1] which computes 
the covariant derivative $DN^I$ in terms of the exterior derivative of $d\Phi$. This approach has several disadvantages as $N^I$ will not be parallel w.r.t. to $D$; moreover specific properties of $J\thetas$  such as being proportional to a holomorphic Killing vector field are not accessible in this way.

Thus we proceed from first principles, using essentially the same technique as in \cite{Na}, but for a different connection. Let 
$\tn$ be the linear connection in $\T Z$ given by  
\begin{equation} \label{nat}
\begin{split}
\tn_{U_1}U_2&=\nabla^g_{U_1}U_2+\frac{1}{2}(g(U_1,U_2)\thetas-\theta(U_1)U_2-\theta(U_2)U_1) \\
&+\frac{1}{2}(\omega(U_1,U_2)J\thetas+\theta(JU_1)JU_2+\theta(JU_2)JU_1) \\
&+\Psi_{U_1}U_2.
\end{split}
\end{equation}
where by a slight abuse of notation $\Psi$ has been extended to $\T Z$ via $\Psi_{\D_{+}}=0$. Using \eqref{d1} it is easy to check that 
\begin{equation*}
\tn \D_{\pm} \subseteq \D_{\pm}.
\end{equation*}
Direct computation shows that $\tn$ is the unique linear connection satisfying 
\begin{equation} 
\begin{split}
&\tn g=\theta \otimes g, \ \tn J=0\\
& \widetilde{T}=\omega \otimes J\thetas+\widetilde{\Psi}
\end{split}
\end{equation}
where $\widetilde{T}$ is the torsion tensor of $\tn$ and $\widetilde{\Psi}(U_1,U_2)=\Psi_{U_1}U_2-\Psi_{U_2}U_1$. 

Indicate with $T^c$, respectively $R^c$, the torsion, respectively  the curvature tensor\footnote{For the curvature tensor $R$ of a connection $\nabla$ we use the convention $R(X,Y)=-[\nabla_X,\nabla_Y]+\nabla_{[X,Y]}$.} of $\cn$ and record the general comparison formula 
\begin{equation} \label{comp-c}
R^c(U_1,U_2)=R^g(U_1,U_2)-d^{\cn}\eta(U_1,U_2)+[\eta_{U_1},\eta_{U_2}]-\eta_{T^c(U_1,U_2)},
\end{equation}
for $U_1,U_2 \in \T Z$, which follows from \eqref{itor} by direct computation. Here 
$$\di^{\cn}\!\eta(U_1,U_2):=(\cn_{U_1}\eta)_{U_2}-
(\cn_{U_2}\eta)_{U_1}.$$
The prolongation of the differential system imposed on $(\Psi,\theta)$ by requiring $\F$ to be totally geodesic is completed below.
\begin{lemma} \label{prol0}
Assume that $\F$ is complex, totally geodesic and homothetic, $\di \theta=0$. Then: 
\begin{itemize}
\item[(i)] $\theta(\Psi_XY)=J\theta(\Psi_XY)=0$,
\item[(ii)] $\cn_X \thetas=0$,
\item[(iii)] $\tn_X\Psi=0$.
\end{itemize}
\end{lemma}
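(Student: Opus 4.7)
I would tackle the three parts in the order (ii), (i), (iii), reflecting their increasing difficulty.

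\textbf{Part (ii)} is essentially a consequence of $d\theta = 0$. Since $\zeta \in \D_+$ and $\cn$ preserves $\T Z = \D_+ \oplus \D_-$, it suffices to show $g(\nabla^g_X \zeta, V) = 0$ for every $V \in \D_+$. Writing $g(\nabla^g_X \zeta, V) = X\theta(V) - \theta(\nabla^g_X V)$ and decomposing $\nabla^g_X V = \cn_X V - \eta_X V$ (where $\cn_X V \in \D_+$ and $\eta_X V \in \D_-$), the hypothesis $\theta(\D_-) = 0$ reduces $\theta(\nabla^g_X V)$ to $\theta(\cn_X V)$. The totally geodesic property gives $\eta_V = 0$, hence $[X,V]_{\D_+} = \cn_X V$, and expanding $0 = d\theta(X, V) = X\theta(V) - \theta(\cn_X V)$ closes the computation.

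\textbf{Part (i)} uses the exterior calculus of $\Phi$. Starting from Lemma \ref{do-ggg}, $d\omega_+ = -\theta \wedge \omega_- + 2\Phi$, and using $d\omega = 0$ to deduce $d\omega_- = \theta \wedge \omega_- - 2\Phi$, the equation $d^2 \omega_+ = 0$ combined with $d\theta = 0$ collapses to
\[ d\Phi = \theta \wedge \Phi. \]
Since $\Phi(V, X, Y) = g(V, J\Psi_X Y)$, direct inspection gives $\Phi(\zeta, X, Y) = \theta(J\Psi_X Y)$ and $\Phi(J\zeta, X, Y) = \theta(\Psi_X Y)$, so (i) is equivalent to the vanishing $\iota_\zeta \Phi = \iota_{J\zeta} \Phi = 0$. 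Evaluating $d\Phi = \theta \wedge \Phi$ on $(V_1, V_2, X, Y)$ with $V_i \in \D_+$, $X, Y \in \D_-$ via Cartan's formula, the Lie brackets $[V_i, X]$ are expanded using $\nabla^g = \cn - \eta$ together with $\eta_V = 0$; the resulting $\cn$-terms are killed by part (ii) when $V_1 \in \{\zeta, J\zeta\}$, leaving a purely algebraic relation that forces both contractions to vanish.

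\textbf{Part (iii)} is the substantive step. A direct comparison between \eqref{nat} and $\cn = \nabla^g + \eta$ shows that $\tn_X = \cn_X$ on $\T Z$ whenever $X \in \D_-$: for $U \in \D_-$ both connections agree term-by-term, and for $U \in \D_+$ the remaining correction terms in \eqref{nat} collapse thanks to $g(\D_-, \D_+) = \omega(\D_-, \D_+) = \theta(\D_-) = 0$. Hence (iii) reduces to $\cn_X \Psi = 0$ for $X \in \D_-$. To establish this I would combine the first Bianchi identity $\sum_{cyc} R^g(U_1, U_2) U_3 = 0$ with the comparison formula \eqref{comp-c}: matching the cyclic sum of $R^C$ obtained this way against the torsion form of the first Bianchi identity for $R^C$, the cyclic sums of curvatures cancel and one is left with a tensorial identity involving $(\cn_{U_1} \eta)_{U_2} U_3$, $[\eta_{U_1}, \eta_{U_2}] U_3$ and $\eta_{T^C(U_1, U_2)} U_3$. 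Choosing $U_1 = X \in \D_-$ with various $U_2, U_3 \in \D_\pm$ isolates $(\cn_X \Psi)_Y Z$ against expressions which, by the algebraic symmetries of $\Psi$ in Lemma \ref{tg-e} and by (i)--(ii), are all determined. The main obstacle is precisely this bookkeeping: organising the Bianchi computation so that, after the cancellations allowed by $\cn J = 0$, $\Psi_X Y + \Psi_Y X = 0$, $\Psi_X(\D_\pm) \subseteq \D_\mp$ and $\cn \zeta|_{\D_-} = 0$, the only surviving tensor is $\cn_X \Psi$, which is thereby forced to vanish.
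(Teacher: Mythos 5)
Parts (ii) and your reduction $\tn_X=\cn_X$ in (iii) are correct and agree with the paper. The serious problem is your proof of (i). The identity $\di\Phi=\theta\wedge\Phi$ is correctly derived, but its component on $\Lambda^2\D_{+}\wedge\Lambda^2\D_{-}$ --- the one you evaluate on $(V_1,V_2,X,Y)$ --- contains the terms $V_1\cdot\Phi(V_2,X,Y)-V_2\cdot\Phi(V_1,X,Y)$, i.e.\ first derivatives of $\Psi$ in $\D_{+}$ directions. Part (ii) only controls $\cn_X\zeta$ for $X\in\D_{-}$; it gives no information on $\nabla^g_{\zeta}\Psi$ or $\nabla^g_{J\zeta}\Psi$, so putting $V_1\in\{\zeta,J\zeta\}$ does not kill these terms and no purely algebraic relation remains (derivatives of $\Psi$ along $\D_{+}$ are only brought under control in Proposition \ref{prol}, whose proof relies on the present lemma, so this cannot be fixed by quoting later results). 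The intended argument is far shorter and uses only $\di\theta=0$: for $X,Y\in\D_{-}$ one has $0=\di\theta(X,Y)=-\theta([X,Y])$, and by \eqref{d1} the $\D_{+}$-component of $[X,Y]$ equals $-(2\Psi_XY+\omega(X,Y)J\zeta)$; since $\theta(J\zeta)=0$ this gives $\theta(\Psi_XY)=0$, and the substitution $Y\mapsto JY$ gives the second identity.

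In (iii) your plan (comparison formula \eqref{comp-c} plus the first Bianchi identity of $R^g$ with $X_1,X_2,X_3\in\D_{-}$ and the last slot in $\D_{+}$) is the paper's, but as sketched it does not close. Those two inputs only yield $R^g(X_1,X_2,X_3,V)=g((\cn_{X_1}\Psi)_{X_2}X_3-(\cn_{X_2}\Psi)_{X_1}X_3,V)$ together with the cyclic relation $(\cn_{X_1}\Psi)_{X_2}X_3-(\cn_{X_2}\Psi)_{X_1}X_3+(\cn_{X_3}\Psi)_{X_1}X_2=0$, hence $R^g(X_1,X_2,X_3,V)=-g((\cn_{X_3}\Psi)_{X_1}X_2,V)$; nothing so far forces this to vanish. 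The missing ingredient is the K\"ahler curvature symmetry $R^g(JX_1,JX_2,X_3,V)=R^g(X_1,X_2,X_3,V)$, which combined with $\cn J=0$ and $\Psi_{JX_1}JX_2=-\Psi_{X_1}X_2$ reverses the sign of the right-hand side and gives $\cn_X\Psi=0$ on $\D_{-}\times\D_{-}$ (and then everywhere, by skew-adjointness of $\Psi_X$ and $\cn g=0$). The ``bookkeeping'' you defer is exactly where the proof happens, and it needs this extra curvature identity, not just the Bianchi identity and the algebraic symmetries of $\Psi$.
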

\begin{proof}
(i) follows from \eqref{d1} via $0=\di\theta(X_1,X_2)=-\theta[X_1,X_2]$.\\
(ii) follows from $\di\theta(V,X)=0$ and having $\D_{+}$ totally geodesic.\\
(iii) As $\eta_{\D_{-}}\D_{\pm} \subseteq \D_{\mp}$ and $\eta_{\D_{+}}=0$, we get from \eqref{comp-c} that 
\begin{equation*}
R^g(X_1,X_2,X_3,V)=g((\nabla^c_{X_1}\eta)_{X_2}X_3-(\nabla^c_{X_2}\eta)_{X_1}X_3,V),
\end{equation*}
for all $X_1,X_2,X_3\in \D_-$ and $V\in \D_+$. At the same time, using (ii) and \eqref{d1} yields $(\nabla^c_{X_1}\eta)_{X_2}X_3=(\nabla^c_{X_1}\Psi)_{X_2}X_3$. Thus from the algebraic Bianchi identity 
for $R^g$ we get further $(\nabla^c_{X_1}\Psi)_{X_2}X_3-(\nabla^c_{X_2}\Psi)_{X_1}X_3+(\nabla^c_{X_3}\Psi)_{X_1}X_2=0$, by taking into account that $\Psi$ is skew-symmetric on $\D_{-} \times \D_{-}$. Hence the curvature formula above reads  $R^g(X_1,X_2,X_3,V)=-g((\nabla^c_{X_3}\Psi)_{X_1}X_2,V)$; combining 
$R^g(JX_1,JX_2,X_3,V)=R^g(X_1,X_2,X_3,V)$ and $\Psi_{JX_1}JX_2=-\Psi_{JX_1}JX_2$ yields $(\nabla^c_{X_1}\Psi)_{X_2}X_3=0$. This proves the claim by taking into account that $\tn_{X_1}=\cn_{X_1}$. 
\end{proof}
\begin{propn} \label{prol}
Assume that $\F$ is complex, totally geodesic and homothetic, that is $\di\theta=0$. We have
\begin{itemize}
\item[(i)] $\Li_{J\thetas}g=\theta \otimes J\theta+J\theta \otimes  \theta$ 
\item[(ii)] $\tn \Psi=0$.
\end{itemize}
\end{propn}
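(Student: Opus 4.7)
The plan is to complete the prolongation begun in Lemma~\ref{prol0} by extending its conclusions from $\D_-$-directional covariant derivatives to $\D_+$-directional ones.

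For (i), I would first compute $\nabla^g \zeta$ along $\D_-$. Starting from Lemma~\ref{prol0}(ii) and applying \eqref{d1} with $V = \zeta$ (using $\theta(\zeta) = |\theta|^2$ and $\theta(J\zeta) = 0$), the key auxiliary observation is $\Psi_X \zeta = 0$ for $X \in \D_-$: this follows from Lemma~\ref{prol0}(i) by the skew-adjointness of $\Psi_X \in \mathfrak{u}(\T Z, g, J)$, since for $Y \in \D_-$, $g(\Psi_X \zeta, Y) = -g(\zeta, \Psi_X Y) = -\theta(\Psi_X Y) = 0$. One then reads off $\nabla^g_X \zeta = \tfrac{1}{2}|\theta|^2 X$ for $X \in \D_-$. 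The totally geodesic assumption forces $\nabla^g_V \zeta \in \D_+$ for $V \in \D_+$, while $d\theta = 0$ renders $\nabla^g\zeta|_{\D_+\times \D_+}$ symmetric. Expanding
\[
(\L_{J\zeta}g)(U_1, U_2) = -g(\nabla^g_{U_1}\zeta, JU_2) - g(\nabla^g_{U_2}\zeta, JU_1)
\]
blockwise along $\T Z = \D_+ \oplus \D_-$, the $\D_-\otimes\D_-$ and mixed blocks vanish and match the RHS, which is supported on $\D_+\otimes\D_+$. The remaining pure $\D_+$ block is a K\"ahler-leaf identity, for which I would invoke the codifferential identity $\di^\star \omega_+ = m J\theta$ of Lemma~\ref{cdff} to constrain $\nabla^g\zeta|_{\D_+\times\D_+}$ just enough to match $\theta \otimes J\theta + J\theta \otimes \theta$.

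For (ii), Lemma~\ref{prol0}(iii) already provides $\tn_X \Psi = 0$ for $X \in \D_-$, so it remains to derive $\tn_V \Psi = 0$ for $V \in \D_+$. I would try two complementary approaches. The first is the Ricci identity for $\tn$ applied at $(X, V) \in \D_- \times \D_+$,
\[
\tn_X \tn_V \Psi - \tn_V \tn_X \Psi - \tn_{[X, V]}\Psi = R^{\tn}(X, V)\cdot \Psi,
\]
in which $\tn_X \Psi = 0$ collapses the middle term; the curvature $R^\tn$ is computable from \eqref{nat} and the comparison formula \eqref{comp-c}, and the K\"ahler symmetries of $R^g$ combined with the algebraic constraints on $\Psi$ (skew, $J$-commuting, swapping $\D_\pm$) should force $\tn_V \Psi = 0$ after applying the first Bianchi identity. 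The second, cleaner route is to differentiate the structure equation $\di\omega_+ = -\theta \wedge \omega_- + 2\Phi$ of Lemma~\ref{do-ggg}: using $d\theta = 0$ and $d\omega = 0$ this yields the compact identity $d\Phi = \theta \wedge \Phi$, which in conjunction with $\tn_X \Psi = 0$ and the algebraic type of $\Psi$ should pin down $\tn_V \Psi$.

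The main obstacle is leaf-directional control. The transverse data transported by $\F$ determines $\nabla^c_X \zeta$ and $\tn_X \Psi$ directly, but not their $\D_+$ analogues, so the leaf-direction information must be squeezed out of integrability (K\"ahler curvature symmetries and Bianchi) played against the algebraic type of $\Psi$. Part (ii) is the more delicate of the two: one must verify that the resulting system genuinely overdetermines $\tn_V \Psi$ rather than merely being consistent with its vanishing.
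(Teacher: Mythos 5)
Your preliminary reductions are sound: the identity $\Psi_X\zeta=0$, the formula $\nabla^g_X\zeta=\tfrac12|\theta|^2X$ on $\D_-$, and the vanishing of the $\D_-\otimes\D_-$ and mixed blocks of $\L_{J\zeta}g$ all check out, and $\di\Phi=\theta\wedge\Phi$ is a correct consequence of the structure equations. But the mechanisms you propose for the two hard pieces would not close the argument. For the pure $\D_+$ block of (i), the identity $\di^{\star}\omega_{+}=mJ\theta$ of Lemma~\ref{cdff} is just a trace of the first-order structure equation \eqref{do+} — it contains no information beyond conformality and produces only $2m$ scalar constraints at each point, whereas $(\L_{J\zeta}g)_{\vert \D_{+}\times\D_{+}}$ is a full symmetric bilinear form on $\D_{+}$. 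No trace identity can determine it. The missing ingredient is genuinely second-order: since $\nabla^c$ preserves $\T Z=\D_{+}\oplus\D_{-}$ one has $R^c(V_1,X_1,V_2,X_2)=0$, and the comparison formula \eqref{comp-c} then expresses $R^g(V_1,X_1,V_2,X_2)$ in terms of $(\nabla^c_{V_1}\eta)_{X_1}X_2$ plus terms quadratic in $\eta$. Imposing the pair symmetry $R^g(V_1,X_1,V_2,X_2)=R^g(V_2,X_2,V_1,X_1)$ and extracting the $J$-invariant part in $(X_1,X_2)$ is exactly what yields $\L_{J\zeta}g(V_1,V_2)=(\theta\otimes J\theta+J\theta\otimes\theta)(V_1,V_2)$; this is the step your outline has no substitute for.

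For (ii) the same problem recurs. The Ricci identity at $(X,V)$, with $\tn_{\D_-}\Psi=0$ already known, only produces $\tn_X(\tn_V\Psi)=-R^{\tn}(X,V)\cdot\Psi$, i.e.\ a transport equation for the unknown $\tn_V\Psi$ along $\D_-$ — it constrains how $\tn_V\Psi$ varies, not its value. Likewise $\di\Phi=\theta\wedge\Phi$ sees only the alternation of $\nabla\Phi$ and leaves most components of $\tn_V\Psi$ untouched; neither route overdetermines the tensor. What actually works (and is the paper's argument) is to keep exploiting the single curvature identity above: the $J$-anti-invariant part in $(X_1,X_2)$ of the pair-symmetry relation gives one linear equation in $g((\nabla^c_{V_1}\Psi)_{X_1}X_2,V_2)+g((\nabla^c_{V_2}\Psi)_{X_1}X_2,V_1)$, while the first Bianchi identity combined with the K\"ahler symmetry $R^g(V_1,V_2,X_1,X_2)=R^g(V_1,V_2,JX_1,JX_2)$ gives a second equation in the antisymmetric combination; together they solve for $g((\nabla^c_{V_1}\Psi)_{X_1}X_2,V_2)$ outright, and comparison with \eqref{nat} shows this is precisely $(\tn_{V_1}\Psi)_XY=0$. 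So the leaf-directional information you correctly identify as the obstacle is extracted not from codifferentials or exterior derivatives, but from playing all three symmetries of $R^g$ against the vanishing of the mixed $R^c$-components.
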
 
\begin{proof}
In what follows, $X,X_i\in \D_-$ and $V, V_i\in \D_+$ for all indices $i$. Since $\T Z=\D_{+} \oplus \D_{-}$ is $\nabla^c$-parallel we have $R^c(V_1,X_1,V_2,X_2)=0$.  Using that $\eta_{|_{\D_{+}}}=0$ and \eqref{comp-c} leads to 
\begin{equation} \label{Rc1}
R^g(V_1,X_1,V_2,X_2)=-g((\nabla^c_{V_1}\eta)_{X_1}X_2,V_2)-g(\eta_{\eta_{X_1}V_1}V_2,X_2).
\end{equation}
From the symmetry in pairs of $R^g$, that is $R^g(V_1,X_1,V_2,X_2)=R^g(V_2,X_2,V_1,X_1)$, it follows that 
\begin{equation} \label{prol1}
g((\nabla^c_{V_1}\eta)_{X_1}X_2,V_2)-g((\nabla^c_{V_2}\eta)_{X_2}X_1,V_1)+g(\eta_{\eta_{X_1}V_1}V_2,X_2)-g(\eta_{\eta_{X_2}V_2}V_1,X_1)=0.
\end{equation}
A purely algebraic computation based on expanding $\eta$ according to \eqref{d1} yields
\begin{equation} \label{prol2}
\begin{split}
g(\eta_{\eta_{{X_1}V_1}}V_2,X_2)=&-g(\Psi_{X_1}V_1,\Psi_{X_2}V_2)\\
&+\frac{1}{2}(\theta(V_1)g(V_2,\Psi_{X_1}X_2)+\theta(V_2)g(V_1,\Psi_{X_1}X_2))\\
&+\frac{1}{2}(\theta(JV_1)g(JV_2,\Psi_{X_1}X_2)-\theta(JV_2)g(JV_1,\Psi_{X_1}X_2))\\
&+\frac{1}{4}(\theta \otimes \theta-J\theta \otimes J \theta)(V_1,V_2)g(X_1,X_2)\\
&-\frac{1}{4}(\theta \otimes J\theta+J\theta \otimes  \theta)(V_1,V_2)g(JX_1,X_2).
\end{split}
\end{equation}
Using \eqref{com-J} and \eqref{d1} the $J$-anti-invariant piece of \eqref{prol1} in the variables $(X_1,X_2)$ reads
\begin{equation} \label{prol3}
\begin{split}
&g((\nabla^c_{V_1}\Psi)_{X_1}X_2,V_2)+g((\nabla^c_{V_2}\Psi)_{X_1}X_2,V_1)\\
+&\theta(V_1)g(V_2,\Psi_{X_1}X_2)+\theta(V_2)g(V_1,\Psi_{X_1}X_2)=0.
\end{split}
\end{equation} 
At the same time, taking the $J$-invariant part in $(X_1,X_2)$ of \eqref{prol1} whilst using again \eqref{d1} and \eqref{prol2} shows that 
\begin{equation*}
\begin{split}
&g(X_1,X_2)\di\theta(V_1,V_2)+\omega(X_1,X_2)\Li_{J\thetas}g(V_1,V_2)\\
=&(\theta \otimes J\theta+J\theta \otimes  \theta)(V_1,V_2)g(JX_1,X_2).
\end{split}
\end{equation*}
Since $\theta$ is exact on the leaves of $\D_+$, we know that $\di\theta$ vanishes on $\Lambda^2\D_{+}$, hence the above equation is equivalent to 
\begin{equation} \label{prol4}
\Li_{J\thetas}g(V_1,V_2)=(\theta \otimes J\theta+J\theta \otimes  \theta)(V_1,V_2).
\end{equation}
To finish the proof of (i) write first
$$\Li_{J\thetas}g(V,X)=g(\nabla^g_VJ\thetas, X)+g(V,\nabla^g_XJ\thetas)=g(V,\nabla^g_XJ\thetas),$$
since $\D_{+}$ is totally geodesic. We now express $\nabla^g$ as $\nabla^c-\eta$ and use $\cn_X(J\thetas)=0$ by (ii) in Lemma \ref{prol0}, and $\eta_X(\D_+)\subset \D_{-}$. This  implies $\Li_{J\thetas}g(V,X)=0$; similarly 
$\Li_{J\thetas}g(X,Y)=0$ by \eqref{d1}, showing that $\Li_{J\thetas}g=\theta \otimes J\theta+J\theta \otimes  \theta$, as claimed in (i).

To prove (ii) combine the algebraic Bianchi identity 
\begin{equation*}
R^g(V_1,V_2,X_1,X_2)=R^g(V_1,X_1,V_2,X_2)-R^g(V_2,X_1,V_1,X_2)
\end{equation*}
 and \eqref{Rc1} to get 
\begin{equation*}
R^g(V_1,V_2,X_1,X_2)=-g((\nabla^c_{V_1}\eta)_{X_1}X_2,V_2)+g((\nabla^c_{V_2}\eta)_{X_1}X_2,V_1)-g(\eta_{\eta_{X_1}V_1}V_2-\eta_{\eta_{X_1}V_2}V_1,X_2).
\end{equation*}
Since on K\"ahler manifolds  $R^g(V_1,V_2,X_1,X_2)=R^g(V_1,V_2,JX_1,JX_2)$, it follows by a short computation based on \eqref{d1} and \eqref{prol2} that 
\begin{equation} \label{prol5}
\begin{split}
&g((\nabla^c_{V_1}\Psi)_{X_1}X_2,V_2)-g((\nabla^c_{V_2}\Psi)_{X_1}X_2,V_1)\\
+&\theta(JV_1)g(JV_2,\Psi_{X_1}X_2)-\theta(JV_2)g(JV_1,\Psi_{X_1}X_2)=0.
\end{split}
\end{equation}
From \eqref{prol3} and \eqref{prol5} we get 
\begin{equation*}
\begin{split}
-2g((\nabla^c_{V_1}\Psi)_{X_1}X_2,V_2)=&\theta(V_1)g(V_2,\Psi_{X_1}X_2)+\theta(V_2)g(V_1,\Psi_{X_1}X_2)\\
+&\theta(JV_1)g(JV_2,\Psi_{X_1}X_2)-\theta(JV_2)g(JV_1,\Psi_{X_1}X_2).
\end{split}
\end{equation*}
Using successively the definition of $\tn$ in \eqref{nat}, the equality  
$\nabla^c_{V_1}=\nabla^g_{V_1}$ 
, an algebraic computation shows that the last displayed equation is  equivalent with $(\tn_{V_1}\Psi)_XY=0$. That $\tn_{V_1} \Psi=0$ follows from having $\Psi_X$ skew-symmetric w.r.t. $g$ and $\tn g=\theta \otimes g$.
\end{proof}
With this information in hands we can describe how $\F$ induces a holomorphic and homothetic foliation with totally geodesic leaves. We shall call such a foliation TGHH.  
Consider the distributions 
\begin{equation*}
\D_{+}^2:=\span \{\Psi_XY\} \subseteq \D_{+}, \quad \D_{-}^2:=\span \{\Psi_XV\} \subseteq \D_{-}.
\end{equation*} 
Both are parallel w.r.t. $\tn$, in particular they must have constant rank over $Z$. Due to $\tn g=\theta \otimes g$, the orthogonal complements $\D_{\pm}^1$ of $\D_{+}^2$ respectively $\D_{-}^2$ in $\D_{+}$ respectively $\D_{-}$ are also $\tn$-parallel. Consider the $J$-invariant distributions 
$$ \D^1:=\D^1_{+} \oplus \D^1_{-}, \quad \D^2:=\D^2_{+} \oplus \D^2_{-}.$$
\begin{thm} \label{tg-main}
Let $(Z,g,J)$ be K\"ahler and equipped with a totally geodesic, homothetic foliation $\F$ with complex leaves. Then
\begin{itemize}
\item[(i)] The $g$-orthogonal splitting $\T Z=\D^1 \oplus \D^2$ defines a $\mathrm{TGHH}$  foliation $\F^1$ with leaves tangent to $\D^1$ and Lee form $\theta$
\item[(ii)] Any integral manifold of $\F^1$ carries a $\mathrm{TGHH}$ foliation with leaves tangent to $\D^1_{+}$,  and Lee form given by the restriction of $\theta$
\item[(iii)] Around any of its smooth points, the quotient $Z \slash \F^1$ is K\"ahler and carries a totally geodesic Riemannian foliation with complex leaves tangent to the projection of $\D^2_+$. 
\end{itemize}
\end{thm}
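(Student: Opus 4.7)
The proof will rest on systematic use of the auxiliary connection $\tn$ from \eqref{nat} together with the parallelism statements of Proposition \ref{prol}. I will exploit that $\D^1$, $\D^2$ and all four subpieces $\D^i_\pm$ are $\tn$-parallel and $J$-invariant, that $\tn\Psi=0$, and that $\zeta,J\zeta\in\D^1_+$ by Lemma \ref{prol0}(i). The decisive algebraic fact is that $\Psi_U=0$ for every $U\in\D^1$: for $U\in\D^1_+\subseteq\D_+$ this is by the extension convention, and for $U\in\D^1_-$ the defining orthogonality $U\perp\D^2_-$ together with the skew-symmetries of $\Psi$ forces $\Psi_U$ to vanish on all of $TZ$. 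Dually, $\theta$ and $J\theta$ annihilate $\D^2$.

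For (i), the Lie bracket $[U_1,U_2]=\tn_{U_1}U_2-\tn_{U_2}U_1-\omega(U_1,U_2)J\zeta-\Psi_{U_1}U_2+\Psi_{U_2}U_1$ has every summand in $\D^1$ for $U_1,U_2\in\D^1$ by the above, which proves integrability. Converting $\tn$ back to $\nabla^g$ via \eqref{nat} and checking that each correction term stays in $\D^1$ yields total geodesicity. For the holomorphic condition I would expand $(\L_V J)W=J\nabla^g_WV-\nabla^g_{JW}V$ through \eqref{nat}; the non-$\tn$-terms cancel in pairs, and the leftover $-J\Psi_WV+\Psi_{JW}V$ vanishes thanks to the identity $\Psi_{JX}=J\Psi_X$ (which follows from $[\Psi_X,J]=0$ combined with $\Psi_{JX}JY=-\Psi_XY$). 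Conformality with Lee form $\theta$ then reduces to direct calculation of $(\L_V g)(X,Y)$ for $V\in\D^1$ and $X,Y\in\D^2$, using $\theta|_{\D^2}=(J\theta)|_{\D^2}=0$ and $g(\Psi_XV,Y)=-g(V,\Psi_XY)=0$, since $\Psi_XY\in\D^2$ while $V\in\D^1$. Closedness of $\theta$ is inherited from $\F$.

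For (ii), each leaf $L$ of $\F^1$ is a Kähler submanifold of $Z$ by total geodesicity, and $\nabla^L=\nabla^g|_L$ on $TL=\D^1$. On $L$ the subdistribution $\D^1_+$ is $\tn$-parallel, $J$-invariant and contained in $\D_+$, so $\Psi$ again vanishes on it. The three arguments of (i) transpose verbatim with $\D^1$ replaced by $\D^1_+$ and $\D^2$ replaced by $\D^1_-$, delivering integrability, total geodesicity, holomorphicity and conformality with Lee form $\theta|_L$; the holomorphicity step rests once more on $\Psi_{JX}=J\Psi_X$.

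For (iii), around any smooth point of $Q=Z/\F^1$, homotheticity of $\F^1$ furnishes $\theta=\di f$ locally, and the rescaled metric $\tilde g=e^{-f}g$ restricts to an $\F^1$-invariant tensor on $\D^2$ which descends to $g_Q$; the complex structure $J|_{\D^2}$ descends to an integrable $J_Q$ because $\F^1$ is holomorphic. The Kähler form $\omega_Q$ pulls back to $e^{-f}\,\omega|_{\D^2}$, closed by the structure equation $\di(\omega|_{\D^2})=\theta\wedge\omega|_{\D^2}$ from Proposition \ref{cdiff}, so $(Q,g_Q,J_Q)$ is Kähler. The residual foliation on $Q$ has tangent $\pi_\star\D^2_+$: projectability $[\D^1,\D^2_+]\subseteq\D^1+\D^2_+$ and integrability of the projection follow from the $\tn$-parallelism of $\D^2_+$ combined with the fact that the only obstructing torsion term $\omega(X,Y)J\zeta$ is vertical; total geodesicity reduces to $(\nabla^g_XY)_{\D^2}=\tn_XY\in\D^2_+$ for $X,Y\in\D^2_+$; the Riemannian property reduces to $(\L_V g)(X,Y)=0$ for $V\in\D^2_+$ and $X,Y\in\D^2_-$, which holds because $\F$ is conformal with $\theta(V)=0$; the complex leaves come from $J\D^2_+\subseteq\D^2_+$. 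I expect (iii) to be the main obstacle: one must simultaneously coordinate the conformal rescaling, the descent of the Kähler form via the structure equation, and all four properties of the residual foliation, while tracking the interplay between the two $g$-orthogonal splittings $TZ=\D_+\oplus\D_-$ and $TZ=\D^1\oplus\D^2$.
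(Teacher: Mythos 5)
Your proposal is correct and follows essentially the same route as the paper: it rests on the $\tn$-parallelism of $\D^{1},\D^{2}$ and $\D^{i}_{\pm}$, the vanishing of $\Psi$ on $\D^{1}$ and of $\theta$, $J\theta$ on $\D^{2}$, and the descent of the conformally rescaled metric and K\"ahler form to $Z\slash\F^{1}$ via the structure equation. The only (cosmetic) difference is that you verify holomorphy and conformality of $\F^{1}$ by direct Lie-derivative computations where the paper funnels the same information through Lemma \ref{tg-e}(i)--(ii), and you leave implicit the Koszul-formula identification $(\nabla^{g}_{\widetilde{U}_1}\widetilde{U}_2)_{\D^{2}}=\widetilde{\nabla^{g_M}_{U_1}U_2}$ that the paper records explicitly.
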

\begin{proof}
(i) From the definitions we have
\begin{equation*}
\Psi_{\T Z}\D^{1}=0, \ \theta(\D^2)=0.
\end{equation*}
The last equality follows from (i) in  Lemma \ref{prol0}. In particular $\thetas \in \D^1$ and also $\tn_{U_1}U_2-\nabla^g_{U_1}U_2 \in \D^1$ for all $U_1,U_2 \in \D^1$ by using \eqref{nat}. But $\D^1$ is $\tn$-parallel thus $\D^1$ is totally geodesic. When $U_1,U_2 \in \D^2$ equation \eqref{nat} reads 
\begin{equation*}
\tn_{U_1}U_2=\nabla^g_{U_1}U_2+\frac{1}{2}(g(U_1,U_2)\thetas+\omega(U_1,U_2)J\thetas)+\Psi_{U_1}U_2.
\end{equation*} 
Since $\Psi_{\D^2}\D^2 \subseteq \D^2$ by orthogonality and the distribution $\D^2$ is $\tn$-parallel it follows that 
$\nabla^g_{U_1}U_2+\frac{1}{2}(g(U_1,U_2)\thetas+\omega(U_1,U_2)J\thetas) \in \D^2$. As $\thetas \in \D^1$,  Lemma \ref{tg-e} shows that $\D^1$ is holomorphic 
and homothetic, with Lee form $\theta$.\\
(ii) follows by arguments entirely similar to those above by taking into account that $\D^1_{\pm}$ are $\tn$-parallel, $\Psi_{\D^1}\D^1=0$ and 
$\theta(\D^1_{-})=0$.\\
(iii) Locally $Z$ is the total space of a submersion $\pi:Z \to M$ with fibres tangent to $\D^1$. Choose a locally defined function $z>0$ with $\theta=\di \ln z$; because $\D^1$ is homothetic with Lee form $\theta$ the restriction of the $z^{-1}g$ to $\D^2$ projects onto a Riemannian metric $g_M$ on $M$, that is $(z^{-1}g)_{\vert \D^2}=\pi^{\star}g_M$. The holomorphy of $\D^1$ ensures that $J$ projects onto an integrable complex structure $J_M$, orthogonal 
w.r.t. $g_M$. From the structure equation \eqref{do-} it follows that the pair $(g_M,J_M)$ is K\"ahler. To prove the claim it suffices to show that 
$\D^{2}_{+}$ projects onto $M$, tangent to the leaves of a complex Riemannian foliation w.r.t. $(g_M,J_M)$. An orthogonality argument based on 
the definition of $\D_{-}^1$ and having $\Psi:\D_{-} \times \D_{-} \to \D_{+}$ skew symmetric shows that 
\begin{equation*}
\Psi_{\D_{-}^1}\D_{+}=0.
\end{equation*}
It follows that 
\begin{equation} \label{tor1}
\Psi_{\D^1}\D_{+}^2=0.
\end{equation}
The distribution $\D_{+}^2$ is $\widetilde{\nabla}$-parallel. Record that $g(\D^1,\D_{+}^2)=0, \omega(\D^1,\D_{+}^2)=0$ and that $\theta$ respectively 
$J\theta$ vanish on $\D_{+}^2$. From the expression for $\widetilde{\nabla}$ in \eqref{nat} and \eqref{tor1} it follows that 
\begin{equation*}
\nabla^g_{\D^1}\D_{+}^2 \subseteq \D_{+}^2.
\end{equation*}
Similarly, using that $\D^1$ is $\widetilde{\nabla}$-parallel and $\Psi_{\D_{+}^2}\D^1=0$ we get $\nabla^g_{\D_{+}^2}\D^1 \subseteq \D^1 \oplus \D_{+}^2$. These facts lead to 
$[\D^1,\D_{+}^2]_{\D^2} \subseteq \D_{+}^2$
where the subscript indicates orthogonal projection. Thus $\D_{+}^2$ projects onto a distribution on $M$, denoted by  $\D_{+}^M \subseteq \T M$, and hence $\D_{-}^2$ projects onto 
the orthogonal complement  $\D_{-}^M$ of $\D_{+}^M$ in $\T M$. Whenever $U$ is a vector field on $M$ indicate with $\widetilde{U}$ its horizontal lift to $\D^2$. Pick $U_1 \in \D_{+}^M$ and 
$U_2, U_3 \in \D_{-}^M$. Because $\D_{+}$ is homothetic and $\theta$ as well as $dz$ vanish on $\D_{+}$ we get 
\begin{equation*}
0=(\Li_{\widetilde{U}_1}g)(\widetilde{U}_2, \widetilde{U}_3)=z(\Li_{\widetilde{U}_1} \pi^{\star}g_M)(\widetilde{U}_2, \widetilde{U}_3)=z \pi^{\star}((\Li_{U_1}g_M)(U_2,U_3))
\end{equation*} 
by using the well know formula $[\widetilde{U}, \widetilde{V}]_{\D^2}=\widetilde{[U,V]}$ for all $U,V \in \T M$. We have showed that $\D_{+}^M$ induces a complex Riemannian foliation w.r.t. $g_M$.

Pick $U_1,U_2 \in \D_{+}^M$; because $\D_{+}^2$ is $\widetilde{\nabla}$-parallel and $\Psi_{\D_{+}^2}\D_{+}^2=0$ it follows from \eqref{nat} that 
\begin{equation*}
\nabla^g_{\widetilde{U}_1}\widetilde{U}_2+\frac{1}{2}(g(\widetilde{U}_1, \widetilde{U}_2)\thetas+\omega(\widetilde{U}_1, \widetilde{U}_2)J\thetas) \in \D_{+}^2.
\end{equation*}
As $\thetas \in \D_{+}^1$ this leads to $(\nabla^g_{\widetilde{U}_1}\widetilde{U}_2)_{\D^2} \in \D_{+}^2.$ Taking into account that a quick check based on Koszul's formula  shows that $(\nabla^g_{\widetilde{U}_1}\widetilde{U}_2)_{\D^2}=\widetilde{\nabla^{g_M}_{U_1}U_2}$ we end up with 
$\nabla^{g_M}_{U_1}U_2 \in \D_{+}^M$. In other words the latter distribution is totally geodesic and the claim is proved.
\end{proof}
Geometrically this is a local factorisation result for conformal submersions, see section \ref{non-h} for details and examples. Combined with the local classification Theorem for TGHH foliations in Section \ref{abs-c} this result fully gives the local structure of K\"ahler metrics admitting totally geodesic, homothetic foliations. It also allows constructing large classes 
of compact examples, see Section \ref{uni}. Before moving on, we set up some notation and recall some basic material 
on symmetries of K\"ahler metrics.

Let $(N^{2n},g_N,J_N)$ be a compact K\"ahler manifold with K\"ahler form $\om_N=g_N(J_N \cdot ,\cdot)$. Indicate with 
$$\mathfrak{aut}(N,g_N,J_N)=\{X\in \T N : \Li_Xg_N=0 \ \mathrm{and} \ \Li_XJ_N=0\}$$
the Lie algebra of holomorphic Killing vector fields on $N$.  We recall that Killing vector fields are automatically holomorphic, i.e. $\mathfrak{iso}(N,g_N)=\mathfrak{aut}(N,g_N,J_N).$ 
If $X$ belongs to $\mathfrak{aut}(N,g_N,J_N)$, then $\Li_X\omega_N=0$, thus, by Cartan's formula, $\di (X\cntrct\omega_N)=0$; when $X\cntrct\omega_N$ is exact, $X$ is called Hamiltonian. A map $z_X:N \to \bbR$ with $X\cntrct \omega_N=\di z_X$ is called a {momentum map}; it is unique up to addition of a real constant.  Let 
$$\aut_0(N,g_N,J_N):=\{X \in \T N : \  \L_Xg_N=0 \ \mathrm{and} \ [X \cntrct \omega_N]=0 \ \mathrm{in} \ H^1_{dR}N \}$$ 
be the Lie algebra of Hamiltonian Killing vector fields. Note that vector fields $X$ in $\aut_0(N,g_N,J_N)$ are automatically holomorphic, $\L_XJ_N=0$. 

We also recall the important fact that 
$\mathfrak{aut}_0(N,g_N,J_N)$ coincides with the Lie subalgebra of Killing vector fields with zeroes. For further use we denote with $\Aut(N,g_N,J_N)$ the group of holomorphic isometries and with $\Aut^0(N,g_N,J_N)$ its connected component; the closed connected subgroup of $\Aut^0(N,g_N,J_N)$ with Lie algebra $\aut_0(N,g_N,J_N)$ will be denoted by $\Aut_0(N,g_N,J_N)$. 

With the preliminaries thus set, the following immediate consequence of Proposition \ref{prol} is the starting point for obtaining the classification results in Section \ref{abs-c}. The notation used below is the same as in Theorem \ref{tg-main} and its proof.
\begin{propn} \label{detect-k}Let $\F$ be a totally geodesic, homothetic foliation on $(Z,g,J)$ with complex leaves and Lee form $\theta$.
\begin{itemize}
\item[(i)] For any locally defined function $f$ with $\theta=\di f$, the vector field 
$$K:=-e^f J \thetas\in \D_+^1$$ is  holomorphic and Killing, with (local) momentum map $z=e^f>0$
\item[(ii)] If $Z$ is compact, then there exists a map $z:Z \to (0,\infty)$ with $\theta=\di \ln z$. In particular, $K\in\mathfrak{aut}_0(Z,g,J)$ is globally defined. 
\end{itemize}
\end{propn}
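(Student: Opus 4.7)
My plan for (i) is to establish three properties of $K$ in turn: that $K$ is Killing, that $z=e^f$ is its Hamiltonian potential, and that these two facts together force $K$ to be holomorphic; the inclusion $K\in\D_+^1$ is then an algebraic afterthought. The central input is Proposition \ref{prol}(i), which measures precisely how $J\zeta$ fails to be Killing, and the scalar $e^f$ is tailor-made to cure this failure.

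For the Killing property, the scaling identity $\mathcal{L}_{hX}g = h\,\mathcal{L}_X g + dh\otimes X^{\flat} + X^{\flat}\otimes dh$ applied with $h=-e^f$ and $X=J\zeta$ reduces the calculation to a direct cancellation between $h\,\mathcal{L}_{J\zeta} g$, given by Proposition \ref{prol}(i) as $\theta\otimes J\theta+J\theta\otimes\theta$ up to the factor $h$, and the two additional terms built from $dh=-e^f\theta$ and $(J\zeta)^{\flat}=\pm J\theta$. For the Hamiltonian property, $JK=e^f\zeta$ yields $\omega(K,\cdot)=g(JK,\cdot)=e^f\theta=\di(e^f)=\di z$, so $K\cntrct \omega=\di z$ and therefore $\mathcal{L}_K\omega=\di(K\cntrct\omega)+K\cntrct \di\omega=0$. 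Combined with $\mathcal{L}_K g=0$ via the identity $(\mathcal{L}_K\omega)(U,V)=(\mathcal{L}_K g)(JU,V)+g((\mathcal{L}_K J)U,V)$, this forces $\mathcal{L}_K J=0$. Finally, $\zeta\in\D_+$ because $\theta$ vanishes on $\D_-$, and $\zeta\in\D^1$ as already observed in the proof of Theorem \ref{tg-main}(i), so $\zeta\in\D_+^1$; since $\Psi_X$ commutes with $J$ the distribution $\D_+^2$ is $J$-invariant, hence so is its $g$-orthogonal complement $\D_+^1$ inside $\D_+$, and $K=-e^f J\zeta\in\D_+^1$.

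For (ii) I would route through the auxiliary TGHH foliation $\F^1$ produced by Theorem \ref{tg-main}(i), which is holomorphic and has the same Lee form $\theta$. Applying Corollary \ref{cor-h} to $\F^1$ on the compact K\"ahler manifold $Z$ upgrades $\theta$ from closed to exact, yielding $f\in C^\infty(Z)$ with $\theta=\di f$, so $z:=e^f:Z\to(0,\infty)$ satisfies $\theta=\di\ln z$. Part (i) then makes $K=-z\,J\zeta$ a globally defined holomorphic Killing vector field with global momentum map $z$, hence $K\in\mathfrak{aut}_0(Z,g,J)$. The main technical hurdle I anticipate is purely bookkeeping, namely tracking the correct sign of $(J\zeta)^{\flat}$ so that the cancellation in the Killing computation comes out cleanly; substantively, everything is forced once Proposition \ref{prol}(i) and the auxiliary foliation $\F^1$ are in place.
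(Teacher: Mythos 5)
Your proposal is correct and follows essentially the same route as the paper: the Killing property of $K=-e^fJ\zeta$ is read off from Proposition \ref{prol}(i) (the paper leaves the conformal-rescaling cancellation implicit, you spell it out), holomorphy follows from $K\cntrct\omega=\di z$ being exact together with $\L_Kg=0$, and part (ii) is obtained by applying Corollary \ref{cor-h} to the holomorphic foliation $\F^1$ of Theorem \ref{tg-main}. The sign bookkeeping you flag for $(J\zeta)^{\flat}$ does work out with the paper's conventions, so there is no gap.
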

\begin{proof}
(i) That $\Li_Kg=0$ follows from (i) in Proposition \ref{prol}. Since $\Li_K \omega=0$ we also have $\Li_KJ=0$.\\
(ii) Because $Z$ is compact and the foliation induced by $\D^1$ is holomorphic and conformal with Lee form $\theta$ the latter is exact by Corollary \ref{cor-h}, thus one can choose a globally defined primitive $f$ for $\theta$. 
\end{proof}
This is the starting point for obtaining the classification results in Section \ref{abs-c}. 

\section{ The K\"ahler geometry of twists} \label{gen-constr}

\subsection{ Hamiltonian forms} \label{l-a}
We begin with the following very general 
\begin{defn} \label{ham-O}
Let $S$ be a connected  manifold and let $\alpha \in \Lambda^2S$ be closed. The space of Hamiltonian vector fields w.r.t. $\alpha$ is 
\begin{equation*}
\Ham(S,\alpha)=\{X: [X\cntrct\alpha]=0 \ \mbox{in} \  H^1_{dR}S \}.
\end{equation*}
Moreover, a circle action with tangent vector field $K_S$ is Hamiltonian with respect to $\alpha$ provided that $K_S \in \Ham(S,\alpha)$. 
\end{defn}
The space of closed $2$-forms with respect to which a given vector field $X$ is Hamiltonian is denoted by 
\begin{equation*}
\Ham^{2}(S, X):=\{\alpha \in \Lambda^2S : \di\alpha=0, \ [X\cntrct\alpha]_{dR}=0\}.
\end{equation*}
A map $z_{\alpha}$ with $X \cntrct \alpha=\di z_{\alpha}$ is called a momentum map for $X$ with respect to the closed form $\alpha$; it is only defined up to addition of constants. 

\begin{rem}\label{fix_z}
	When we need to fix a momentum map $z_\alpha$, and in addition $S$ is compact, we assume that $\int_Sz_\alpha=0$. 	
	The thus normalised momentum map is then $\underline{z}_{\alpha}=z_{\alpha}-\tfrac{\int_S z}{vol(S)}$; this notation will be used 
	throughout the rest of the paper.
\end{rem}
The main fact we wish to prove in this section is that on compact K\"ahler manifolds an isometric Hamiltonian action is automatically $\Omega$-Hamiltonian whenever $\Omega$ is harmonic. The upshot is that such actions can therefore be lifted to the total space of circle bundles according to Proposition \ref{lift} in the next section. 

To that extent we let $(N^{2n},g_N,J_N), n \geq 1$ be a compact K\"ahler manifold with K\"ahler form $\omega_N=g_N(J_N \cdot,\cdot)$. In addition 
to the symmetry algebras defined at the end of the previous section 
it is also useful to consider the complex version of Hamiltonian vector fields as follows. Whenever $\Omega \in \Lambda^{1,1}N$ is closed let 
\begin{equation*}
\Ham^c(N,\Omega):=\{X \in \T N:X \cntrct \Omega=\di a+J_N\di b\}.
\end{equation*}
Equivalently $X \in \Ham^c(N,\Omega)$ if and only if $[X_{01} \cntrct \Omega]=0$ in $H^{1,0}N$. Finally, we indicate with $\H^p(N,g_N)$ the space of harmonic $p$-forms with respect to the metric $g_N$ and prove the following 
\begin{propn} \label{ham-K}
Let $(N,g_N,J_N)$ be a compact K\"ahler manifold and let $X$ belong to $\aut_0(N,g_N,J_N)$. Then 
\begin{itemize}
\item[(i)]  $ \H^2(N,g_N)\subset \Ham^2(N,X)$
\item[(ii)] if $\ \Omega \in \Lambda^{1,1}N$ satisfies $\di \Omega=0$ we have $\aut_0(N,g_N,J_N) \subseteq \Ham^c(N,\Omega)$.
\end{itemize}
\end{propn}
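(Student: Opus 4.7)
The plan is to prove (i) directly by a cohomological pairing argument exploiting hard Lefschetz on the compact K\"ahler manifold $N$, and then deduce (ii) by combining (i) with the Hodge decomposition and the global $\partial\bar\partial$-lemma together with a short explicit computation on the $i\partial\bar\partial$-potential of the exact part.

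For (i), fix $\Omega\in\H^2(N,g_N)$. Since $X$ is Killing and $\Omega$ is harmonic, the flow of $X$ acts by isometries and preserves harmonic forms, so $\L_X\Omega=0$; together with $\di\Omega=0$, Cartan's formula gives $\di(X\cntrct\Omega)=0$, so $[X\cntrct\Omega]\in H^1_{\mathrm{dR}}(N)$. To show this class vanishes I would pair it against an arbitrary harmonic 1-form $\beta$ via the degree-count identity $X\cntrct(\Omega\wedge\beta\wedge\om_N^{m-1})=0$. Expanding by the graded Leibniz rule for $\cntrct$ and using the Hamiltonian relation $X\cntrct\om_N=\di z_X$ yields
\[
(X\cntrct\Omega)\wedge\beta\wedge\om_N^{m-1}+\Omega\wedge(X\cntrct\beta)\wedge\om_N^{m-1}-(m-1)\,\Omega\wedge\beta\wedge\om_N^{m-2}\wedge\di z_X=0.
\]
Integrating over $N$, the middle term vanishes: $X\cntrct\beta=\beta(X)$ is locally constant (because $\L_X\beta=0$) and zero at a zero of $X$, which exists because on a compact K\"ahler manifold $\aut_0(N,g_N,J_N)$ consists of holomorphic Killing fields with zeros. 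The last term vanishes by Stokes, noting that $\Omega,\beta,\om_N$ closed gives $\di(z_X\,\Omega\wedge\beta\wedge\om_N^{m-2})=\di z_X\wedge\Omega\wedge\beta\wedge\om_N^{m-2}$. Hence $\int_N(X\cntrct\Omega)\wedge\beta\wedge\om_N^{m-1}=0$ for every harmonic $\beta$. By hard Lefschetz the cup product with $[\om_N]^{m-1}$ realises an isomorphism $H^1(N)\to H^{2m-1}(N)$, so $[X\cntrct\Omega]$ pairs trivially with every class of $H^{2m-1}(N)$, and Poincar\'e duality forces $[X\cntrct\Omega]=0$.

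For (ii), by $\bbR$-linearity it suffices to treat $\Omega$ real. Hodge theory on compact K\"ahler manifolds plus the global $\partial\bar\partial$-lemma yields $\Omega=\Omega_h+i\partial\bar\partial u$ with $\Omega_h$ harmonic of type $(1,1)$ and $u$ a real smooth function. Part (i) produces a real $f$ with $X\cntrct\Omega_h=\di f$. For the remainder, writing $X=Y+\bar Y$ with $Y$ holomorphic of type $(1,0)$ and using $\bar\partial Y^k=0$ gives $Y\cntrct(i\partial\bar\partial u)=i\bar\partial(Yu)$ and $\bar Y\cntrct(i\partial\bar\partial u)=-i\partial(\bar Yu)$; splitting $Yu=p_1+ip_2$ into real and imaginary parts and using the identification $i(\partial-\bar\partial)p=J_N\di p$ for real $p$ collapses this to $X\cntrct(i\partial\bar\partial u)=-\di p_2-J_N\di p_1$. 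Combining, $X\cntrct\Omega=\di(f-p_2)+J_N\di(-p_1)$, of the required form.

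The main technical step is (i); the delicate point is that the Killing and Hamiltonian hypotheses must enter together---the Hamiltonian relation $X\cntrct\om_N=\di z_X$ lets Stokes eliminate the $\om_N^{m-2}\wedge\di z_X$ term while the Killing condition combined with the existence of a zero of $X$ kills the $\beta(X)$ term---so that hard Lefschetz can close the pairing argument. Once (i) is available, (ii) reduces to bookkeeping via the $\partial\bar\partial$-lemma and the explicit coordinate computation above.
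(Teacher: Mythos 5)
Your proof is correct, and part (i) takes a genuinely different route from the paper's. The paper decomposes $\H^2(N,g_N)$ into $J_N$-invariant and $J_N$-anti-invariant harmonic forms, writes $X\cntrct\Omega=\sigma+\di f$ with $\sigma$ harmonic, and kills $\sigma$ by an $L^2$ integration-by-parts: $\Vert\sigma\Vert^2_{L^2}=\langle X^\flat\wedge\sigma,\Omega\rangle_{L^2}=\pm\langle \di(z_XJ_N\sigma),\Omega\rangle_{L^2}=0$, using that $J_N\sigma$ is closed (a Kähler fact about harmonic $1$-forms) and $\Omega$ is co-closed. You instead run a purely cohomological pairing: the degree-count identity, the Hamiltonian relation to make the $\om_N^{m-2}\wedge\di z_X$ term exact, the existence of a zero of $X$ to kill $\beta(X)$, and then hard Lefschetz plus Poincar\'e duality to conclude $[X\cntrct\Omega]=0$. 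Both arguments draw essentially on compact Kähler input — the paper on the type decomposition of harmonic forms, you on hard Lefschetz — and both correctly isolate where the Killing and Hamiltonian hypotheses enter; the paper's version is a little shorter and works at the level of forms rather than classes, while yours makes the topological mechanism more transparent. Your part (ii) is the same argument as the paper's (global $\partial\bar\partial$-lemma plus part (i) plus an explicit computation of $X\cntrct(i\partial\bar\partial u)$), merely phrased in Dolbeault coordinates where the paper uses Cartan's formula and Lie derivatives.
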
  
\begin{proof}
(i) Let $\Omega \in \H^2(N,g_N)$. Because 
$X$ is a holomorphic Killing vector field it is well known that $\L_{X}\Omega=0$. In particular $\di (X \cntrct \Omega)=0$ which allows writing $X \cntrct \Omega=\sigma+\di f$
with $\sigma \in \H^1(N,g_N)$. Choose a momentum map $z_X$ for $X$, that is $X \cntrct  \omega_N=\di z_X$. We compute the $L^2$-norm
\begin{equation*}
\begin{split}
\Vert \sigma \Vert^2_{L^2}=&\langle \sigma, X \cntrct \Omega-\di f \rangle_{L^2}=
\langle \sigma, X \cntrct \Omega\rangle_{L^2}
=\langle X^{\flat} \wedge \sigma, \Omega \rangle_{L^2}=\langle J_NX^{\flat} \wedge J_N\sigma, J_N\Omega \rangle_{L^2}\\
=&-\langle \di z_X \wedge J_N\sigma, J_N\Omega \rangle_{L^2}=-\langle \di\, (z_XJ_N\sigma), J_N\Omega \rangle_{L^2}=0
\end{split}
\end{equation*}
since $J_N\sigma$ is closed and $J_N\Omega$ is co-closed. Therefore $\sigma=0$ and the claim is proved.\\
(ii) The global $\partial \overline{\partial}$-Lemma allows writing $\Omega=\H(\Omega)+\di J_N \di f$ where $\H(\Omega) \in \H^{1,1}(N,g_N)$ and 
$f:N \to \bbR$. By (i) the form $X \cntrct  \H(\Omega)$ is exact; since $X$ is holomorphic Cartan's formula yields 
$X \cntrct (\di J_N \di f)=J_N \di \L_Xf- \di \L_{JX}f$ whence the result.
\end{proof}

\subsection{Lifting circle actions and twists} \label{gen-consn}
From now on assume that $S$ is comes equipped with the following set of data.
\begin{itemize} 
\item[$\bullet$] a circle action $(R^S_{\lambda})_{\lambda \in \bbS}$ with infinitesimal generator $K_S$ which is Hamiltonian with respect to $\Omega \in \Ham^2(S,K_S)$ 
\item[$\bullet$] a principal circle bundle $Q \xrightarrow{\pi_Q} S$ equipped with a principal connection form $\Theta$ in $Q$ with curvature form $\Omega$, that is $-\di \Theta=\pi^{\star}_Q\Omega$ and $\Theta(T_Q)=1$ where $T_Q$ is tangent to the principal circle action on $Q$. 
\end{itemize}
Indicate with 
$\Lambda^{\star}_{\bbS}S$ the space of forms which are invariant under the circle action.
Cartan's formula makes that $\Ham^2(S, K_S)\subset \Lambda^2_{\bbS}S$. Hamiltonian actions can be lifted \cite{Swann}(see Proposition 2.3) to the total space of circle bundles such as $Q$. The proof given therein is purely topological; below we quickly outline a proof when the total space of the circle bundle is compact, mostly to explain the infinitesimal lift of vector fields (see also \cite{MiR}, \cite{bh} for related information). 

Choose a momentum map $z_{\Omega}$ for the curvature form $\Omega$. The infinitesimal lift of $K_S$ to $Q$ (which depends on the choice of $z_\Omega$) is defined by 
\begin{equation}\label{ilift}
\widetilde{K_S}:=
(z_{\Omega} \circ \pi_Q) T_Q+K_S^{\H}
\end{equation}
 where $K_S^{\H}$ denotes 
the lift of $K_S$ to $\H:=\ker(\Theta)$. 
\begin{propn} \label{lift}
When $S$ is compact there exists a choice of momentum map such that the flow of $\widetilde{K_S}$ induces  
a circle action $ \mathbb{S}^1\ni \lambda \mapsto \widetilde{R}_\lambda$ on 
$Q$ satisfying 
\begin{itemize}
\item[(i)] $\pi_Q \circ \widetilde{R}_\lambda=R^S_\lambda \circ \pi_Q$
\item[(ii)] $\widetilde{R}_{\lambda_1} \circ R_{\lambda_2}^Q=R_{\lambda_2}^Q \circ \widetilde{R}_{\lambda_1} $ 
where $\lambda \mapsto R^{Q}_\lambda$ is the principal action on $Q$
\item[(iii)] $\widetilde{R}_\lambda^{\star}\Theta=\Theta$.
\end{itemize}
\end{propn}
\begin{proof}
Consider the infinitesimal lift $\widetilde{K_S}$ with respect to 
some momentum map $z_{\Omega}$. Using Cartan's formula leads to $\L_{\widetilde{K_S}}\Theta=0$. Since we also 
have $[T_Q,\widetilde{K_S}]=0$ and $(\di \pi)\widetilde{K_S}=K_S$ the 
flow $(\psi_t)_{t \in \bbR}$ of $\widetilde{K_S}$ satisfies 
\begin{equation*}
(\psi_t)^{\star}\Theta=\Theta, \quad \psi_t \circ R^Q_\lambda=R^Q_\lambda \circ \psi_t, \quad 
\pi \circ \psi_t=\phi_t \circ \pi
\end{equation*}
where $(\phi_t)_{t \in \bbR}$ is the flow of $K_S$. Because $\phi_{2\pi}=1_S$ 
it follows that $\psi_{2\pi}$ is a gauge transformation preserving $\Theta$, thus 
$\psi_{2\pi}=R_{\lambda_0}^P$ with $\lambda_0 \in \mathbb{S}^1$. The flow $\left(\psi_t \circ R^Q_{\exp(-\1\, tt_0)}\right)_{t \in \bbR}$, where $e^{2\pi \1\, t_0}=\lambda_0$ is $2\pi$-periodic and clearly satisfies 
(i)-(iii); moreover, this is the flow of $\widetilde{K_S}+t_0T_Q$ which is the Hamiltonian lift corresponding to the momentum map 
$z_{\Omega}+t_0$.
\end{proof}
\begin{rem} \label{lift-u} We emphasize that:
\begin{itemize}
\item[(i)] Lifted actions as above are not unique. It is easy to see that any other action 
satisfying (i)-(iii) is of the form $\mathbb{S}^1 \ni \lambda\mapsto \widetilde{R}_\lambda \circ R_{\lambda^n}^Q$ with $n \in \mathbb{Z}$. In particular, the momentum map $z_\Omega$ is defined only up to addition of an integer. We shall always choose this integer such that $z_\Omega>0$. 
\item[(ii)] The non-zero real multiples of the curvature form $\Omega$ are the only elements in $\mathrm{Ham}(S,K_S)$ for which we don't normalize the momentum map to have zero integral.
\end{itemize}
\end{rem}
In light of this remark it is convenient to clarify the choices made for the momentum map of $\Omega$ in the following
\begin{defn} \label{adm-mo}
Assume that $S$ is compact. An admissible momentum map for $\Omega$ is a momentum map $z_{\Omega}$ such that Proposition \ref{lift} holds and which moreover satisfies $z_{\Omega}>0$ over $S$.
\end{defn}
Lifted circle actions feature prominently in the following construction.
\begin{defn} (\cite{Swann}) \label{def-tw}
Let $Q \xrightarrow{\pi_Q} S$ be a principal circle bundle over the compact manifold $S$, equipped a principal connection form $\Theta$ in $Q$ with curvature form $\Omega$. Consider an $\Omega$-Hamiltonian circle action $ \mathbb{S}^1\ni \lambda \mapsto R^S_\lambda$ on $S$. The twist $Z$ of $S$ is the quotient $\pi_Z: Q\to Z:=Q \slash \bbS$
with respect to the lifted action $(\widetilde{R}_{\lambda})_{\lambda \in \bbS}$ from Proposition \ref{lift}. 
\end{defn}
In this context, $Q$ is called a twist bundle of $S$. If $Z$ is a smooth 
manifold the twist is called smooth. The principal circle action on $Q$ projects onto an $\bbS$-action $(R^Z_{\lambda})_{\lambda \in \bbS}$ on $Z$, with infinitesimal generator to be denoted by $K_{Z}$.
\subsection{Twist correspondence for Hamiltonian forms} \label{twist-co}
Consider a smooth twist $Z$ obtained from $S$ as in Definition \ref{def-tw}.
The space of $\bbS$-invariant forms on $S$ respectively $Z$ are denoted by $\Lambda_{\bbS} ^\star S$ respectively $\Lambda_{\bbS} ^\star Z$. According to Remark \ref{lift-u}, (i) we choose the momentum map $z_\Omega$ to be admissible;
this can be always arranged for whenever $S$ is compact. 

The main objective of this section is to compare, in an explicit way, the spaces $\Ham^2(S,K_S)$ and $\Ham^2(Z,K_Z)$.
To achieve this, we first consider the differential operator $\di_{\mkern+1.6mu  \Omega}:\Lambda^{\star}S \to \Lambda^{\star}S$ which acts according to 
		\begin{equation*}
		\di_{\mkern+1.6mu \Omega}\alpha=\di\alpha+z_{\Omega}^{-1}\Omega\wedge(K_S\cntrct\alpha).
		\end{equation*}
		In order to explain in more detail some aspects of the twist construction we record the following 
\begin{lemma}\label{deomega}
The following hold
	\begin{itemize}
		\item[(i)] the differential operator $\di_{\mkern+1.6mu  \Omega}$
		satisfies
		\begin{equation*}
		\di_{\mkern+1.6mu\Omega}^2\alpha=z_\Omega^{-1}\Omega\wedge\L_{K_S}\alpha
		\end{equation*}
		as well as $\di_{\mkern+1.6mu \Omega}\Lambda_{\bbS} ^\star S \subseteq \Lambda_{\bbS} ^\star S$
		\item[(ii)] there exists an exterior algebra isomorphism 
		$\iota_\Omega :\Lambda_{\bbS} ^\star S\to \Lambda_{\bbS}^\star Z$ satisfying 
		\begin{equation}\label{tw_forms}
		\begin{split}
		\pi_Z^\star(\iota_\Omega(\alpha))&=\pi_Q^\star\alpha-z_\Omega^{-1}\Theta\wedge\pi_Q^\star(K_S\cntrct\alpha)
		\end{split}
		\end{equation}
		\item[(iii)] we have $\di \circ \iota_\Omega=\iota_\Omega \circ \di_{\mkern+1.6mu \Omega}.$
	\end{itemize}
\end{lemma}
\begin{proof}
One proves (i) by direct computation,  (ii) follows from \cite[Lemma 3.4]{Swann}, while (iii) follows from \cite[Corollary 3.6]{Swann}.
\end{proof}
To keep track of the degree of the forms involved we denote with $\iota^p_\Omega$ the restriction of $\iota_\Omega$ to $\Lambda^p_{\bbS}S$. We can now describe the geometry of the fibration $\pi_Z : Q \to Z$ in more detail as follows.
The 2-form $\Omega$ induces a closed 2-form $\Omega_Z$ on $Z$ with respect to which the $\bbS$-action induced by the principal action on $Q$ is Hamiltonian; explicitly 
\begin{itemize}
\item[$\bullet$] we have $\Omega_Z=\iota^2_{\Omega}(z^{-1}_{\Omega}\Omega)$. Since $K_Z$ is induced by the projection of $T_Q$ onto 
$Z$, the form $\Omega_Z$ is Hamiltonian with respect to $K_Z$, with momentum map $K_Z \cntrct \Omega_Z=\di z^{-1}_{\Omega}$
\item[$\bullet$] the form $\Theta_Z:=(z_{\Omega} \circ \pi_Q)^{-1}\Theta \in \Lambda^1Q$ is a principal connection form in $Q$, with respect 
to the lifted action $(\tilde{R}_{\lambda})_{\lambda \in \bbS}$; in addition we have $-\di \Theta_Z=\pi_Z^{\star}\Omega_Z$.
\end{itemize}
\begin{rem}\label{symmetry}
$S$ and $Z$, in case the latter is smooth, are dual to each other in the following sense.  
As outlined above the closed 2-form $\Omega_Z$ is Hamiltonian with respect to the action $(R^Z_{\lambda})_{\lambda \in \bbS}$. The twist of $Z$ by this action is precisely $S$, see \cite[Section 3.2]{Swann}.
\end{rem}

Furthermore we define the translation map according to 
\begin{equation}\label{tau}
\tau_\Omega:\Ham^2(S,K_S)\longrightarrow\Lambda^2_{\bbS}S,\qquad \tau_\Omega(\alpha)=
\alpha-{\underline{z}_\alpha}{z_{\Omega}^{-1}}\Omega.
\end{equation}

\begin{propn}\label{tw_ham}
The translation map satisfies the following 
	\begin{itemize}
		\item[(i)] $\mathrm{im}(\iota_\Omega^2 \circ\tau_\Omega)\subset \Ham^2(Z,K_Z)$
		\item[(ii)] if $\om\in \Ham^2(S,K_S)$ is such that $ \tau_\Omega(\omega)$ is non-degenerate, then so is $\iota^2_\Omega \circ\tau_\Omega(\omega)$. 
	\end{itemize}
\end{propn}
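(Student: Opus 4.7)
The plan is to transport every computation to the auxiliary operator $\di_{\mkern+1.6mu \Omega}$ of Lemma~\ref{deomega}. Since $\di \circ \iota_\Omega = \iota_\Omega \circ \di_{\mkern+1.6mu\Omega}$, closedness of $\iota_\Omega^2(\tau_\Omega(\omega))$ on $Z$ reduces to verifying $\di_{\mkern+1.6mu \Omega} \tau_\Omega(\omega) = 0$ on $S$. Setting $f := z_\omega/z_\Omega$ (well-defined because $z_\Omega$ is nowhere-vanishing), the Hamiltonian identities $K_S \cntrct \omega = \di z_\omega$ and $K_S \cntrct \Omega = \di z_\Omega$ give $K_S \cntrct \tau_\Omega(\omega) = z_\Omega\,\di f$; combined with $\di \omega = \di \Omega = 0$, a direct expansion yields $\di_{\mkern+1.6mu \Omega}(\omega - f\Omega) = -\di f \wedge \Omega + \Omega \wedge \di f = 0$, which settles closedness.

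For the Hamiltonian property, I would identify $K_Z$ with $(\di\pi_Z)(T_Q)$, following the duality in Remark~\ref{symmetry}. Pulling $K_Z \cntrct \iota_\Omega^2(\tau_\Omega(\omega))$ back to $Q$, formula~\eqref{tw_forms} together with $T_Q \cntrct \pi_Q^\star(\cdot) = 0$ and $T_Q \cntrct \Theta = 1$ collapses everything to
\begin{equation*}
\pi_Z^\star \bigl(K_Z \cntrct \iota_\Omega^2(\tau_\Omega(\omega))\bigr) = -z_\Omega^{-1}\pi_Q^\star \bigl(K_S \cntrct \tau_\Omega(\omega)\bigr) = -\pi_Q^\star \di f.
\end{equation*}
Because $K_S(z_\omega) = \omega(K_S, K_S) = 0$ and similarly $K_S(z_\Omega) = 0$, the pullback $\pi_Q^\star f$ is invariant under both the principal $T_Q$-action and the lifted $\widetilde{K_S}$-action, and therefore descends to a function $\tilde f \in C^{\infty}(Z)$ with $\pi_Z^\star \tilde f = \pi_Q^\star f$. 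Injectivity of $\pi_Z^\star$ then gives $K_Z \cntrct \iota_\Omega^2(\tau_\Omega(\omega)) = -\di \tilde f$, an exact form; this proves~(i).

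For part~(ii) I would work pointwise on $Q$. Substituting $K_S \cntrct \tau_\Omega(\omega) = z_\Omega \di f$ into~\eqref{tw_forms} gives the clean identity $\pi_Z^\star \iota_\Omega^2(\tau_\Omega(\omega)) = \pi_Q^\star \tau_\Omega(\omega) - \Theta \wedge \pi_Q^\star \di f$. Fix $q \in Q$, set $p = \pi_Q(q)$, and let $H_q = \ker \Theta_q$ be the horizontal subspace. For a basis $\{e_1, \ldots, e_{2n}\}$ of $T_p S$ with $\tau_\Omega(\omega)^n(e_1, \ldots, e_{2n}) \neq 0$, its horizontal lifts $\tilde e_i \in H_q$ annihilate $\Theta$, so expanding the $n$-th power kills the $\Theta$-contribution and
\begin{equation*}
\bigl(\pi_Z^\star \iota_\Omega^2(\tau_\Omega(\omega))\bigr)^{n}(\tilde e_1, \ldots, \tilde e_{2n}) = \tau_\Omega(\omega)^{n}(e_1, \ldots, e_{2n}) \neq 0.
\end{equation*}
Because $z_\Omega$ is nowhere-vanishing, $\widetilde{K_S} = z_\Omega T_Q + K_S^{\H}$ has non-zero $T_Q$-component and is therefore transverse to $H_q$; as $\widetilde{K_S}$ spans $\ker(\di\pi_Z)$, the restriction $(\di\pi_Z)|_{H_q} \colon H_q \to T_{\pi_Z(q)} Z$ is a linear isomorphism, transferring non-degeneracy of $\tau_\Omega(\omega)$ at $p$ to non-degeneracy of $\iota_\Omega^2(\tau_\Omega(\omega))$ at $\pi_Z(q)$.

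Most of the work is bookkeeping pullbacks along $\pi_Q$ and $\pi_Z$, since the conceptual content has been pre-packaged in Lemma~\ref{deomega} and the lifted-action construction. The step calling for genuine care is checking that the primitive $-f$ of $K_Z \cntrct \iota_\Omega^2(\tau_\Omega(\omega))$ actually descends from $Q$ to $Z$ rather than merely giving exactness upstairs, which depends squarely on the $K_S$-invariance of the two momentum maps.
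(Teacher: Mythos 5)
Your proof is correct and follows essentially the same route as the paper: closedness is reduced via $\di\circ\iota_\Omega=\iota_\Omega\circ\di_{\mkern+1.6mu\Omega}$ to the identity $\di_{\mkern+1.6mu\Omega}\circ\tau_\Omega=0$, and non-degeneracy is transferred because $\iota_\Omega$ is an exterior algebra isomorphism (your horizontal-lift computation is just the pointwise version of this). The only difference is that you explicitly verify exactness of $K_Z\cntrct\iota^2_\Omega(\tau_\Omega(\omega))$ by descending the primitive $-z_\omega/z_\Omega$ from $Q$ to $Z$, a step the paper leaves implicit; this is a welcome addition, not a divergence.
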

\begin{proof}
	By direct computation one derives that $\di_{\mkern+1.6mu \Omega}\circ\tau_\Omega=0$ which, combined with (iii) in Lemma \ref{deomega}, proves (i). Claim (ii) follows from having $\iota_\Omega$ an exterior algebra isomorphism.
\end{proof}
These considerations enter the following correspondence between Hamiltonian forms on $S$ respectively $Z$, which is the main result of this section.
\begin{propn} \label{iso-ham}Assume that $S$ is compact and choose an admissible momentum map $z_{\Omega}$. The map 
\begin{equation*}
		\iota_\Omega^2 \circ \tau_\Omega : \Ham^2(S,K_S) \to \Ham^2(Z,K_Z)
		\end{equation*}
		is a vector space isomorphism.
\end{propn}
\begin{proof}
First we prove that the map in the statement is surjective. Pick $\beta \in \Ham^2(Z,K_Z)$; since 
$\Ham^2(Z,K_Z) \subseteq \Lambda^2_{\bbS}Z$, using Lemma \ref{deomega},(ii) ensures that $\beta=\iota_{\Omega}^2\gamma$ with 
$\gamma \in \Lambda^2_{\bbS}S$. By \eqref{tw_forms} this reads 
\begin{equation*}
\pi_Z^{\star}\beta=\pi_Q^\star\gamma-z_\Omega^{-1}\Theta\wedge\pi_Q^\star(K_S\cntrct\gamma).
\end{equation*}
Since $K_Z\cntrct\beta=\di \underline{z}_{\beta}$ and $(\mathrm{d} \pi_Z)T_Q=K_Z$ it follows that 
$z_{\Omega}^{-1}K_S \cntrct \gamma=-\di \underline{z}_{\beta}$. 

Furthermore, from having $\beta$ closed, we conclude that $\di_{\Omega} \gamma=0$ by Lemma \ref{deomega}, (iii). Thus, 
\begin{equation*}
\di \gamma=\di_{\Omega}\gamma-z_{\Omega}^{-1}\Omega \wedge (K_S \cntrct \gamma)=\Omega \wedge \di(\underline{z}_{\beta} \circ \pi_Z).
\end{equation*}
Equivalently, the form $\alpha:=\gamma-(\underline{z}_{\beta} \circ \pi_Z)\Omega$ is closed, that is $\di \alpha=0$. At the same time 
$K_S \cntrct \alpha=-\di((\underline{z}_{\beta} \circ \pi_Z)z_{\Omega})$, in particular $\alpha$ belongs to $\Ham^2(S,K_S)$. Therefore the normalised momentum map for $\alpha$ reads 
$\underline{z}_{\alpha}=-(\underline{z}_{\beta} \circ \pi_Z)z_{\Omega}+t$ where $t \in \bbR$ hence $\gamma=\tau_{\Omega}\alpha+tz_{\Omega}^{-1}\Omega$. At the same time 
\begin{equation} \label{i-Om}
\tau_{\Omega}\Omega=t_0 z_{\Omega}^{-1}\Omega
\end{equation}
where the constant $t_0:=z_{\Omega}-\underline{z}_{\Omega}>0$. Summarising, we have showed that the form $\gamma=\tau_{\Omega}(\alpha+\tfrac{t}{t_0}\Omega)$ and the surjectivity claim is proved. To finish the proof it suffices to establish injectivity for the translation map. Indeed, if $\alpha \in \Ham^2(S,K_S)$ satisfies $\tau_{\Omega}\alpha=0$ it is easy to derive that $\underline{z}_{\alpha}=cz_{\Omega}$ for some $c \in \bbR$. Integration over $S$ whilst taking into account that $z_{\Omega}>0$ then forces $c=0$ and the claim is fully proved.
\end{proof}
\subsection{K\"ahler twists} \label{k-twts}
We determine precise sets of data on $S$ which via the twist construction yield K\"ahler structures on the twist $Z$. The set-up here is again a manifold $S$ equipped with the same type of twist data as in the beginning of section \ref{gen-consn}. We first recall how the following correspondence between almost complex structures on $S$ respectively $Z$.
\begin{defn} (\cite{Swann})\label{jec}
	 The almost complex structures $J_Z, J_S$ on $Z$, respectively $S$ are equivalent, $J_Z\sim_{\H}J_S$, if they have the same horizontal lift to $\H:=\ker \Theta$ in $\T Q$.
\end{defn}  

If $J_S$ is integrable, by \cite[Proposition 3.8]{Swann} $J_Z$ is integrable as well. In this situation we now outline how to compute parts of the K\"ahler cone $\mathcal{K}(Z,J_Z)$ of $(Z,J_Z)$, based  on Proposition \ref{iso-ham}. We define the twisted K\"ahler cone $\mathcal{K}^{\bbS}_{\Omega}(S,J_S) \subseteq \Lambda^2_{\bbS}S$ of $(S,J_S)$ according to 
$$\mathcal{K}^{\bbS}_{\Omega}(S,J_S):=\{ \tau_{\Omega}\alpha : \ \alpha\in \Ham^2(S,K_S), \ \tau_{\Omega}\alpha \in \Lambda_{J_S}^{1,1}S \ 
\mathrm{and} \ (\tau_\Omega \alpha)(\cdot, J_S \cdot)>0 \}.$$ 
Since the translation map is injective (see end of proof of Proposition \ref{iso-ham}) the twisted K\"ahler cone may be identified with a subset of $\Ham^2(S,K_S)$.
Furthermore, we indicate with $\mathcal{K}^{\bbS}(Z,J_Z)$ the circle invariant K\"ahler cone of $(Z,J_Z)$ and record that 
\begin{equation} \label{k-incl}
\mathcal{K}(Z,J_Z) \cap \Ham^2(Z,K_Z) \subseteq \mathcal{K}^{\bbS}(Z,J_Z).
\end{equation}
The twisted K\"ahler cone measures up to which extent twist duality does not preserve K\"ahler type, as showed below.
\begin{propn}\label{tw_kahler}
Let $J_S$ be a $\bbS$-invariant complex structure on $S$ and consider the induced $\bbS$-invariant complex structure $J_Z$ on $Z$. 
The following hold
\begin{itemize}
\item[(i)] the map $\iota^2_\Omega \circ \tau_{\Omega}$ induces a bijection
\begin{equation*}
\mathcal{K}^{\bbS}_{\Omega}(S,J_S) \longrightarrow \mathcal{K}(Z,J_Z) \cap \Ham^2(Z,K_Z)
\end{equation*}
\item[(ii)] if $K_Z$ has zeroes or if $H^1_{dR}Z=0$ the inclusion in \eqref{k-incl} is an equality
\item[(iii)] if $\Omega$ is non-degenerate the twisted K\"ahler cone does not contain K\"ahler forms
\item[(iv)] assuming that $\Omega(\cdot ,J_S \cdot)>0$ we have that $z_{\Omega}^{-1}\Omega \in \mathcal{K}^{\bbS}_{\Omega}(S,J_S)$.
\end{itemize} 
\end{propn}
\begin{proof}
(i) follows from the isomorphism in Proposition \ref{iso-ham} combined with part (ii) in Proposition \ref{tw_ham}.\\
(ii) letting $\alpha$ belong to the K\"ahler cone $\mathcal{K}(Z,J_Z)$ the definitions ensure that the pair $(g_{\alpha}:=
\alpha(\cdot ,J_Z \cdot),J_Z)$ is K\"ahler. If in addition  $\alpha$ is $\bbS$-invariant and $K_Z$ has zeroes it follows that $K_Z \in \aut_0(Z,g_{\alpha},J_Z)$; 
hence the K\"ahler form $\alpha$ must be hamiltonian with respect to $K_Z$. This proves equality in \eqref{k-incl} when $K_Z$ has zeroes.
When $H^1_{dR}Z=0$ equality follows from $\Lambda^2_{\bbS}Z \cap \ker \di=\Ham^2(Z,K_Z)$.\\
(iii) assume that $\tau_{\Omega}\alpha=\alpha-z_{\Omega}^{-1}\underline{z}_{\alpha}\Omega$ with $\alpha\in \Ham^2(S,K_S) $ belongs to the twisted K\"ahler cone. Clearly, $\tau_{\Omega}\alpha$ is closed if and only if we have $\di (z_{\Omega}^{-1}\underline{z}_{\alpha})\wedge\Omega=0$. Since $\Omega$ is non-degenerate this yields $\underline{z}_{\alpha}=cz_{\Omega}$ where $c \in \bbR$. After integration, it follows that $0=c\int_Sz_\Omega$; since $z_{\Omega}$ is admissible, it has non-zero integral hence $c=0$ and further $\underline{z}_{\alpha}=0$. It follows that 
$K_S \cntrct \alpha=0$ and since $\tau_{\Omega}\alpha=\alpha$ we obtain a contradiction with having $\tau_{\Omega}\alpha$ non-degenerate.\\
(iv) follows from \eqref{i-Om}.
\end{proof}
Part (iii) in Proposition \ref{tw_kahler} hightlights the different geometric nature of the twisted K\"ahler cone; part (iv) 
in the same proposition provides exmaples of instances when the twisted K\"ahler cone is non-empty.


\subsection{The Weinstein construction for K\"ahler manifolds}\label{wei_con} Let $(M^{2m},g_{M},J_{M})$ where $ m \geq 1$, be a K\"ahler manifold with K\"ahler form $\omega_{M}=g_{M}(J_{M} \cdot ,\cdot)$. Let $(L,h) \to M$ be a complex Hermitian line bundle such that 
$c_1(L)=[\om_M]$, in other words $M$ is a Hodge manifold polarised by $L$. In the sequel $(M,L)$ will denote a polarised K\"ahler manifold.  Chern-Weil theory allows choosing a linear connection $D$ in $L$ such that  
\begin{equation} \label{cD}
Dh=0, \ R^D=\1 \, \om_M \otimes 1_L.
\end{equation}

The sphere bundle $P:=S(L)$
induces a principal $\mathbb{S}^1$-fibration $\mathbb{S}^1 \hookrightarrow P \xrightarrow{\ p\ } 
M$. The action of $\mathbb{S}^1$ on $P$ is $(m,s_m) \lambda=(m, \lambda s_m)$ for $\lambda \in \mathbb{S}^1$, where in the r.h.s. complex multiplication is meant. Then $L\simeq P \times_{\mathbb{S}^1} \bbC$ with respect to the action given by $(u, w)\lambda:=(R^P_\lambda(u), w\lambda^{-1})$; the isomorphism is given by the map $[u,w] \mapsto (m,ws_m)$ whenever $u=(m,s_m) \in P$ and $w \in \bbC$, which is easily checked to be well defined. The connection $D$ induces a principal connection $1$-form $\Theta$ in $\Lambda^{1}(P)$ which satisfies $-\di\Theta=p^{\star}\om_M$ (see  \cite[Section 5]{Mor} for details on the correspondence between $D$ and $\Theta$).

Now choose a K\"ahler manifold $(N^{2n},g_N,J_{N}), n \geq 1$ equipped with a circle action (on the right)  
$\bbS \ni \lambda \mapsto R^N_{\lambda} \in \Aut_0(N,g_N,J_N)$; we denote by $K_{N} \in \aut_0(N,g_N,J_N)$ its infinitesimal generator and by $z_{N}$ a momentum map, thus $K_N \cntrct \omega_N=\di z_N$.
Furthermore we assume that 
\begin{equation} \label{norm-N}
z_N>0
\end{equation}
on $N$. When $N$ is compact this can be done w.l.o.g., as the momentum map is uniquely determined only up to addition of constants.

With respect to the free $\mathbb{S}^1$-action on 
$P \times N$ given by 
\begin{equation} \label{act1}
(u,n)\lambda=(u \lambda, R^N_{\lambda^{-1}}n ).
\end{equation}
form the quotient 
$$ Z(N,M,L)=P \times_{\mathbb{S}^1} N=(P \times N) \slash \mathbb{S}^1.
$$
In order to perform explicit calculations on $Z(N,M,L)$ it is very convenient to observe it is a particular case of the twist construction from Definition \ref{def-tw}.
\begin{propn} \label{ws-tw}
	Let $S=M \times N$ and consider the product circle bundle 
	\begin{equation} \label{a-bd}
	Q=P \times N  \xrightarrow{p \times 1_N} S 
	\end{equation}
	equipped with a principal connection form induced from $\Theta$.
	Endow $S$ with the circle action given by 
	\begin{equation} \label{a-example}
	(m,n)\lambda:=(m,R^N_{\lambda^{-1}}n). 
	\end{equation}
	Then $Z(N,M,L)$ is isomorphic with the twist of $S$ by $Q$ and moreover the curvature of $\Theta$ is $\Omega=\om_M$ and $z_\Omega=1$ is a momentum map with respect to $\Omega$.
\end{propn}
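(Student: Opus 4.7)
The plan is to unfold each of the ingredients in Definition \ref{def-tw} for the data $(S,Q,\Theta,\Omega,R^S)$ described in the statement and check that the resulting twist manifold coincides with $Z(N,M,L)$.

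First I would identify the infinitesimal generator of the $\bbS$-action \eqref{a-example} on $S=M\times N$. Differentiating $(m,n)\lambda=(m,R^N_{\lambda^{-1}}n)$ at $\lambda=1$ gives $K_S=(0,-K_N)$; in particular $K_S$ is tangent to the $N$-factor. Next, the connection form on the product bundle $Q=P\times N\xrightarrow{p\times 1_N}S$ is the pullback of $\Theta$ under the first projection. Its curvature form on $S$ is the pullback of $\om_M$, so $\Omega=\om_M$ viewed as a $2$-form on $S$ that annihilates vectors tangent to $N$. Consequently
\begin{equation*}
K_S\cntrct\Omega=-K_N\cntrct \om_M=0=\di(1),
\end{equation*}
which shows that the $\bbS$-action is $\Omega$-Hamiltonian with constant momentum map, and we may take $z_\Omega=1$ as required.

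Next I would compute the infinitesimal lift $\widetilde{K_S}$ given by formula \eqref{ilift}. Since $K_S$ is tangent to $N$ and the horizontal distribution $\ker\Theta$ on $P\times N$ contains the $N$-factor, the horizontal lift of $K_S$ to $Q$ is just $(0,-K_N)$. With $z_\Omega\circ \pi=1$, this yields
\begin{equation*}
\widetilde{K_S}=T_Q+(0,-K_N)
\end{equation*}
on $Q=P\times N$, where $T_Q$ generates the principal $\bbS$-action on $P$. The two summands commute and are $2\pi$-periodic, so the flow of $\widetilde{K_S}$ integrates to the global action
\begin{equation*}
(u,n)\cdot\lambda=(u\lambda,\,R^N_{\lambda^{-1}}n),
\end{equation*}
which is precisely \eqref{act1}. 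This simultaneously verifies (i)--(iii) of Proposition \ref{lift} without needing the periodicity argument in its proof.

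Finally, I would conclude by noting that the twist of $S$ by $Q$ is, by Definition \ref{def-tw}, the quotient of $Q=P\times N$ by the lifted action just identified, which by construction equals $P\times_{\bbS}N=Z(N,M,L)$. The two claims on $\Omega$ and $z_\Omega$ have already been recorded along the way. The only real obstacle is a bookkeeping one: making sure the sign conventions in $K_S=-K_N$, the inverse in $R^N_{\lambda^{-1}}$, and the choice $z_\Omega=1$ are mutually consistent so that the flow of $\widetilde{K_S}$ matches \eqref{act1} on the nose; everything else is a direct unpacking of definitions.
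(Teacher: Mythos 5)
Your proposal is correct and follows essentially the same route as the paper: identify $K_S=-K_N$, observe $\Omega=\om_M$ with $K_S\cntrct\Omega=0$ so that $z_\Omega=1$, compute the lifted action as \eqref{act1}, and recognise the quotient as $P\times_{\bbS}N$. The only (welcome) addition is your explicit remark that the lift is manifestly $2\pi$-periodic here, so the periodicity argument in Proposition \ref{lift} is not needed in this special case.
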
	
\begin{proof}
By construction, the curvature form $\Omega$ in \eqref{a-bd} is $\omega_M$.
Hence, the circle action in \eqref{a-example} is Hamiltonian w.r.t. $\Omega$ since it is generated by $K_S=-K_N$ and $K_S \cntrct  \Omega=0$. The lift of \eqref{a-example} to $P \times N$ granted by Proposition \ref{lift} is therefore 
$ (u,n)\lambda=(R^P_\lambda(u),R^N_{\lambda^{-1}}n)$
with momentum map $z_{\Omega}=1$. It follows that the twist of the action \eqref{a-example} is precisely the associated fibre bundle 
$P \times_{\mathbb{S}^1}N$. 
\end{proof}
Note that the momentum map $z_{\Omega}=1$ is admissible in the sense of Definition \ref{adm-mo}; thus results in section 
\ref{twist-co} can be used for the twist pair $(M \times N, Z(N,M,L))$.

In the rest of this section we simply indicate $Z(N,M,L)$ with $Z$. The manifold $Z$ comes equipped with a natural circle action induced by 
\begin{equation} \label{act3}
((u,n),\lambda) \mapsto (u, R^N_{\lambda}n)
\end{equation}
on $P \times N$. Its infinitesimal generator will be denoted by $K$. We have a fibre bundle 
$$ N \hookrightarrow Z \xrightarrow{\ \pi\ } M.
$$
At the same time $P\times N$ becomes a $\mathbb{S}^1$-principal bundle over $Z$, with canonical projection denoted by 
$\tilde{p}: P \times N \to Z$. These fibrations enter the following commutative diagram

\begin{equation} \label{diagrama}
\begin{tikzpicture}[baseline=(current bounding box.center)]
    \node (N21) at (-1.5,0) {};
    \node (N21 label) at (-1.7,0.1) {$\mathbb{S}^1$};

    \node (N22) at (0,0) {$P\times N$};

    \node (N23) at (2,0) {$Z$};

    \node (N13) at (2,1) {$N$};

    \node (N32) at (0,-1.5) {$M\times N$};

    \node (N33) at (2,-1.5) {$M$};

    \draw[-stealth] (N22) -- (N23)  node[midway,above] {\small $\tilde{p}$};

    \draw[-stealth] (N22) -- (N32)  node[midway,left] {\small $p\times\operatorname{id}$};

    \draw[-stealth] (N23) -- (N33)  node[midway,right] {\small $\pi$};

    \draw[-stealth] (N32) -- (N33)  node[midway,above] {\small $\operatorname{pr}_1$};

    \draw[right hook-stealth] (N13) -- (N23);

    \draw[right hook-stealth] (N21) -- (N22);
    


\end{tikzpicture}
\end{equation}
\subsubsection{\bf The K\"ahler structure on $Z$} 
Start with the splitting 
\begin{equation}\label{teta}
\T P=\span \{T_P\} \oplus \H 
\end{equation}
where $T_{P}$ is tangent to the principal $\mathbb{S}^1$-action on $P$ (thus $\Theta(T_P)=1$) and $\H=\ker \Theta$. 
We have a direct sum 
decomposition 
$$ \T(P \times N)=\tilde{\V} \oplus \tilde{\H}
$$
where $\tilde{\V}=\ker (\di \tilde{p})=\span \{T_P-K_N\}$ and $\tilde{\H}=\H \oplus \T N$. Hence,  $\Theta$ in \eqref{teta} becomes a  principal connection form in $\mathbb{S}^1 \hookrightarrow P \times N \to Z$. Since $\T N$ and $\H$ 
are horizontal and invariant under the action \eqref{act1} they project onto 
$\D_{+},\D_{-} \subseteq \T Z$, that is  
\begin{equation*}
\begin{split}
\D_{+}=\di \tilde{p}(\T N)=\ker (\di \pi) \ \mathrm{and} \ \D_{-}&=\di\tilde{p}(\H)
\end{split}
\end{equation*}
giving rise to the direct sum decomposition 
$$ \T Z=\D_{+} \oplus \D_{-}.
$$
The map $\iota^2_{\om_M}: \Lambda_{\mathbb{S}^1}^{\star}N \to \Lambda^{\star}Z$ allows pushing-forward invariant forms from $N$ to $Z$ which, moreover, are vertical in the sense of the following

\begin{defn}
A form $\alpha$ in $\Lambda^{\star}Z$ is called vertical if $X \cntrct \alpha=0$ for all $X$ in $\D_{-}$.
\end{defn}

Denote by $X_{-}$ the horizontal lift to $\D_{-}$ of the vector field $X$ on $M$. If $V$ is an $\mathbb{S}^1$-invariant vector field on $N$ we denote by 
$V_{+} \in \D_{+}$ its projection onto $Z$. The latter is invariant under the circle action in \eqref{act3} and satisfies $[V_{+},X_{-}]=0$.

\smallskip

{\noindent\bf The symplectic form.} Let $\om:=\iota^2_{\omega_M}(z_N\om_M+\om_N)$, see Lemma \ref{deomega} (ii).  The splitting of $\omega$ according to $\T Z=\D_+\oplus\D_-$ is
\begin{equation}\label{omega-def}
\omega:=\omega_+ + \omega_-,
\end{equation}
with components explicitly given by 
\begin{equation*}
\omega_{+}=\iota_{\om_M}^2(\omega_N) \ \mathrm{and} \ \omega_-:=\iota_{\om_M}^2(z_N\om_M)=z\pi^\star\om_M
\end{equation*}
where $z$ is the projection onto $Z$ of the momentum map $z_N$ on $N$. By (i) in Proposition \ref{tw_ham}, and using $\tau_{\om_M}(\om_N)=\om_N+z_N\om_M$ it follows that  $\omega$ is closed. Since $z_N\om_M+\om_N$ is non-degenerate, Proposition \ref{tw_ham} (ii) ensures that $\om$ is non-degenerate as well. By (iii) in Lemma \ref{deomega} we have
\begin{equation} \label{curv-d}
\di \omega_{+}=-\pi^{\star}\om_M \wedge \di z.
\end{equation}

\begin{rem} \label{symp}
In symplectic geometry this recipe to construct symplectic forms is well known and due to Weinstein \cite{W}(see also \cite{McD-S}); it works in the more general set-up when $N$ is equipped with an Hamiltonian action of some Lie group $G$ and $G \hookrightarrow P \to M$ is a principal bundle with a fat connection.
\end{rem}

{\noindent\bf The Riemannian metric.} 
Define 
\begin{equation}\label{def_g}
g:=g_++z\pi^\star g_M,
\end{equation}
where $g_+$ is a Riemannian metric on $\D_+$ given by  
\begin{equation*}
\tilde{p}^{\star}g_{+}=g_{N}+ \Theta \otimes g_{N}(K_N ,\cdot) + g_{N}(K_N ,\cdot) \otimes \Theta+ g_{N}(K_N,K_N) \Theta \otimes \Theta.
\end{equation*}
Note that the symmetric $(2,0)$-tensor on $P \times N$ in the r.h.s
is projectable down to $Z$ since it is invariant under both circle actions and since it vanishes on $T_P-K_N$.

\smallskip

{\noindent\bf The  complex structure.} Let 
$$ J=J_{+}+J_{-},$$ 
 where $J_{+}: \D_{+} \to \D_{+}$ is defined by  
$$ J_{+}:=(g_{+})^{-1}\omega_{+},$$
and  $J_{-}$ is the lift to $\D_{-}$ of $J_{M}$. Since $J\sim_{\tilde \H} J_M+J_N$, it is a 
complex structure on $Z$.

\begin{propn} \label{pro1-c}
\begin{itemize}
\item[(i)] The structure $(g,J,\om)$ is K\"ahler and circle invariant. Moreover the infinitesimal generator of the action  $K=(K_N)_+\in \aut_0(Z,g,J)$ and has momentum map $z$.
\item[(ii)] $\pi:(Z, J) \to (M, J_{M})$ is a holomorphic submersion and $\D_{+}$ is a holomorphic distribution w.r.t. $J$.
\end{itemize}
\end{propn}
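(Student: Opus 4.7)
The plan is to reduce all assertions to statements about the building blocks on $M$ and $N$ via the twist-correspondence machinery from Subsection \ref{l-a}, using that $Z$ is a twist of $S=M\times N$ by Proposition \ref{ws-tw}. For the K\"ahler claim in (i), I verify separately on $\D_+$ and $\D_-$ that $\om=g(J\cdot,\cdot)$, $J^2=-1$ and $g$ is $J$-invariant; no mixed terms appear since $g$ and $\om$ are block-diagonal and $J$ preserves each block. On $\D_-$ the three tensors are, up to the positive factor $z$, the $\pi$-pullbacks of $(g_M,J_M,\om_M)$, so the K\"ahler identities transfer from $M$. On $\D_+$ I use that $\di\tilde p:\T_nN\to\D_+|_{[u,n]}$ is a linear isomorphism and that the defining formulas for $g_+,\om_+,J_+$, pulled back by $\tilde p$ and restricted to $\T N\subset\T(P\times N)$ (where $\Theta$ vanishes), reduce to $g_N,\om_N,J_N$; hence the K\"ahler identities on $N$ transfer pointwise to $\D_+$.

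Closedness $\di\om=0$ follows from Proposition \ref{tw_ham}(i): since $K_S=-K_N$ and $\om_M$ only sees the $M$-direction, $K_S\cntrct\om_N=-\di z_N$, so $\om_N\in\Ham^2(S,K_S)$ with $z_{\om_N}=-z_N$, giving $\tau_{\om_M}(\om_N)=\om_N+z_N\om_M$ and $\om=\iota^2_{\om_M}(\tau_{\om_M}(\om_N))$. Integrability of $J$ is granted by $J\sim_{\tilde{\H}}J_M+J_N$ together with the preservation of integrability under the twist correspondence cited after Definition \ref{jec}; since $J_M+J_N$ is integrable on $S$, so is $J$ on $Z$. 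Combined with compatibility this yields the K\"ahler property. Circle invariance is then immediate since all ingredients are built from $\bbS$-invariant tensors. For the momentum map, $K\cntrct\om=K\cntrct\om_+$ because $K\in\D_+$ and $\om_-$ vanishes on $\D_+$; the $\tilde p$-horizontal lift of $K$ is $(0,K_N)\in\tilde{\H}$, and by Lemma \ref{deomega}(ii) applied to $\alpha=\om_N$ we have $\tilde p^\star\om_+=\pi_Q^\star\om_N+\Theta\wedge\pi_Q^\star\di z_N$. Using $\Theta((0,K_N))=0$ and $\L_{K_N}z_N=0$ yields $(0,K_N)\cntrct\tilde p^\star\om_+=\pi_Q^\star\di z_N=\tilde p^\star\di z$, so $K\cntrct\om=\di z$ and $K\in\aut_0(Z,g,J)$ with momentum map $z$.

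Statement (ii) is a direct check. The map $\pi$ is a submersion by construction and $\D_+=\ker(\di\pi)$; since $J|_{\D_-}=J_-$ is the horizontal lift of $J_M$ and $\di\pi$ kills $\D_+$, the identity $\di\pi\circ J=J_M\circ\di\pi$ holds, so $\pi$ is a holomorphic submersion. Consequently $\D_+$, as the vertical distribution of a holomorphic submersion, is locally spanned by holomorphic vector fields, and so satisfies $(\L_VJ)\T Z\subseteq\D_+$ for $V\in\D_+$, i.e.\ it is holomorphic in the sense of Definition \ref{def-h}. The main technical point lies in the pointwise identification of $(g_+,J_+,\om_+)$ on $\D_+$ with $(g_N,J_N,\om_N)$ on $N$ via $\di\tilde p$; once this is in place, all remaining claims reduce to unwinding the definitions in Subsection \ref{wei_con}.
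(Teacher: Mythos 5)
Your proof is correct and takes essentially the same route as the paper, which disposes of (i) by citing Proposition \ref{tw_kahler} applied to the twist data of Proposition \ref{ws-tw} and of (ii) by the construction of $J$; you merely spell out the blockwise compatibility checks, the identity $\tau_{\om_M}(\om_N)=\om_N+z_N\om_M$, and the momentum-map computation $K\cntrct\om=\di z$ that the paper leaves implicit. The sign bookkeeping with $K_S=-K_N$ and $z_{\om_N}=-z_N$ is handled correctly.
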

\begin{proof}
(i) is Proposition \ref{tw_kahler} applied to the data described in Remark \ref{ws-tw}.

(ii) That $\pi$ (hence $\D_+=\ker (d \pi)$) is holomorphic follows from the construction of $J$ and the fact that $\D_{+}$ is $J$-invariant.  
\end{proof}

The next result shows that the naturally defined distribution $\D_+$ is in fact a foliation with special properties.

\begin{propn} \label{kaehler}
With respect to $g$, the distribution $\D_{+}$ is 
\begin{itemize}
\item[(i)] homothetic with Lee form $\theta=\di \ln z$
\item[(ii)] totally geodesic.
\end{itemize}
\end{propn}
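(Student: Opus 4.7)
Both properties can be checked on a spanning family of vector fields in $\D_{+}$ and $\D_{-}$, since being homothetic with a given Lee form is $C^\infty(Z)$-linear in $V \in \D_+$ on $\D_-$, and being totally geodesic is a pointwise tensorial property. I would work upstairs on $Q=P\times N$ and project via $\tilde p$. Concretely, for any $\mathbb{S}^1$-invariant vector field $\tilde V$ on $N$ extended trivially in the $P$-factor, the projection $V_{+}\in\D_{+}$ is well defined; likewise the horizontal lift $X^{\H}$ of $X\in TM$ to $P$, extended by $0$ on $N$, projects to $X_{-}\in\D_{-}$. The three observations I would establish first on $Q$ are: (a) $[(0,\tilde V),(X^{\H},0)]=0$, because the two vector fields live in disjoint factors and neither depends on the other coordinate, hence $[V_{+},X_{-}]=0$ on $Z$; (b) $\tilde p^{\star}z=z_{N}$ is constant in the $P$-direction, so $X_{-}(z)=0$, which also gives $\theta:=\di \ln z$ vanishes on $\D_{-}$; (c) $\tilde p^{\star}(g_{+}(V_{+},W_{+}))=g_{N}(\tilde V,\tilde W)$ is again constant along $P$, so $X_{-}\bigl(g_{+}(V_{+},W_{+})\bigr)=0$. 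Finally, averaging any local frame of $TN$ over $\mathbb{S}^1$ shows such $V_{+}$ span $\D_{+}$ pointwise.

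\textbf{Proof of (i).} For $V\in\D_{+}$ and $X,Y\in\D_{-}$, the decomposition $g=g_{+}+z\,\pi^{\star}g_{M}$ gives $g(X,Y)=z\,\pi^{\star}g_{M}(X,Y)$, so
\begin{equation*}
(\L_{V}g)(X,Y)=V\bigl(z\,\pi^{\star}g_{M}(X,Y)\bigr)-g([V,X],Y)-g(X,[V,Y]).
\end{equation*}
Taking $V=V_{+}$, $X=X_{-}$, $Y=Y_{-}$, the bracket terms vanish by (a). Since $V_{+}$ is $\pi$-vertical, it kills the basic function $\pi^{\star}g_{M}(X_{-},Y_{-})$, leaving
\begin{equation*}
(\L_{V_{+}}g)(X_{-},Y_{-})=V_{+}(z)\,\pi^{\star}g_{M}(X_{-},Y_{-})=\frac{V_{+}(z)}{z}\,g(X_{-},Y_{-})=\di\ln z(V_{+})\,g(X_{-},Y_{-}).
\end{equation*}
Combined with (b), this yields $\theta=\di\ln z$ as claimed.

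\textbf{Proof of (ii).} By Koszul, for $V,W\in\D_{+}$ and $X\in\D_{-}$,
\begin{equation*}
2g(\nabla^{g}_{V}W,X)=-X\,g(V,W)+g([X,V],W)+g([X,W],V),
\end{equation*}
where I have used $g(V,X)=g(W,X)=0$ together with $[V,W]\in\D_{+}$ (integrability of $\D_+$, already recorded by Proposition \ref{min}). Specializing to $V=V_{+}$, $W=W_{+}$, $X=X_{-}$, each of the three summands vanishes by (c) and (a) respectively. Thus $\nabla^{g}_{V}W\perp\D_{-}$ for a spanning family, hence for all $V,W\in\D_{+}$, proving that $\D_{+}$ is totally geodesic.

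\textbf{Main obstacle.} The only non-routine point is justifying that vector fields of the special form $V_{+}$ and $X_{-}$ suffice. For $\D_{+}$ this is handled by $\mathbb{S}^1$-averaging a local frame on $N$; for $\D_{-}$, any $X\in TM$ locally lifts horizontally. The remaining worry is tensoriality: the Lee form identity in (i) is $C^\infty$-linear in $V$ on $\D_-$ because $g(V,X)=0$, and being totally geodesic in (ii) is a property of the distribution, so checking on a spanning family is enough.
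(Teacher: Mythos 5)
Your proof is correct in substance, but it follows a genuinely different route from the paper for part (ii). For (i) the two arguments coincide: the paper simply observes that \eqref{def_g} exhibits $\pi$ as a conformal submersion with dilation governed by $z$, which is exactly the Lie-derivative computation you carry out explicitly (your observations (a)--(c) are the content of the paper's remark that $[V_{+},X_{-}]=0$ and that $z$ and $\pi^{\star}g_M$ are basic in the appropriate directions). For (ii), however, the paper does not touch the Levi-Civita connection at all: it notes that $I\sim_{\tilde\H}J_M-J_N$ is integrable (Swann's twist correspondence preserves integrability) and then invokes Proposition \ref{tg}, which for a conformal foliation characterises integrability of $I$ as holomorphy plus total geodesy of $\D_{+}$. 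That argument is shorter but leans on the Nijenhuis computation of Lemma \ref{nij} and on part (i) already being established; your Koszul computation is more elementary and self-contained, and it buys a proof that does not pass through the almost complex structure $I$ at all. (Incidentally, the integrability of $\D_{+}$ you need is immediate from $\D_{+}=\ker(\di\pi)$; Proposition \ref{min} asserts minimality, not integrability.)

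There is one step that, as stated, fails: the claim that $\mathbb{S}^1$-averaging a local frame of $\T N$ produces invariant vector fields spanning $\D_{+}$ \emph{pointwise}. At a point of $N$ with nontrivial isotropy --- in particular at a zero of $K_N$, which exists whenever $N$ is compact since the action is Hamiltonian --- every $\mathbb{S}^1$-invariant vector field takes values in the fixed subspace of the isotropy representation, which is a proper subspace of $\T_nN$; the averaged frame degenerates there. The gap is easily repaired: both identities you verify are equalities of continuous tensor fields (you already note that $(\L_Vg)_{\vert\D_-}$ is $C^{\infty}$-linear in $V$, and $(\nabla^g_VW)_{\D_-}$ is tensorial in $V$ and $W$), so it suffices to establish them on the open dense set of principal orbits, where invariant local frames do exist, and conclude everywhere by continuity. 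With that amendment the argument is complete.
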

\begin{proof} 
	(i) By the very definition \eqref{def_g} we see that $\pi:Z\to M$ is a conformal submersion. The claim follows since the dilation factor is constant on the base.\\
	(ii) Let $I$ be the almost complex structure defined in \eqref{i}. Then $I\sim_{\tilde\H} J_M-J_N$, and hence $I$ is integrable. The claim then follows from Proposition \ref{tg}. 
\end{proof}

\begin{defn} \label{def-w1}
The K\"ahler manifold $(Z,J,g,\omega)$ is said to be obtained by the Weinstein construction. We shall also denote the underlying manifold by $Z=Z(N,M,L)$ to refer to the fibre, the base and the polarisation thereof.
\end{defn}

\begin{exo} \label{proj}
	Take $N=\mathbb{P}^n$ endowed with the Fubini-Study metric, and with the circle action $[z_0: \ldots :z_n]\lambda=[z_0\lambda : \ldots :z_n]$. Then $Z=\mathbb{P}(L \oplus 1_{\bbC^n})$. We thus recover the Calabi construction as used for example in \cite{hamf1}.
\end{exo}

\begin{rem} \label{rigid-actn}
	Note that when using the Calabi construction in \cite{hamf1}, the $\mathbb{S}^1$-action on $Z$ and hence that of the circle on $N$ are assumed to be {rigid} in the sense that  $\di x_N \wedge \di z_N=0$, where $x_N=g_N(K_N,K_N)$. This is also the case in Example \ref{proj}. However our construction does not need rigidity of the action.
	
	In fact, every rigid action gives rise to a non-rigid action in the following way. Assume that $(N,g_N,J_N)$ admits 
	a rigid action. Pick $f:N \to \bbR$ invariant under the circle such that $\omega_f=
	\omega_N+\di J\di f>0$. Then $(\omega_f,J_N)$ is a K\"ahler structure, w.r.t. which the action of the circle is Hamiltonian with momentum map $z_f=z+\L_{JK}f$; then $x_f=x+\L_{JK}^2f $, is not, in general, only a function of $x_f$. For instance take $f=z^2H$ where $\L_KH=\L_{JK}H=0$,
	$H$ is not constant and such that $\omega_f$ remains positive. 
\end{rem}

\subsection{The exact Weinstein construction} \label{loc_wein_def}
The local version of the above is obtained in a local trivialisation chart of the fibre bundle $Z\to M$. We present a slightly more general situation which will occur naturally in the final part of our discussion. 

Let $(M, J_M,g_M)$ be a simply connected K\"ahler manifold, with exact fundamental form  $\om_M=\di \alpha_M$. The 1-form $\alpha_M$ is unique only up to transformations of the type $\alpha_M\mapsto\alpha_M+\di f$. Let $(N,J_N,g_N)$ be a K\"ahler manifold admitting a non-trivial vector field $K_N\in\aut_0(N,J_N,g_N)$. Furthermore we assume there is a positive momentum map $z_N$ for $K_N$. On the product $M\times N$ consider:
\begin{equation}\label{local_wein}
\begin{split}
\tilde \om_{\tM\times\tN}&=z_{\tN}\om_{\tM}+\om_{\tN}-\di z_{\tN}\wedge\alpha_{\tM},\\
\tilde J_{\tM\times\tN}&=J_{\tM}+J_{\tN}+J_{\tM}\alpha_{\tM}\otimes K_{\tN}-\alpha_{\tM}\otimes J_{\tN}K_{\tN}.
\end{split}
\end{equation}
Direct computation shows these define a K\"ahler structure on $M\times N$ with Riemannian metric $g_{M\times N}$ w.r.t. which the distributions 
\begin{equation}\label{local_wein_distr}
\begin{split}
\D_+&=\T N\\
\D_-&=\{X+\alpha_{M}(X)K_{N} : \ X\in\T M\}
\end{split}
\end{equation}
define a TGHH foliation with Lee form $\theta:=\di \ln z_N$.
 
\subsection{The Picard group of $Z$}\label{pic}

Later on in the paper (see Section \ref{uni}) we will need to explicitly describe 
how holomorphic line bundles over $N$ can be pushed forward to $Z$. To that extent 
we need some preliminary material.

\begin{propn}
The map 
\begin{equation} \label{deRham-i}
\H^2(N,g_N) \to H^2_{dR}(Z), \quad \alpha \mapsto [\iota^2_{\om_M}\circ\tau_{\om_M}(\alpha)].
\end{equation}
is well-defined and induces 	 an injective linear map $\iota^2:H^2_{dR}(N) \to H^2_{dR}(Z)$.
\end{propn}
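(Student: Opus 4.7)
The approach is to leverage the twist correspondence from Section \ref{gen-constr} in the concrete Weinstein setting where, by Proposition \ref{ws-tw}, $S=M\times N$, $Q=P\times N$, $\Omega=\omega_M$ and $z_\Omega=1$. Well-definedness of the map on the level of closed forms is essentially a repackaging of results already established. Indeed, for $\alpha\in\H^2(N,g_N)$, Proposition \ref{ham-K}(i) (applied on the compact K\"ahler $N$) ensures $\alpha\in\Ham^2(N,K_N)$, so a momentum map $z_\alpha$ exists. Pulling back via $pr_2$ makes $\alpha$ a form in $\Ham^2(S,K_S)$ with $K_S=-K_N$, and Proposition \ref{tw_ham}(i) guarantees $\iota^2_{\omega_M}\circ\tau_{\omega_M}(\alpha)$ is closed on $Z$. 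To obtain linearity and a well-defined map into $H^2_{dR}(Z)$, I will normalise $z_\alpha$ by $\int_N z_\alpha\,\omega_N^n=0$, making $\alpha\mapsto z_\alpha$ linear; the Hodge isomorphism $H^2_{dR}(N)\cong\H^2(N,g_N)$ then produces $\iota^2$.

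The crux is the injectivity statement. The plan is to compute the pullback of $\iota^2_{\omega_M}(\tau_{\omega_M}(\alpha))$ to a fibre of $\pi:Z\to M$ and check it reproduces $\alpha$. Concretely, pick $m_0\in M$, lift to $u_0\in P$ with $p(u_0)=m_0$, and define $j_{u_0}:N\to Z$ by $j_{u_0}(n)=\tilde p(u_0,n)$; this is a diffeomorphism onto $\pi^{-1}(m_0)$. Composing with the inclusion $i_{u_0}:N\to P\times N,\ n\mapsto(u_0,n)$ gives $\tilde p\circ i_{u_0}=j_{u_0}$, and by the defining formula \eqref{tw_forms},
\begin{equation*}
j_{u_0}^{\star}\bigl(\iota^2_{\omega_M}(\tau_{\omega_M}(\alpha))\bigr)=i_{u_0}^{\star}(p\times 1_N)^{\star}\tau_{\omega_M}(\alpha)-i_{u_0}^{\star}\bigl(\Theta\wedge(p\times 1_N)^{\star}(K_S\cntrct\tau_{\omega_M}(\alpha))\bigr).
\end{equation*}

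The second term vanishes because $\Theta=pr_P^{\star}\Theta$ and $pr_P\circ i_{u_0}$ is constant, so $i_{u_0}^{\star}\Theta=0$. For the first term, $(p\times 1_N)\circ i_{u_0}$ factors through $\{m_0\}\times N$, so $pr_1^{\star}\omega_M$ pulls back to zero while $pr_2^{\star}\alpha$ pulls back to $\alpha$. Thus $j_{u_0}^{\star}\bigl(\iota^2_{\omega_M}(\tau_{\omega_M}(\alpha))\bigr)=\alpha$. If $\iota^2[\alpha]=0$ in $H^2_{dR}(Z)$, writing $\iota^2_{\omega_M}(\tau_{\omega_M}(\alpha))=\di\gamma$ yields $\alpha=\di(j_{u_0}^{\star}\gamma)$; harmonicity forces $\alpha=0$.

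The main obstacle I foresee is bookkeeping around the momentum map: ensuring that the chosen normalisation actually makes the map linear and that the induced map on cohomology is independent of the harmonic-representative choice. Since different choices of $z_\alpha$ differ by constants and change $\tau_{\omega_M}(\alpha)$ by a multiple of $\omega_M$, whose image under $\iota^2_{\omega_M}$ is $\pi^{\star}\omega_M$ (generally non-exact), the normalisation must be fixed once and for all; after that fix, linearity is automatic and the fibre-restriction argument above closes the proof.
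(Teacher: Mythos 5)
Your proof follows the same route as the paper: well-definedness comes from Proposition \ref{ham-K}(i) combined with Proposition \ref{tw_ham}(i), and injectivity comes from restricting to a fibre $\tilde p(u_0,\cdot):N\to Z$ and observing that the pullback recovers $\alpha$ (the paper states this as ``by construction,'' which you verify explicitly from \eqref{tw_forms}). Your remark about fixing the normalisation of $z_\alpha$ is a legitimate point the paper leaves implicit, since changing $z_\alpha$ by a constant shifts the image by a multiple of $[\pi^{\star}\omega_M]$, and your resolution is correct.
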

\begin{proof}
 Since $K_N\in \mathfrak{aut}_0(N,g_N,J_N)$, Proposition \ref{ham-K} (i) ensures that we have an inclusion $\H^2(N,g_N)\subseteq \Ham^2(M\times N, K_N)$.  Therefore the map in \eqref{deRham-i} is well-defined by Proposition \ref{tw_ham} (i). The induced map $\iota^2$  between cohomology groups is injective since by construction $\tilde p_u^{\star}\iota^2(c)=c$ whenever $c \in H^2_{dR}(N)$ and $u \in P$. Here we have indicated with $\tilde p_u:=\tilde{p}(u, \cdot):N \to Z, u \in P$ the fibre inclusion map.
\end{proof}

  The counterpart of \eqref{deRham-i} at the level of Picard groups is explicitly described in the following 
\begin{propn} \label{picard}
Assuming that $N$ and $M$ are compact we have a natural injective morphism 
$$ \mathrm{Pic}(N,J_N) \to \mathrm{Pic}(Z,J), \quad l \mapsto l_{+}$$
such that $c_1(l_{+})=\iota^2(c_1(l))$. 
\end{propn}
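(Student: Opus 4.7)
My approach is to construct $l_+$ as an associated holomorphic line bundle: Proposition \ref{lift} produces an $\bbS$-equivariant structure on any suitable line bundle over $N$, which then descends along the principal bundle $\tilde p : P\times N \to Z$. Concretely, given $l\in Pic(N,J_N)$ I equip it with a Hermitian metric whose Chern curvature $-\1\, F_l$ has $F_l$ harmonic; this is possible on the compact K\"ahler manifold $N$ by the global $\partial\bar\partial$-lemma, and then $F_l\in \H^2(N,g_N)\cap\Lambda^{1,1}N$ represents $2\pi c_1(l)$. Since $K_N\in\aut_0(N,g_N,J_N)$ is Killing and holomorphic, $F_l$ is $\bbS$-invariant, and by Proposition \ref{ham-K}(i) it lies in $\Ham^2(N,K_N)$. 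Applying Proposition \ref{lift} to the principal $\bbS$-bundle $S(l)\to N$ with its Chern connection lifts the $K_N$-action to $S(l)$, commuting with the principal action and preserving the connection form. Equivalently, this produces an $\bbS$-action on $l$ by holomorphic isometries preserving the Chern connection $D_l$, with associated momentum $z_l$ satisfying $K_N\cntrct F_l=\di z_l$.

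I then form $l_+$ on $Z$ as follows. On $pr_2^{\star}l\to P\times N$ I define an $\bbS$-action covering \eqref{act1}: on the base $(u,n)\lambda=(u\lambda, R^N_{\lambda^{-1}}n)$, and on each fibre $l_n$ by the inverse of the lifted action on $l$. This is free (since the principal action on $P$ is free), so $l_+:=pr_2^{\star}l/\bbS\to Z$ is a well-defined smooth complex line bundle. Moreover $pr_2^{\star}D_l$ is $\bbS$-invariant and, after the correction $pr_2^{\star}A_l\mapsto pr_2^{\star}A_l+\1\, z_l\,\Theta$ (which annihilates the vertical generator $T_P-K_N$ of $\tilde p$), becomes basic; it therefore descends to a connection $D_+$ on $l_+$ with curvature $F_+$ satisfying
\begin{equation*}
\tilde p^{\,\star}F_+\;=\;pr_2^{\star}F_l\,-\,\di(z_l\Theta).
\end{equation*}
A parallel computation using \eqref{tw_forms}, $K_S=-K_N$, $z_\Omega=1$ (Proposition \ref{ws-tw}) and $p^{\star}\om_M=-\di\Theta$ shows that $\tilde p^{\,\star}\iota^2_{\om_M}(\tau_{\om_M}(F_l))$ equals the same expression. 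Hence $F_+=\iota^2_{\om_M}(\tau_{\om_M}(F_l))$, and by Proposition \ref{tw_kahler} (more precisely the underlying type-preservation property of $\iota^2_{\om_M}\circ\tau_{\om_M}$) this form is of type $(1,1)$ with respect to $J$; consequently $D_+$ is the Chern connection of a unique holomorphic structure on $l_+$, and in cohomology $c_1(l_+)=\iota^2(c_1(l))$.

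Injectivity follows by restriction: for any $u_0\in P_m$, the map $n\mapsto\tilde p(u_0,n)$ identifies $\pi^{-1}(m)\subset Z$ holomorphically with $N$, and under this identification $l_+|_{\pi^{-1}(m)}\cong l$ canonically; therefore $l_+$ trivial forces $l$ trivial. The main technical point I anticipate is the curvature calculation for $D_+$: keeping track of sign conventions (for momentum maps, for the lift from Proposition \ref{lift}, and for \eqref{tw_forms}) requires care, and one must verify that the resulting $(1,1)$-form is really the curvature of an honest Chern connection on $l_+$ rather than an artifact of the gauge choice.
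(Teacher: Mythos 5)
Your proof follows essentially the same route as the paper's: take the harmonic representative of $c_1(l)$, invoke Proposition \ref{ham-K}(i) to make the circle action Hamiltonian with respect to it, lift the action via Proposition \ref{lift}, form the quotient bundle over $Z$ with the corrected connection $pr_2^{\star}A_l+\1\, z_l\Theta$ (the paper's $(\pi_Q^{\star}z_{\Omega_Q})\Theta_P+\Theta_Q$ on the associated sphere bundle), derive the curvature identity $F_+=(\iota^2_{\om_M}\circ\tau_{\om_M})(F_l)$ giving $c_1(l_+)=\iota^2(c_1(l))$, and obtain injectivity by restricting to a fibre. The only items the paper includes that you omit are routine: independence of the isomorphism class of $l_+$ from the choice of connection and of the lift (a gauge-conjugation argument) and the tensor-product compatibility $(l_1\otimes l_2)_+=(l_1)_+\otimes(l_2)_+$, which is what makes $l\mapsto l_+$ a group \emph{morphism} as the statement asserts.
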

\begin{proof}
Let $l\in \mathrm{Pic}(N,J_N)$ be represented by a principal circle bundle $Q \xrightarrow{\pi_Q} N$; consider the harmonic representative $\Omega_Q \in \Lambda^{1,1}_{J_N}N$  in the class $c_1(Q)$. Choose 
a principal connection 
form $\Theta_Q$ in $Q$ with $-\di \Theta_Q=\pi^{\star}\Omega_Q$. 
Proposition \ref{ham-K} (i) shows that the circle action on $N$ is $\Omega_Q$-Hamiltonian. Lift the action on $N$ to an action $\widetilde{R}_\lambda$ on $Q$, according to Proposition \ref{lift}. With respect to the free circle action on 
$P \times Q$ given by  
\begin{equation} \label{a-lift}
((u,q),\lambda) \mapsto (R^P_\lambda(u), \widetilde{R}_{\lambda^{-1}}q)
\end{equation} 
define  $Q_{+}:=(P \times Q) \slash \mathbb{S}^1$. Letting $p_{Q_{+}}:P \times Q \to Q_{+}$ be the canonical projection we briefly describe the principal bundle structure in 
$Q_{+}$. The projection $\pi_{Q_{+}}:Q_{+} \to Z$ respectively the circle action $R^{Q_{+}}:Q_{+} \times \mathbb{S}^1 \to Q_{+}$ are uniquely determined 
by requiring the diagrams
\begin{equation}\label{com-co}
\begin{tikzcd}
&P\times Q \arrow[d,swap,"p_{Q_{+}}"] \arrow[r,"1_{P} \times \pi_Q"] &P \times N \arrow[d,"\tilde{p}"]\\
&Q_{+} \arrow[r,"\pi_{Q_{+}}"]&Z\\
\end{tikzcd}
\end{equation}
respectively 
\begin{equation}\label{com-ac}
\begin{tikzcd}
&(P\times Q) \times \bbS \arrow[d,swap,"p_{Q_{+}} \times 1_{\bbS}"] \arrow[r,"1_{P} \times R^Q"] &P \times Q \arrow[d,"p_{Q_{+}}"]
\\
&Q_{+} \times \bbS \arrow[r,"R^{Q_{+}}"]&Q_{+}\\
\end{tikzcd}
\end{equation}
be commutative. 

Let $z_{\Omega_Q}$ be the momentum map determined from $\di z_{\Omega_Q}=K_N \cntrct 
\Omega_{Q}$ and let $\widetilde{K_N}$ be the infinitesimal lift of $K_N$ to $Q$ constructed in \eqref{ilift}. The infinitesimal generator of \eqref{a-lift} is $T_P-\widetilde{K_N}$. The form $(\pi_Q^\star z_{\Omega_Q})\Theta_P+\Theta_Q$ in $\Lambda^1(P \times Q)$ is invariant under \eqref{a-lift}(use (iii) in Proposition \ref{lift}) and vanishes on $T_P-\widetilde{K_N}$. Thus there exists 
$\Theta_{Q_{+}} \in \Lambda^1Q_{+}$ such that 
$$p_{Q_{+}}^{\star}\Theta_{Q_{+}}=(\pi_Q^\star z_{\Omega_Q})\Theta_P+\Theta_Q.$$ As the latter form is invariant under the principal circle action on $Q$ the commutativity of \eqref{com-ac} entails that $\Theta_{Q_{+}}$ is a principal connection form in $Q_{+}$.

From 
$$-\di (\Theta_Q+z_{\Omega_Q}\Theta_P)=\pi_Q^{\star}\Omega_Q+\Theta_P \wedge 
\pi^{\star}(K_N \cntrct \Omega_Q)+z_{\Omega} \pi_P^{\star}\om_M$$
 we get  
$$\Omega_{Q_{+}}=(\iota^2_{\om_M}\circ\tau_{\om_M})(\Omega_Q),$$
  thus 
$c_1(Q_{+})=\iota^2(c_1(Q))$, by \eqref{deRham-i}. 

If $\Theta^{\prime}_Q$ is another principal connection form with $-\di\Theta^{\prime}_Q=\pi^{\star}\Omega_Q$ the lifted circle action on $Q$ becomes $\bbS \ni \lambda \mapsto F \circ \widetilde{R}_\lambda \circ F^{-1}$ where $F:Q \to Q$ is a gauge transformation. The map $1_P \times F$ conjugates the circle actions on $P \times Q$ thus the isomorphism class of $Q_{+}$ remains unchanged. By standard arguments one now proves 
\begin{equation}\label{tens}
(Q_1 \otimes Q_2)_{+}=(Q_1)_{+} \otimes (Q_2)_{+}.
\end{equation}
This shows that if $Q_1$ and $Q_2$ represent the same class in $\mathrm{Pic}(N,J_N)$, then $(Q_{1})_+$ is isomorphic to $(Q_{2})_+$ and represents a class $l_+\in \mathrm{Pic}(Z,J)$ and the association $l\mapsto l_+$ is a group morphism.

Injectivity amounts to showing that if $Q_{+}$ is trivial then so is $Q$. This in turn follows easily from \eqref{com-co}.
\end{proof}

\subsection{The automorphism Lie algebras} \label{aut}
In  this subsection, unless otherwise indicated,  
$M$ and $N$ will be compact. We give here the description of the Lie algebra 
$\mathfrak{aut}(Z,J)=\{X\in \T Z : \L_XJ=0\}$ of holomorphic vector fields on $(Z,J)$. The ideal of holomorphic vector fields with zeroes will be denoted $\mathfrak{aut}_0(Z,J)$. Any $X\in \mathfrak{aut}_0(Z,J)$ admits an holomorphy potential $f_X:Z\to\bbC$ determined from $X^{01}\cntrct\omega=\del f_X$. Here $X^{01}=\frac{1}{2}(X+iJX)$ in $T^{01}Z$. As in the real setup, holomorphy potentials can be normalised by $\int_Z f_X=0$.

Consider the complex vector bundle $E:=P \times_{\bbS} \aut(N,J_N) \to M$ where the circle acts on $P \times \aut(N,J_N)$ according to 
\begin{equation*}
((u,X), \lambda) \mapsto (R^P_\lambda(u), (R^N_{\lambda^{-1}})_{\star}X).
\end{equation*}
The Lie algebra structure in $\aut(N,J_N)$ induces a Lie algebra structure 
\begin{equation*}
[\cdot, \cdot] : \Gamma(E) \times \Gamma(E) \to \Gamma(E).
\end{equation*}
We denote by $D^E$ the connection induced by $D$ in $E$ and by $\del^E$ the associated Dolbeault operator.
\begin{propn} \label{lifts} Let $\D^{01}_+=\D_+\cap \T^{0,1}Z$. There exists  a natural injective map 
	\begin{equation*}
	\Phi:\Gamma(E) \to \D_{+}^{01}, \ b \mapsto \Phi(b) 
	\end{equation*} 
such that
\begin{itemize}
\item[(i)] for $b\in \Gamma(E)$ we have $\Phi(b)\in \mathfrak{aut}(Z,J)$ if and only if  
$$ b \in \mathcal{E}:=\ker\left(\del^E: \Gamma(E) \to \Lambda^{10}(M,E)\right)$$
\item[(ii)]  $\Phi$ is a Lie algebra morphism. 
\end{itemize}
\end{propn}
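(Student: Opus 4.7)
The plan is first to identify $\Gamma(E)$ with the space of $\bbS$-equivariant smooth maps $\tilde b:P\to\aut(N,J_N)$ (where $\bbS$ acts on $\aut(N,J_N)$ via $\lambda\cdot X:=(R^N_{\lambda^{-1}})_{\star}X$), and then to build $\Phi$ by a lift-and-descend procedure. Given such a $\tilde b$, the assignment $B(u,n):=(0,\tilde b(u)(n))\in T_{(u,n)}(P\times N)$ defines a real vector field tangent to the $N$-factor; equivariance makes it invariant under the action \eqref{act1}, and since $B$ vanishes on $\tilde{\V}=\span\{T_P-K_N\}$ it descends to a real vector field $V_b\in\Gamma(\D_+)$ on $Z$. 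Then $\Phi(b)\in\D^{01}_+$ is defined to be the $(0,1)$-component of $V_b$ with respect to $J$. Injectivity is immediate: $V_b=0$ forces $B=0$ and hence $\tilde b\equiv 0$.

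For (i), I would use that $V_b\in\aut(Z,J)$ if and only if $[V_b,X^{1,0}]\in\T^{1,0}Z$ for every $X^{1,0}\in\T^{1,0}Z$. Using the splitting $\T^{1,0}Z=\D_+^{1,0}\oplus\D_-^{1,0}$, I would test this on each summand separately. For $X^{1,0}\in\D_+^{1,0}$ the bracket reduces to the fibrewise Lie bracket of $\tilde b(u)$ with a $(1,0)$-vector on $N$; since $\tilde b(u)\in\aut(N,J_N)$ is $J_N$-holomorphic, the bracket lies in $T^{1,0}N$ and descends into $\D_+^{1,0}$, so no condition is imposed. For $X^{1,0}\in\D_-^{1,0}$, lift it to the horizontal vector $X^{\mathcal H}$ on $P\times N$, which by construction of $J$ is the horizontal lift of a $J_M$-holomorphic tangent vector on $M$. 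A direct computation should identify the $\D_+$-component of $[B,X^{\mathcal H}]$ with the covariant derivative $D^E_{X}b$ evaluated fibrewise, so that the $(0,1)$-projection of $[V_b,X^{1,0}]$ corresponds, up to sign, to $(\del^E b)(X)$. Hence $V_b$ is holomorphic precisely when $\del^E b=0$, i.e., $b\in\mathcal{E}$.

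For (ii), given $b_1,b_2\in\mathcal{E}$ I would lift $[V_{b_1},V_{b_2}]$ to $[B_1,B_2]$ on $P\times N$. Since both $B_i$ vanish on $TP$ and are tangent to the $N$-factor, their bracket is pointwise the $\aut(N,J_N)$-bracket $[\tilde b_1(u),\tilde b_2(u)]$, which corresponds to the section $[b_1,b_2]\in\Gamma(E)$. Therefore $[V_{b_1},V_{b_2}]=V_{[b_1,b_2]}$, and passing to $(0,1)$-parts yields $[\Phi(b_1),\Phi(b_2)]=\Phi([b_1,b_2])$.

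The main obstacle will be the horizontal calculation in (i): one must carefully verify that the connection $D^E$ induced on $E$ by the principal connection $\Theta$ genuinely governs the $\D_+$-component of $[B,X^{\mathcal H}]$, and then track all the complex-structure projections so that the obstruction collapses precisely onto $\del^E b$, with no residual curvature correction. Such corrections should be absorbed either by the vertical part of the bracket (thanks to $\tilde b(u)\in\aut(N,J_N)$) or by the integrability of $J$ on $Z$, but making this entirely transparent is where the bulk of the bookkeeping lies.
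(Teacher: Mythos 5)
Your construction of $\Phi$ via $\bbS$-equivariant maps $P\to\aut(N,J_N)$ descended from the vertical field on $P\times N$, the reduction of (i) to a bracket computation with horizontal lifts (where, as you suspect, no curvature correction appears -- only brackets of two horizontal lifts see $\di\Theta$, so writing $X^b=\sum_j f_jX^j$ gives $[X^{\H},X^b]=\sum_j(\L_{X^{\H}}f_j)X^j$ and the obstruction collapses exactly onto $\del^E b$), and the pointwise fibrewise bracket for (ii) are precisely the paper's argument. The proposal is correct and takes essentially the same approach as the paper.
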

\begin{proof}
The map $\Phi$ is constructed as follows.  A section $b$ in $E$ canonically induces a map $V^b:P \to \h_N$ such that $V^b(R^P_\lambda(u))=(R^N_{\lambda^{-1}})_{\star}V^b(u)$. Therefore the vector field $X^b$ on $P \times N$ given by $(u,n) \mapsto (V^b(u))_{n}$ is invariant w.r.t. the circle action \eqref{act1} and thus projects onto a vector field $\Phi(b) \in \D_{+}^{01}$.\\
(i) Choosing a basis $\{X^j\}$ in $\h_N$ allows writing $X^b=\sum \limits_{i}f_jX^j$ with $f_j:P \to \bbC$. Clearly $\Phi(b)$ is holomorphic if and only if  
$[X^b,JX^{\H}]=J[X^{b},X^{\H}]$ in $\T (P \times N)$ whenever $X \in \T M$. Since 
\begin{equation} \label{brack1} [X^{\H},X^b]=\sum \limits_{i}(\L_{X^{\H}}f_j) X^j 
\end{equation}
it follows that $\L_{(JX^{\H})}f_j=-\1\, \L_{X^{\H}}f_j$; equivalently $\L_{(JX^{\H})}V=-\1\, \L_{X^{\H}}V$ and the claim is proved.
\\
(ii) Because $X^{b_i} \in \T N, i=1,2$, we have $[X^{b_1},X^{b_2}]_{(u,n)}=[V^{b_1}(u), V^{b_2}(u)]_n$ on $P \times N$ and the claim follows by projection 
onto $Z$.  
\end{proof}
 
In order to extend $\Phi$ to the space $\mathcal{E} \oplus \aut_0(M,J_M)$ we establish the following
\begin{lemma} \label{lie-inl} 
\begin{itemize}
\item[(i)]The map 
$$\Phi:\aut_0(M,J_M) \to \aut(Z,J) \ \mathrm{given \ by} \ \Phi(X):=X_{-}+(f_X \circ \pi) K^{01}$$ is a well defined Lie algebra monomorphism.
\item[(ii)] We have $[\Phi(X),\Phi(b)]=\Phi(\rho(X)b)$, where the Lie algebra representation 
$$ \rho:\aut_0(M,J_M) \to \mathrm{Der}(\mathcal{E}) \ \mbox{is given by} \ \rho(X)b=D_X^Eb-f_X[b,K^{01}].
$$
\end{itemize} 
\end{lemma}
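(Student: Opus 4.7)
The plan is to reduce every Lie bracket on $Z$ to a calculation on $P \times N$ using the projection $\tilde p$ and the presentations of $X_-$ and $\Phi(b)$ as lifts from $P \times N$: $X_-$ lifts to the horizontal vector field $X^{\H} \in \H \subset \T P$ on the first factor, while $\Phi(b)$ lifts to the $N$-tangent vector field $(u,n) \mapsto V^b(u)_n$. All necessary information is then encoded in the curvature identity $-\di\Theta = p^\star\omega_M$ on $P$, the defining property of $D^E$, and the Lie morphism property of $\Phi$ on fibre sections from Proposition \ref{lifts}.

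\textit{Part (i).} The core of well-definedness is a direct Hamiltonian identification. The real vector field $\widetilde X := X_- + (f_X \circ \pi)K$ is Hamiltonian for $z(f_X \circ \pi)$:
\[
\widetilde X \cntrct \omega = z\,\pi^\star(X \cntrct \omega_M) + (f_X \circ \pi)(K \cntrct \omega) = z\,\di(f_X \circ \pi) + (f_X \circ \pi)\,\di z = \di\bigl(z\,(f_X \circ \pi)\bigr),
\]
using $\omega = \omega_+ + z\pi^\star\omega_M$, $X \cntrct \omega_M = \di f_X$, and $K \cntrct \omega = \di z$. On the K\"ahler manifold $(Z,g,J)$ Hamiltonian vector fields of real functions are automatically Killing and holomorphic, so $\widetilde X \in \aut_0(Z,g,J)$; the vector field $\Phi(X)$ appearing in the statement is the $(0,1)$-representative of $\widetilde X$ inside the same $(0,1)$-formalism used in Proposition \ref{lifts}, which shows $\Phi(X) \in \aut(Z,J)$. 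Injectivity is immediate from the splitting $\D_- \oplus (\D_+ \otimes \bbC)$: $\Phi(X)=0$ forces $X_-=0$ and hence $X=0$. For the Lie morphism identity I expand $[\Phi(X),\Phi(Y)]$ by Leibniz and lift to $P \times N$; the horizontal bracket $[(X^{01})^{\H},(Y^{01})^{\H}]$ has vanishing vertical part because the would-be coefficient $\omega_M(X,J_M Y)+\omega_M(J_M X,Y)$ is killed by the $(1,1)$-type of $\omega_M$. Combined with $[X_-,K^{01}]=0$ (the two lifts live in commuting factors of $\T(P\times N)$) and $[(f_X\circ\pi)K^{01},(f_Y\circ\pi)K^{01}]=0$, the bracket reduces to a $K^{01}$-multiple whose coefficient matches $f_{[X,Y]} = -\omega_M(X,Y) = X^{01}(f_Y)-Y^{01}(f_X)$ via the K\"ahler Poisson identity on $M$.

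\textit{Part (ii).} Expand
\[
[\Phi(X), \Phi(b)] = [X_-, \Phi(b)] + (f_X \circ \pi)\,[K^{01}, \Phi(b)] - \Phi(b)(f_X \circ \pi)\,K^{01};
\]
the last term vanishes because $\Phi(b) \in \D_+ = \ker(\di\pi)$ annihilates pullbacks from $M$. Lifting $X_- \leftrightarrow X^{\H}$ and $\Phi(b) \leftrightarrow V^b$ to $P \times N$, the bracket $[X^{\H}, V^b]$ is at each point the $X^{\H}$-derivative of the equivariant map $V^b : P \to \aut(N,J_N)\otimes \bbC$, which by the very definition of $D^E$ on $E=P\times_{\bbS}\aut(N,J_N)$ is the equivariant map for $D_X^E b$; hence $[X_-, \Phi(b)] = \Phi(D_X^E b)$. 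For the middle term, I regard $K^{01}$ as the $\bbS$-invariant constant section of the complexification of $E$ with value $K_N^{01}$; Proposition \ref{lifts}(ii) then yields $[K^{01}, \Phi(b)] = -\Phi([b,K^{01}])$, and the $C^\infty(M)$-linearity $\Phi(gb) = (g \circ \pi)\Phi(b)$ of the construction from Proposition \ref{lifts} converts the prefactor into $-\Phi(f_X[b,K^{01}])$. Assembling the two terms gives $\Phi(D_X^E b - f_X[b,K^{01}]) = \Phi(\rho(X)b)$, as asserted.

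\textit{Main obstacle.} The delicate point is the $(0,1)$-bookkeeping in (i): a naive horizontal-lift curvature computation contributes a correction along the real $K$, and it is only after passing to the $(0,1)$-representative (equivalently, invoking the K\"ahler Poisson identity on $M$ together with the $(1,1)$-type of $\omega_M$) that the spurious $K^{10}$-piece disappears and the bracket matches $\Phi([X,Y])$; precisely this bookkeeping is what the correction term $(f_X \circ \pi)K^{01}$ in the definition of $\Phi(X)$ is designed to handle.
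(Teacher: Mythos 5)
Your part (ii) is essentially the paper's argument: both reduce the bracket to a computation on $P\times N$ via the equivariant map $V^b$, identify $[X^{\H},X^b]$ with $D^E_Xb$ through \eqref{brack1}, and absorb the remaining term using the fibrewise Lie algebra structure together with the vanishing of $\L_{X^b}(f_X\circ\pi_P)$; that part is sound.

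The well-definedness step in part (i), however, has a genuine gap. First, in Subsection \ref{aut} the domain $\aut_0(M,J_M)$ is the Lie algebra of holomorphic vector fields with zeros, and $f_X$ is the \emph{complex} holomorphy potential determined by $X_{01}\cntrct\omega_M=\del f_X$; the relation $X\cntrct\omega_M=\di f_X$ with $f_X$ real is only available on the smaller subalgebra $\aut_0(M,g_M,J_M)$ of Hamiltonian Killing fields. Your identity $\widetilde X\cntrct\omega=\di\bigl(z(f_X\circ\pi)\bigr)$ therefore does not apply to a general element of the domain, and $\widetilde X=X_-+(f_X\circ\pi)K$ is not even a real vector field when $f_X$ is complex. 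Second, even on that smaller subalgebra the principle you invoke --- that on a K\"ahler manifold Hamiltonian vector fields of real functions are automatically Killing and holomorphic --- is false: the symplectic gradient of a generic smooth function is neither (one needs the Hessian of the potential to be $J$-invariant), so exhibiting a momentum for $\widetilde X$ does not establish $\L_{\Phi(X)}J=0$. The paper instead proves well-definedness by the direct computation $\L_{X_-}J=2\1\,\pi^{\star}(X\cntrct\omega_M)\otimes K^{01}$ (obtained from \eqref{d1} and $(\L_{X_-}J)\D_+=0$) together with $\L_{(f_X\circ\pi)K^{01}}J=-2\1\,\pi^{\star}(\del f_X)\otimes K^{01}$, so that the two defects cancel exactly when $X$ admits a holomorphy potential; this is the computation your argument needs in place of the Hamiltonian shortcut. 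The curvature bookkeeping you flag as the ``main obstacle'' in the morphism identity can indeed be handled by passing to $(0,1)$-parts as you indicate, but it does not repair the well-definedness step.
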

\begin{proof}
(i) Observe that  $\L_{X_{-}}J=2\1\, \pi^{\star}(X \cntrct \om_{M}) \otimes K^{01}$ for all $X \in \aut(M,J_M)$. This can be easily verified 
using the first formula in \eqref{d1} and the fact that $(\L_{X_{-}}J)V=0$ for all $V \in \D_{+}$. If moreover $X \in \aut_0(M,J_M)$, the claim follows from the identity 
$\L_{(f_XK^{01})}J=-2\1\, \pi^{\star}(\del f_X) \otimes K^{01}$.\\
(ii) Working on $P \times N$, it is enough to compute the commutator $[X^{\H}+(f_X \circ \pi_P)K_N^{01},X^b]$. Writing $X^b=\sum f_jX^j$ as in the proof of 
(ii) in Proposition \ref{lifts} we find 
\begin{equation*}
[X^{\H}+(f_X \circ \pi_P)K_N^{01},X^b]=\sum \limits_{i}(\L_{X^{\H}}f_j) X^j+f_j(f_X \circ \pi_P)[K^{01}_N, X^j]
\end{equation*}
by using \eqref{brack1} and the fact that $f_X \circ \pi_P$, and $f_j$ are functions on $P$. The claimed commutation relation follows now easily. At the same time it shows that 
$\rho(X)$ preserves $\mathcal{E}$ (this can be also checked directly from the definition of $\rho$). The Jacobi identity in $\h_Z$ ensures that $\rho$ is a representation of Lie algebras.
\end{proof}

\begin{thm} \label{holvf} The map 
\begin{equation*}
\Phi : \mathcal{E} \rtimes_{\rho} \aut_0(M,J_M) \to \aut(Z,J)
\end{equation*}
is a Lie algebra isomorphism.
\end{thm}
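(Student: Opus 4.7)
The plan is to exploit the groundwork already laid: Proposition \ref{lifts} makes $\Phi$ into an injective Lie algebra morphism on $\mathcal{E}$; Lemma \ref{lie-inl}(i) does the same on $\aut_0(M,J_M)$; and Lemma \ref{lie-inl}(ii) computes the cross bracket $[\Phi(X),\Phi(b)]=\Phi(\rho(X)b)$, which is precisely the condition for $\Phi$ to intertwine the semidirect-product bracket on the domain with the commutator on $\aut(Z,J)$. Thus $\Phi$ is automatically a Lie algebra morphism on the whole semidirect product, and only bijectivity remains.

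\textbf{Injectivity.} Suppose $\Phi(b)+\Phi(X)=0$ with $b\in\mathcal{E}$ and $X\in\aut_0(M,J_M)$. Decomposing according to $\T Z=\D_-\oplus\D_+$, both $\Phi(b)\in\D_+^{01}$ and the correction $(f_X\circ\pi)K^{01}\in\D_+$ are vertical, whereas $X_-$ is horizontal. The $\D_-$-component therefore forces $X_-=0$, hence $X=0$ and (under the normalisation $\int_M f_X=0$) $f_X=0$. Then $\Phi(b)=0$, so $b=0$ by Proposition \ref{lifts}.

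\textbf{Surjectivity.} For $Y\in\aut(Z,J)$ I would first extract a candidate horizontal part from the holomorphic exact sequence associated to $\pi$:
\begin{equation*}
0\to T_{Z/M}\to T_Z\to\pi^{\star}T_M\to 0.
\end{equation*}
Since $\pi$ has compact connected fibres, $\pi_{\star}\mathcal{O}_Z=\mathcal{O}_M$, so pushing forward and taking global sections yields a linear map $\aut(Z,J)\to\aut(M,J_M)$, $Y\mapsto\pi_{\star}Y=:X$. The identification $\pi_{\star}T_{Z/M}\simeq E$ as holomorphic bundles (fibrewise $\aut(N,J_N)$, with transition functions given by the $\bbS$-action defining $E$) identifies the kernel of this map with $H^0(M,E)=\mathcal{E}$. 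The key remaining point is to show that the image lies in $\aut_0(M,J_M)$. Writing $Y=X_-+V$ with $V\in\Gamma(\D_+)$, the identity $\L_YJ=0$ together with the formula $\L_{X_-}J=2\1\,\pi^{\star}(X\cntrct\omega_M)\otimes K^{01}$ used in the proof of Lemma \ref{lie-inl}(i) forces
\begin{equation*}
\L_VJ=-2\1\,\pi^{\star}(X\cntrct\omega_M)\otimes K^{01}.
\end{equation*}
Reading off the $K^{01}$-valued coefficient on horizontal lifts $X_-$ and averaging $V$ fibrewise along the compact $N$-factor produces a function $h\in C^{\infty}(M)$ with $X\cntrct\omega_M=dh$, so $X\in\aut_0(M,J_M)$. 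One then forms $\Phi(X)\in\aut(Z,J)$, observes that $Y-\Phi(X)\in\Gamma(\D_+)$ remains holomorphic, and invokes Proposition \ref{lifts} to obtain a unique $b\in\mathcal{E}$ with $Y-\Phi(X)=\Phi(b)$, completing the surjection.

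The main obstacle is precisely this Hamiltonian step: establishing that an arbitrary holomorphic vector field on $Z$ necessarily descends to a Hamiltonian field on $(M,\omega_M)$. This reflects the fact that the polarisation $L$ is preserved infinitesimally only by $\aut_0(M,J_M)$, and rigorously extracting this exactness from the tensorial identity for $\L_VJ$ above, together with the $\partial\overline{\partial}$-lemma on compact K\"ahler manifolds (in the spirit of Proposition \ref{ham-K}), is the delicate technical point.
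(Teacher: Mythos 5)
Your reduction to surjectivity, the injectivity check, and the endgame (subtract $\Phi(X)$ and invoke Proposition \ref{lifts}) all match the paper, and your sheaf-theoretic derivation of projectability of the horizontal part via $\pi_{\star}\mathcal{O}_Z=\mathcal{O}_M$ is a legitimate alternative to the paper's hands-on argument (which tests the horizontal component against local holomorphic functions pulled back from $M$ and uses compactness of $N$). The problem sits exactly at the step you yourself flag as delicate: showing that $X=\pi_{\star}Y$ lies in $\aut_0(M,J_M)$.

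Two concrete issues there. First, the condition you aim for, $X\cntrct\omega_M=\di h$ with $h$ real, is not the right one and is in general false for a holomorphic non-Killing field: in this subsection $\aut_0(M,J_M)$ is the ideal of holomorphic fields admitting a complex holomorphy potential, $X_{01}\cntrct\omega_M=\del f_X$, and that is what must be produced. Second, ``averaging $V$ fibrewise'' is not a proof and is not the mechanism that actually works; averaging a vertical vector field over the fibre is not a well-defined scalar, and even averaging a candidate scalar would require commuting the fibre integral with horizontal derivatives, which you have not justified. The paper's route is different and sharper: since $(\L_VJ)W=0$ for vertical $W$, the restriction of $V$ to each fibre lies in the finite-dimensional algebra $\aut(N,J_N)$, so on $P\times N$ one writes $V=\sum_j f_jX^j$ with $f_j:P\to\bbC$ and $X^1=K^{01}$; because $X^j$ commutes with the horizontal lifts $Y^{\H}$ and $(J_MY)^{\H}$, your identity $\L_VJ=-2\1\,\pi^{\star}(X\cntrct\omega_M)\otimes K^{01}$ collapses to $\sum_j(\di f_j-\1\,J\di f_j)(Y)\,X^j=2\,\pi^{\star}\omega_M(X,Y)\,X^1$, whence $\pi^{\star}(X\cntrct\omega_M)=\del f_1$ while the remaining coefficients satisfy the holomorphicity equations placing the residual vertical field in $\mathcal{E}$. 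The holomorphy potential is thus literally the $K^{01}$-coefficient $f_1$ of the vertical part of your field, and it descends to $M$ not by averaging but because $K^{01}$ has weight zero under its own circle action, so $f_1$ is $\bbS$-invariant on $P$. Without this decomposition your argument does not close.
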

\begin{proof} By Lemma \ref{lie-inl}, it is enough to see that $\Phi$ is surjective. Pick $H_0=V_0+X_0$ in $\aut(Z,J)$, split according to $\T Z=\D_{+} \oplus \D_{-}$. The equation $\L_{H_0}J=0$ has four components we will 
now describe. Working in direction of $\D_{-}$ we get $(\L_{V_0}J)Y_-+(\L_{X_0}J)Y_-=0$, for all $Y\in \T M$.  Since $\D_{+}$ is holomorphic, by projecting onto $\D_{-}$ respectively 
$\D_{+}$ we get 
\begin{equation} \label{hvvf-1}
((\L_{X_0}J)Y_-)_{\D_-}=0
\end{equation}
as well as $(\L_{V_0}J)Y_-+((\L_{X_0}J)Y_-)_{\D_+}=0$ for all $Y\in \T M$. But 
\begin{equation*}
\begin{split}
((\L_{X_0}J)Y_-)_{\D_+}&=(-\nabla^g_{JY_-}X_0+J\nabla^g_{Y_-}X_0)_{\D_+}\\
&=\eta_{JY_-}X_0-J\eta_{Y_-}X_0=2\1\, \pi^{\star}\om_{M}(X_0,Y_-)K^{01}
\end{split}
\end{equation*} 
by the first formula in \eqref{d1} and $\om_{-}=z\pi^{\star}\om_{M}, K=-zJ\zeta$. Therefore 
\begin{equation} \label{hvvf-2}
(\L_{V_0}J)Y_-=-2\1 \pi^{\star}\om_{M}(X_0,Y_-)K^{01}
\end{equation}
for all $Y \in \T M$. In direction of $\D_{+}$ we have  $(\L_{V_0}J)V+(\L_{X_0}J)V=0$; because $\D_{+}$ is holomorphic, $(\L_{V_0}J)V$ belongs to 
$\D_{+}$. Moreover $(\L_{X_0}J)V=-\nabla^g_{JV}X_0+J\nabla^g_VX_0$ belongs to $\D_{-}$ since $\D_{+}$ is totally geodesic. It follows that 
\begin{equation} \label{hvvf-3}
(\L_{V_0}J)V=0
\end{equation}
as well as 
\begin{equation*} \label{hvvf-4}
(\L_{X_0}J)V=0
\end{equation*}
whenever $V \in \D_{+}$.  
\begin{claim}
	The vector field $X_0$ is projectable,  i.e. $X_0=X_{-}$ for some vector field $X$ on $M$.
\end{claim}
Indeed, let $f:M\rightarrow \bbC$ be a {\em local} holomorphic function, defined on some open set $U\subset M$. Consider the function $F:=\L_{Y_0}(f\circ p):p^{-1}(U)\times N\rightarrow \bbC$, where $Y_0\in\H$ is the horizontal lift of $X_0$ to $P\times N$. Here $p:P\times N\longrightarrow M$ is considered as $p(u,n)=p(u)$, by a slight abuse of notation. We have:
\begin{equation*}
\begin{split}
\L_{JV}F&=\L_{JV}(\L_{Y_0}(f\circ p))=\L_{Y_0}(\L_{JV}(f\circ  p))+\L_{[JV,Y_0]}(f\circ p)\\
&=\L_{[JV,Y_0]}(f\circ p)\ \ \text{since}\ \ JV\in \D_+\\
&=\L_{J[V,Y_0]}(f\circ p) \ \ \text{by} \ \eqref{hvvf-4}\\
&=\1\, \L_{[V,Y_0]}(f\circ p) \ \ \text{since $f$ is holomorphic}. 
\end{split}
\end{equation*}
This proves that $F$ is holomorphic in the second argument, hence $F$ only depends on $p^{-1}(U)$, as $N$ is compact. Therefore, $Y_0$ does not depend on $N$ and the claim is proved.

\smallskip

Then \eqref{hvvf-1} guarantees 
that $X \in \aut(M,J_M)$. To deal with \eqref{hvvf-3} we consider the horizontal lift $\widetilde{V}$ to the circle fibration $P \times N \to Z$ in diagram  \eqref{diagrama}. It determines a family $\widetilde{V}_u, u\in P$, of vector fields in $T^{01}N$ such that $\widetilde{V}_{R^P_\lambda(u)}=(R^N_{\lambda^{-1}})_{\star}\widetilde{V}_u$ and moreover $\widetilde{V}_u \in \aut(N,J_N)$ for $(u,\lambda) \in P \times \bbS$. 
Choose now a basis $\{X^j\}$ in $\aut(N,J_N)$ with $X^1=K^{01}$ and write $V=\sum \limits_{j}f_jX^j$ with $f_j:P \to \bbC$. Using \eqref{hvvf-2}
yields 
$$ \sum \limits_{j} (\di f_j-\1\, J\di f_j)(Y) X_j=2\pi^{\star}\om_{M}(X_0,Y)X_1.
$$
It follows that $\pi^{\star}(X \cntrct \om_M)=\del f_1$ and $\di f_j-\1\, J\di f_j=0$ on $\H \subseteq TP$ for $j \geq 1$.
It is easy to see that up to a constant $f_1=f_{X} \circ \pi$ where $f_{X}:M \to \bbC$ satisfies $\int_M f_{X}=0$. We have showed that $X \in \aut_0(M,J_M)$. Furthermore $V-f_1K^{01}:P \to \aut(N,J_N)$ has the same invariance properties as $\tilde{V}$ hence $V_0-f_1K^{01}=\Phi(b),b \in \mathcal{E}$, by using Proposition \ref{lifts}. That is $H_0=\Phi(b)+X_{-}+(f_{X} \circ \pi)K^{01}=\Phi(b+X)$ and the proof is complete.
\end{proof}

An explicit way of determining $\mathcal{E}$ is outlined below.
The circle action on $N$ induces a circle action on $\gh_N$ via $ \bbS\ni \lambda  \mapsto (R^N_\lambda)_\star$. Let $k_p \in \mathbb{Z}, 0 \leq p \leq d$, be the weights of the representation $(\bbS, \aut(N,J_N))$, with the convention $k_0=0, \ k_p \neq k_j$ for $0 \leq p \neq j \leq d$. We have a splitting 
\begin{equation}\label{descomp}
 \aut(N,J_N)=\bigoplus_{p=0}^d  \gh_p, 
\end{equation}
where $(R^N_\lambda)_\star X=\lambda^{k_p}X$ whenever $X\in\gh_p$.
 
Denote by $L^i, i\in \mathbb{Z}$, the $i$-th tensor power of $L$, and by $\partial^{i}:\Gamma(L^i)\to\Lambda^{1,0}(M)\otimes L^{-i}$ the  Dolbeault operator of the holomorphic bundle $L^i$.

\begin{propn} \label{e}
We have an isomorphism
\begin{equation*}
\mathcal{E}\simeq \h_0 \oplus  \left(\bigoplus \limits_{1 \leq p \leq d, k_p <0}\ker \del^{k_p}   \otimes \h_p\right).
\end{equation*}
\end{propn}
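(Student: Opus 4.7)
The plan is to decompose the bundle $E$ according to the weight decomposition of the circle action on $\aut(N,J_N)$, then reduce the Dolbeault-type equation defining $\mathcal{E}$ to one equation on each weight summand and finally use positivity of $L$ to analyse each kernel.

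First, starting from the decomposition \eqref{descomp}, $E = P \times_{\bbS}\aut(N,J_N) = \bigoplus_{p=0}^d E_p$, where $E_p := P \times_{\bbS} \h_p$. Tracking the weights through the standard associated-bundle dictionary (the convention $L = P \times_{\bbS} \bbC$ fixes ``weight $-1$ on the fibre''), and recalling that $\bbS$ acts on $\h_p$ via $(R^N_{\lambda^{-1}})_\star X = \lambda^{-k_p} X$, I would produce a natural smooth $\bbC$-linear isomorphism $E_p \simeq L^{k_p} \otimes \underline{\h_p}$, where $\underline{\h_p}$ is the trivial bundle. Under this identification the induced connection $D^E$ splits as $\bigoplus_p D^{L^{k_p}} \otimes 1_{\h_p}$ with $D^{L^{k_p}}$ the $k_p$-th tensor power of the Chern connection $D$ from \eqref{cD}.

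Second, the operator $\bar{\partial}^E$ used in Proposition \ref{lifts} splits along this decomposition, its $p$-th component becoming the operator $\partial^{k_p}\otimes 1_{\h_p}$ (matching the target $\Lambda^{1,0}M\otimes L^{-k_p}$ as defined in the excerpt, once one checks that the condition $\L_{JX^{\H}}V^b = -\sqrt{-1}\,\L_{X^{\H}}V^b$ extracted in the proof of Proposition \ref{lifts} translates precisely to the conjugate Dolbeault equation associated to the Chern connection on $L^{k_p}$). This immediately gives
\[
\mathcal{E} \simeq \bigoplus_{p=0}^d \ker(\partial^{k_p}) \otimes \h_p .
\]

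Third, I would evaluate each summand by the sign of $k_p$. For $p=0$, $k_0=0$ and $L^0$ is trivial, so $\ker \partial^0 = \bbC$ and the contribution is exactly $\h_0$. For $k_p > 0$, the Hermitian identification $\bar{L} \simeq L^{-1}$ identifies $\ker \partial^{k_p}$ with the space of holomorphic sections of the negative line bundle $L^{-k_p}$ on the compact K\"ahler manifold $M$; integrating $\lvert s\rvert^2$ against the curvature and using that $R^D = \sqrt{-1}\,\omega_M$ forces such sections to vanish. Hence only the indices with $k_p<0$ survive, giving the stated formula.

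The main subtlety to handle carefully is the pairing of conventions: the direction of the weight $-k_p$ versus $k_p$ in the identification $E_p \simeq L^{k_p}\otimes \underline{\h_p}$, and the fact that the operator $\partial^{k_p}$ in the excerpt lands in $\Lambda^{1,0}M\otimes L^{-k_p}$ rather than in $\Lambda^{0,1}M\otimes L^{k_p}$. Once it is checked that this is exactly the operator obtained from the equivariance condition $\L_{JX^{\H}}V = -\sqrt{-1}\,\L_{X^{\H}}V$ of Proposition \ref{lifts}, the vanishing for positive weights follows by a one-line Bochner/Kodaira argument on the compact base.
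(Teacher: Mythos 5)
Your proposal is correct and follows essentially the same route as the paper: decompose $E$ via the weight splitting \eqref{descomp} into $\bigoplus_p L^{k_p}\otimes\underline{\h_p}$, split $\del^E$ accordingly, and kill the summands with $k_p>0$ using positivity of $L$. The only (cosmetic) difference is that you justify the vanishing of $\ker\del^{k_p}$ for $k_p>0$ by a direct Bochner argument on holomorphic sections of the negative bundle $L^{-k_p}$, where the paper simply invokes Kodaira's vanishing theorem.
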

\begin{proof}
Let $\tilde{\h}_0$, respectively $\tilde{\h}_p$ denote the trivial bundle $M\times \h_0 \to M$, respectively $M\times \h_p\to M$. From the definition of $E$ and \eqref{descomp} we have a vector bundle isomorphism 
$ E=\tilde{\h}_0 \oplus  \left(\bigoplus \limits_{1 \leq p \leq d} L^{k_p} \otimes \tilde{\h}_p \right)
$ 
which in turn induces an isomorphism 
$$\ker(\del^E)=\h_0 \oplus  \left(\bigoplus \limits_{1 \leq p \leq d}\ker \del^{k_p}  \otimes \h_p\right).$$ Because $L$ is positive, $\ker \del^{k_p} =0$ for $k_p \geq 0$ by Kodaira's vanishing theorem, and the claim follows.
\end{proof}
\begin{rem}
\begin{itemize}
\item[(i)]When $N=\mathbb{P}^1$ so that $Z=\mathbb{P}(L \oplus 1)$ the result above has been proved in \cite{ACGT}, see also \cite{Ma} for the case when $M$ is a Riemann surface.
\item[(ii)] Proposition \ref{e} still works when no assumption is made on the curvature form $R^D$ of the Hermitian connection $D$ in $L$, i.e we do not assume the curvature condition $R^D=\1\, \om_M\otimes 1_L$. Going through the proof above we see that the only adjustment needed is to replace $\aut_0(M,J_M)$ with the space 
$$\{X \in \aut(M,J_M)\ :\  X \cntrct R^D=\del f_X, f_X:M \to \mathbb{C}, \int_{M}f_X=0 \}.$$ Moreover it suffices to assume that $M$ only carries a complex structure and a volume form.
\end{itemize}
\end{rem}
The holomorphy potentials for elements in $\aut_0(Z,J)$ are determined as follows. If $ b \in \ker \del^{k_p} \otimes \h_p$  is of the form
$b=\sum_j s_j \otimes X^j$, let $\Phi(b)\in \aut(Z,J)$ be the corresponding vector field. Let  $f_j$ be the lift of $s_j$ to $P$ which 
thus satisfy $f_j(R^P_\lambda(u))=\lambda^{-k_p}f_j(u)$. If $X^j$ has a potential, i.e. $X^j\in \gh_N^0$, let $F_{X^j}$ be its holomorphy potential. Since $X^j$ belongs to $\h_p$, $(R^N_\lambda)_\star X^j=\lambda^{k_p}X^j$. Then  $f_jF_{X^j}$ is $\bbS$-invariant on $P\times N$ with respect to the action \eqref{act1}, and hence it projects on a function $F_{\Phi(b)}$ on $Z$ which is the holomorphy potential of  $\Phi(b)$; explicitly  
\begin{equation} \label{HP-s}
F_{\Phi(b)}\circ\pi=\sum_j f_j F_{X^j}.
\end{equation}
To finish this section we determine the Lie algebra $\mathfrak{iso}(Z,g)$ based on the following preliminary 
\begin{lemma} \label{hamc}
We have $\Ham^c(Z,\pi^{\star}\omega_M)=\Gamma(\D_{+}) \oplus \pi^{\star}\Ham^c(M,\omega_M)$.
\end{lemma}
\begin{proof}
Pick $X \in \Ham^c(Z,\pi^{\star}\omega_M)$. According to the definition of the latter space in section \ref{k-twts} we have $X \cntrct  \pi^{\star}\omega_M=\di a+J\di b$ 
for some functions $a,b:Z \to \bbR$. In particular $\di a+J\di b$ vanishes on $\D_{+}$; lifted to $P \times N$ the function $a+ib$ is holomorphic in direction of $N$ thus it must be constant in those directions. It follows that $a+ib=(f+ih) \circ \pi$ for some function 
$f+ih:M \to \bbC$. Therefore 
$\di a+J\di b=\pi^{\star}(\di f+J_M \di h)$
and the claim follows easily.
\end{proof}
From Theorem \ref{holvf} it follows that $\Phi(\mathfrak{iso}_{\bbS}(N,g_N)\oplus \mathfrak{aut}_0(M,g_M,J_M)) \subseteq \mathfrak{iso}(Z, g)$ where 
the lower index $\bbS$ denotes invariant vector fields w.r.t. the circle action. 
It turns out that 
\begin{thm} \label{comp-iso}
If $Z$ is compact the restriction 
$$\Phi: \mathfrak{iso}_{\bbS}(N,g_N)\oplus \mathfrak{aut}_0(M,g_M,J_M)\to \mathfrak{iso}(Z, g)$$
is a Lie algebra isomorphism. 
\end{thm}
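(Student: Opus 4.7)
The injectivity of the restricted map is inherited from $\Phi$ in Theorem \ref{holvf}, and the inclusion $\Phi(\mathfrak{iso}_{\bbS}(N,g_N)\oplus \mathfrak{aut}_0(M,g_M,J_M)) \subseteq \mathfrak{iso}(Z,g)$ is already noted. The task is therefore to prove surjectivity. Given $H \in \mathfrak{iso}(Z,g)$, the classical fact that Killing vector fields on a compact K\"ahler manifold are holomorphic places $H \in \mathfrak{aut}(Z,J)$, so Theorem \ref{holvf} produces a unique $(b,X) \in \mathcal{E} \oplus \mathfrak{aut}_0(M,J_M)$ with $H = \Phi(b) + X_{-} + (f_X \circ \pi)K^{01}$. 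The goal is to upgrade $b$ to a constant section of $\h_0 \subset E$ corresponding to an $\bbS$-invariant Killing field $b_N$ on $N$, and $X$ to a Killing field on $M$.

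The first step is a weight reduction. The canonical $\bbS$-action on $Z$ is by holomorphic isometries (Proposition \ref{pro1-c}), so it acts on $\mathfrak{iso}(Z,g)$ by conjugation; decomposing $H = \sum_k H^{(k)}$ in $\mathfrak{iso}(Z,g)\otimes_{\bbR}\bbC$ produces Fourier weight components. Via $\Phi^{-1}$, this matches the grading $\mathcal{E} = \h_0 \oplus \bigoplus_{k_p<0} \ker\del^{k_p}\otimes\h_p$ of Proposition \ref{e} (with $\mathfrak{aut}_0(M,J_M)$ of weight zero, since $\bbS$ acts trivially on $M$). Because Kodaira vanishing eliminates all positive-weight components of $\mathcal{E}$, one has $H^{(k)} = 0$ for $k > 0$; reality of $H$ then forces $H^{(-k)} = \overline{H^{(k)}} = 0$, and therefore $H = H^{(0)}$ is $\bbS$-invariant. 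Thus $b \in \h_0$ corresponds to a unique $b_N \in \mathfrak{aut}(N,J_N)^{\bbS}$.

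The second step separates the Killing condition on the fiber from that on the base. With $b$ a constant $\h_0$-section, formula \eqref{HP-s} gives an explicit holomorphy potential $F_H = F_{b_N} + f_X \circ \pi$ for $H$ in $\mathfrak{aut}_0(Z,J)$, the two summands depending only on the $N$- and $M$-coordinates respectively. Invoking the criterion that a Hamiltonian holomorphic vector field is Killing if and only if its normalised holomorphy potential is purely imaginary, and evaluating pointwise while separating the fiber and base variables, one concludes that both $F_{b_N}$ and $f_X$ are purely imaginary. This is exactly $b_N \in \mathfrak{iso}_{\bbS}(N,g_N)$ and $X \in \mathfrak{aut}_0(M,g_M,J_M)$, as desired.

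The principal obstacle is twofold. First, matching the $\bbS$-weight grading on $\mathfrak{iso}(Z,g)\otimes\bbC$ with the internal grading of $\mathcal{E}$ amounts to verifying that $\Phi$ is $\bbS$-equivariant with respect to conjugation on the target and pushforward on the source, which requires unpacking the construction of $\Phi$ at the level of $P \times N$. Second, and more delicate, is the potential existence of Killing fields on $Z$ that fail to be Hamiltonian (coming from parallel holomorphic fields via Matsushima's decomposition), in which case $H \notin \mathfrak{aut}_0(Z,J)$ and the holomorphy-potential criterion does not apply verbatim. The substitute is a direct computation of $\L_H g$: along $\D_- \times \D_-$, where $g = z\pi^{\star}g_M$ and $X_-(z)=0$ because horizontal lifts preserve the fiber coordinates, one obtains that $X$ descends to a conformal, hence Killing, holomorphic field on the compact K\"ahler base $M$; and along a fixed fiber, biholomorphic to $N$ with a conformally rescaled K\"ahler metric, $\L_H g = 0$ delivers the Killing property for $b_N$ on $N$.
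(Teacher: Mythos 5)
Your overall strategy (first force $\bbS$-invariance of a Killing field $H$, then split it into a fibre part and a base part) parallels the paper's, but the execution of the first step has a genuine gap. The Fourier decomposition $H=\sum_k H^{(k)}$ lives in the formal complexification $\mathfrak{iso}(Z,g)\otimes_{\bbR}\bbC$ of a \emph{real} representation of $\bbS$, whereas the grading of $\mathcal{E}$ in Proposition \ref{e} is by $J$-complex weights: $(R^N_\lambda)_\star X=\lambda^{k_p}X$ with the scalar acting through the complex structure. These two gradings do not match. Concretely, if $b_p$ has $J$-weight $k_p<0$ then
$\mathrm{Ad}_{R_{e^{\1 t}}}\Phi(b_p)=\cos(k_pt)\,\Phi(b_p)+\sin(k_pt)\,J\Phi(b_p)$,
so the real field $\Phi(b_p)$ contributes the Fourier components $\tfrac12\left(\Phi(b_p)\mp \1 J\Phi(b_p)\right)$ at \emph{both} weights $k_p$ and $-k_p$. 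Hence the positive Fourier modes of $H$ are exactly $\tfrac12\left(\Phi(b_p)+\1 J\Phi(b_p)\right)$ with $k_p<0$, and Kodaira vanishing says nothing about them; the assertion ``$H^{(k)}=0$ for $k>0$'' is unjustified, and with it the conclusion that $H$ is $\bbS$-invariant. This invariance, i.e.\ that $K$ is central in $\mathfrak{iso}(Z,g)$, is the hard part of the theorem: the paper proves it by showing $\mathrm{ad}_K^2X=0$ for $X\in\aut_0(Z,g,J)$ through a momentum-map computation (Proposition \ref{ham-K} (ii) applied to $\pi^{\star}\om_M$, plus the identity $[X,Y]_{\D_+}=z^{-1}\om(X,Y)K$ for $X,Y\in\D_-$, which forces the momentum map of $[K,[K,X]]$ to vanish), and then invoking an $\mathrm{Ad}$-invariant inner product on the reductive Lie algebra of the compact isometry group to pass from $\mathrm{ad}_K^2=0$ to $\mathrm{ad}_K=0$. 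Nothing of this sort is supplied by your weight argument.

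Even granting invariance, your second step diverges from the paper and is underdeveloped. The paper never needs holomorphy potentials or the Hamiltonian dichotomy you worry about: it uses only $\di(H\cntrct\om)=0$, transports it through the twist isomorphism to $\di\alpha_H=\alpha_H(K_N)\om_M$ on $M\times N$, and exploits $[\om_M]\neq 0$ to conclude $\alpha_H(K_N)=0$, from which $f_2=0$ and the Killing property of the fibre component follow at once. Your fallback computation of $\L_Hg$ on $\D_-\times\D_-$ drops the term $\theta(V_+)\,g_{\vert\D_-}=\di\ln z(V_+)\,g_{\vert\D_-}$ coming from the fibre component of $H$ (the foliation is homothetic, not Riemannian, and $\di z(V_+)=\om_+(K,V_+)$ genuinely depends on the fibre coordinate), so the claimed descent of $X$ to a conformal field on $M$ does not follow as stated.
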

\begin{proof}
{\it{Step 1:}} $K$ acts trivially on $\mathfrak{iso}(Z,g)$.\\
Since $\pi^{\star}\omega_M \in \Lambda^{1,1}Z$ is closed we have that $\aut_0(Z,g,J) \subseteq \Ham^c(Z,\pi^{\star}\omega_M)$ by
part (ii) in Proposition \ref{ham-K}. Let $X \in \aut_0(Z,g,J)$; by Lemma \ref{hamc} it follows that $[K,X] \in \D_{+}$. On the other hand $[K,X]$ is Hamiltonian, $[K,X] \cntrct \omega=\di c$ with $c:Z \to \bbR$ hence $\di c$ vanishes on $\D_{-}$. Since $[X,Y]_{\D_+}=z^{-1}\omega(X,Y)K$  whenever $X,Y \in \D_{-}$ it follows that $(\di c)K=0$. In other 
words the momentum map $\omega([K,X],K)$ for $[K,[K,X]]$ vanishes identically.  Therefore $\mathrm{ad}_K^2X=[K,[K,X]]=0$ in the Lie algebra $\mathfrak{iso}(Z,g)$; the latter is reductive since it is the Lie algebra of a compact Lie group hence $[K,X]=0$. If $X \in \mathfrak{iso}(Z,g)$ the Lie bracket $[K,X] \in \aut_0(Z,g,J)$ must therefore commute with $K$ thus $[K,X]=0$ by the same argument as above.\\
{\it{Step 2:}} $\Phi$ is surjective.\\
Let $X \in \mathfrak{aut}(Z,g,J)$. Since $[K,X]=0$ and $X$ is holomorphic Theorem \ref{holvf} yields   
$$ X=\Phi(V+Y)=V_{+}+(Y_{-}+(f_1 \circ \pi) K+(f_2 \circ \pi)JK)
$$
with $(V,Y) \in  \mathfrak{aut}^{\bbS}(N,J_N)\oplus \mathfrak{aut}_0(M,J_M)$,
according to the definition of the map $\Phi$. Let $f_Y=f_1-if_2$ be the normalised holomorphy potential for $Y$, which thus satisfies $Y \cntrct \omega_M=\di f_1-J_M \di f_2$. 
From the definition of the map $\iota_{\om_M}$ we get $ X \cntrct \om=\iota^1_{\om_M}(\alpha_X)$ where 
$$\alpha_X=V \cntrct \om_N+z_N(Y \cntrct \om_M)+f_1\di z_N-f_2 J_N\di z_N \in \Lambda^1(M \times N).$$
Then $\di (X \cntrct \om)=0$ is equivalent with $\di_{\mkern+1.8mu\om_M}\alpha_X=0$; in expanded form this reads 
\begin{equation} \label{eq-k}
\di \alpha_X=\alpha_X(K_N)\omega_M.
\end{equation}
Because $\L_{K_N}\alpha_X=0$ contracting with $K_N$ leads to $\di (\alpha_X(K_N))=0$, which forces $\alpha_X(K_N)=0$ since $[\om_M] \neq 0 $ in $H^2_{dR}(M \times N)$. From the definition of $\alpha_X$ this entails $\om_{N}(V,K_N)-x_N f_2=0$. Differentiating in direction of $M$ we get further $x_N \di f_2=0$ thus 
$f_2=0$ by taking into account that $K_N$ is not identically zero and $\int_Mf_2=0$. We have showed that $Y \in \aut_0(M,g_M,J_M)$; as $f_2$ vanishes 
we have $\alpha_X=V \cntrct \om_N+\di(z_Nf_1)$ thus \eqref{eq-k} reduces to $\di(V \cntrct \om_N)=0$ showing that $V \in \mathfrak{iso}_{\bbS}(N,g_N)$. This completes the proof.
\end{proof}

Recall that  a compact complex manifold with positive first Chern class admits a K\"ahler metric of constant scalar curvature only if the Lie algebra of its holomorphic vector fields is reductive (\cite{lichne}). It follows that the above structure results can be used to determine explicit obstructions for the existence of constant scalar curvature metrics compatible with $J$.

\begin{thm} \label{csc-obs}
Assume that the Lie algebra $\aut(Z,J)$ is reductive. Then
\begin{equation*}
H^0(M,L^{-k_p}) \otimes \h_p=0 \ \mbox{for all} \ 1 \leq p \leq d \ \mbox{such that} \ k_p<0.
\end{equation*}
In particular, if $H^0(M,L) \neq 0$ the algebra $\aut(Z,J_Z)$ is reductive if and only 
\begin{itemize}
\item[(i)] the action of $\bbS$ on $(N,J_N)$ has only positive weights
\item[(ii)] the Lie algebras $\h_0$ and $\aut_0(M,J_M)$ are reductive.
\end{itemize}
In this case $\aut(Z,J)=\h_{0} \oplus \aut_0(M,J_M)$, a direct sum of Lie algebras.
\end{thm}
\begin{proof}
Let $p_0$ be the greatest $1 \leq p \leq d$ such that $k_p<0$. Then $[\h_{p},\h_{p_0}]=0$ for all $1 \leq p \leq d$ such that $k_p<0$ and 
$[\h_0,\h_{p_0}] \subseteq \h_{p_0}$. By Theorem \ref{holvf} and Proposition \ref{e} it follows that $\ker(\del^{p_0}) \otimes \h_{p_0}$ is an abelian ideal in  $\aut(Z,J)$ thus contained in its center. As $K_N \in \h_0$ acts on 
$\ker(\del^{p_0}) \otimes \h_{p_0}$ by multiplication with $ik_{p_0}$ we conclude that $\ker(\del^{p_0}) \otimes \h_{p_0}=0$. The claim follows now by induction over the number of negative weights in $\aut(N,J_N)$. Finally, having $H^0(M,L) \neq 0$ forces $\h_{p}=0$ whenever $k_p<0$.
\end{proof}

\subsection{Non-uniqueness of the foliation $\D_+$} \label{uni}
Let $(Z,g,J)$ be K\"ahler and equipped with a holomorphic, globally homothetic foliation $\F$ with Lee form $\theta$. When $\F$ is regular, that is when the leaf space $M:=Z \slash \F$ is a smooth manifold we obtain a smooth submersion $\pi:Z \to M$. The holomorphy of $\F$ entails that $M$ comes equipped with a complex structure $J_M$ such that 
\begin{equation*}
\pi:(Z,J) \to (M,J_M)
\end{equation*}
is holomorphic. Choosing $z:Z \to (0,\infty)$ with $\theta=\di \ln z$ makes that $\D_{+}=\ker (\di \pi)$ defines a Riemannian foliations w.r.t. $z^{-1}g$, in particular $M$ carries a Riemannian metric $g_M$ such that 
\begin{equation*}
g_{\vert \D_{-}}=z \pi^{\star}g_M.
\end{equation*}
Clearly $(g_M,J_M)$ is K\"ahler, with K\"ahler form $\omega_M=g_M(J_M \cdot, \cdot)$ satisfying $\omega_{-}=z\pi^{\star}\omega_M$.

In the setup we investigate the extent up to which homothetic foliations 
compatible with the K\"ahler structure $(g,J)$ must be unique. Whenever 
$E \subseteq TM$ is a distribution we indicate with $\pi^{\star}E \subseteq \D_{-}$ 
its horizontal lift.
\begin{thm} \label{multi-ruled}
Assume that $(M,g_M,J_M)$ carries a
conformal foliation $\F_M$ with complex vertical distribution $\D_{+}^M$, 
Lee form $\theta_M$ and horizontal distribution $\D_{-}^M$. Decompose 
\begin{equation} \label{enlarge1}
\T Z=(\D_{+} \oplus\ \pi^{\star}\D_{+}^M) \oplus\ \pi^{\star}\D_{-}^M.
\end{equation}
Then 
\begin{itemize}
\item[(i)] the distribution $\D_{+} \oplus\ \pi^{\star}\D_{+}^M$ is tangent 
to the leaves of a conformal foliation with Lee form 
$\theta+\pi^{\star}\theta_M$;
\item[(ii)] $\D_{+} \oplus\ \pi^{\star}\D_{+}^M$ is holomorphic if and only if  $\ \D_{+}^M$ is holomorphic in $\T M$;
\item[(iii)] $\D_{+} \oplus\ \pi^{\star}\D_{+}^M$ is totally geodesic if and only if  $\ \D_{+}^M$ is totally geodesic in $\T M$ and the intrinsic torsion satisfies $\eta_{\D_{+}}( \pi^{\star}\D_{+}^M)=0$.
\end{itemize}
\end{thm}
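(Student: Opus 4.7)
The plan is to check each of (i)--(iii) by splitting an arbitrary vector in $\D_{+} \oplus \pi^{\star}\D_{+}^M$ as $V + W$ with $V \in \D_{+}$ and $W \in \pi^{\star}\D_{+}^M$, and verifying the property for $V$ and $W$ separately. Involutivity of the new distribution comes first: $[V_1, V_2] \in \D_{+}$ by integrability of $\F$; for $V \in \D_{+}$ and $W$ the horizontal lift of $Y \in \D_{+}^M$, $V$ is $\pi$-related to $0$ and $W$ to $Y$, so $\pi_{\star}[V, W] = 0$ and $[V, W] \in \D_{+}$; and $\pi_{\star}[W_1, W_2] = [Y_1, Y_2] \in \D_{+}^M$ from integrability of $\F_M$, giving $[W_1, W_2] \in \D_{+} \oplus \pi^{\star}\D_{+}^M$.

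For (i), the case $V \in \D_{+}$ follows from conformality of $\F$ since $\pi^{\star}\D_{-}^M \subseteq \D_{-}$. For $W \in \pi^{\star}\D_{+}^M$ the key ingredients are $g_{\vert \D_{-} \otimes \D_{-}} = z\, \pi^{\star} g_M$ together with $\di z(\D_{-}) = 0$ (since $\theta = \di \ln z$ vanishes on $\D_{-}$, and $W \in \D_-$). A direct computation using horizontal lifts yields $(\L_W g)(X_1, X_2) = z\, \pi^{\star}\bigl((\L_Y g_M)(Y_1, Y_2)\bigr)$ for $X_i \in \pi^{\star}\D_{-}^M$ projecting to $Y_i \in \D_{-}^M$; conformality of $\F_M$ converts this to $(\pi^{\star} \theta_M)(W)\, g(X_1, X_2)$. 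Summing the two contributions identifies the Lee form as $\theta + \pi^{\star} \theta_M$.

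For (ii), the case $V \in \D_{+}$ is immediate from holomorphicity of $\F$. Holomorphicity of $\F$ ensures $\pi:(Z, J) \to (M, J_M)$ is holomorphic, so $\di\pi \circ J = J_M \circ \di\pi$. For $W \in \pi^{\star}\D_{+}^M$ with $\di\pi(W) = Y$ and arbitrary $U \in \T Z$, a short calculation yields $\di\pi\bigl((\L_W J)U\bigr) = (\L_Y J_M)(\di\pi\, U)$. Since the vertical component of $(\L_W J)U$ automatically lies in $\D_{+}$, the inclusion $(\L_W J)(\T Z) \subseteq \D_{+} \oplus \pi^{\star}\D_{+}^M$ is equivalent to $(\L_Y J_M)(\T M) \subseteq \D_{+}^M$, i.e., holomorphicity of $\F_M$.

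For (iii) I would decompose $\nabla^g = \cn - \eta$, so that only $\eta$ contributes to the $\pi^{\star}\D_{-}^M$-component of $\nabla^g_{U_1} U_2$. For $V_1, V_2 \in \D_{+}$ we have $(\nabla^g_{V_1} V_2)_{\D_{-}} = -\eta_{V_1} V_2$, and its vanishing in $\pi^{\star}\D_{-}^M$ is, by the $g$-skew-symmetry $g(\eta_{V_1} V_2, X) = -g(V_2, \eta_{V_1} X)$, equivalent to $\eta_{V_1}(\pi^{\star}\D_{-}^M) = 0$. For mixed $V \in \D_{+}$ and $W \in \pi^{\star}\D_{+}^M$, integrability of the new distribution reduces the check to $(\nabla^g_W V)_{\pi^{\star}\D_{-}^M}$; formula \eqref{d1} gives $\eta_W V = -\tfrac{1}{2}(\theta(V) W - \theta(JV) JW)$, whose $\pi^{\star}\D_{-}^M$-component vanishes thanks to $J$-invariance of $\pi^{\star}\D_{+}^M$ (inherited from $\D_{+}^M$ being $J_M$-invariant in the complex-foliation setting of this paper). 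Finally, for $W_i$ the horizontal lifts of $Y_i \in \D_{+}^M$, the submersion $\pi:(Z, z^{-1} g) \to (M, g_M)$ is Riemannian, so O'Neill's formula combined with the conformal-change formula (whose correction involves only $\zeta \in \D_{+}$, hence is invisible in the $\pi^{\star}\D_{-}^M$-direction) yields $(\nabla^g_{W_1} W_2)_{\pi^{\star}\D_{-}^M}$ equal to the horizontal lift of $(\nabla^{g_M}_{Y_1} Y_2)_{\D_{-}^M}$, which vanishes for all $Y_1, Y_2 \in \D_{+}^M$ exactly when $\D_{+}^M$ is totally geodesic in $\T M$. The most delicate step is this last one, since one must combine the conformal rescaling with the Riemannian submersion structure carefully so that the $\pi^{\star}\D_{-}^M$-projection is preserved; the skew-symmetry trick handling $\eta_{V_1} V_2$ is similarly crucial as this component is not directly given by \eqref{d1}.
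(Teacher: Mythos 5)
Your proof is correct and follows essentially the same route as the paper's: both arguments verify integrability, conformality, holomorphy and total geodesy separately on $\D_{+}$ and on horizontal lifts from $\D_{+}^M$, using the conformal-submersion identities (your O'Neill-plus-conformal-rescaling step is exactly the paper's Koszul computation \eqref{KoKo}) together with \eqref{d1} with $\Psi=0$ to dispose of the mixed terms. One small caveat: the opening sentence of your part (iii) --- that only $\eta$ contributes to the $\pi^{\star}\D_{-}^M$-component of $\nabla^g_{U_1}U_2$ --- is false as a blanket statement, since $\nabla^c$ preserves only the splitting $\D_{+}\oplus\D_{-}$ and not the finer decomposition of $\D_{-}$; however, you never invoke it in the one case where it would fail (both arguments in $\pi^{\star}\D_{+}^M$), which you correctly handle via O'Neill's formula instead, so the argument stands.
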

\begin{proof}
From the assumptions made the splitting \eqref{enlarge1} is $g$-orthogonal and $J$-invariant. Since $\pi:(Z,g)\to (M,g_M)$ is a conformal submersion with dilation factor $z$, we have (e.g. \cite[Proposition 2.1.15 (ii)]{Wood}) 
\begin{equation} \label{lie3}
[X_{-},Y_{-}]=[X,Y]_{-}-z(\pi^{\star}\omega_{M})(X,Y) J\zeta
\end{equation}
whenever $X,Y \in \T M$. Here $X_{-}$ denotes the horizontal lift to $\D_{-}$ of the vector field $X \in \T M$. A quick verification using 
Koszul's formula, \eqref{lie3} and $\di z(\D_{-})=0$ shows that 
\begin{equation} \label{KoKo}
(\nabla^g_{X_{-}}Y_{-})_{\D_-}=(\nabla^{g_M}_XY)_{-}.
\end{equation}
(i) As $X_{-}$ is basic, that is $[\D_{+},X_{-}] \subseteq \D_{+}$, the integrability of $\D_{+} \oplus\ \pi^{\star}\D_{+}^M$ is a consequence of having 
$\D_{+}^M$ integrable, \eqref{lie3} and $\zeta \in \D_{+}$. Pick vector fields $Y,Z \in \D_{-}^M$ and $X \in \D_{+}^M$; using \eqref{KoKo} we compute 
\begin{equation*}
\begin{split}
(\L_{X_{-}}g)(Y_{-},Z_{-})=&g(\nabla^g_{X_{-}}Y_{-},Z_{-})+g(Y_{-},\nabla^g_{X_{-}}Z_{-})\\
=&g((\nabla^{g_M}_XY)_{-},Z_{-})+g(Y_{-},(\nabla^{g_M}_XY)_{-})=z\pi^{\star}\L_Xg_M(Y,Z)\\
=&z \pi^{\star}(\theta_M(X)g_M(Y,Z))=(\pi^{\star}\theta_M)(X_{-})g(Y_{-},Z_{-})
\end{split}
\end{equation*}
since $\D_{+}^M$ is conformal. If $V \in \D_{+}$ we have $(\L_Vg)(Y_{-},Z_{-})=\theta(V)g(Y_{-},Z_{-})$. These considerations prove the claim by taking into account that $\theta(\pi^{\star}\D_{+}^M)=0$ together with $(\pi^{\star} \theta_M)(\D_{+})=0$.\\
(ii) Since $\D_{+} \oplus \ \pi^{\star}\D_{+}^M$ is integrable the holomorphy of the distribution $\D_{+} \oplus\  \pi^{\star}\D_{+}^M$ amounts to having 
$(\L_{\D_{+} \oplus\ \pi^{\star}\D_{+}^M}J)\pi^{\star}\D_{-}^M \subseteq \D_{+} \oplus\ \pi^{\star}\D_{+}^M$. As $\D_{+}$ is holomorphic this is equivalent to 
the inclusion $(\L_{\pi^{\star}\D_{+}^M}J)\pi^{\star}\D_{-}^M \subseteq \D_{+} \oplus\ \pi^{\star}\D_{+}^M$. From \eqref{lie3} it is easy to see this happens if and only 
if $\D_{+}^M$ is holomorphic in $\T M$.\\
(iii) Pick $X \in \D_{+}^M$ and $V \in \D_{+}$. Then $g(\nabla^g_{X_{-}}V,Y_{-})=-g(V,\nabla^g_{X_{-}}Y_{-})=0$ for all $Y \in \D_{-}^M$ by using \eqref{d1}. Note that in the latter $\Psi=0$ since $\D_{+}$ is holomorphic. It follows that $\nabla^g_{X_{-}}V$ belongs to $\D_{+} \oplus 
\pi^{\star}\D_{+}^M$. Because $[V,X_{-}] \in \D_{+}$ we get that $\nabla^g_{X_{-}}V \in \D_{+} \oplus \
\pi^{\star}\D_{+}^M$ as well. Consequently, the latter distribution is totally geodesic if and only if  
\begin{equation*}
\nabla^g_VW \in \D_{+} \oplus\  
\pi^{\star}\D_{+}^M, \qquad \nabla^g_{X_{-}}Y_{-} \in \D_{+} \oplus\  
\pi^{\star}\D_{+}^M
\end{equation*}
whenever $V,W \in \D_{+}$ and $X,Y \in \D_{+}^M$. The first requirement is equivalent to having $\eta_{\D_{+}}\pi^{\star}\D_{+}^M=0$ whilst the second amounts to 
$\D_{+}^M$ totally geodesic in $\T M$ by \eqref{KoKo}.
\end{proof}
\begin{rem} \label{proof2}
When $\D_{+}^M$ is holomorphic, a somewhat shorter proof of (i) in Theorem \ref{enlarge1} is available. The K\"ahler form 
of $(g,J)$ splits orthogonally as 
\begin{equation*}
\om=(\om_{+}+z\pi^{\star}\om_{+}^M)+z\pi^{\star}\om_{-}^M.
\end{equation*}
according to \eqref{enlarge1}. We compute 
\begin{equation*}
\begin{split}
\di(\om_{+}+z\pi^{\star}\om_{+}^M)&=-\theta \wedge \om_{-}-z\pi^{\star}\theta_M \wedge 
\pi^{\star} \om_{-}^M+\di z \wedge \pi^{\star} \om_{+}^M\\
&=-(\di\ln z+\pi^{\star}\theta_M) \wedge z\pi^{\star}\om_{-}^M
\end{split}
\end{equation*}
by using the structure equations for $\D_{+}$ respectively $\D_{+}^M$ together with $\theta=\di \ln z$ and $\om_{-}=z\pi^{\star}\om_{+}^M+z\pi^{\star}\om_{-}^M$. As 
$\di\ln z+\pi^{\star}\theta_M$ vanishes on $\pi^{\star}\D_{-}^M$ the claim follows from Proposition \ref{cdiff}.\\
\end{rem}
This non-uniqueness result has a nice geometric interpretation in case 
the foliation on the base space is totally geodesic. Indeed we have the 
following two main classes of examples illustrating the multi-fibered structure 
$Z$ may have. 

\begin{propn} \label{no+cpx}
	Let $(M,g_M,J_M)=(M_1,g_{M_1},J_{M_1}) \times (M_2,g_{M_2},J_{M_2})$,  equipped with the polarisation given by $L=\mathrm{pr}_1^{\star}L_1 \otimes \mathrm{pr}_2^{\star}L_2$, where $L_i \to M_i$ are the polarisations of $M_i$, $i=1,2$. The K\"ahler structure $(g,J)$ obtained by the Weinstein construction on $Z=Z(N,M,L)$ has the following properties
	\begin{itemize}
		\item[(i)] it carries $3$ distinct $\mathrm{TGHH}$-foliations given by $\D_{+}, \D_{+} \oplus \pi^{\star}\T M_1, \D_{+} \oplus \pi^{\star}\T M_2$ with the same Lee form. The last two foliations correspond to explicit fibrations $Z(N,M_i,L_i)\hookrightarrow Z\xrightarrow{\pi_i} M_i, i=1,2$
		\item[(ii)] the metric $g$ admits $3$ distinct orthogonal complex structure $I_1,I_2,I_3$ obtained as in Proposition \ref{tg}. Those mutually commute and $(g,I_k), k=1,2,3,$ are not K\"ahler.
	\end{itemize} 
\end{propn}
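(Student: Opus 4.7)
The strategy is to reduce part (i) to a direct application of Theorem \ref{multi-ruled} and part (ii) to Proposition \ref{tg} together with the structure equations of Remark \ref{str-eqsn}. For (i) I take $\F_M$ to be the product foliation on $M$ with leaf-tangent distribution $\D_+^M = \T M_i$: each factor of a K\"ahler product is $J_M$-invariant, integrable, totally geodesic and Riemannian, so $\theta_M = 0$. Items (i), (ii), (iii) of Theorem \ref{multi-ruled} then give in turn the conformality of $\D_+ \oplus \pi^\star \T M_i$ with Lee form $\theta + \pi^\star \theta_M = \theta$, the holomorphy, and the total geodesicity; for the last item I additionally invoke $\eta_{\D_+} = 0$, which holds by Propositions \ref{kaehler} and \ref{tg}. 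Closedness of $\theta = d\ln z$ (Proposition \ref{detect-k}) promotes this to a homothetic foliation. The corresponding fibration is $\pi_j := pr_j \circ \pi: Z \to M_j$ for $j \neq i$; its fibres identify with $Z(N, M_i, L_i)$ because restricting $L = pr_1^\star L_1 \otimes pr_2^\star L_2$ to a slice $M_i \times \{pt\}$ yields $L_i$ up to a trivial factor.

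For (ii), Proposition \ref{tg} assigns to each of the three TGHH foliations an integrable, $g$-orthogonal complex structure obtained by flipping the sign of $J$ on the leaf-tangent distribution. Relative to the $J$-invariant decomposition $\T Z = \D_+ \oplus \pi^\star \T M_1 \oplus \pi^\star \T M_2$, each of $J, I_1, I_2, I_3$ is a choice of $\pm J$ on each summand; the three $I_k$ are pairwise distinct, and their pairwise commutativity is immediate since on every summand both operators involved reduce to $\pm J$. To show that none of the $(g,I_k)$ is K\"ahler I would express the fundamental 2-form of $I_k$ as a signed sum of the restrictions of $\omega$ to the three summands and apply Remark \ref{str-eqsn}. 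For instance $\omega_{I_1} = -\omega_+ + \omega_-$, whence
\begin{equation*}
d\omega_{I_1} \;=\; -d\omega_+ + d\omega_- \;=\; 2\,\theta \wedge \omega_-,
\end{equation*}
which is nonzero because $\theta = d\ln z$ does not vanish (the circle action on $N$ is nontrivial) and $\omega_-$ is non-degenerate on $\D_-$. Entirely analogous computations, performed with the structure equations of the enlarged foliations $\D_+ \oplus \pi^\star \T M_k$ (whose Lee form is still $\theta$ by (i)), dispose of $I_2$ and $I_3$.

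\textbf{Main obstacle.} The conceptual content is already packaged in Theorem \ref{multi-ruled} and Proposition \ref{tg}, so the remaining work is bookkeeping. The only slightly delicate point is checking that the 3-forms $\theta \wedge \omega^{(k)}_-$ controlling $d\omega_{I_k}$ are genuinely nonzero for all three choices of foliation. This is automatic: $\theta \in (\D_+)^\star$ by the definition of Lee form, whereas each horizontal K\"ahler form $\omega^{(k)}_-$ lies in the exterior algebra of the complementary $\pi^\star \T M_j$-factor, so no cancellation can occur. I therefore anticipate no genuine obstruction.
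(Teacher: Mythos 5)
Your proof is correct and follows essentially the same route as the paper: part (i) is Theorem \ref{multi-ruled} applied to the product foliation on $M$ (with $\theta_M=0$), and part (ii) is Proposition \ref{tg} applied to the three foliations. You are somewhat lighter than the paper on the identification of the fibres of $\pi_i$ with $Z(N,M_i,L_i)$ (the paper does this via an explicit $\mathbb{T}^2$-quotient description of $Z$), but you compensate by making explicit the non-K\"ahlerness computation $d\omega_{I_k}=2\,\theta\wedge\omega^{(k)}_-\neq 0$, which the paper leaves implicit.
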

\begin{proof} 
(i) We apply Theorem \ref{multi-ruled} to the product  foliation, $\D_{+}^M=\T M_1, \D_{-}^M=\T M_2$, for which, in particular, $\theta_M=0$. It only remains to determine the fibrations $\pi_i$. Choose Hermitian metrics $h_i$ in $L_i,i=1,2$; with respect to the canonical product metric the sphere bundle 
$P$ of $L$ is identified to $(P_1 \times P_2) \slash \mathbb{S}^1$ where 
$P_i=S(L_i, h_i)$  and $\mathbb{S}^1$ acts on 
$P_1 \times P_2$ according to $((p_1,p_2),\lambda) \mapsto (R^{P_1}_\lambda p_1,R^{P_2}_{\lambda^{-1}}p_2)$. Recall that $Z:=P \times_{\bbS} N$ where $(N,g_N,J_N)$ is K\"ahler and equipped with a Hamiltonian 
$\bbS$ action. The above identification for $P$ makes it straightforward to check that one can naturally identify 
$$Z\simeq (P_1 \times P_2 \times N) \slash \mathbb{T}^2$$ 
w.r.t. the free 
$\mathbb{T}^2=\bbS \times \bbS$ action on $P_1 \times P_2 \times N$ given by 
\begin{equation*} 
(p_1,p_2,n)(\lambda_1, \lambda_2):=(R^{P_1}_{\lambda_1} p_1,R^{P_2}_{\lambda_1^{-1}\lambda_2}p_2,R^N_{\lambda_2^{-1}}n).
\end{equation*}
Identifying 
$N \times (P_1 \times P_2) \simeq (N \times P_1) \times P_2 \to (N \times P_2) \simeq P_1$ via the map given by $(n,(p_1,p_2)) \mapsto ((n,p_1),p_2) \mapsto ((n,p_2),p_1)$
induces further identifications 
\begin{equation*}
Z\simeq (N \times_{\mathbb{S}^1} P_1) \times_{\mathbb{S}^1} P_2\simeq (N \times_{\mathbb{S}^1} P_2) \times_{\mathbb{S}^1} P_1.
\end{equation*}
Here the circle acts on $P_k \times_{\bbS} N, k=1,2$, according to \ref{act3}.
In other words, for $i=1,2$, the foliations $\D_{+} \oplus \pi^{\star}\T M_i$ constructed in Theorem \ref{multi-ruled} correspond to the fibrations 
\begin{equation}\label{pic2}
\begin{tikzcd}
&Z(N,M_2,L_2) \arrow[hookrightarrow,d]\\
Z(N,M_1,L_1) \arrow[hookrightarrow,r] &Z \arrow[d,swap,"\pi_1"] \arrow[r,"\pi_2"] &M_2. \\
&M_1\\ 
\end{tikzcd}
\end{equation}
The projection maps $\pi_i:Z \to M_i,i=1,2$ are induced by the bundle projection maps $\pi_{P_i}:P_i \to M_i, i=1,2$.\\
(ii) follows by using Proposition \ref{tg} for the 3 foliations constructed above. Note that ``distinct'' here means $I_j\neq \pm I_k$ for all $1\leq j\neq k\leq 3$. 
\end{proof}

\begin{rem}
	It is an open problem to determine the number of complex structures orthogonal with respect to a given Riemannian metric, see \cite{salamon}. Explicit examples were constructed on Lie groups, where this number is large, see \cite{joyce}.
\end{rem}

In the second class of examples below the geometric origin of the polarisation 
on the base  $M$ is of a different nature.

\begin{propn} \label{pol-b}
Let $M_1=Z(N_0,M_0,L_0)$ be obtained by the Weinstein construction with  $(N_0, l_0)$  polarised. Let  $(N_1,g_{N_1},J_{N_1})$ be  a K\"ahler manifold with a Hamiltonian circle action and build 
$Z_1=Z(N_1,M_1,L_1)$ where $L_1=(l_0)_{+}$ is the polarisation of $M_1$ constructed in Proposition \ref{picard}. Then
\begin{itemize}
	\item[(i)] $Z_1$ is obtained from the Weinstein construction for the fibre $Z(N_1, N_0, l_0)$ and  polarised base $(M_0, L_0)$, i.e. $Z_1=Z(Z(N_1, N_0, l_0), M_0, L_0)$
	\item[(ii)] the K\"ahler structure $(g,J)$ on $Z_1$ obtained by the Weinstein construction carries $2$ distinct $\mathrm{TGHH}$-foliations given by $\D_{+}$, $\D_{+} \oplus \pi^{\star}\D^{M_1}_1$  with  Lee forms $\theta$, respectively $\theta+\pi^\star\theta_{M_1}$. 
\end{itemize} 
\end{propn}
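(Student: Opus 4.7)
The plan is to use the twist/associated-bundle description from Subsections~\ref{wei_con}--\ref{pic} to identify the two K\"ahler manifolds in~(i), and then apply Theorem~\ref{multi-ruled} to the base foliation of $M_1$ for~(ii).

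For (i), let $P_0\to M_0$ and $Q_0\to N_0$ denote the sphere bundles of $L_0$ and $l_0$ respectively. By the explicit construction in the proof of Proposition~\ref{picard}, the sphere bundle of $L_1=(l_0)_+\to M_1$ is
\begin{equation*}
P_1 \;=\; (P_0\times Q_0)/\bbS^1,
\end{equation*}
where $\bbS^1$ acts via the principal action on $P_0$ coupled with the lifted action on $Q_0$ provided by Proposition~\ref{lift}. Combining this with the Weinstein formula for $Z_1$ yields
\begin{equation*}
Z_1 \;=\; P_1\times_{\bbS^1} N_1 \;=\; (P_0\times Q_0\times N_1)/\bbT^2,
\end{equation*}
where the second $\bbS^1$-factor uses the principal action on $Q_0$ together with $R^{N_1}_{\lambda^{-1}}$, and the two factors commute by Proposition~\ref{lift}(ii). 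Swapping the order of the two quotients, I first divide the inner factor $Q_0\times N_1$ by the Weinstein circle to obtain $Z_{10}:=Z(N_1,N_0,l_0)$, and then the remaining quotient $(P_0\times Z_{10})/\bbS^1$ is, by definition, $Z(Z_{10},M_0,L_0)$. It then remains to match the K\"ahler data on both sides; this follows because the metric, complex structure and K\"ahler form in the Weinstein recipe of Subsection~\ref{wei_con} are assembled from the principal connection forms on $P_0,Q_0$, the base/fibre metrics, and the momentum maps, and each of these ingredients is manifestly symmetric under reordering the $\bbT^2$-quotient.

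For (ii), the first foliation $\D_+$ tangent to the $N_1$-fibres is the canonical TGHH foliation on $Z_1$ given by Proposition~\ref{kaehler}, with Lee form $\theta$. For the second foliation I exploit that the base $M_1=Z(N_0,M_0,L_0)$ itself carries a TGHH foliation $\D_+^{M_1}$ by Proposition~\ref{kaehler}, namely the vertical distribution of $M_1\to M_0$, with Lee form $\theta_{M_1}=\di\ln z_{N_0}$. Applying Theorem~\ref{multi-ruled} to this foliation on the base, parts~(i) and~(ii) show that $\D_+\oplus\pi^\star\D_+^{M_1}$ is a conformal and holomorphic foliation with Lee form $\theta+\pi^\star\theta_{M_1}$. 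The total geodesy hypothesis of Theorem~\ref{multi-ruled}(iii) splits into two conditions: $\D_+^{M_1}$ is totally geodesic (granted), and $\eta_{\D_+}(\pi^\star\D_-^{M_1})=0$; the latter is automatic because $\D_+$ is already totally geodesic in $Z_1$, forcing $\eta_V=0$ for every $V\in\D_+$, as used in the proof of Proposition~\ref{tg}.

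The main obstacle lies in (i): the smooth $\bbT^2$-quotient identification is immediate, but verifying that the K\"ahler structures agree (and not merely the underlying manifolds) requires careful bookkeeping of which principal connection, base metric and momentum map enter each Weinstein building block. In particular one must check that the connection on $P_1$ arising from the Picard construction of $(l_0)_+$ coincides with the connection used in the outer Weinstein construction of $Z(Z_{10},M_0,L_0)$, and that the relevant momentum maps (that for $K_{N_1}$ acting on $N_1$, and that for the lifted $K_{N_0}$ acting on $Z_{10}$) match the product of momentum maps on $Q_0\times N_1$ associated with the two commuting circles. This is a straightforward, if somewhat lengthy, exercise in unwinding the definitions from Subsection~\ref{wei_con} and the proof of Proposition~\ref{picard}.
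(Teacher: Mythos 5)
Your proposal is correct and follows essentially the same route as the paper: identifying $Z_1$ as the $\mathbb{T}^2$-quotient $(P_0\times Q_0\times N_1)/\mathbb{T}^2$ via the Picard construction of $(l_0)_+$ and regrouping the two circle quotients for (i), and applying Theorem \ref{multi-ruled} to the canonical foliation of the base $M_1$ for (ii). Your extra verification that the total-geodesy hypothesis $\eta_{\D_+}(\pi^\star\D_-^{M_1})=0$ holds because the canonical Weinstein foliation is totally geodesic is a detail the paper leaves implicit, but it is the intended argument.
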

\begin{proof} (i) Let $P_0 \to M_0$ respectively $Q_0 \to N_0$ be the sphere bundles in $L_0$ respectively $l_0$. From the construction of the polarising 
bundle  in Proposition \ref{picard} applied for $M_1$ it follows that we can identify 
\begin{equation*} 
Z_1=(P_0 \times Q_0 \times N_1) \slash \mathbb{T}^2
\end{equation*}
 w.r.t. the free 
$\mathbb{T}^2=\bbS \times \bbS$ action on $P_0 \times Q_0 \times N_1$ given by 
\begin{equation}\label{tw-it2}
(p_0,q_0,n_1)(\lambda_1, \lambda_2):=(R^{P_0}_{\lambda_1}p_0,\ (\widetilde{R}_{\lambda_1^{-1}}\circ R^{Q_0}_{\lambda_2})q_0,\ R^{N_1}_{\lambda_2^{-1}}n_1).
\end{equation}
Here $(\widetilde{R}_{\lambda})_{\lambda \in \bbS}$ denotes, according to Proposition \ref{lift}, the lift of the Hamiltonian action of $\bbS$ on $N_0$ to $Q_0$. The explicit form of the action \eqref{tw-it2} makes it easy to check that we can further identify $Z_1=P_0 \times_{\bbS}Z(N_1,N_0,l_0)$ where 
the circle action on $Z(N_1,N_0,l_0)=Q_0 \times_{\mathbb{S}^1} N_1$ is induced from $((q_0,n_1),\lambda) \mapsto (\widetilde{R}_{\lambda}q_0,n_1)$. Note that the latter action is exactly the lift of the $\bbS$-action on $N_0$ to $Z(N_1,N_0,l_0)$.\\
(ii) To see that the foliation 
$\D_{+} \oplus \pi^{\star}\D_{+}^{M_1}$ has the required property, apply Theorem \ref{multi-ruled} to the canonical foliation $\D_+$ on $M_1$. It corresponds to the vertical fibration in 
\begin{equation}\label{pic3}
\begin{tikzcd}
&Z(N_1,N_0,l_0) \arrow[hookrightarrow,d]\\
N_1 \arrow[hookrightarrow,r] &Z_1 \arrow[d,swap,"\pi_1"] \arrow[r,"\pi"] &M_1. \\
&M_0\\ 
\end{tikzcd}
\end{equation}
\end{proof}
\begin{rem} 
	(i) Proposition \ref{pol-b} also explains what happens when the Weinstein construction is iterated. Above we have performed only a stage-2 iteration, as shown in the following diagram:
	\begin{equation}\label{diagramaa}
\begin{tikzcd}
N_1 \arrow[hookrightarrow,d] & N_1 \arrow[hookrightarrow,d]\\
Z(N_1,N_0,l_0) \arrow[d] \arrow[hookrightarrow,r] &Z_1 \arrow[d] \arrow[r] &M_0 \\
N_0 \arrow[hookrightarrow,r] & M_1=Z(N_0,M_0,L_0)\arrow[r] \arrow[d] &M_0 \\
&M_0\\ 
\end{tikzcd}
\end{equation}
However the 
procedure can be continued to construct towers of fibrations, prolongating the middle column above by iterating $n$-times.

(ii) A special instance of iteration in the Weinstein construction occurs in the case of Bott manifolds, see \cite{bct}.
\end{rem}

Yet another construction of different homothetic and holomorphic foliations compatible with the same metric relies on projecting such an object, under 
suitable compatibility conditions, from the fibre $N$ to $Z$. If $\T N=\D^1\oplus \D^2$ with $\D^i$, $i=1,2$, invariant for the $S^1$-action, then $\D^i$ is contained in the horizontal space $\tilde \H$ of the circle fibration $\tilde p : P\times N\to Z$, hence they project on $Z$ into $\D^i_+=\di \tilde p (\D^i)$. 
\begin{thm} \label{unique2}
Let $(N,g_N,J_N)$ be equipped with an homothetic and holomorphic foliation $\D^1$, 
with horizontal space $\D^2$ and Lee form $\theta^N$. Assume that 
\begin{itemize}
\item[(a)] $\D^1$ is circle invariant and
\item[(b)] we have $(\di \ln z_N)_{\vert \D^1}=\theta^N$,
\end{itemize}
and decompose 
\begin{equation} \label{shrink2}
\T Z=\D^1_{+} \oplus\ (\D^2_{+} \oplus \D_{-}).
\end{equation}
Then, with respect to the metric $g$,
\begin{itemize}
\item[(i)] the distribution $\D^1_{+} $ is tangent 
to the leaves of an holomorphic and homothetic foliation with Lee form 
$\iota_{{\om_M}}(\theta^N)$ (see \eqref{tw_forms}).
\item[(ii)] if $\ \D^1$ is totally geodesic w.r.t $g_N$, so is $\D_{+}^1 $.
\end{itemize}
\end{thm}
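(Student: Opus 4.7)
The plan is to lift the structure equation satisfied by $\D^1$ on $N$ to one on $Z$ through the twist formalism of Subsection \ref{l-a}, applied to the Weinstein data of Proposition \ref{ws-tw} where $\Omega=\om_M$, $z_\Omega=1$ and $K_S=-K_N$. In this setup the twisted exterior derivative reads $\di_{\om_M}\alpha=\di\alpha-\om_M\wedge(K_N\cntrct\alpha)$ and the K\"ahler form decomposes as $\om=\iota_{\om_M}(z_N\om_M+\om_N)$.

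First I would split $\om_N=\om_N^+ + \om_N^-$ according to the $J_N$-invariant orthogonal decomposition $\T N=\D^1\oplus\D^2$ (with $\om_N^+$ supported on $\D^1$ and $\om_N^-$ on $\D^2$), and observe that because the refined splitting \eqref{shrink2} of $\T Z$ is $J$-invariant and $g$-orthogonal one has $\om^1_+=\iota_{\om_M}(\om_N^+)$ and $\om^1_-=\iota_{\om_M}(\om_N^-)+z\pi^\star\om_M$. The key algebraic identity $K_N\cntrct\om_N^+=z_N\theta^N$ follows from writing $K_N=K_N^1+K_N^2$ along $\T N=\D^1\oplus\D^2$: since $\om_N^+$ vanishes on $\D^2$, the left-hand side equals $K_N^1\cntrct\om_N^+=(\di z_N)|_{\D^1}$, and condition (b) rewrites the last expression as $z_N\theta^N$. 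Combining this with the structure equation $\di\om_N^+=-\theta^N\wedge\om_N^-$ for $\D^1$ on $N$ (from Proposition \ref{cdiff}), Lemma \ref{deomega}(iii) yields
\[ \di\om^1_+=\iota_{\om_M}(\di_{\om_M}\om_N^+)=-\iota_{\om_M}(\theta^N)\wedge\om^1_- . \]

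For (i), integrability of $\D^1_+$ is immediate from condition (a) and integrability of $\D^1$. Holomorphicity follows from the twist correspondence applied to the $J_S$-holomorphic, $\bbS$-invariant distribution $\D^1\subseteq\T N\subseteq\T S$; concretely, I would produce local $\bbS$-invariant holomorphic frames of $\D^1$ (by averaging a holomorphic Frobenius frame over the $\bbS$-action, which acts by biholomorphisms) and invoke Theorem \ref{holvf} together with the inclusion $\h_0\subseteq\mathcal{E}$ to lift such frames to holomorphic vector fields on $Z$ tangent to $\D^1_+$. With holomorphicity established, the displayed structure equation and Proposition \ref{cdiff}(i)$\Rightarrow$(ii) give conformality with Lee form $\iota_{\om_M}(\theta^N)$. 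Homotheticity amounts to $\di\iota_{\om_M}(\theta^N)=0$; by Lemma \ref{deomega}(iii) this reduces to $\theta^N(K_N)=0$, which holds since $\theta^N(K_N^2)=0$ by definition and evaluating $K_N^1\cntrct\om_N^+=z_N\theta^N$ on $K_N^1$ kills the left-hand side by antisymmetry of $\om_N^+$.

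Part (ii) combines $\D^1$ being totally geodesic in $(N,g_N)$ with $\D_+$ being totally geodesic in $Z$ (Proposition \ref{kaehler}): the fibres of $\pi:Z\to M$ are then totally geodesic submanifolds isometric to $(N,g_N)$, so for $V,W\in\D^1_+$ the ambient covariant derivative $\nabla^g_V W$ coincides with the intrinsic $\nabla^{g_N}_{\tilde V}\tilde W\in\D^1$, and the remaining components of $\eta^1_{\D^1_+}$ vanish by orthogonality. I expect the main technical obstacle to be the rigorous construction of the $\bbS$-invariant local holomorphic frame for $\D^1$; an alternative route that bypasses it is to verify conformality directly by a case analysis of $\mathcal{L}_V g$ on pairs $(X,Y)\in\D^1_-\times\D^1_-$ and then extract $\Psi^1=0$ from the structure equation via Lemma \ref{do-ggg} to obtain holomorphicity.
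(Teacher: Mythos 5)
Your computation of the homothetic part is essentially the paper's own proof: you split $\om_N=\om_N^++\om_N^-$, identify $\om^1_\pm$ under $\iota_{\om_M}$, derive $K_N\cntrct\om_N^+=z_N\theta^N$ from hypothesis (b), and obtain $\di\om^1_+=-\iota_{\om_M}(\theta^N)\wedge\om^1_-$ from Lemma \ref{deomega}(iii) and the structure equation on $N$ --- this is exactly the displayed calculation in the paper. Your direct verification that $\di\,\iota_{\om_M}(\theta^N)=0$ reduces to $\theta^N(K_N)=0$ (which holds by antisymmetry of $\om_N^+$) is a clean way to get homotheticity and is, if anything, more explicit than the paper, which leaves this step implicit.

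The genuine gap is in your primary argument for holomorphy of $\D^1_+$. Theorem \ref{holvf} and the inclusion $\h_0\subseteq\mathcal{E}$ concern \emph{globally defined} holomorphic vector fields on a compact $N$ (sections of $E=P\times_{\bbS}\aut(N,J_N)$), whereas the complex Frobenius theorem only gives \emph{local} holomorphic frames of $\D^1$; these need not lie in $\aut(N,J_N)$, need not be $\bbS$-invariant, and averaging over the circle can destroy linear independence, so the cited machinery does not apply. The paper instead argues as in Theorem \ref{multi-ruled}: working on $P\times N$, an $\bbS$-invariant section of $\D^1$ commutes (modulo $\D^1$) with the basic lifts $X^{\H}$, so $(\L_VJ)U\in\D^1_+$ for $V\in\D^1_+$ and all $U$, and the same bracket computations give total geodesy in (ii), which you also leave at the level of a plausibility argument. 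Your fallback route (prove conformality by computing $\L_Vg$ case by case, then extract $\Psi^1=0$ by comparing the computed $\di\om^1_+$ with Lemma \ref{do-ggg} and Lemma \ref{tg-e}(ii)) is logically sound --- note that Remark \ref{str-eqsn}(ii) correctly prevents you from reading conformality off the structure equation alone --- but the case analysis of $\L_Vg$ is precisely the nontrivial content and is not carried out.
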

\begin{proof}
 That $\D^{1}_{+}$ defines a holomorphic foliation w.r.t. $J$ as well as part (ii) is proved by arguments similar to those used in Theorem \ref{multi-ruled}. To show that $\D^1_{+}$ is homothetic 
we split the K\"ahler forms of $(g_N,J_N)$ respectively $(g,J)$ according to 
$\om^N=\om^1+\om^2$ respectively
\begin{equation*}
\om=\iota_{\om_M}(\om^1)+(\iota_{\om_M}(\om^2)+\om_{-}).
\end{equation*}
Using (iii) in Proposition \ref{deomega} and the structure equation 
$\di \om^1=-\theta^N \wedge \om^2$ we compute
\begin{equation*}
\begin{split}
\di \iota_{\om_M}\om^{1}=-\iota_{\om_M}(\theta^N) \wedge \iota_{\om_M}(\om^2)-\pi^{\star}\om_M \wedge \iota_{\om_M}(K_N \cntrct \om_1).
\end{split}
\end{equation*}
The assumption in (b) guarantees that $z_N^{-1}K_N \cntrct \omega^1=\theta^N$ and the claim is proved. 
\end{proof}
This leaves quite some freedom in constructing new examples. Indeed, take  
$N=Z(N_0,M_0,l_0)$ equipped with its canonical totally geodesic foliation, the canonical $\mathbb{S}^1$-action as defined in \eqref{act3} certainly satisfies (a) and (b) above\footnote{and then $Z(N,M,l)$ fibers over $M_0 \times M$.}. However one can determine {\it{all}} circle actions on $N$ satisfying these assumptions and obtain many more examples. This is suggested by Example \ref{pol-b} where the circle action on $N$ is lifted from $M_0$.

\smallskip

 Directly from Theorem \ref{comp-iso} we derive:
\begin{propn} \label{act5} 
Assume that $N=Z(N_0,M_0,l_0)$ is equipped with its canonical foliation $\D^1$. Then any circle action $\bbS \subseteq \mathrm{Iso}(N,g_N)$ satisfying 
(a) and (b) in Theorem \ref{unique2} has tangent vector field of the form 
$$tK \ \mbox{or} \ Y_{-}+z_YK,$$
where $Y \in \mathfrak{iso}(M_0,g_{M_0})$ is Hamiltonian vector field with momentum map $z_{Y}$, and $t \in \bbR$.
\end{propn}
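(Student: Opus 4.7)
The plan is to apply the classification of isometries from Theorem \ref{comp-iso} to $N=Z(N_0,M_0,l_0)$, and then extract the constraints imposed by condition (b). By Theorem \ref{comp-iso} (with $(N_0,M_0)$ playing the roles of $(N,M)$), compactness yields an isomorphism
\[
\mathfrak{iso}(N,g_N)\cong\mathfrak{iso}_{\bbS}(N_0,g_{N_0})\oplus\mathfrak{aut}_0(M_0,g_{M_0},J_{M_0}),
\]
so the generator of the given $\bbS$-action decomposes uniquely as
\[
X=V_{+}+Y_{-}+(f_Y\circ\pi)\,K,
\]
with $V\in\mathfrak{iso}_{\bbS}(N_0,g_{N_0})$, $Y\in\mathfrak{aut}_0(M_0,g_{M_0},J_{M_0})$ of normalised real holomorphy potential $f_Y$, and $K=(K_{N_0})_{+}$ the canonical generator of the Weinstein circle action on $N$ whose momentum $z_{\mathrm{can}}$ is the fibrewise descent of $z_{N_0}$. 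Condition (a) is then automatic: $V_{+}$ and $K$ already lie in $\D^1$, while $Y_{-}$ is the horizontal lift of a vector field on $M_0$, hence basic, so $[X,\D^1]\subseteq\D^1$.

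Next I compute the momentum map $z_N$ of $X$. The calculation in Step~2 of the proof of Theorem \ref{comp-iso} shows that $X\cntrct\omega_N$ corresponds under $\iota^{1}_{\omega_{M_0}}$ to
\[
\alpha_X=V\cntrct\omega_{N_0}+z_{N_0}\,(Y\cntrct\omega_{M_0})+f_Y\,\di z_{N_0}\ \in\ \Lambda^{1}(M_0\times N_0),
\]
and forces $V$ itself to be Hamiltonian on $N_0$ with some momentum $z_V$. Then $\alpha_X=\di(z_V+z_{N_0}f_Y)$, and since $\iota^{0}_{\omega_{M_0}}$ is the identity on $\bbS$-invariant functions,
\[
z_N=z_V^{+}+z_{\mathrm{can}}\,(f_Y\circ\pi)+c
\]
for some $c\in\bbR$, where $z_V^{+}$ denotes the fibrewise descent of $z_V$ to $N$.

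Finally I impose condition (b). Since $\theta^N\vert_{\D^1}=\di\ln z_{\mathrm{can}}\vert_{\D^1}$, the condition is equivalent to $z_N/z_{\mathrm{can}}$ being constant along every leaf of $\D^1$. Restricted to a fibre $F_{m_0}\cong N_0$ this reads
\[
z_V(n)+z_{N_0}(n)\,f_Y(m_0)+c=\mu(m_0)\,z_{N_0}(n),\qquad n\in N_0,
\]
for some positive function $\mu$ on $M_0$. Separating the $m_0$- and $n$-dependence gives $\mu=\lambda+f_Y$ with $\lambda\in\bbR$, and $z_V=\lambda z_{N_0}-c$ on $N_0$; non-degeneracy of $\omega_{N_0}$ then forces $V=\lambda K_{N_0}$. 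Substituting back one gets $X=Y_{-}+(z_Y\circ\pi)\,K$ with $z_Y:=\lambda+f_Y$ a momentum map for $Y$, which is the second case of the statement when $Y\neq 0$ and reduces to $X=tK$ with $t=\lambda$ when $Y=0$. The only real work is the explicit description of $z_N$ in the middle step, via the twist-differential formalism; once that is in place the fibrewise separation of variables makes the conclusion immediate.
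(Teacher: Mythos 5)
Your proposal is correct and follows exactly the route the paper intends: the paper gives no written proof beyond the phrase ``directly from Theorem \ref{comp-iso}'', and your argument is precisely the fleshed-out version of that, decomposing the generator via Theorem \ref{comp-iso}, computing its momentum map through $\iota^1_{\omega_{M_0}}$, and using condition (b) fibrewise to force $V=\lambda K_{N_0}$. The only cosmetic remark is that Step 2 of Theorem \ref{comp-iso} by itself only gives $V\in\mathfrak{iso}_{\bbS}(N_0,g_{N_0})$; the exactness of $V\cntrct\omega_{N_0}$ (hence the existence of $z_V$) comes from the fact that the ambient action is assumed Hamiltonian (condition (b) presupposes a momentum map $z_N$), which you use implicitly.
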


\section{Applications in Riemannian and Hermitian geometry}

We describe several implications of the special foliated structure of a K\"ahler manifold obtained by the Weinstein construction in Riemannian and Hermitian geometries.

\subsection{Holomorphic harmonic morphisms}\label{ham}

Recall that a harmonic morphism is a smooth map $f:(M_1,g_1)\rightarrow (M_2,g_2)$ between Riemannian manifolds, such that $\phi\circ f$ is a (local) harmonic map on $(M_1,g_1)$ for all  (local) harmonic maps $\phi$ on $(M_2,g_2)$. It was independently proven by Fuglede and Ishihara that this is equivalent with $f$ being a harmonic map which is horizontally weakly conformal (we refer to \cite{Wood} for details about harmonic morphisms).
Harmonic morphisms thus produce conformal foliations off the critical point set. Conversely, one can look for conditions under which given foliations produce harmonic morphisms. It was proven in \cite[Corollary 2.6]{bg} that a minimal, conformal  foliation  locally  produces harmonic morphisms. Thus 

\begin{propn}
	The  holomorphic submersion $\pi:(Z,J) \to (M,J_M)$ with totally geodesic fibres is a harmonic morphism from $(Z,g) \to (M,g_M)$.
\end{propn}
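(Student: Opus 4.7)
The plan is to verify the two hypotheses of the cited characterisation (\cite[Corollary 2.6]{bg}) for the foliation $\F$ by fibres of $\pi$: namely that $\F$ is conformal and minimal. Once this is done, $\pi$ is automatically (locally) a harmonic morphism, and since the submersion $\pi:Z\to M$ is globally well defined and has the correct induced metric $g_M$ on the base by construction, the conclusion is global.

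For the conformality step, I would invoke Proposition \ref{kaehler}(i), which states that in the Weinstein construction the leaf-tangent distribution $\D_+$ is homothetic with Lee form $\theta=\di\ln z$; in particular $\F$ is a conformal foliation in the sense of Definition \ref{def-cf}, and equivalently $\pi:(Z,g)\to(M,g_M)$ is a conformal submersion with dilation factor $z^{-1/2}$, as can be read off the direct sum decomposition $g=g_+ + z\,\pi^\star g_M$ of Subsection \ref{wei_con}. This gives the horizontal (weak) conformality required for a harmonic morphism via the Fuglede--Ishihara criterion.

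For the minimality step, it suffices to recall Proposition \ref{min}: any foliation with complex leaves on a K\"ahler manifold is automatically minimal. Since the fibres of $\pi$ are complex submanifolds of the K\"ahler manifold $(Z,g,J)$ (they are tangent to $\D_+$ and $J\D_+=\D_+$), their mean curvature vanishes identically. Combining conformality with minimality of the fibres, \cite[Corollary 2.6]{bg} yields that $\pi$ is a harmonic morphism.

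I do not anticipate a significant obstacle here: the whole point is that the structural properties of $\F$ established in Section \ref{wei_con}, namely TGHH with Lee form $\theta=\di\ln z$ and complex leaves, are exactly the input required by the local construction principle for harmonic morphisms from conformal foliations. The only minor point worth flagging is that the totally geodesic assumption on the fibres is in fact stronger than needed for the conclusion (minimality alone suffices), but it is present in our setup and causes no trouble.
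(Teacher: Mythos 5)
Your proposal is correct and follows exactly the paper's own (one-line) argument: the paper deduces the proposition directly from \cite[Corollary 2.6]{bg} using that the fibre foliation is conformal (Proposition \ref{kaehler}) and minimal (indeed totally geodesic, and in any case minimal by Proposition \ref{min} since the leaves are complex). Your write-up just fills in the same two verifications in slightly more detail.
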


\begin{rem}
	\begin{itemize}
		\item[(i)] 	Both $Z$ and $M$ can be choosed compact and the dimension of the fibers respectively the codimension  are arbitrary.   As far as we know, up to now no examples of holomorphic harmonic morphisms from compact K\"ahler manifolds, with arbitrary fibre dimension and codimension , were known. 
		\item[(ii)] It was already observed in \cite{BLMS} that the Calabi construction on line bundles yields holomorphic harmonic morphisms with fibres of complex dimension 1.
		\end{itemize}
\end{rem}

\subsection{Conformal submersions with complex fibres} \label{non-h}
Based on the structure result in Theorem \ref{tg-main} we construct in this section examples of totally geodesic, non-holomorphic foliations 
on compact K\"ahler manifolds. This recipe actually covers all local examples. 

Start with a compact K\"ahler manifold $(M,g_M,J_M)$ which carries a totally geodesic, Riemannian foliation  with complex leaves. Let $\T M=
\D_{+}^M \oplus \D_{-}^M$ be the associated splitting (that we assume not to be $\nabla^{g_M}$-parallel, i.e. not giving rise to a local Riemannian product).  We also assume that $[\omega_M] \in H^2(M,\mathbb{Z})$ thus $M$ is polarised by a circle bundle 
$P \to M, c_1(P)=[\omega_M]$.

In addition we consider a K\"ahler manifold $N:=Z(N_0,M_0,l_0)$ obtained by the Weinstein construction. This carries its canonical homothetic foliation 
$$\T N=\D_{+}^N \oplus \D_{-}^N
$$
with Lee form $\theta^N=\di \ln z_N$. The Hamiltonian circle action $\bbS \subseteq \Aut_0(N,g_N,J_N)$ induced by the action on $N_0$(see \eqref{act3}) satisfies the compatibility condition 
in Theorem \ref{unique2}, (b). 
With respect to this circle action we form the K\"ahler manifold
\begin{equation} \label{ex-no3}
Z:=P \times_{\bbS} N.
\end{equation}
The canonical foliation on $Z$ comes from the splitting $\T Z=\D_{+} \oplus \D_{-}$. Using Theorem \ref{unique2}, we push forward the canonical foliation of $N$ to $Z$ and decompose $\D_+=\D_+^1\oplus\D_-^2$. Moreover, by Theorem \ref{multi-ruled}, we pull-back the foliation $\D_{+}^M$ from $M$ to $Z$ and obtain $\D_-=\D_+^2\oplus\D_-^2$. 

\begin{thm} \label{tg-no3}
On the K\"ahler manifold $(Z,g,J)$ described in \eqref{ex-no3}, consider the splitting 
$$\T Z=(\D_+^1\oplus\D_+^2)\oplus (\D_-^1\oplus\D_-^2).$$ 
Then
\begin{itemize}
\item[(i)] The distribution $\D_+^1\oplus\D_+^2$ induces a complex, totally geodesic, homothetic foliation which is not holomorphic.
\item[(ii)] The canonical deformation $(h,I)$ is a non-integrable almost Hermitian structure of type $\mathcal{G}_1$, see Proposition \ref{g1-f}. 
\item[(iii)] All K\"ahler manifolds supporting a totally geodesic homothetic foliation with complex leaves are obtained locally by the above construction.
\end{itemize}   
\end{thm}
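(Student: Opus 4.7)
The plan is to address the three claims in turn, with the bulk of the work lying in (i); the main technical obstacle is that the distribution $\D_+^1\oplus\D_+^2$ mixes a strict sub-distribution of the outer Weinstein fibre with a horizontal lift from the base, so the arguments of Theorem \ref{multi-ruled} cannot be applied verbatim but must be refined to track how the splitting $\D_+=\D_+^1\oplus\D_-^2$ from Theorem \ref{unique2} interacts with the pullback of $\D_+^M$.

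For (i), complex invariance is immediate from $J\sim_{\tilde\H}J_M+J_N$ together with the $J_N$-invariance of $\D_+^N$ and the $J_M$-invariance of $\D_+^M$. For the metric-type structure, the plan is to combine two inputs: Theorem \ref{unique2} applied to the canonical foliation $\D_+^N$ on $N=Z(N_0,M_0,l_0)$ makes $\D_+^1$ itself a TGHH foliation on $Z$ with Lee form the projection of $\iota_{\om_M}(\theta^N)$; and Theorem \ref{multi-ruled} applied to $Z\to M$ with $\F_M=\D_+^M$ shows that $\D_+\oplus\pi^{\star}\D_+^M$ is TG and conformal on $Z$ (with $\theta_M=0$ since $\D_+^M$ is Riemannian, and $\eta_{\D_+}=0$ since $\D_+$ is TG). The point is then to restrict the $\D_+$-contribution to the sub-distribution $\D_+^1$: this operation is compatible with the structures because the splitting $\D_+=\D_+^1\oplus\D_-^2$ is $\tn$-parallel and lifted consistently across $Z$ via parallel transport on $P$. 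Carrying out this restriction yields the TG and homothetic properties of $\D_+^1\oplus\D_+^2$.

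For non-holomorphy I argue by contradiction: if $\D_+^1\oplus\D_+^2$ were holomorphic, Lemma \ref{tg-e}(ii) would force $\Psi=0$ for this foliation, which upon projecting the structure equation for $\di\omega_+$ through $\pi:Z\to M$ forces $\D_+^M$ holomorphic in $\T M$; combined with $\D_+^M$ being TG Riemannian, Lemma \ref{tg-e} then yields $\eta=0$ identically on $M$, so the splitting $\T M=\D_+^M\oplus\D_-^M$ is $\nabla^{g_M}$-parallel, making $(M,g_M,J_M)$ a local Riemannian product, contrary to assumption. Part (ii) is then immediate: Proposition \ref{g1-f} applied to the TG complex foliation $\D_+^1\oplus\D_+^2$ gives $(h,I)$ of type $\mathcal{G}_1$, and Proposition \ref{tg} characterises integrability of $I$ as equivalent to the foliation being simultaneously TG and holomorphic, which (i) denies.

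For (iii), given any TG homothetic complex foliation $\F$ on a K\"ahler $(Z,g,J)$, Theorem \ref{tg-main} produces the intermediate TGHH foliation $\F^1$ with leaf-direction $\D^1=\D^1_+\oplus\D^1_-$, and locally identifies $M:=Z/\F^1$ as a K\"ahler manifold carrying a TG Riemannian foliation $\D_+^M$ with complex leaves. Theorem \ref{thm1-in} applied locally to $\F^1$ realises $Z$ as the Weinstein construction $Z(N_1,M,L)$ with $N_1$ a leaf of $\F^1$; by Theorem \ref{tg-main}(ii), $N_1$ inherits its own TGHH foliation tangent to $\D^1_+$, and iterating Theorem \ref{thm1-in} on $N_1$ yields $N_1=Z(N_0,M_0,l_0)$ locally. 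This matches the nested Weinstein data of \eqref{ex-no3}, and the direction $\D_+$ of the original $\F$ is identified with $\D_+^1\oplus\D_+^2$ in the present notation.
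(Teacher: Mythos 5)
Your proposal is correct and follows essentially the same route as the paper: part (i) by combining Theorems \ref{multi-ruled} and \ref{unique2}, non-holomorphy by the contradiction that $\D_+^M$ would be holomorphic, totally geodesic and Riemannian, hence $\nabla^{g_M}$-parallel; part (ii) from Propositions \ref{g1-f} and \ref{tg}; part (iii) from the factorisation of Theorem \ref{tg-main}. Your treatment of (iii) is in fact slightly more complete than the paper's one-line reference, since you make explicit the appeal to the (local) Weinstein classification of the intermediate TGHH foliation and its iteration on the fibre.
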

\begin{proof}
(i) The foliated structure of $Z$ is implied by combining Theorems \ref{multi-ruled} and \ref{unique2}. If the foliation were holomorphic, part (ii) in  Theorems \ref{multi-ruled} leads to $\D_+^M$  holomorphic in $\T M$. Since $\D_+^M$ is already totally geodesic and Riemannian, it will be $\nabla^g$-parallel, contradicting our assumption.  
(ii) follows from (i) and Proposition \ref{g1-f}, while 
(iii) has been established during the proof of Theorem \ref{tg-main}.
\end{proof}

\begin{rem}
(i) In particular, when all manifolds involved in the above construction are compact, then $Z$ is compact too, therefore we obtain examples of totally geodesic, conformal and non-holomorphic submersions from compact K\"ahler manifolds. 

(ii) Complete K\"ahler manifolds $(M,g,J)$ supporting a totally geodesic, Riemannian foliation with complex leaves have been classified in \cite{Na1}: up to a finite cover and Riemannian products, they are twistor spaces of quaternion K\"ahler manifolds of positive scalar curvature or certain classes of compact homogeneous K\"ahler manifolds. In the latter case, the fibrations producing the desired Riemannian foliations have been fully described in \cite{span}.
\end{rem}
Based on these we illustrate how factorisation 
works for the foliation constructed in Theorem \ref{tg-no3}, (i). Let $g_{FS}$ and $g_{qK}$ be the canonical metrics of the complex, respectively quaternionic projective spaces, normalised such that the Hopf fibration $\mathbb{P}^{2n+1} \xrightarrow{\ \pi_{h}\ }\mathbb{H}P^n$ is a Riemannian submersion.
\begin{propn} \label{fact-ex}
In Theorem \ref{tg-no3}, assume $(M,g_M)=(\mathbb{P}^{2n+1}, g_{FS})$ is polarised by the canonical class. Then:
\begin{itemize} 
\item[(i)] The foliation $\D_{+}^1 \oplus \D_{+}^2$ is tangent to the fibres of a conformal submersion 
$Z \xrightarrow{\pi_0} M_0 \times \mathbb{H}P^n$. The base is equipped with product metric $g_{M_0}+g_{qK}$ and the fibres of $\pi_0$ are $Z(N_0, \mathbb{P}^1, \mathcal{O}(-1))$;
\item[(ii)] $\pi_0$ factorises according to the commutative diagram 
\begin{equation*}
\begin{tikzpicture}

\node (N11) at (-1.5,0) {$N_0$};

\node (N12) at (0,0) {$Z$};

\node (N13 anchor) at (2.8,-.15) {};
\node (N13) at (3,0) {$M_0 \times \mathbb{P}^{2n+1}$};

\node (N23 anchor) at (2.8,-1.35) {};
\node (N23) at (3,-1.5) {$M_0 \times \mathbb{H}P^{n}$};

\draw[right hook-stealth] (N11) -- (N12);

\draw[-stealth] (N12) -- (N13) node[midway,above] {\small $\pi_1$};

\draw[-stealth] (N13 anchor) -- (N23 anchor) node[midway,right] {\small $1_{M_0} \times \pi_h$};

\draw[-stealth] (N12) -- (N23) node[midway,below left] {\small $\pi_0$};
\end{tikzpicture}
\end{equation*}
where the submersion $\pi_1$ is holomorphic, totally geodesic and conformal, whilst the submersion $1_{M_0} \times \pi_{h}$ is totally geodesic and Riemannian, with complex fibers.
\end{itemize}
\end{propn}

\begin{proof} 
 By \eqref{pic2} we already have a holomorphic and conformal submersion
$$N_0\hookrightarrow (Z,g, J) \xrightarrow{\ \pi_1\ } M_0\times \mathbb{P}^{2n+1},$$
where the base is equipped with the product metric and complex structure. Let $\pi_0$  be the composition $(1_{M_0}\times \pi_{h})\circ \pi_1$. It is  conformal since $\pi_1$ is conformal and $\pi_h$ is Riemannian. The distribution $\D_{+}^1 \oplus \D_{+}^2$ is by construction tangent to the direct sum $\ker \di \pi_0=\ker \di \pi_1\oplus \pi^\star_1(\T \mathbb{P}^1)$.   To prove both claims there remains to determine the fibres of $\pi_0$. From the diagram we see that the fibres  $\pi_0^{-1}(\{m_0,q\})=\pi_1^{-1}\left(\{m_0\}\times \pi_h^{-1}(q)\right).$ 
Recall that $Z=P\times _{\bbS} N_0$, where $P$ is the tensor product of the polarisations of $M_0$ and $\mathbb{P}^{2n+1}$. The latter corresponds to the circle fibration $\bbS\hookrightarrow \mathbb{S}^{4n+3}\xrightarrow{\ p_h\ } \mathbb{P}^{2n+1}$. As $p_h^{-1}(\mathbb{P}^1)\simeq \mathbb{S}^3$, the identification being equivariant when $\mathbb{S}^3$ is acted on by $\bbS$ with quotient $\mathbb{P}^1$, the claim follows.
\end{proof}

\begin{rem}
	More generally, in the above statement one can replace the Hopf fibration by the twistor fibration of any quaternion K\"ahler manifold of positive scalar curvature.
\end{rem}
\begin{rem}
	From Theorem \ref{tg-no3} (ii) and Proposition \ref{fact-ex} we obtain  examples of conformal submersions from $\mathcal{G}_1$ structures, namely  $\pi_0:(Z,h,I)\to M_0\times\mathbb{H}P^n$. Note that here the canonical variation is taken w.r.t. the distribution $\D^1_+\oplus\D^2_+$ tangent to the fibres of $\pi_0$. Moreover, the almost Hermitian structure induced by $(h,I)$ on the fibres is K\"ahler.  
\end{rem}

\subsection{Hermitian geometry on $Z$} \label{I-subs}
By (ii) in Proposition \ref{kaehler} that the almost complex structure $I$ given by 
$$ I=-J \ \mbox{on} \ \D_{+}, \ I=J \ \mbox{on} \ \D_{-}
$$
is integrable and orthogonal w.r.t. metric $g$ of $Z$.

In this section we show that $I$ admits at least two types of compatible Riemannian metrics with distinguished geometric properties. Some of these metrics are conformal with the K\"ahler metric $g$. The problem of finding conformal metrics with different or equal special holonomies w.r.t. different complex structures was already discussed in the literature, e.g.  \cite{BLMS}, \cite{MMP}. 
Recall now:

\begin{defn} \label{bal}
A Hermitian structure with fundamental form $\Omega$ defined on a manifold $Z^{2d}, d \geq 3$, is called { balanced} if its Lee form vanishes, equivalently $\di \Omega^{d-1}=0$.
\end{defn}

The complex manifold $(Z,I)$ carries natural families of such metrics as 
showed below. Let $[a,b]$ be the range of the momentum map $z:Z \to \bbR$; whenever $\psi:[a,b] \to \bbR$ is smooth we indicate with $\psi(z)$ the composition $\psi \circ z:Z \to \bbR$. 
\begin{propn} \label{I}
Let $\varphi:[a,b] \to (0,\infty)$ be decreasing, $\varphi^{\prime}<0$, and such 
that $\varphi^{\prime \prime}$ is not identically zero. Let 
\begin{equation*}
\Omega_{\varphi}=\varphi^{\prime}(z)\om_{+}+z^{-1}\varphi(z)\om_{-}.
\end{equation*}
The Hermitian structure $\left((-\varphi^{\prime}(z))^{-\frac{n-1}{m+n-1}}\Omega_{\varphi},I\right)$ is balanced non-K\"ahler if $n \geq 2$ and K\"ahler if $n=1$.
\end{propn}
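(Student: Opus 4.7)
The Hermitian metric associated with $(\Omega_\varphi,I)$ is $h=-\varphi'(z)\,g|_{\D_+}+z^{-1}\varphi(z)\,g|_{\D_-}$, which is positive definite because $\varphi>0$ and $\varphi'<0$; write the rescaled fundamental form as $\tilde\Omega:=\lambda\,\Omega_\varphi$ with $\lambda=(-\varphi'(z))^{-(n-1)/(m+n-1)}$. The plan rests on three basic facts: the structure equations $\di\omega_+=-z^{-1}\di z\wedge\omega_-$ and $\di\omega_-=z^{-1}\di z\wedge\omega_-$, which follow from \eqref{curv-d} and the closedness of $\om=\om_++\om_-$; the fact that $\di z$ vanishes on $\D_-$, so that $\di z$ and $\omega_+^n$ both live in the exterior algebra over $(\D_+)^*$ and their wedge vanishes for dimensional reasons ($2n+1>2n=\dim_{\bbR}\D_+$); and the binomial identity $\binom{m+n-1}{n}\,n=\binom{m+n-1}{n-1}\,m$.

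When $n=1$ the factor $\lambda$ is constant equal to $1$. A direct calculation with the two structure equations shows that the $\di z\wedge\omega_-$-contributions to $\di\Omega_\varphi$ cancel, leaving $\di\Omega_\varphi=\varphi''(z)\,\di z\wedge\omega_+$; since $\D_+$ has rank $2$, $\omega_+$ is already top on it and $\di z\wedge\omega_+=0$, so $\di\tilde\Omega=0$ and $(\tilde\Omega,I)$ is K\"ahler.

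For $n\geq 2$ I would prove balancedness by expanding $\tilde\Omega^{m+n-1}$: because $\omega_+^{n+1}=0=\omega_-^{m+1}$, only two terms of the binomial sum survive, namely
\begin{equation*}
\tilde\Omega^{m+n-1}=\binom{m+n-1}{n}F(z)\,\omega_+^n\wedge\omega_-^{m-1}+\binom{m+n-1}{n-1}G(z)\,\omega_+^{n-1}\wedge\omega_-^m,
\end{equation*}
with $F=\lambda^{m+n-1}(\varphi')^n(z^{-1}\varphi)^{m-1}$ and $G=\lambda^{m+n-1}(\varphi')^{n-1}(z^{-1}\varphi)^m$. Applying the structure equations and using $\di z\wedge\omega_+^n=0$, the form $\di\tilde\Omega^{m+n-1}$ becomes a scalar multiple of $\di z\wedge\omega_+^{n-1}\wedge\omega_-^m$, and the binomial identity above reduces the vanishing of its coefficient to the single ODE $zG'(z)=m\bigl(F(z)-G(z)\bigr)$. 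The exponent $-(n-1)/(m+n-1)$ of $\lambda$ is chosen precisely so that $\lambda^{m+n-1}(\varphi')^{n-1}=(-1)^{n-1}$ and $\lambda^{m+n-1}(\varphi')^n=(-1)^n(-\varphi')$, after which $F=(-1)^{n-1}\varphi'\,(z^{-1}\varphi)^{m-1}$ and $G=(-1)^{n-1}(z^{-1}\varphi)^m$, and the above ODE collapses to the tautology $(-1)^{n-1}m(z^{-1}\varphi)^{m-1}(\varphi'-z^{-1}\varphi)$ on both sides. Finally, non-K\"ahlerness for $n\geq 2$ follows from
\begin{equation*}
\di\tilde\Omega=(\lambda\varphi')'(z)\,\di z\wedge\omega_++\lambda'(z)\,z^{-1}\varphi(z)\,\di z\wedge\omega_-,
\end{equation*}
which is nonzero: the hypothesis $\varphi''\not\equiv 0$ forces $\lambda'\not\equiv 0$, while $\di z\wedge\omega_-$ is nontrivial on $Z$. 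The only delicate part of the argument is this sign- and exponent-bookkeeping, which is exactly what fixes the prescribed conformal factor; the remaining steps are routine applications of the structure equations.
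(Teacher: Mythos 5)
Your proof is correct and follows essentially the same route as the paper's: both reduce the balanced condition, via the structure equations $\di\om_{+}=-\theta\wedge\om_{-}$, $\di\om_{-}=\theta\wedge\om_{-}$, the vanishing of $\di z\wedge\om_{+}^{n}$, and a binomial-coefficient identity, to a single first-order ODE in $z$ that the conformal factor is designed to satisfy. The only (cosmetic) differences are that the paper starts from the general ansatz $-A(z)\om_{+}+B(z)\om_{-}$ and solves the ODE to derive the stated factor, whereas you verify it directly, and that the paper tests $\di\alpha\wedge\alpha^{m+n-2}=0$ instead of $\di(\alpha^{m+n-1})=0$, which is the same condition.
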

\begin{proof}
The proof follows in fact by direct computation, from the structure equations of the foliation $\D_{+}$. However, to illustrate how these structures were found 
consider the Hermitian structure $(\alpha,I)$ given by 
\begin{equation} \label{da-0}
\alpha=-A(z)\om_{+}+B(z) \om_{-}
\end{equation}
for some functions $A,B:[a,b] \to (0,\infty)$. From \eqref{curv-d} and \eqref{do-} we get 
\begin{equation} \label{da-1}
\di \alpha=C(z)\di z \wedge \om_{+}+D(z)\di z \wedge \om_{-}
\end{equation}
where $C(z)=-A^{\prime}(z)$ and $D(z)=B^{\prime}(z)+z^{-1}(A+B)$. Recall that $n=\dim_{\bbC}\D_+$ and $m=\dim_{\bbC}\D_-$. Assume that $n \geq 2$. Then  
\begin{equation*}
\begin{split}
\di \alpha \wedge \om_{+}^{n-2} \wedge \om_{-}^{m}&=C(z)\di z \wedge \om_{+}^{n-1} 
\wedge \om_{-}^m,\\
\di \alpha \wedge \om_{+}^{n-1} \wedge \om_{-}^{m-1}&=D(z)\di z \wedge \om_{+}^{n-1} 
\wedge \om_{-}^m, \quad \di \alpha \wedge \om_{+}^n=0.
\end{split}
\end{equation*}
At the same time, taking into account the vanishing of $(\omega_{+})^{i}, i \geq n+1$, respectively $(\omega_{-})^j, j \geq m+1$, shows that
\begin{equation*}
\begin{split}
 \alpha^{n+m-2}&=\binom{m+n-2}{n-2}(-A\omega_{+})^{n-2}\wedge(B\omega_{-})^m\\
 &+\binom{m+n-2}{n-1}(-A\omega_{+})^{n-1}\wedge (B\omega_{-})^{m-1}\\
 &+\binom{m+n-2}{n}(-A\omega_{+})^{n}\wedge (B\omega_{-})^{m-2}.
 \end{split}
 \end{equation*}

The balanced equation $\di \alpha \wedge
\alpha^{n+m-2}=0$ reads thus $(n-1)BC=mAD$ or equivalently 
\begin{equation*}
mAB^{\prime}+(n-1)BA^{\prime}+mz^{-1}A(A+B)=0.
\end{equation*}
Elementary considerations show that $B=z^{-1}A^{-\frac{n-1}{m}}\varphi(z)$ and 
$A^{\frac{m+n-1}{m}}=-\varphi^{\prime}(z)$ where $\varphi>0$ and $\varphi^{\prime}<0$. Clearly $\di \alpha=0$ if and only if $C=D=0$, which corresponds to $\varphi^{\prime \prime}=0$.\\
When $n=1$ we have $\di z \wedge \om_{+}=0$ hence the balanced equation reduces to 
$D=0$. Equivalently $d\alpha=0$ and it straightforward to check that $A=-\varphi^{\prime}, B=z^{-1}\varphi$.
\end{proof}
As far as the conformal class of the metric $g$ is concerned this yields: 
\begin{cor} \label{corI}
The Hermitian structure $(z^{-\frac{2m}{m+n-1}}g,I)$ is balanced non-K\"ahler if $n \geq 2$ and K\"ahler if $n=1$.
\end{cor}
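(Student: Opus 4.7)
My plan is to realise the corollary as a special case of Proposition \ref{I} by choosing $\varphi$ so that $\Omega_\varphi$ is pointwise proportional to the fundamental form of $(g,I)$.

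First, observe that since $I=-J$ on $\D_{+}$ and $I=J$ on $\D_{-}$, the fundamental form of the Hermitian structure $(g,I)$ is
\begin{equation*}
\Omega_I := g(I\cdot,\cdot) = -\omega_{+}+\omega_{-},
\end{equation*}
and for a conformal rescaling $f(z)\,g$ the fundamental form is simply $f(z)(-\omega_{+}+\omega_{-})$. So my goal is to find a smooth positive decreasing $\varphi$ on the range $[a,b]\subset(0,\infty)$ of $z$ such that the form
$\Omega_{\varphi}=\varphi'(z)\omega_{+}+z^{-1}\varphi(z)\omega_{-}$
is pointwise proportional to $-\omega_{+}+\omega_{-}$; this forces the first-order ODE $-\varphi'(z)=z^{-1}\varphi(z)$, whose positive solutions are $\varphi(z)=C/z$, $C>0$.

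Taking $C=1$ gives $\varphi(z)=z^{-1}$, which satisfies all the hypotheses of Proposition \ref{I}: $\varphi>0$ (since $z>0$), $\varphi'=-z^{-2}<0$, and $\varphi''=2z^{-3}$ does not vanish. With this choice,
\begin{equation*}
\Omega_\varphi = z^{-2}(-\omega_{+}+\omega_{-}),\qquad (-\varphi'(z))^{-\frac{n-1}{m+n-1}} = z^{\frac{2(n-1)}{m+n-1}},
\end{equation*}
so Proposition \ref{I} produces a balanced (resp. K\"ahler when $n=1$) Hermitian form for $I$ equal to
\begin{equation*}
z^{\frac{2(n-1)}{m+n-1}}\cdot z^{-2}(-\omega_{+}+\omega_{-}) = z^{-\frac{2m}{m+n-1}}(-\omega_{+}+\omega_{-}),
\end{equation*}
which is precisely the fundamental form of $(z^{-\frac{2m}{m+n-1}}g,I)$. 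The corollary follows.

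There is no serious obstacle here: all the analytic content lives in Proposition \ref{I}, and the only thing to verify is the elementary ODE computation above plus the exponent arithmetic $-2+\tfrac{2(n-1)}{m+n-1}=-\tfrac{2m}{m+n-1}$. Non-K\"ahlerness for $n\geq 2$ is inherited from Proposition \ref{I} since $\varphi''\not\equiv 0$.
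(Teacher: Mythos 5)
Your proposal is correct and is exactly the paper's proof: the authors dispose of the corollary by taking $\varphi(z)=z^{-1}$ in Proposition \ref{I}. Your additional verifications (the ODE $-\varphi'=z^{-1}\varphi$ forcing this choice, the identification of the fundamental form of $(g,I)$ with $-\omega_{+}+\omega_{-}$, and the exponent arithmetic) are all accurate and merely spell out what the paper leaves implicit.
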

\begin{proof}
Take $\varphi(z)=z^{-1}$ in Proposition \ref{I}.
\end{proof}

\begin{rem}
	Balanced metrics (also known as semi-K\"ahler in Gray-Hervella's classification, belonging to the class $\mathcal{W}_3$, \cite{gh}) appear naturally on some compact complex manifolds, e.g.  on twistor spaces of  conformally flat Riemannian manifolds, cf. \cite{mich},  \cite[Proposition 11]{gau_weyl}). Balanced metrics on twistor spaces are related to totally geodesic, holomorphic foliations of complex dimension $m(m-1)=\dim( \mathrm{SO}(2m)/\mathrm{U}(m))$. But these foliations are not homothetic. Our examples are thus of a different nature.
	
	 On the other hand, there exist compact  complex manifolds which do not admit any compatible balanced metric. Such is the case of  $S^5\times S^1$ endowed with a natural complex structure which makes the projection onto $\mathbb{P}^2$  holomorphic (see  \cite{fu}). 
\end{rem}

\section{Local and global characterisations of the Weinstein constructions} \label{abs-c}

Let $(Z^{2(m+n)}, g, J), m+n\geq 2$, be a compact connected K\"ahler manifold endowed with a 
complex foliation $\mathcal{F}$ with tangent leaf distribution $\D_{+}$, $\dim_\bbC \D_+=n$. We denote by $\D_{-}$ the orthogonal complement of $\D_{+}$ and assume that 
\begin{equation} \label{tgh}
\D_{+} \ \mbox{is holomorphic}
\end{equation}
and 
\begin{equation} \label{hom}
\D_{+} \ \mbox{is totally geodesic and homothetic with Lee form}  \ \theta.
\end{equation}
In this situation we show how to recover most of the data involved in the Weinstein construction.
\begin{rem} \label{t=0}
If $\theta=0$ the distributions $\D_\pm$ are parallel w.r.t. $\nabla^g$ hence $(Z,g,J)$ is locally a product of K\"ahler manifolds. Therefore, we shall assume in the sequel that $\theta$ does not vanish identically on $Z$.
\end{rem}
Because $Z$ is compact Proposition \ref{detect-k} provides a holomorphic Killing vector field $K$ with momentum map $z>0$, that is $K \cntrct  \omega=\di z$. 
Moreover $\theta=\di\ln z$. If $\zeta:=\theta^\sharp$, then  
\begin{equation} \label{kk}
J\zeta=-z^{-1}K.
\end{equation}

\subsection{The local splitting}\label{tls}
We will now use the existence of $K$ to  obtain the full classification of complex foliations satisfying \eqref{tgh} and \eqref{hom}. We first need to set up some preliminaries.  

The form $\frac{1}{z}\omega_{-}$ is closed by \eqref{do-} and thus, 
it defines a ray of cohomology classes  $\bbR^+c$, where 
\begin{equation} \label{co-class}
c:=[z^{-1}\omega_{-}] \in H^{1,1}(Z,\bbR).
\end{equation}
\begin{rem} \label{cn0}
Assume that $Z$ is compact. Then
$$(z^{-1}\omega_{-})^{m} \wedge \omega^n=\left(z^{m}\left( \begin{array}{cc}
m+n \\ n \end{array} \right)\right)^{-1}\omega^{m+n}.$$ By using Stokes' theorem it follows that the cohomology class $c$ does not vanish.
\end{rem}

\begin{defn} The ray defined above is  called the twist class of the foliation $\F$ and is denoted $\tw(\F)$.
\end{defn}

In the rest of this section we work under the assumption that the twist class
\begin{equation}
\tw(\F) \in \bbR^+ H^2(Z,\mathbb{Z}).
\end{equation}
If this happens, one can choose the momentum map $z$ in such a way that $c\in H^2(Z,\mathbb{Z})$. This assumption allows constructing, by Chern-Weil theory, a {\it{twist bundle}} for $Z$; this is a principal circle bundle $\mathbb{S}^1 \hookrightarrow Q \xrightarrow{\pi_Q}Z$ with $c_1(Q)=c$ and principal circle action denoted by $(R^Q_\lambda)_{\lambda \in \bbS}$. 

\begin{rem}\label{idea}
	Were $Z$ obtained by the Weinstein construction from $M$ and $N$, the above $Q$ coincides with the twist bundle of $M\times N$, see Definition \ref{def-tw}. Hence, as the twist construction is symmetric, see Remark \ref{symmetry}, the twist of $Z$ by $Q$ is precisely $(M\times N, J_M\times J_N,  z_Ng_M+g_N)$.  Our strategy will thus be to twist $Z$ using the bundle $Q$ and an appropriate $c$-Hamiltonian $\bbS$-action. The natural candidate for this action is the action of $\bbR$ on $Z$ induced by the flow of $K$. However, two difficulties arise: (1) this flow is a priori not periodic, and (2) even in the periodic case, one has to show that the twist is smooth, i.e. the lifted circle action is free. To overcome them, we first construct a Riemannian metric on $Q$ which would project on the twist in case the latter existed. This metric will be a local product, and the integrality of the twist class will force periodicity for the  $\bbR$-action lifted in a Hamiltonian way to the universal cover of $Q$. Then treating the remaining issue amounts to understanding the geometry of the various group actions involved.
\end{rem}
Choose a principal connection form $\Theta$ in $Q$ with curvature form
\begin{equation}\label{curva}
-\di \Theta=\pi_Q^{\star}(z^{-1}\om_{-}).
\end{equation}
 Indicating with $T_Q $ the vector field 
tangent to the principal circle action on $Q$ we have a direct sum decomposition: 
\begin{equation} \label{split-sas}
\T Q=\span \{\ T_Q  \} \oplus \H,
\end{equation}
where $\H:=\ker(\Theta)$. Lifting the splitting $\T Z=\D_{+} \oplus \D_{-}$ to $\H$ allows decomposing 
\begin{equation} \label{spl1}
\H=\H_{+} \oplus \H_{-}.
\end{equation}
Consider the Riemannian metric $g_Q  $ on $Q$ given by 
\begin{equation*}
\begin{split}
g_Q  = (1+\pi_Q^{\star}x)\Theta^2+\pi_Q^{\star}g_{+}-\Theta \otimes \pi_Q^{\star}(K \cntrct g_{+})-\pi_Q^{\star}(K \cntrct g_{+}) \otimes \Theta
+\pi_Q^{\star}(z^{-1}g_{-})
\end{split}
\end{equation*}
where $x:=g(K,K)$, and $g_{\pm}=g|_{\D_\pm}$. 

\begin{rem}
	The Riemannian metric $g_Q$ is geometrically obtained as follows: 
	\begin{itemize}
		\item[(i)] Consider the $K$-invariant metric on $Z$ defined by $\overline{g}:=g_++z^{-1}g_-$;
		\item[(ii)] Twist $\overline{g}$ w.r.t. the principal connection $\Theta$ and Hamiltonian vector field $K$ to obtain $g_Q$.
	\end{itemize}
Observe that at this stage, the construction does not require periodic orbits of the flow.
\end{rem}

Whenever $U \in \T Z$ we indicate with $U^{\H}$ its horizontal lift to $\H$ w.r.t. the splitting  \eqref{split-sas}. Let 
$$\xi_Q:=T_Q+K^{\H}.$$
Purely algebraic considerations show that 
\begin{equation} \label{alg-Q}
g_Q(\xi_Q,\xi_Q)=1, \ g_Q(\xi_Q,\H)=0, \ g_Q(\H_{+},\H_{-})=0.
\end{equation}

In this section we mainly study the properties of the $g_Q$-orthogonal splitting 
\begin{equation} \label{splt3}
\T Q=\left(\span\{\xi_Q\} \oplus \H_{-}\right) \oplus \H_{+}.
\end{equation}
Eventually this will turn out to be parallel w.r.t. the Levi-Civita connection $\nabla^{g_Q}$ of the metric $g_Q$ (see Theorem \ref{loc-desc}).  

\begin{lemma} \label{symm}
We have 
\begin{itemize}
\item[(i)] $\L_{T_Q}g_Q=0$
\item[(ii)] $\L_{\xi_Q}g_Q=0$
\item[(iii)] $g_Q(\xi_Q,\cdot)=\Theta$.
\end{itemize}
\end{lemma}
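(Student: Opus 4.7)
The plan is to verify all three statements by direct computation, organising the work according to the $g_Q$-orthogonal decomposition $\T Q=\span\{T_Q\}\oplus\H_+\oplus\H_-$ hinted at in \eqref{alg-Q}, and exploiting the symmetries already built into the definitions of $\Theta$, $g_Q$, and $\xi_Q$.

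For (i), I would note that $T_Q$ is the infinitesimal generator of the principal $\bbS$-action and is vertical, so $\L_{T_Q}\Theta=0$ (since $\Theta$ is a principal connection form) and $\L_{T_Q}\pi_Q^\star\alpha=0$ for every form $\alpha$ on $Z$. Since every tensorial factor appearing in the definition of $g_Q$ is either $\Theta$ or a pullback from $Z$, the product rule for Lie derivatives immediately yields $\L_{T_Q}g_Q=0$.

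For (iii), I would evaluate $g_Q(\xi_Q,\cdot)$ on each block of the decomposition above, using $\xi_Q=T_Q+K^{\H}$, $\Theta(T_Q)=1$, $\Theta(U^{\H})=0$, and $\di\pi_Q(U^{\H})=U$. Two cancellations drive the result. First, $g_Q(T_Q,T_Q)=1+\pi_Q^\star x$ comes entirely from the $\Theta^2$-term, while $g_Q(T_Q,K^{\H})=-\pi_Q^\star x$ comes from the single surviving cross term (using $(K\cntrct g_+)(K)=x$). Summing gives $g_Q(\xi_Q,T_Q)=1=\Theta(T_Q)$. Second, for $V\in\D_+$ the contribution $g(K,V)$ of the pullback of $g_+$ in $g_Q(K^{\H},V^{\H})$ cancels the contribution $-g(K,V)$ coming from the cross term in $g_Q(T_Q,V^{\H})$, producing $g_Q(\xi_Q,V^{\H})=0=\Theta(V^{\H})$. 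For $X\in\D_-$ all terms vanish on the nose because $K\perp\D_-$ and $g_-$ kills $\D_+$.

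For (ii), I would write $\xi_Q=T_Q+K^{\H}$ and use (i) to reduce to $\L_{K^{\H}}g_Q=0$. More efficiently, observe that $\di\pi_Q(\xi_Q)=K$, so $\xi_Q$ is $\pi_Q$-related to $K$; hence $\L_{\xi_Q}\pi_Q^\star\alpha=\pi_Q^\star\L_K\alpha$ for every form $\alpha$ on $Z$. Since $K\in\aut_0(Z,g,J)$ is Killing and tangent to $\F$, it preserves the splitting $\T Z=\D_+\oplus\D_-$ together with its momentum map $z$ (because $\L_K z=\omega(K,K)=0$); consequently it annihilates each of $x$, $g_+$, $K\cntrct g_+$, and $z^{-1}g_-$. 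The $\Theta^2$-piece is handled via Cartan's formula and \eqref{curva}:
\[
\L_{\xi_Q}\Theta=\di(\xi_Q\cntrct\Theta)+\xi_Q\cntrct\di\Theta=\di 1-\pi_Q^\star\bigl(z^{-1}(K\cntrct\omega_-)\bigr)=0,
\]
since $K\in\D_+$ forces $K\cntrct\omega_-=0$. Combining, $\L_{\xi_Q}g_Q=0$.

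The only mild obstacle is the bookkeeping in (iii): the coefficient $1+\pi_Q^\star x$ in the $\Theta^2$-term is engineered precisely so that it is offset by the cross-term contribution, and one should track signs carefully. Conceptually nothing is subtle: the underlying reason for the whole lemma is that $\xi_Q$ is the Hamiltonian lift of $K$ to $Q$ in the sense of Proposition \ref{lift}, and $g_Q$ is designed to descend (in the local twist picture) to a $K$-invariant Riemannian structure whose horizontal lift is computed off $\Theta$.
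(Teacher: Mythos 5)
Your proposal is correct and follows essentially the same route as the paper: (i) from $\L_{T_Q}\Theta=0$ and the fact that everything else is pulled back from $Z$, (ii) by reducing to the invariance of each tensorial ingredient of $g_Q$ under the lift of the Killing field $K$ (with $\L\Theta=0$ via Cartan's formula, \eqref{curva} and $K\cntrct\om_-=0$), and (iii) by direct algebraic evaluation, which the paper leaves to the reader and you carry out correctly, including the cancellations $1+x-x=1$ and $g(K,V)-g(K,V)=0$.
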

\begin{proof}
(i) is due to $\L_{T_Q}\Theta=0$.\\
(ii) By (i) it suffices to check that $\L_{K^{\H}}g_Q=0$. Since $K\in \D_{+}$ is  Killing  it must preserve the $g$-orthogonal distributions 
$\D_{\pm}$, hence $\L_Kg_{+}=\L_Kg_{-}=0$. Clearly $\L_K z=\L_Kx=0$. In addition, from $\Theta(K^{\H})=0, K \cntrct \om_{-}=0$ and \eqref{curva} we get $\L_{K^{\H}}\Theta=0$ by means of Cartan's formula. All tensors involved in the definition of $g_Q$ are thus $K^{\H}$-invariant and the claim is proved.\\
(iii) follows algebraically from the definition of $g_Q$.
\end{proof}
\begin{propn} \label{spl2}
The distribution $\H_{+}$ is totally geodesic w.r.t. $g_Q$.
\end{propn}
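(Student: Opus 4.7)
The plan is to verify, for any $V_1, V_2 \in \Gamma(\H_+)$, that $\nabla^{g_Q}_{V_1}V_2$ has vanishing components along $\xi_Q$ and along $\H_-$; the three summands $\span\{\xi_Q\}, \H_-, \H_+$ are in fact $g_Q$-mutually orthogonal, as follows from the explicit definition of $g_Q$ together with Lemma \ref{symm}(iii) (so $\span\{\xi_Q\} \oplus \H_-$ is the $g_Q$-orthogonal complement of $\H_+$). Because the statement is tensorial I would take $V_1, V_2$ to be the $\bbS$-basic horizontal lifts of vector fields $\tilde V_1, \tilde V_2 \in \Gamma(\D_+)$, and $U \in \Gamma(\H_-)$ the basic lift of some $\tilde U \in \Gamma(\D_-)$.

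A single bracket computation will be used throughout: from the curvature identity $-\di\Theta = \pi_Q^\star(z^{-1}\om_{-})$ and the fact that $\om_{-}(\D_{+},\cdot)=0$, one has $\Theta([V_1,V_2]) = 0$ and $\Theta([V_i,U]) = 0$, so all these brackets are horizontal; moreover $[V_1,V_2]$ projects to $[\tilde V_1,\tilde V_2] \in \D_{+}$ by integrability of $\D_{+}$, whence $[V_1,V_2] \in \H_{+}$.

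For the $\xi_Q$-component I would bypass Koszul. Differentiating the identities $g_Q(V_j,\xi_Q)=0$ gives $g_Q(\nabla^{g_Q}_{V_i}V_j,\xi_Q) + g_Q(V_j,\nabla^{g_Q}_{V_i}\xi_Q) = 0$ for $i,j \in \{1,2\}$. Subtracting the $(1,2)$ and $(2,1)$ equations and using $g_Q([V_1,V_2],\xi_Q) = 0$ (since $[V_1,V_2] \in \H_{+}$) yields $g_Q(V_2,\nabla^{g_Q}_{V_1}\xi_Q) = g_Q(V_1,\nabla^{g_Q}_{V_2}\xi_Q)$, while Lemma \ref{symm}(ii) (that $\xi_Q$ is $g_Q$-Killing) gives the opposite-sign identity. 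Both bilinear forms therefore vanish, and feeding this back into the first differentiated equation gives $g_Q(\nabla^{g_Q}_{V_1}V_2,\xi_Q) = 0$.

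For the $\H_{-}$-component I would apply Koszul's formula. The vanishing $g_Q(V_i,U) = 0$ kills two of the six terms; $[V_1,V_2] \in \H_{+}$ kills a third; the remaining two bracket terms involve $[V_i,U]$, which is horizontal and projects to $[\tilde V_i,\tilde U]$, so that they evaluate (via the formulae for $g_Q$ on $\H$) to $g([\tilde V_i,\tilde U],\tilde V_j)$ on the base. The standard manipulation $\tilde U g(\tilde V_1,\tilde V_2) + g([\tilde V_1,\tilde U],\tilde V_2) + g([\tilde V_2,\tilde U],\tilde V_1) = g(\nabla^g_{\tilde V_1}\tilde U,\tilde V_2) + g(\nabla^g_{\tilde V_2}\tilde U,\tilde V_1)$ then rearranges everything into
\[
2\,g_Q(\nabla^{g_Q}_{V_1}V_2,U) = g\bigl(\tilde U,\ \nabla^g_{\tilde V_1}\tilde V_2 + \nabla^g_{\tilde V_2}\tilde V_1\bigr),
\]
which vanishes because $\D_{+}$ is totally geodesic in $Z$ and $\tilde U \in \D_{-}$.

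The main obstacle I anticipate is algebraic bookkeeping on $Q$: although each individual ingredient (the curvature-based bracket identities, the Killing skew-symmetry of $\xi_Q$, and the total geodesy of $\D_{+}$ on the base) is simple, the metric $g_Q$ contains the cross term $\Theta\otimes \pi_Q^{\star}(K\cntrct g_+) + \pi_Q^{\star}(K\cntrct g_+)\otimes \Theta$, so one must verify carefully at the outset that these cross terms do not spoil the orthogonality $\T Q = \span\{\xi_Q\} \perp \H_{-} \perp \H_{+}$; once that is settled the Koszul manipulation proceeds as above.
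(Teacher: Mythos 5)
Your proof is correct and follows essentially the same route as the paper's: Koszul's formula reduces the $\H_{-}$-component to the total geodesy of $\D_{+}$ on $Z$, and the $\xi_Q$-component vanishes because $\xi_Q$ is Killing and $\di\Theta=-\pi_Q^{\star}(z^{-1}\om_{-})$ annihilates $\H_{+}\times\H_{+}$. The only cosmetic difference is that you derive the vanishing of $g_Q(\nabla^{g_Q}_{V_1}\xi_Q,V_2)$ on $\H_{+}$ by a symmetric-versus-skew argument, whereas the paper invokes the identity $2g_Q(\nabla^{g_Q}_{V}\xi_Q,W)=\di\Theta(V,W)$ directly; these are the same computation.
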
 
\begin{proof}
By \eqref{curva} we get 
\begin{equation} \label{br-1}
[U_1^{\H},U_2^{\H}]=[U_1,U_2]^{\H}+\pi_Q^{\star}(z^{-1}\om_-)(U_1,U_2)T_Q
\end{equation}
whenever $U_1,U_2$ are vector fields on $Z$. Pick $V, W \in \D_{+}$ and $X \in \H$. Using successively equations \eqref{alg-Q}, \eqref{br-1} and the integrability of $\D_+$ direct computations based on Koszul's formula give
$$g_Q(\nabla^{g_Q}_{V^{\H}}W^{\H},X^{\H})=\pi_Q^{\star}(g(\nabla^g_VW,X)).$$
Since $\D_+$ is totally geodesic, it follows that $\nabla^{g_Q}_{V^{\H}} W^{\H}\in \span\{\xi_Q\}\oplus\H_+$. Using successively that $g_Q(\xi_Q,\H_{+})=0$ together with the fact that 
$\xi_Q$ is a Killing vector field with $g_Q(\xi_Q,\cdot)=\Theta$ we get 
\begin{equation*}
2g_Q(\nabla^{g_Q}_{V^{\H}} W^{\H},\xi_Q)=-2g_Q(\nabla^{g_Q}_{V^{\H}}\xi_Q,W^{\H})=-\di\Theta(V^{\H},W^{\H})=0
\end{equation*}
where the last equality is granted  by \eqref{curva},  
and the proof is complete.
\end{proof}
\begin{lemma} \label{int-2} 
Let $\bar\Omega:=\pi_Q^{\star}\om_{+}+\pi_Q^{\star}(\di z) \wedge \Theta \in \Lambda^2Q$. The following hold:
\begin{itemize}
\item[(i)] $\ker (\bar\Omega :\T Q \to \T^{\star}Q)=\span \{\xi_Q\} \oplus \H_{-}$
\item[(ii)] $\di \bar\Omega=0$
\item[(iii)]
the distribution $\span \{\xi_Q \} \oplus \H_{-}$ is integrable.
\end{itemize}
\end{lemma}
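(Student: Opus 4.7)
The plan is to verify the three claims in order, with (iii) following easily from (i) and (ii).

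For (i), the first step is to compute $\xi_Q\cntrct\bar\Omega$. Since $\di\pi_Q(\xi_Q)=K$, $\Theta(\xi_Q)=1$, and $\di z(K)=\omega(K,K)=0$, and noting that $K\cntrct\omega=\di z$ vanishes on $\D_-$ (as $\theta=\di\ln z$ does), so that $K\cntrct\omega_+=K\cntrct\omega=\di z$, one finds
\begin{equation*}
\xi_Q\cntrct\pi_Q^\star\omega_+=\pi_Q^\star\di z,\qquad \xi_Q\cntrct(\pi_Q^\star\di z\wedge\Theta)=\di z(K)\,\Theta-\Theta(\xi_Q)\,\pi_Q^\star\di z=-\pi_Q^\star\di z,
\end{equation*}
which cancel. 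Next, for $Y\in\H_-$ the horizontal lift of some $X\in\D_-$, both summands of $Y\cntrct\bar\Omega$ vanish: the first because $\omega_+$ is zero whenever an argument lies in $\D_-$, the second because $\Theta(Y)=0$ and $\di z(X)=0$. Hence $\span\{\xi_Q\}\oplus\H_-\subseteq\ker\bar\Omega$. For the reverse inclusion I would parametrise an arbitrary element as $aT_Q+Y+V^{\H}$ with $Y\in\H_-$ and $V\in\D_+$; imposing vanishing of the contraction with $\bar\Omega$ forces, through its $\Theta$-component, $\di z(V)=0$, and through its horizontal component, $V\cntrct\omega=a(K\cntrct\omega)$. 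Non-degeneracy of $\omega$ then gives $V=aK$, so the element equals $a\xi_Q+Y$, establishing (i).

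For (ii), I differentiate using \eqref{curva} and $\di z=z\theta$:
\begin{equation*}
\di\bar\Omega=\pi_Q^\star\di\omega_+-\pi_Q^\star\di z\wedge\di\Theta=\pi_Q^\star\di\omega_++\pi_Q^\star(\theta\wedge\omega_-).
\end{equation*}
The structure equation \eqref{do+} from Proposition \ref{cdiff} reads $\di\omega_+=-\theta\wedge\omega_-$, producing the desired cancellation.

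For (iii), since $\bar\Omega$ is closed by (ii) and by (i) has constant-rank kernel, I invoke the standard fact that the kernel of a closed form of constant rank is an integrable distribution. Explicitly, for $U_1,U_2\in\ker\bar\Omega$ and any vector field $W$ on $Q$, the Cartan formula together with $U_i\cntrct\bar\Omega=0$ collapses to
\begin{equation*}
0=\di\bar\Omega(U_1,U_2,W)=-\bar\Omega([U_1,U_2],W),
\end{equation*}
so $[U_1,U_2]\in\ker\bar\Omega$ by varying $W$. No step is expected to be the main obstacle; the careful point is bookkeeping with the block decomposition $\omega=\omega_++\omega_-$ and remembering that $\omega_+$ annihilates any vector lying in $\D_-$, which makes the off-diagonal contractions collapse as required.
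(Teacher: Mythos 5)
Your proposal is correct and follows the same route as the paper: part (i) is exactly the "direct algebraic computation" the paper leaves implicit (your contractions of $\xi_Q$ and $\H_-$ with $\bar\Omega$ and the converse via non-degeneracy of $\omega$ check out), part (ii) is the identical cancellation using \eqref{do+}, \eqref{curva} and $\theta=z^{-1}\di z$, and part (iii) is the same Cartan-formula argument the paper uses.
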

\begin{proof}
(i) is proved by direct algebraic computation.\\
(ii) We have $\di\bar\Omega=\pi_Q^{\star}(\di\omega_{+})-\pi_Q^{\star}(\di z) \wedge \di \Theta=-\pi_Q^{\star}(\theta \wedge \om_{-})+
\pi_Q^{\star}(\di z \wedge (z^{-1}\omega_{-}))$ by using successively the structure equation 
\eqref{do+} and \eqref{curva}. The claim follows from $\theta=z^{-1}\di z$.\\
(iii) Let $U_1,U_2$ be sections of $\span \{\xi_Q \} \oplus \H_{-}$. Since $\bar\Omega$ is closed we have $\di \bar\Omega(U_1,U_2,\cdot)=0$. 
After expansion taking into account that $U_1 \cntrct \bar\Omega=U_2 \cntrct \bar\Omega=0$ (see (i)) this yields $\bar\Omega([U_1,U_2], \cdot)=0$ and the claim follows by using again (i). 
\end{proof}
\begin{propn} \label{tg-2}
The distribution $\span\{\xi_Q\} \oplus \H_{-}$ is totally geodesic w.r.t. $g_Q$.
\end{propn}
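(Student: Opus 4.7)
The plan is to apply the Koszul-formula characterization of totally geodesic distributions. Set $\mathcal{D} := \span\{\xi_Q\} \oplus \H_-$; by \eqref{alg-Q} the orthogonal complement in $\T Q$ is $\H_+$. For $X, Y \in \mathcal{D}$ and $W \in \H_+$, the orthogonality of the splitting together with the integrability of $\mathcal{D}$ from Lemma \ref{int-2}(iii) collapses Koszul's formula to
\[
2g_Q(\nabla^{g_Q}_X Y, W) = -(\L_W g_Q)(X,Y).
\]
The proof thus reduces to showing $(\L_W g_Q)(X,Y) = 0$ for all $X, Y \in \mathcal{D}$ and $W \in \H_+$.

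Writing $W = Y_0^{\H}$ with $Y_0 \in \D_+$, I would verify this in three subcases according to whether the arguments are $\xi_Q$ or lie in $\H_-$. When $X = Y = \xi_Q$ the claim follows immediately from $g_Q(\xi_Q, \xi_Q) = 1$ together with the Killing property of $\xi_Q$ (Lemma \ref{symm}(ii)). When $X = \xi_Q$ and $Y \in \H_-$, Lemma \ref{symm}(iii) identifies $g_Q(\xi_Q, [W,Y])$ with $\Theta([W,Y]) = -\di\Theta(W,Y)$, which vanishes by the curvature formula \eqref{curva} because $\omega_{-}$ is zero whenever one argument belongs to $\D_+$; meanwhile \eqref{br-1} and the $\bbS$-invariance of horizontal lifts reduce $[W,\xi_Q]$ to $[Y_0, K]^{\H}$, and integrability of $\D_+$ gives $[Y_0, K]\in \D_+$, whose horizontal lift has no $\H_-$-coupling with $Y$ under $g_Q|_{\H} = \pi_Q^{\star}(g_+ + z^{-1} g_-)$.

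The main case is $X = \bar X^{\H}, Y = \bar Y^{\H} \in \H_-$. Using \eqref{br-1} and $\omega_{-}(Y_0, \bar X) = 0$ one obtains $[W, X] = [Y_0, \bar X]^{\H}$, and a direct expansion of $(\L_W g_Q)(X,Y)$ using the explicit form of $g_Q|_{\H_-}$ yields
\[
(\L_W g_Q)(X,Y)= \pi_Q^{\star}\!\left( z^{-1}(\L_{Y_0} g)(\bar X, \bar Y) - z^{-2}(Y_0 \cdot z)\, g(\bar X, \bar Y)\right).
\]
Since $\D_+$ is conformal with Lee form $\theta$, one has $(\L_{Y_0} g)(\bar X, \bar Y) = \theta(Y_0)\, g(\bar X, \bar Y)$, and since $\theta = \di \ln z$, one has $Y_0 \cdot z = z\,\theta(Y_0)$; the two terms cancel. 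I expect this third case to be the main obstacle: the cancellation works precisely because the conformal factor $z^{-1}$ built into $g_Q|_{\H_-}$ matches the Lee form $\theta = \di \ln z$, which is the geometric motivation for starting the twist from the rescaled metric $\bar g = g_+ + z^{-1} g_-$ on $Z$ rather than from $g$ itself.
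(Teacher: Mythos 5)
Your proposal is correct and follows essentially the same route as the paper: the paper likewise reduces via Koszul's formula (using integrability from Lemma \ref{int-2} and the orthogonality \eqref{alg-Q}) to the identity $-V^{\H}g_Q(X_1^{\H},X_2^{\H})-g_Q([X_1^{\H},V^{\H}],X_2^{\H})-g_Q([X_2^{\H},V^{\H}],X_1^{\H})=0$, which is exactly your $(\L_W g_Q)(X,Y)=0$, and obtains the same cancellation between the homothetic condition $(\L_{Y_0}g)_{\vert\D_-}=\theta(Y_0)g_{\vert\D_-}$ and the factor $z^{-1}$ via $\theta=\di\ln z$. The $\xi_Q$-cases are likewise handled in the paper with the same ingredients ($\xi_Q$ Killing, $g_Q(\xi_Q,\cdot)=\Theta$, and $\di\Theta$ vanishing on $\H_+\wedge\H$ by \eqref{curva}).
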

\begin{proof}
Pick sections $X_1,X_2 \in \D_{-}$ respectively $V \in \D_{+}$. Since $\span\{\xi_Q\} \oplus \H_{-}$ is integrable and $g_Q$-orthogonal to 
$\H_{+}$ using Koszul's formula leads to 
\begin{equation*}
\begin{split}
2g_Q(\nabla^{g_Q}_{X_1^{\H}}X_2^{\H},V^{\H})=&-V^{\H}g_Q(X_1^{\H},X_2^{\H})-g_Q([X_1^{\H},V^{\H}],X_2^{\H})-g_Q([X_2^{\H},V^{\H}],X_1^{\H}).
\end{split}
\end{equation*}
Since $[V^{\H},X_i^{\H}]=[V,X_i]^{\H},i=1,2$ by \eqref{br-1} the definition of $g_Q$ ensures that 
$$g_Q([X_1^{\H},V^{\H}],X_2^{\H})=\pi_Q^{\star}(z^{-1}g([X_1,V],X_2)) \ \mbox{and} \ g_Q([X_2^{\H},V^{\H}],X_1^{\H})=\pi_Q^{\star}(z^{-1}g([X_2,V],X_1).$$
Thus differentiating in $g_Q(X_1^{\H},X_2^{\H})=\pi_Q^{\star}(z^{-1}g(X_1,X_2))$ yields
\begin{equation*}
\begin{split}
2g_Q(\nabla^{g_Q}_{X_1^{\H}}X_2^{\H},V^{\H})&= \pi_Q^{\star}(z^{-2}\di z(V)g(X_1,X_2))\\
&-\pi_Q^{\star}(z^{-1}(Vg(X_1,X_2)+g([X_1,V],X_2)+g([X_2,V],X_1))).
\end{split}
\end{equation*}
But 
$Vg(X_1,X_2)+g([X_1,V],X_2)+g([X_2,V],X_1)=(\L_Vg)(X_1,X_2)=\theta(V)g(X_1,X_2)$
since the foliation is homothetic. Then $\theta=z^{-1}\di z$ yields $g_Q(\nabla^{g_Q}_{X_1^{\H}}X_2^{\H},V^{\H})=0$, in other words 
$\nabla^{g_Q}_{X_1^{\H}}X_2^{\H} \in \span \{\xi_Q\} \oplus \H_{+}$.

The integrability of $\span \{\xi_Q\}\oplus \H_{-}$ entails $g_Q(\nabla^{g_Q}_{\xi_Q}X^{\H},V^{\H})=g_Q(\nabla^{g_Q}_{X^{\H}}\xi_Q,V^{\H})$. Since 
$\xi_Q$ is a Killing vector field with $g_Q(\xi_Q, \cdot)=\Theta$ we find, after using \eqref{curva}, that $g_Q(\nabla^{g_Q}_{X^{\H}}\xi_Q,V^{\H})=\frac{1}{2}\di \Theta(X^{\H},V^{\H})=0$. Thus the vector fields $\nabla^{g_Q}_{\xi_Q}X^{\H}$ and $\nabla^{g_Q}_{X^{\H}}\xi_Q$ are both $g_Q$-orthogonal to $\H_{+}$.

Finally, since $\xi_Q$ is a unit Killing vector field with respect to $g_Q$ we find  $\nabla^{g_Q}_{\xi_Q}{\xi_Q}=0$ and the claim is proved by gathering the facts above.
\end{proof} 
Consider the universal cover $\widetilde{Q} \xrightarrow{\pi_{\tQ}} Q$ 
equipped with the lifted Riemannian metric $g_{\tQ}:=\pi_{\tQ}^{\star}g_Q$. 
\begin{thm}\label{loc-desc}  Let  $Z$ be compact. The Riemannian manifold $(\widetilde{Q},g_{\widetilde{Q}})$ splits as
	\begin{equation} \label{ma-split}
	(\widetilde{Q},g_{\widetilde{Q}})=(\widetilde{P},g_{\tilde P}) \times (\widetilde{N},g_{\tilde N})
	\end{equation}
where $(\widetilde{P},g_{\tilde P})$ and  $(\widetilde{N},g_{\tilde N})$ are simply connected and complete. In addition: 
\begin{enumerate}
	\item[(i)] $(\widetilde{P},g_{\widetilde P})$ is Sasakian, with Reeb vector field $\xi_{\widetilde{P}}$ given by the lift of $\xi_Q$ to $\widetilde Q$, contact form $\eta_{\widetilde{P}}={\pi}_{\tQ}^{\star}(\Theta)$ and contact distribution induced by $\H_-$ 
	\item[(ii)] $(\widetilde{N},g_{\widetilde N})$ is K\"ahler, with K\"ahler form $\omega_{\widetilde N}=\pi_{\tQ}^{\star}\bar\Omega$. The lift 
	$K_{\widetilde{N}}$ of $K^\H$ to $\widetilde Q$ is a holomorphic and Hamiltonian vector field on $\widetilde N$, with  momentum map $z_{\widetilde N}$ given by the lift of $z\circ \pi_Q$ to $\widetilde Q$.
\end{enumerate}
\end{thm}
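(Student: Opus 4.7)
The strategy is to apply de Rham's decomposition theorem to $(\tQ, g_{\tQ})$ with respect to the orthogonal splitting \eqref{splt3}, and then identify the geometric structures on each factor. From Propositions \ref{spl2} and \ref{tg-2} together with Lemma \ref{int-2}(iii), both $\H_+$ and $\span\{\xi_Q\}\oplus\H_-$ are integrable and totally geodesic w.r.t.\ $g_Q$; the integrability of $\H_+$ follows from \eqref{br-1} via $[V_1^\H,V_2^\H]=[V_1,V_2]^\H$ for $V_1,V_2\in\D_+$, as $\om_-$ vanishes on $\D_+$. By \eqref{alg-Q} these two summands are $g_Q$-orthogonal and complementary, hence each is $\nabla^{g_Q}$-parallel (a standard consequence of integrability plus totally geodesic plus orthogonal complementarity). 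Since $Z$ is compact and $Q\to Z$ is a principal $\bbS$-bundle, $Q$ is compact, so $g_Q$ is complete and so is $g_{\tQ}$. Applying de Rham's theorem then yields the global product $(\tQ, g_{\tQ})=(\tP,g_{\tP})\times(\tN,g_{\tN})$ with both factors simply connected and complete.

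For (i), let $\xi_{\tP}$ be the lift of $\xi_Q$. By Lemma \ref{symm}(ii)(iii), $\xi_{\tP}$ is a unit Killing vector field with associated 1-form $\eta_{\tP}=\pi^\star_{\tQ}\Theta$ whose kernel is the lift of $\H_-$. Define $\phi_{\tP}:=-\nabla^{g_{\tP}}\xi_{\tP}$, which from \eqref{curva} and Koszul's formula coincides on the contact distribution (after fixing the normalization by $\Theta$ and $g_Q$) with the pullback of $J|_{\D_-}$. Because the structure equation \eqref{do-} and the holomorphy of $\D_+$ make $(z^{-1}g_-, J|_{\D_-}, z^{-1}\om_-)$ a transversely K\"ahler structure on $Z$, the Sasakian axioms $\phi_{\tP}^2=-I+\eta_{\tP}\otimes\xi_{\tP}$ and $(\nabla_X\phi_{\tP})Y=g_{\tP}(\xi_{\tP},Y)X-g_{\tP}(X,Y)\xi_{\tP}$ then follow by a direct Koszul computation from the explicit form of $g_Q$.

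For (ii), define $J_{\tN}$ on $\tN$ as the horizontal lift to $\H_+$ of $J|_{\D_+}$. By Lemma \ref{int-2}, $\bar\Omega$ has kernel $\span\{\xi_Q\}\oplus\H_-$, is closed, and, since $\Theta$ vanishes on $\H_+$, satisfies $\bar\Omega(V_1^\H,V_2^\H)=\om_+(V_1,V_2)=g_+(JV_1,V_2)=g_Q(J_QV_1^\H,V_2^\H)$ for $V_1,V_2\in\D_+$. Thus $\om_{\tN}:=\pi^\star_{\tQ}\bar\Omega$ is the K\"ahler form of $(g_{\tN}, J_{\tN})$. The horizontal lift $K^\H$ is tangent to $\H_+$ and descends to a vector field $K_{\tN}$ on $\tN$, which is holomorphic since $K$ is. Finally, the identity
\[
K^\H\cntrct\bar\Omega = \pi_Q^\star(K\cntrct\om_+)+\pi_Q^\star(\di z)(K^\H)\Theta-\Theta(K^\H)\pi_Q^\star(\di z)=\pi_Q^\star(\di z),
\]
obtained using $K\cntrct\om_-=0$ (so $K\cntrct\om_+=\di z$), $\di z(K)=\om(K,K)=0$, and $\Theta(K^\H)=0$, identifies the momentum map as $z_{\tN}=z\circ\pi_Q\circ\pi_{\tQ}|_{\tN}$.

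The main obstacle lies in (i): carefully matching the scalings in the Koszul computation of $\phi_{\tP}$ with the normalizations fixed for $\Theta$ and $g_Q$, and verifying that the transverse almost-K\"ahler structure on the contact distribution induced from $J|_{\D_-}$ is genuinely K\"ahler---which is precisely what makes $\tP$ Sasakian via a Boothby-Wang type fibration. Once this matching is set up, the rest (the de Rham splitting and the K\"ahler identifications on $\tN$) is essentially formal given the results of the previous subsections.
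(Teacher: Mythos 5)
Your global splitting argument is the same as the paper's: Propositions \ref{spl2} and \ref{tg-2} give two complementary orthogonal totally geodesic distributions, hence parallel ones, and de Rham applies since $Q$ is compact (the paper leaves the completeness remark implicit, but your justification is correct). Part (ii) also matches the paper essentially verbatim, including the contraction computation $K^{\H}\cntrct\bar\Omega=\pi_Q^{\star}\di z$. Where you genuinely diverge is in part (i): you propose to set $\phi_{\tP}=-\nabla^{g_{\tP}}\xi_{\tP}$ and verify the full Sasakian identity $(\nabla_X\phi_{\tP})Y=g_{\tP}(\xi_{\tP},Y)X-g_{\tP}(X,Y)\xi_{\tP}$ by a Koszul computation, which you yourself flag as the main unresolved obstacle. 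The paper avoids this computation entirely: it checks only that $\xi_{\tP}$ is a unit Killing field with $g_{\tP}(\xi_{\tP},\cdot)=\eta_{\tP}$, that $\di\eta_{\tP}=g_{\tP}(\phi\cdot,\cdot)$ (immediate from \eqref{curva} and the definition of $g_Q$ on $\H_-$), and that the transverse almost complex structure $J_{\tP}$ is integrable --- the latter reducing, via the bracket formula \eqref{br-1}, to $N^J=0$ on $Z$ together with $\om_-$ being of type $(1,1)$. Since a K-contact structure with integrable transverse CR structure is Sasakian, this closes part (i) without any curvature or $\nabla\phi$ computation; if you insist on your route you must actually carry out the Koszul calculation, and the scaling issues you mention (the factor $z^{-1}$ in $g_Q\vert_{\H_-}$ versus the curvature normalisation \eqref{curva}) are exactly where it could go wrong.

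Two smaller points you gloss over and the paper does not: you must check that $\eta_{\tP}$ (resp.\ $\bar\Omega$) is basic for the factor $\tN$ (resp.\ $\tP$), i.e.\ that $\L_U\Theta=0$ for $U$ a section of $\H_+$ and $\L_U\bar\Omega=0$ for $U$ a section of $\span\{\xi_Q\}\oplus\H_-$ --- vanishing of a form on a complementary factor of a Riemannian product does not by itself mean it is pulled back from the other factor; and you assert that $\om_{\tN}$ is the K\"ahler form of $(g_{\tN},J_{\tN})$ without verifying the integrability of $J_{\tN}$, which the paper checks by the same Nijenhuis computation via \eqref{br-1}. Both are routine but needed.
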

\begin{proof}
By Propositions \ref{spl2} and \ref{tg-2}, the distributions appearing in the splitting \eqref{splt3}
are totally geodesic and $g_Q$-orthogonal, hence parallel w.r.t. $\nabla^{g_Q}$. Since $(\widetilde Q, g_{\widetilde Q})$ is simply connected and complete 
the claim in \eqref{ma-split} follows from the de Rham splitting theorem. For further use record that $\pi_Q^\star \left(\span\{\xi_{Q}\} \oplus \H_{-}\right) $
is tangent to $\widetilde P$ whilst  $\pi_Q^\star \H_{+}$ is tangent to $\widetilde N$. \\
(i) The form $\Theta$ vanishes on $\H_{+}$; whenever $U$ is a section of $\H_{+}$ we have $\L_U\Theta=0$ by using Cartan's formula, \eqref{curva} and 
the vanishing of $\om_{-}$ on $\D_{+}$. Therefore $\eta_{\widetilde{P}}:={\pi}_{\tQ}^{\star}\Theta$ is a $1$-form on $\widetilde{P}$, which does not depend on $\widetilde{N}$. Since $g_{\widetilde P}(\xi_{\widetilde P},\cdot)=\eta_{\widetilde{P}}$ it follows that $\xi_{\widetilde P}$ is a vector field 
on $\widetilde P$ which does not depend on $\widetilde N$. Then Lemma \ref{symm}, (ii) ensures that $\xi_{\widetilde P}$ is a unit length Killing vector field. The distribution $\mathcal{D}_{\tP}:=\ker(\eta_{\tP})$ is the lift to $\T\tP$ of $\H_{-}$. From \eqref{curva} we have $\di \eta_{\tP}=g_{\tP}(\phi \cdot, \cdot)$ where 
$\phi(\xi_{\tP})=0, \phi_{\vert \mathcal{D}_{\tP}}=J_{\tP}$ and $J_{\tP}$ is the lift to $\mathcal{D}_{\tP}$ of $J_{-}^{\H}:\H_{-} \to \H_{-}$. Here 
$J_{-}^{\H}$ is the horizontal lift of $J_{-}:\D_{-} \to \D_{-}$. Checking transverse integrability for $J_{\tP}$ amounts to 
$$[X^{\H},Y^{\H}]-[J_{-}^{\H}X^{\H},J_{-}^{\H}Y^{\H}]+J_{-}^{\H}([J_{-}^{\H}X^{\H},Y^{\H}]+[X^{\H},J_{-}^{\H}Y^{\H}]) =0$$
for all $X,Y \in \D_{-}$. This follows from \eqref{br-1}, $N^J(X,Y)=0$ on $Z$ and having $\om_{-}$ of type $(1,1)$ w.r.t. $J$.
\\
(ii) The form $\bar\Omega$ is closed and vanishes on $\span \{\xi_Q\} \oplus \H_{-}$ by Lemma \ref{int-2} (i) $\&$ (ii). Since $\L_{U}\bar\Omega=0$ whenever 
$U$ is a section of $\span \{\xi_{Q}\} \oplus \H_{-}$ it follows that $\omega_{\widetilde N}:={\pi}_{\tQ}^{\star}\bar\Omega$ induces a symplectic form 
on $\widetilde{N}$, which does not depend on $\widetilde{P}$. The full restriction of $\bar\Omega$ to $\H_{+}$ reads $\bar\Omega_{\vert \H_{+}}=(g_Q)_{\vert \H_{+}}(J_{+}^{\H}\cdot,\cdot)$ where $J_{+}^{\H}:\H_{+} \to \H_{+}$ is the horizontal lift of $J_{+}:\D_{+} \to \D_{+}$. Therefore the lift of $J_{+}^{\H}$ to $\widetilde{Q}$ induces an almost complex structure $J_{\widetilde{N}}$ on $\widetilde{N}$ which does not depend on $\widetilde{P}$ and satisfies $g_{\widetilde N}(J_{\widetilde N} \cdot, \cdot)=\om_{\widetilde N}$. Checking integrability for $J_{\widetilde N}$
amounts to 
$$[V^{\H},W^{\H}]-[J_{+}^{\H}V^{\H},J_{+}^{\H}W^{\H}]+J_{+}^{\H}([J_{+}^{\H}V^{\H},W^{\H}]+[V^{\H},J_{+}^{\H}W^{\H}]) =0$$
for all $V,W \in \D_{+}$. This follows from \eqref{br-1} and $N^J(V,W)=0$. We have showed that $(g_{\widetilde{N}},J_{\widetilde{N}})$ is K\"ahler.

Restricting $\L_{K^{\H}}g_{Q}=0$ to $\H_{+}$ shows that $K_{\widetilde{N}}$ is a Killing vector field w.r.t. $g_{\widetilde{N}}$. That $K_{\widetilde{N}}$ is Hamiltonian w.r.t. $\om_{\widetilde{N}}$ (and hence holomorphic w.r.t. $g_{\widetilde{N}}$) follows from 
$K^{\H} \cntrct \bar\Omega=\pi_Q^{\star}(K \cntrct \om_{+})+K^{\H} \cntrct (\pi_Q^{\star}(\di z)  \wedge \Theta)=\pi_Q^{\star}\di z=\di (\pi_Q^{\star}z)$. Here we have taken into account the definition of $\bar\Omega$ (see Lemma \ref{int-2}) as well as $\di z(K)=\Theta(K^{\H})=0$.
\end{proof}

\begin{lemma} \label{iso-s}
	$\Iso(\widetilde Q, g_{\widetilde Q})\simeq \Iso(\widetilde P,g_{\widetilde P})\times \Iso (\widetilde N, g_{\widetilde N})$. 
\end{lemma}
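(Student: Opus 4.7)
\emph{Plan.} The inclusion $\Iso(\tP, g_{\tP}) \times \Iso(\tN, g_{\tN}) \hookrightarrow \Iso(\tQ, g_{\tQ})$ is immediate, since any product of isometries of the factors preserves the product metric. The substance of the lemma is the reverse inclusion, which I would establish via de Rham's decomposition theorem combined with a parity argument.

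Since $(\tQ, g_{\tQ})$ is complete and simply connected, its Riemannian structure admits an essentially unique decomposition into a flat factor and irreducible factors, and any isometry of $\tQ$ permutes these pieces, pairing only isometric ones. It therefore suffices to show that no de Rham irreducible factor of $\tP$ is isometric to any de Rham factor of $\tN$. For $\tN$ the K\"ahler form $\omega_{\tN}$ is parallel, hence splits as a direct sum of parallel $2$-forms on the irreducible factors; each summand must be non-degenerate (else $\omega_{\tN}$ itself would be degenerate), endowing every factor with a K\"ahler structure and forcing its dimension to be even.

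To handle $\tP$ I would show it is de Rham irreducible, so of odd dimension $2m+1 \geq 3$. Suppose for contradiction $\tP = A \times B$ were a non-trivial metric product, giving $\T\tP = \T A \oplus \T B$ as parallel orthogonal distributions. The Sasakian Reeb field $\xi_{\tP}$, being Killing, splits as $\xi_A + \xi_B$ with $\xi_A, \xi_B$ tangent to their respective factors. Combining $\nabla_X \xi_{\tP} = -\phi X$ with the vanishing of $\nabla_X \xi_B$ for $X \in \T A$ (covariant derivatives across de Rham factors being zero), one obtains that $\phi$ preserves the splitting; the structure identity $\phi^2 = -\mathrm{id} + \eta_{\tP} \otimes \xi_{\tP}$ then forces $\xi_{\tP}$ to be concentrated in a single factor, say $B$, since otherwise $\phi^2 X$ for $X \in \T A$ would have a nontrivial $\T B$-component. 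But then $\phi|_{\T A} = -\nabla \xi_A|_{\T A} = 0$, contradicting $\phi^2 X = -X$ for any non-zero $X \in \T A$. Consequently $\tP$ is irreducible and odd-dimensional, so cannot match any (even-dimensional) de Rham factor of $\tN$.

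The main technical step is this de Rham irreducibility of the Sasakian factor $\tP$: while classical, it relies on careful manipulation of the structure tensors $(\phi, \xi_{\tP}, \eta_{\tP})$ satisfying $\phi(\xi_{\tP}) = 0$, $\phi^2 = -\mathrm{id} + \eta_{\tP} \otimes \xi_{\tP}$, and $\nabla \xi_{\tP} = -\phi$. Once in hand, the de Rham theorem delivers for every $f \in \Iso(\tQ, g_{\tQ})$ a splitting $f = (f_{\tP}, f_{\tN})$ with $f_{\tP} \in \Iso(\tP, g_{\tP})$ and $f_{\tN} \in \Iso(\tN, g_{\tN})$, completing the desired isomorphism.
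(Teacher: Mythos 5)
Your overall strategy---de Rham decomposition of the complete, simply connected $(\tQ,g_{\tQ})$, evenness of all de Rham factors of the K\"ahler manifold $\tN$, and irreducibility plus odd-dimensionality of the Sasakian factor $\tP$, so that no isometry of $\tQ$ can mix the two blocks---is the same as the paper's, which likewise reduces the lemma to the irreducibility of $g_{\tP}$ together with a dimension/parity count. The problem is your proof of that irreducibility, which has a genuine gap. You assert that $\nabla_X\xi_B=0$ for $X\in\T A$ because ``covariant derivatives across de Rham factors'' vanish. This is false: parallelism of the splitting $\T\tP=\T A\oplus\T B$ only gives $\nabla_X\xi_B\in\T B$. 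The component $\xi_B=P_B\xi_{\tP}$ is not a priori the lift of a vector field on $B$ (it may depend on the $A$-variable), and in fact $\nabla_X\xi_B=P_B\nabla_X\xi_{\tP}=-P_B\phi X$, so claiming it vanishes is literally the claim that $\phi$ preserves the splitting---the very conclusion you then draw from it. The argument is circular, and it cannot be repaired using only the first-order relation $\nabla\xi_{\tP}=-\phi$ and the algebraic identities $\phi^2=-\mathrm{id}+\eta_{\tP}\otimes\xi_{\tP}$, $\phi\xi_{\tP}=0$: one needs second-order (curvature) information about the Sasakian structure.

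The repair is short and is exactly what the paper uses: for a Sasakian metric, $R(X,Y)\xi_{\tP}=\eta_{\tP}(Y)X-\eta_{\tP}(X)Y$ (this is the paper's $R(X,Y)\xi=-\xi\cntrct(X^{\flat}\wedge Y^{\flat})$ after raising indices). If $\tP=A\times B$ were a nontrivial product, the mixed curvature would vanish, so for linearly independent $X\in\T A$, $Y\in\T B$ one gets $\eta_{\tP}(Y)X=\eta_{\tP}(X)Y$, hence $\eta_{\tP}(X)=\eta_{\tP}(Y)=0$ and $\xi_{\tP}=0$, a contradiction. (Equivalently one may use $(\nabla_X\phi)Y=g_{\tP}(X,Y)\xi_{\tP}-\eta_{\tP}(Y)X$, or the K-contact identity $R(X,\xi_{\tP})\xi_{\tP}=X-\eta_{\tP}(X)\xi_{\tP}$.) Once irreducibility of $\tP$ is established this way, the remainder of your argument---evenness of the factors of $\tN$, the permutation property of isometries of a de Rham product, and the resulting splitting $f=(f_{\tP},f_{\tN})$---is correct and coincides with the paper's proof.
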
 

\begin{proof}
Since $\widetilde P$ is Sasakian,  its metric  $g_{\widetilde P}$ is irreducible. Indeed, the curvature tensor of a Sasakian metric satisfies $R(X,Y)\xi=-\xi\cntrct (X^\flat\wedge Y^\flat)$, where $\xi$ denotes the Reeb field. If, by absurd, the tangent bundle were a direct sum $\T_1\oplus \T_2$ of parallel subbundles, then $0=R(X_1,X_2)\xi=-\xi\cntrct(X_1^\flat\wedge X_2^\flat)$, and hence $X_1^\flat(\xi)=0=X_2^\flat(\xi)$, thus $\xi$ is orthogonal to both $\T_1$ and $\T_2$, contradiction.
	
Now let $f\in \Iso(\widetilde Q, g_{\widetilde Q})$ and restrict its differential $f_\star$ to the subbundle $\T\tilde P$. The distribution $f_\star (\T\widetilde P)$ is $\nabla^{g_{\widetilde Q}}$ - parallel. Since $g_{\widetilde P}$ is irreducible, the only $\nabla ^{g_{\tQ}}$ - parallel distributions on $\widetilde Q$ are of the form $\T\widetilde P\oplus E$, $F$, with $E,F\subseteq \T\widetilde N$, parallel w.r.t. $\nabla^{ g_{\widetilde N}}$ (here we allow $E=\{0\}$). We exclude both cases as follows. Let $\widetilde N=\bbR^{2k}\times \widetilde N_1\times\cdots \times \widetilde N_l$ be the de Rham decomposition of $\widetilde N$; here $\widetilde N_i$ are the irreducible K\"ahler factors.
	
	If $f_\star (\T\widetilde P)=F$, then, as $g_{\widetilde P}$ is irreducible, $F$ should be equal with one of the $\T\widetilde N_i$, which are of even dimension, contradiction.
	
	If $f_\star (\T\widetilde P)=\T\widetilde P\oplus E$, a dimension argument shows that $E=\{0\}$. 
	
	Therefore, $f_\star (\T\widetilde P)=\T\widetilde P$. Since $\T\widetilde P\perp \T\widetilde N$ in $\T\widetilde Q$, we conclude that also $f_\star (\T\widetilde N)=\T\widetilde N$ and the proof is complete.
\end{proof}
Note that only the completeness of $g_{\widetilde Q}$ has been used above, as $\widetilde Q$ is not necessarily compact.

\subsection{Geometry of the group actions} \label{actiunii}
  Denote $\Gamma:=\pi_1(Q)$ and its action on $\widetilde{Q}$ by 
$(\tilde q,\gamma) \mapsto \tilde q\gamma$. The action of $\bbS$ lifts to an $\bbR$-action $({R}^{\tQ}_{t})_{t \in \bbR}$ on $\widetilde Q$ (see \cite{Bre}, Theorem 9.1) commuting with the action of $\Gamma$. Explicitly
\begin{equation} \label{proj-aa}
{\pi}_{\tQ} \circ {R}^{\tQ}_{t}=R^Q_{\exp(2\pi i t)} \circ {\pi}_{\tQ}, \quad 
{R}^{\tQ}_t(\tilde q\gamma)=( R^{\tQ}_t\tilde q)\gamma.
\end{equation}
  Indicate with $A:\pi_1(\bbS)=\mathbb{Z} \to \Gamma$ the homomorphism induced in homotopy by any fibre inclusion $\bbS\hookrightarrow Q$, as well as 
$$K_A:=\ker(A), \ G_A:=\bbR/K_A.$$
\begin{lemma}\label{actiuni} The following hold:
	\begin{enumerate}
	\item[(i)] ${R}^{\tQ}_1(\tilde q)=\tilde qa$ with $a$ in the center $Z(\Gamma)$ of $\Gamma$.
	\item[(ii)] $A(n)=a^n$.
	\item[(iii)] $
	K_A=\begin{cases}
	\{0 \}  \ \text{if a has infinite order}, \\[.1in]
	p\mathbb{Z}  \ \ \text{if} \ a^p=1,\  p\neq 0.
	\end{cases}
	$
	\item[(iv)] The induced $G_A$-action on $\widetilde{Q}$ is free and proper. 
	\end{enumerate}
\end{lemma}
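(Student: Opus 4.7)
I would prove the four claims in sequence, each building on the previous.

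For (i), note that the first identity in \eqref{proj-aa} with $t=1$ gives $\pi_{\widetilde Q}\circ R^{\widetilde Q}_1=\pi_{\widetilde Q}$, so $R^{\widetilde Q}_1$ is a lift of $\mathrm{id}_Q$, i.e.\ a deck transformation of $\widetilde Q\to Q$. Since $\widetilde Q$ is connected there is a unique $a\in\Gamma$ with $R^{\widetilde Q}_1\widetilde q=\widetilde q a$ for every $\widetilde q$. Centrality of $a$ is forced by the second identity in \eqref{proj-aa}: for any $\gamma\in\Gamma$, applying both identities at $t=1$ gives $(\widetilde q\gamma)a=R^{\widetilde Q}_1(\widetilde q\gamma)=(R^{\widetilde Q}_1\widetilde q)\gamma=\widetilde q a\gamma$, so $\gamma a=a\gamma$.

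For (ii), the morphism $A$ sends the homotopy class of a loop in the fibre to the deck transformation obtained by lifting it to $\widetilde Q$. The generator of $\pi_1(\mathbb{S}^1)$ is $[t\mapsto\exp(2\pi \1\, t)]$, whose image in $Q$ is $t\mapsto R^Q_{\exp(2\pi \1\, t)}q_0$; by \eqref{proj-aa} this loop lifts to $t\mapsto R^{\widetilde Q}_t\widetilde q_0$, which ends at $\widetilde q_0 a$. Hence $A(1)=a$, and $A(n)=a^n$ by the homomorphism property. Part (iii) is then an immediate consequence of (ii): $K_A=\ker(n\mapsto a^n)$.

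For (iv), first observe that $R^{\widetilde Q}_{t+s}=R^{\widetilde Q}_t\circ R^{\widetilde Q}_s$ combined with (i) gives inductively $R^{\widetilde Q}_n\widetilde q=\widetilde q a^n$ for $n\in\mathbb{Z}$, so $K_A$ acts trivially and the $\mathbb{R}$-action descends to a $G_A$-action. To prove freeness suppose $R^{\widetilde Q}_t\widetilde q=\widetilde q$. Projecting down with \eqref{proj-aa} and using that the principal $\mathbb{S}^1$-action on $Q$ is free yields $t=n\in\mathbb{Z}$; then $\widetilde q a^n=\widetilde q$ and, since $\Gamma$ acts freely on $\widetilde Q$, $a^n=1$, so $n\in K_A$ and $[t]=0$ in $G_A$.

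Properness is the only step requiring care. When $a$ has finite order, $G_A$ is compact and the conclusion is automatic for a compact group acting on a Hausdorff space. When $a$ has infinite order, $G_A=\mathbb{R}$ and I will use the standard criterion: given compact sets $K_1,K_2\subset\widetilde Q$, the set of pairs $(t,\widetilde q)$ with $\widetilde q\in K_2$ and $R^{\widetilde Q}_t\widetilde q\in K_1$ is compact. Given a sequence of such pairs, write $t_k=\tau_k+n_k$ with $\tau_k\in[0,1)$ and $n_k\in\mathbb{Z}$; because the $\mathbb{R}$- and $\Gamma$-actions commute,
\[
R^{\widetilde Q}_{t_k}\widetilde q_k=\bigl(R^{\widetilde Q}_{\tau_k}\widetilde q_k\bigr)\,a^{n_k}.
\]
The $\tau_k$ being bounded and $\widetilde q_k$ in a compact set, the points $R^{\widetilde Q}_{\tau_k}\widetilde q_k$ lie in a compact set $C\subset\widetilde Q$; since they are mapped by $a^{n_k}$ into the compact set $K_1$, proper discontinuity of the $\Gamma$-action forces $\{n_k\}$ to be finite. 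Extracting a constant subsequence $n_k=n$ and using compactness of $[0,1]\times K_2$ produces the desired convergent subsequence. The main obstacle is precisely this last step: correctly interweaving the non-compact $\mathbb{R}$-action with the proper discontinuity of $\Gamma$ so that the proof reduces to the automatic properness of the $\mathbb{S}^1$-action on $Q$.
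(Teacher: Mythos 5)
Your proof is correct. Parts (i)--(iii) follow essentially the same route as the paper: $R^{\tQ}_1$ is a deck transformation, hence equals right translation by some $a\in\Gamma$, central by the equivariance in \eqref{proj-aa} and freeness of the $\Gamma$-action; $A(1)=a$ is read off by lifting the fibre loop; (iii) is immediate. The only genuine divergence is in (iv): the paper simply cites \cite[Proposition 1.7]{mo} for freeness and properness (adding only the remark that properness is automatic when $\mathrm{ord}(a)=p$ since $G_A$ is then a circle), whereas you give a direct, self-contained argument. Your freeness proof (project to $Q$, use freeness of the principal $\bbS$-action to force $t\in\bbZ$, then use freeness of $\Gamma$ to force $t\in K_A$) and your properness proof (split $t_k=\tau_k+n_k$, confine $R^{\tQ}_{\tau_k}\tilde q_k$ to a compact set, and invoke the finiteness of $\{\gamma\in\Gamma:\gamma C\cap K_1\neq\emptyset\}$ for the properly discontinuous deck action to bound the $n_k$) are both sound; the latter is exactly the content hidden behind the citation. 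What your approach buys is a proof readable without the external reference; what the paper's buys is brevity and a pointer to the general statement about lifting group actions to covers.
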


\begin{proof}
	(i) Let $t=1$ in \eqref{proj-aa}, then  ${R}^{\tQ}_1(\tilde q)=\tilde qa$ with $a\in\Gamma$, since ${R}^{\tQ}_1$ is a gauge transformation of $\Gamma\rightarrow\tilde Q\longrightarrow Q$. To see $a$ is central, use the second equation in \eqref{proj-aa} and the fact that $\Gamma$ acts freely on $\tilde Q$.\\
(ii) Fix $q\in Q$, and $\tilde q \in \tilde Q$ in the fibre above $q$. Let $c(t)=R^Q_{\exp(2\pi i t)}q$, $t\in[0,1]$, be a loop at $q$. Let $\tilde c$ be its lift through $\tilde q$. By the very definition of the action of $\Gamma=\pi_1(Q)$ on the fibre over $q$, we have $\tilde c(1)={R}^{\tQ}_1(\tilde q)$, which by (i) equals $\tilde q a$, thus  $A(1)=a$ and the claim is proved.\\
(iii) follows directly from (ii).\\
(iv) follows from \cite[Proposition 1.7]{mo}. Note that if $\mathrm{ord}(a)=p$, then $G_A$ is topologically a circle and hence the action is automatically proper.
\end{proof}

Since by construction  $ R^{\tQ}_t\in \Iso(\tilde Q,g_{\tilde Q})$ we can split the $\bbR$-action on $\tilde Q$ according to the de Rham splitting of the latter.
\begin{propn} \label{split-Ra}
The following hold
\begin{itemize}
\item[(i)]The lifted $\bbR$-action on $\widetilde{Q}$ is a product 
${R}^{\tQ}_t={R}^{\tP}_t \times {R}^{\tN}_t$ 
where ${R}^{\tP}_t$, respectively ${R}^{\tN}_t$, are isometric $\bbR$-actions on $(\widetilde{P}, g_{\tP})$ 
respectively $(\widetilde{N}, g_{\tN})$. Moreover, ${R}^{\tN}_t$ is Hamiltonian 
\item[(ii)] $({R}^{\tP}_t)_{t \in \bbR}$ induces a free and proper $G_A$-action on $\widetilde{P}$ tangent to the Reeb field $\xi_{\tP}$ 
\item[(iii)] 	There exists a principal bundle $G_A\hookrightarrow \widetilde P \xrightarrow{\pi_{\tP}} \widetilde M$, with $\widetilde M$ simply connected. This bundle is (differentiably) trivial when $\mathrm{ord}(a)=\infty$
\item[(iv)] $\widetilde{M}$ has a K\"ahler structure $(g_{\tM},J_{\tM})$ with K\"ahler form $\om_{\tM}$ such that $-\di\eta_{\widetilde P}=\pi_{\tP}^{\star} 
\om_{\tM}$.
\end{itemize} 
\end{propn}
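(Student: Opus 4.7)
My plan is to establish the four parts in order, relying on the de Rham splitting from Theorem \ref{loc-desc} and the freeness and properness of the $G_A$-action on $\widetilde Q$ from Lemma \ref{actiuni}(iv).

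For (i), I will identify the infinitesimal generator of $R^{\tQ}_t$ as the lift to $\widetilde Q$ of $2\pi T_Q=2\pi(\xi_Q-K^{\H})$. By Theorem \ref{loc-desc}, the lift of $\xi_Q$ is the Reeb field $\xi_{\tP}$ tangent to $\widetilde P$, while $K^{\H}\in \H_+$ lifts to $K_{\tN}$ tangent to $\widetilde N$. The generator therefore decomposes as $2\pi\xi_{\tP}-2\pi K_{\tN}$ with one term in each factor of $\widetilde Q=\widetilde P\times \widetilde N$, so its flow is the product $R^{\tP}_t\times R^{\tN}_t$. Each component is a one-parameter group of isometries since $\xi_{\tP}$ and $K_{\tN}$ are Killing (and Lemma \ref{iso-s} ensures consistency of this splitting with the product structure of $\Iso(\widetilde Q)$), and $R^{\tN}_t$ is Hamiltonian because $K_{\tN}$ is so by Theorem \ref{loc-desc}(ii).

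For (ii), the key geometric input is that $K_{\tN}$ has a zero. Under the standing assumption $\theta\neq 0$ (Remark \ref{t=0}), the momentum map $z$ is non-constant; compactness of $Z$ forces $z$ to have critical points where $K\cntrct\om=\di z$ vanishes, and non-degeneracy of $\om$ yields $K=0$ there. Lifting such a zero to $\widetilde Q$ produces a point $\tilde n_0\in \widetilde N$ with $K_{\tN}(\tilde n_0)=0$, so $R^{\tN}_t(\tilde n_0)=\tilde n_0$ for all $t\in\bbR$. If $R^{\tP}_t(\tilde p_0)=\tilde p_0$ for some $\tilde p_0$ and $t$, the product structure from (i) gives $R^{\tQ}_t(\tilde p_0,\tilde n_0)=(\tilde p_0,\tilde n_0)$, and freeness on $\widetilde Q$ from Lemma \ref{actiuni}(iv) forces $t\in K_A$; this is exactly freeness of the induced $G_A$-action on $\widetilde P$. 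Properness will then follow from the $G_A$-equivariant closed embedding $\widetilde P\hookrightarrow \widetilde Q$, $\tilde p\mapsto (\tilde p,\tilde n_0)$, combined with properness on $\widetilde Q$.

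Parts (iii) and (iv) are essentially standard consequences of the setup. A free proper $G_A$-action on $\widetilde P$ yields a principal $G_A$-bundle $\widetilde P\to \widetilde M:=\widetilde P/G_A$, and the long exact homotopy sequence of this fibration together with $\pi_1(\widetilde P)=0$ produces $\pi_1(\widetilde M)=0$ whether $G_A=\bbR$ or $G_A=\bbS^1$; when $G_A=\bbR$, contractibility of the fibre makes the bundle trivial. For (iv), I will apply the standard Sasakian-K\"ahler quotient construction: $\ker\eta_{\tP}$ is $G_A$-invariant and projects isomorphically onto each tangent space of $\widetilde M$, the transverse complex structure descends to an integrable $J_{\tM}$, and $-\di \eta_{\tP}$, being horizontal and $G_A$-invariant, descends to a closed real $(1,1)$-form $\om_{\tM}$ with $-\di\eta_{\tP}=\pi_{\tP}^\star \om_{\tM}$; together with the quotient metric on the contact distribution this produces the K\"ahler structure. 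The main obstacle is step (ii): freeness of a product $\bbR$-action on a product manifold does not automatically yield freeness on each factor, and the crucial geometric input closing this gap is the vanishing of $K$ somewhere on the compact manifold $Z$.
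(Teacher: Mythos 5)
Your proposal is correct and follows essentially the same route as the paper: part (i) via the splitting of the generator $2\pi T_Q=2\pi(\xi_Q-K^{\H})$ into $\xi_{\tP}$- and $K_{\tN}$-components of the de Rham product, part (ii) via the existence of a zero of the Hamiltonian field $K$ on the compact $Z$ giving a fixed point $\tilde n_0$ of $R^{\tN}_t$, with properness read off the slice $\widetilde P\times\{\tilde n_0\}$, and parts (iii)--(iv) by the standard principal-bundle and Sasakian-quotient arguments. The only cosmetic difference is that you deduce freeness directly from Lemma \ref{actiuni}(iv), whereas the paper routes through \eqref{proj-aa} and Lemma \ref{actiuni}(i); both are valid.
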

\begin{proof}
(i) follows from Lemma \ref{iso-s} since ${R}^{\tQ}_t\in \Iso(\tilde Q,g_{\tilde Q})$. \\
(ii) Let $(t,u) \in \bbR \times \widetilde{P}$ be such that $R^{\tP}_t(u)=u$. The vector field $K$ has zeroes since it is Hamiltonian on the compact manifold $Z$. 
Thus both $K^{\H}$ and its lift to $\widetilde{Q}$ will  have zeroes. It follows that $R^{\tN}_t$ has fixed points; if $\tilde n_0$ is such a point then $R^{\tN}_t(u,\tilde n_0)=(u,\tilde n_0)$. By \eqref{proj-aa} and using that $R^Q_{\lambda}$ is free on 
$Q$ it follows that $\exp(2\pi i t)=1$, that is $t \in \mathbb{Z}$. From (i) in Lemma \ref{actiuni}, 
$R^{\tQ}_t\tilde q=\tilde qa^t, \tilde q \in \widetilde{Q}$, it follows that $a^t=1$ hence $t \in K_A$. Thus $(R^{\tP}_t)_{t \in \bbR}$ induces a free $\bbR \slash K_A$-action on $\widetilde{P}$. To show the properness of the latter action  we only need consider the instance when $K_A=\{0\}$ since otherwise 
$G_A$ is compact. Let $K \subseteq \widetilde{P}$ be compact. Then 
\begin{equation*}
\{t \in \bbR: R^{\tP}_t(K) \cap K \neq \emptyset \}=\{t \in \bbR: R^{\tQ}_t(K \times \{\tilde n_0\}) \cap (K \times \{\tilde n_0\}) \neq \emptyset \}
\end{equation*}
has compact closure since $(R^{\tQ}_t)_{t \in \bbR}$ is proper on $\widetilde{Q}$ (here $\tilde n_0$ is a fixed point of the action $R^{\tN}_t$). By construction, the lift of $T_Q$ to $T_{\tQ}$ equals $\xi_{\tP}-K_{\tN}$ showing that the action on $\tP$ is tangent to the Reeb field.\\
(iii) The existence of the $G_A$-bundle follows from (ii). If $\mathrm{ord}(a)=\infty$, then $G_A=\bbR$, and the bundle is differentiably trivial (see e.g. \cite[Theorem 5.7, p. 58]{kn}). The simple connectedness of $\widetilde M$ follows from the long exact sequence in homotopy, since $\widetilde P$ is simply connected.\\
(iv) follows from the fact that $(\tP,g_{\tP}, \eta_{\tP})$ is Sasakian and (iii).
\end{proof}

By (iv) above  we obtain a smooth manifold 
\begin{equation*}
\tZ:=\widetilde{Q} \slash G_A
\end{equation*}
which is simply connected since $G_A$ is connected. We now show that $\tilde Z$ is precisely the universal cover of $Z$ (thus motivating the notation).

Recall that the action of $G_A$ on $\widetilde{Q}$ is the lift of the principal $\bbS$-action on $Q$, thus using 
\eqref{proj-aa} shows that the map $\pi_Q \circ {\pi}_{\tQ}$ induces a smooth map $\pi_{\tZ}:\tZ \to Z$. Now, from the commutation of the actions of $G_A$ and 
$\Gamma$, we obtain an action $\tZ \times \Gamma \to \tZ$. By Lemma \ref{actiuni}, (i) the central element $a \in \Gamma$ acts on $\widetilde Q$ as $R^{\tQ}_1$, thus trivially on $\tZ$. We obtain 
an action 
\begin{equation} \label{cov-ac}
\tZ \times \Gamma_a \to \tZ
\end{equation}
where $\Gamma_a=\Gamma \slash \langle a \rangle$. Note that $\pi_1(Z)=\Gamma_a$, as entailed by the construction of $a$. Record that the following diagram is commutative
\begin{equation}\label{com-cov}
\begin{tikzcd}
&\widetilde{Q} \arrow[d,swap,"p_{\widetilde{Q}}"] \arrow[r,"\pi_{\widetilde{Q}}"] &Q \arrow[d,"\pi_Q"]\\
&\widetilde{Z} \arrow[r,"\pi_{\widetilde{Z}}"]&Z\\
\end{tikzcd}
\end{equation}
where $p_{\tQ}: \widetilde{Q} \to \tZ$ is the canonical projection. By \cite[Proposition 1.9]{mo}, 
$$\Gamma_a \hookrightarrow \tZ \xrightarrow{\ \pi_{\tZ}\ } Z$$  
is a covering space, therefore the universal cover of $Z$. 

This relates to the de Rham splitting of $(\widetilde{Q},g_{\tilde{Q}})$ as follows. From Theorem \ref{loc-desc} 
\begin{equation*}
\tZ=\tP \times_{G_A}\tN
\end{equation*}
where the free $G_A$-action on $\tP \times \tN$ is induced by $t \in \bbR \mapsto R^{\tP}_t \times R^{\tN}_{t}$; moreover having $\tP$ polarising the 
K\"ahler manifold $(\tM,g_{\tM},J_{\tM})$ (see Proposition \ref{split-Ra}, (iv)), $(\widetilde N,g_{\widetilde N},J_{\widetilde N})$-K\"ahler and $G_A \subseteq \Iso(\widetilde N,g_{\widetilde N})$ Hamiltonian (see Proposition \ref{split-Ra} (i)) make it possible to endow $\tZ$ with the K\"ahler structure $(\tilde{g},\tilde{J})$ coming from the Weinstein construction.
\begin{lemma} \label{pZ} We have: 
\begin{itemize}
\item[(i)] $\pi_{\tZ}$ is isometric and holomorphic, $\pi_{\tZ}^{\star}g=\tilde{g}, (\di \pi_{\tZ})\tilde{J}=J(\di \pi_{\tZ})$.
\item[(ii)] $\Gamma_a \subseteq \Aut(\tZ,\tilde{J})$.
\end{itemize}
\end{lemma}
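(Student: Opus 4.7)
The strategy is to compare the two K\"ahler structures $\pi_{\tZ}^{\star}(g,J)$ and $(\tilde g,\tilde J)$ on $\tZ$ by lifting both to $\widetilde Q$, where the commutative diagram \eqref{com-cov} gives $\pi_{\tQ}^{\star}\pi_Q^{\star}g=p_{\tQ}^{\star}\pi_{\tZ}^{\star}g$. Since $p_{\tQ}:\widetilde Q\to\tZ$ is a submersion whose fibres are tangent to the lifted principal vector field $T_{\tQ}=\xi_{\tP}-K_{\tN}$ (under the identification $\widetilde Q=\widetilde P\times\widetilde N$ of Theorem \ref{loc-desc}), the identity $\pi_{\tZ}^{\star}g=\tilde g$ is equivalent to equality of the two pullbacks to $\widetilde Q$ modulo tensors annihilating $T_{\tQ}$, and likewise for the complex structures.

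First I would expand $p_{\tQ}^{\star}\tilde g$ by the Weinstein recipe of Subsection \ref{wei_con} applied to fibre $\tN$ (with generator $K_{\tN}$ and momentum $z_{\tN}$) and polarised base $(\tM,\tP)$ with connection $\eta_{\tP}$; this yields an explicit tensor on $\widetilde P\times\widetilde N$ involving $g_{\tN}$, $\eta_{\tP}$, $\pi_{\tP}^{\star}g_{\tM}$ and $K_{\tN}\cntrct g_{\tN}$. Next I would expand $g_{\tQ}=\pi_{\tQ}^{\star}g_Q$ using the definition of $g_Q$ from Subsection \ref{tls}, together with the identifications $\pi_{\tQ}^{\star}\Theta=\eta_{\tP}$ and $\pi_Q^{\star}(z^{-1}g_{-})=\pi_{\tP}^{\star}g_{\tM}$ (the last by reading the Riemannian submersion $\pi_Q$ on $\H_{-}$ against the construction of $g_{\tM}$ from the Sasakian structure of $\widetilde P$), and the lifts of $K$, $x$, $z$ to $K_{\tN}$, $g_{\tN}(K_{\tN},K_{\tN})$, $z_{\tN}$ provided by Theorem \ref{loc-desc}(ii). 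Term-by-term comparison then shows the two tensors coincide modulo $T_{\tQ}$, proving $\pi_{\tZ}^{\star}g=\tilde g$.

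The holomorphy of $\pi_{\tZ}$ is verified by the analogous pullback argument: $\tilde J_{-}$, built from the transverse K\"ahler structure of the Sasakian $\tP$, pulls back via $p_{\tQ}$ to the lift of $J_{\vert\D_{-}}$ by the very definition of $J_{\tM}$ via the tensor $\phi$ in Theorem \ref{loc-desc}(i), while $\tilde J_{+}$ on the Weinstein fibre $\tN$ pulls back to the lift of $J_{\vert\D_{+}}$ through the relation \eqref{kk} $J\zeta=-z^{-1}K$ combined with the defining identity $\tilde J_{+}=(\tilde g_{+})^{-1}\tilde\omega_{+}$. Part (ii) then follows at once from (i): since $\pi_{\tZ}$ is a covering with deck group $\Gamma_a$ by construction \eqref{cov-ac}, every $\gamma\in\Gamma_a$ satisfies $\pi_{\tZ}\circ\gamma=\pi_{\tZ}$, hence $(\di\pi_{\tZ})\gamma_{\star}=\di\pi_{\tZ}$; combining with $(\di\pi_{\tZ})\tilde J=J(\di\pi_{\tZ})$ and the injectivity of $\di\pi_{\tZ}$ (as $\pi_{\tZ}$ is a local diffeomorphism) forces $\gamma_{\star}\tilde J=\tilde J\gamma_{\star}$.

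The main obstacle is the algebraic bookkeeping in the metric and complex-structure comparisons: one must align the various normalisations (principal connection with curvature $\pi_Q^{\star}(z^{-1}\omega_{-})$ versus Weinstein connection with curvature $\pi_{\tP}^{\star}\omega_{\tM}$; momentum maps $z$ on $Z$ and $z_{\tN}$ on $\tN$) so that the two constructions, which are essentially dual under the twist correspondence of Remark \ref{symmetry}, produce matching tensors on $\widetilde Q$. One must also handle uniformly the two cases $G_A=\bbS$ and $G_A=\bbR$, corresponding respectively to the global Weinstein construction of Subsection \ref{wei_con} and the local one of Subsection \ref{loc_wein_def}.
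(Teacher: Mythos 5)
Your proposal is correct and follows essentially the same route as the paper: both arguments lift everything to $\widetilde Q$ via the commutative diagram \eqref{com-cov} and compare the two K\"ahler structures on the horizontal distributions $\H_{\pm}$, keeping track of the conformal weight $z^{-1}$ on $\D_{-}$, and both deduce (ii) from the deck-transformation identity $\pi_{\tZ}\circ\gamma=\pi_{\tZ}$ combined with the holomorphy and local injectivity of $\di\pi_{\tZ}$. The only difference is one of economy: where you propose a term-by-term expansion of the Weinstein tensors, the paper simply observes that $\pi_{\tQ}$ is a local isometry for $(g_Q,g_{\tQ})$ preserving the splitting $\H_{+}\oplus\H_{-}$ and that both projections are transversally holomorphic, which packages the same bookkeeping.
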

\begin{proof}
(i) Let $\tilde\H$ be the lift of $\H$ to $\tQ$. Since $\pi_{\tilde Q}$ is a local isometry, 
on $\tilde\H$ we have $\pi_{\tilde Q}^\star {g_Q}_{\vert \H}={g_{\tilde Q}}_{\vert\tilde \H}$. But  ${g_Q}_{\vert \H}=g_++z^{-1}g_-$, and ${g_{\tilde Q}}_{\vert\tilde \H}=\tilde g_++\tilde z^{-1}\tilde g_-$, where $\tilde z:=z\circ\pi_{Q}$. As $\di \pi_{\tilde Q}(\tilde \H_{\pm})=\H_{\pm}$, we find $\pi_{\tilde Q}^\star g_{\pm}=\tilde g_{\pm}$ and hence $\pi_{\tZ}^{\star}g=\tilde{g}$.

As for the holomorphy of $\pi_{\tZ}$, recall from Theorem \ref{loc-desc} that $\tilde \D_\pm=\di p_{\tQ}(\tilde\H_{\pm})$ and $\D_{\pm}=\di \pi_Q(\H_\pm)$. Since $\tilde J$ comes from the Weinstein construction, the projections $\pi_{\tQ}$ and $\pi_Q$ are transversally holomorphic, i.e. they are holomorphic on $\tilde\H$ and $\H$. The commutativity of diagram \eqref{com-cov} then proves the claim.

(ii) Follows form the holomorphy of $\pi_{\tZ}:(\tZ,\tilde{J}) \to (Z,J)$ by using that $\pi_{\tZ}$ is a local diffeomorphism, invariant under the action of $\Gamma_a$. 
\end{proof}

Let 
\begin{equation*}
\Aut(\widetilde P,g_{\widetilde P},\eta_{\tP}):=\{f \in \Iso(\widetilde P,g_{\widetilde P}):f^{\star} \eta_{\widetilde P}=\eta_{\widetilde P}\}
\end{equation*}
be the group of contact automorphisms of the Sasakian manifold $(\widetilde P,g_{\widetilde P}, \eta_{\tP})$. The proof of the following 
Lemma is straightforward.
\begin{lemma} \label{a-inf1}
Assume that $(\tM,g_{\tM},J_{\tM})$ is an exact K\"ahler manifold, $-\di \alpha_{\tM}=\om_{\tM}$ which moreover is simply connected. Denote 
$G_{\tM}:=\Aut(\tM,g_{\tM},J_{\tM})$.
\begin{itemize}
\item[(i)] we have a well defined map $a:G_{\tM} \to C^{\infty}(\tM,\bbR), f \mapsto a_f$, uniquely determined from 
$$ \di a_f=\alpha_{\tM}-f^{\star}\alpha_{\tM}\ \mathrm{and} \ a_{1}=0
$$
\item[(ii)] consider the Sasakian manifold $(\tP=\tM \times \bbR,\  \eta_{\tP}=\di t+\alpha_{\tM},\ g_{\tP}=\eta_{\tP} \otimes \eta_{\tP}+g_{\tM})$. The map $\varepsilon : G_{\tM} \to \Aut(\widetilde P,g_{\widetilde P},\eta_{\tP})$ is a well defined 
group isomorphism.
\begin{equation*}
\varepsilon(f)(m,t)=(f(m),a_f(m)+t).
\end{equation*}
\end{itemize}
\end{lemma}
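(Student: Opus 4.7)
For (i), the $1$-form $\alpha_{\tM} - f^\star\alpha_{\tM}$ is closed: its exterior derivative is $-\omega_{\tM} + f^\star\omega_{\tM} = 0$, since any K\"ahler isometry preserves $\omega_{\tM}$. Simple connectedness of $\tM$ forces $H^1_{dR}(\tM) = 0$, so this form admits a smooth primitive $a_f$, unique up to an additive constant. I would fix the family $\{a_f\}_{f \in G_{\tM}}$ by demanding $a_1 = 0$ together with the cocycle relation $a_{f_1 f_2} = f_2^\star a_{f_1} + a_{f_2}$. Both sides of the latter share exterior derivative $\alpha_{\tM} - (f_1 f_2)^\star \alpha_{\tM}$ and thus differ by a constant; triviality of the resulting $\bbR$-valued $2$-cocycle on $G_{\tM}$ (needed to make the normalization self-consistent) will come out as a byproduct of the surjectivity argument in (ii).

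For (ii), the statement that $\varepsilon(f) \in \Aut(\tP, g_{\tP}, \eta_{\tP})$ is routine. The identity
\[
\varepsilon(f)^\star \eta_{\tP} = \di(a_f + t) + f^\star \alpha_{\tM} = (\alpha_{\tM} - f^\star \alpha_{\tM}) + \di t + f^\star \alpha_{\tM} = \eta_{\tP}
\]
holds by the defining equation for $a_f$, and writing $g_{\tP} = \eta_{\tP} \otimes \eta_{\tP} + pr^\star g_{\tM}$ in the Reeb decomposition (with $pr : \tP \to \tM$ the natural projection), preservation of $g_{\tP}$ follows from preservation of $\eta_{\tP}$ together with the isometry identity $f^\star g_{\tM} = g_{\tM}$. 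The group law $\varepsilon(f_1) \circ \varepsilon(f_2) = \varepsilon(f_1 f_2)$ is precisely the cocycle relation of (i); injectivity of $\varepsilon$ is trivial, since $\varepsilon(f) = \mathrm{id}_{\tP}$ immediately gives $f = \mathrm{id}_{\tM}$ and then $a_f \equiv 0$.

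The surjectivity of $\varepsilon$ is where the Sasakian geometry is used crucially. Let $\varphi \in \Aut(\tP, g_{\tP}, \eta_{\tP})$. The Reeb field $\xi_{\tP} = \partial_t$ is intrinsic to $(g_{\tP}, \eta_{\tP})$ (it is the $g_{\tP}$-dual of $\eta_{\tP}$), so $\varphi_\star \partial_t = \partial_t$ and hence $\varphi(m, t) = (f(m), t + b(m))$ for smooth $f : \tM \to \tM$ and $b : \tM \to \bbR$. Preservation of $\eta_{\tP}$ yields $\di b = \alpha_{\tM} - f^\star \alpha_{\tM}$, preservation of $g_{\tP}$ forces $f \in \Iso(\tM, g_{\tM})$, and preservation of the Sasakian $(1,1)$-tensor (determined by $g_{\tP}$ and $\di \eta_{\tP} = -pr^\star \omega_{\tM}$) forces $f^\star J_{\tM} = J_{\tM}$; hence $f \in G_{\tM}$ and $\varphi = \varepsilon(f)$ with the choice $a_f := b$. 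This simultaneously closes the argument of (i): the globally consistent family $\{a_f\}$ is the one induced by composition in $\Aut(\tP, g_{\tP}, \eta_{\tP})$. The main obstacle, as anticipated, is the cocycle-triviality step, which amounts to exhibiting a splitting of the central extension $0 \to \bbR \to \Aut(\tP, g_{\tP}, \eta_{\tP}) \to G_{\tM} \to 0$; the splitting is the map $\varepsilon$ itself, so the apparent circularity in the argument dissolves once one lays out the logic carefully.
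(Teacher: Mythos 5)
The paper offers no argument for this lemma (it is declared ``straightforward''), so your proposal has to stand on its own, and the easy parts of it do: the closedness of $\alpha_{\tM}-f^{\star}\alpha_{\tM}$, the computation $\varepsilon(f)^{\star}\eta_{\tP}=\eta_{\tP}$, and the identification of $\partial_t$ as the Reeb field with the resulting normal form $\varphi(m,t)=(f(m),t+b(m))$ for any $\varphi\in\Aut(\tP,g_{\tP},\eta_{\tP})$ are all correct. But the two difficulties you flag and then defer are genuine gaps, and your argument closes neither. Surjectivity of $\varepsilon$ fails outright: the Reeb translations $T_c(m,t)=(m,t+c)$, $c\neq 0$, preserve $\eta_{\tP}$ and $g_{\tP}$, yet $\varepsilon(f)=T_c$ would force $f=1$ and $a_1=c\neq 0$. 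What your normal-form analysis actually proves is that $\Aut(\tP,g_{\tP},\eta_{\tP})$ is a central extension of $G_{\tM}$ by $\bbR$; a splitting of a central extension is a section, not an isomorphism onto the total group, so your closing sentence (``the splitting is the map $\varepsilon$ itself'') cannot deliver surjectivity. Relatedly, ``the choice $a_f:=b$'' is not well defined, because $b$ is determined by $f$ only up to an additive constant (replace $\varphi$ by $T_c\circ\varphi$); hence the surjectivity argument cannot manufacture the canonical family $\{a_f\}$ needed in (i), and the circularity you acknowledge does not dissolve.

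Worse, the cocycle-triviality you postpone is a genuine obstruction, not a formality. Take $\tM=\bbC$ with the flat metric, $\om_{\tM}=\di x\wedge \di y$ and $\alpha_{\tM}=-x\,\di y$. For a translation $f_v(z)=z+v$ one gets $a_{f_v}=v_1y+\mathrm{const}$, and the defect $f_2^{\star}a_{f_1}+a_{f_2}-a_{f_1f_2}$ equals $v_1w_2$ plus a term symmetric in $(v,w)$; its antisymmetric part $\om_{\tM}(v,w)$ can never be written as $\lambda(v)+\lambda(w)-\lambda(v+w)$, so no renormalisation of the additive constants makes $\varepsilon$ a homomorphism even on the translation subgroup (one obtains the Heisenberg extension). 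The class of the cocycle is also independent of the choice of primitive $\alpha_{\tM}$, so this cannot be fixed by a better gauge. The statement you are trying to prove is therefore false as written, in both the homomorphism and the surjectivity claims; the correct assertion — and the only one the paper actually uses later — is that every element of $\Aut(\tP,g_{\tP},\eta_{\tP})$ has the form $(m,t)\mapsto(f(m),t+b(m))$ with $f\in G_{\tM}$ and $\di b=\alpha_{\tM}-f^{\star}\alpha_{\tM}$, i.e.\ $\Aut(\tP,g_{\tP},\eta_{\tP})$ is a central extension of $G_{\tM}$ by the Reeb flow. Your write-up should establish that statement and record the discrepancy rather than assert the isomorphism.
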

In \cite[p. 368]{arn} the contact manifold  $(\tM \times \bbR, \di t+\alpha_{\tM})$ is called contactification. Lemma \ref{a-inf1} makes explicit the action of $G_{\tM}$ on $\tP$, by automorphisms of the Sasakian structure. We prove now that the action of $\Gamma=\pi_1(Q)$ on $\widetilde Q$ splits according to \eqref{ma-split}. 
\begin{propn}\label{prop_split}
	W.r.t. the decomposition \eqref{ma-split} the action of $\, \Gamma$ splits as a product action 
	
	\begin{equation}\label{menahem1}
	\Gamma \subseteq \Aut(\widetilde P,g_{\widetilde P}, \eta_{\tP}) \times C_{\Iso (\widetilde N, g_{\widetilde N})}(G_A),
	\end{equation}
	where $C_H(S)$ denotes the centraliser of a set $S$ in the group $H$. Therefore, $\Gamma$ induces a product action on $\widetilde M\times \widetilde N$  
	\begin{equation}\label{menahem2}
	\Gamma\subseteq \Aut(\widetilde M, g_{\widetilde M}, J_{\widetilde M})\times C_{\Iso (\tilde N, g_{\widetilde N})}(G_A).
	\end{equation}	
\end{propn}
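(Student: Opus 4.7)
The plan is to combine Lemma~\ref{iso-s} with the invariance properties of $\Theta$ and with the commutation relations \eqref{proj-aa}. First I would observe that since $g_{\widetilde Q}=\pi_{\widetilde Q}^{\star}g_Q$ the group $\Gamma=\pi_1(Q)$ acts by deck transformations, hence by isometries on $(\widetilde Q, g_{\widetilde Q})$. Applying Lemma~\ref{iso-s} each $\gamma\in\Gamma$ splits as a product $\gamma=(\gamma_1,\gamma_2)\in \Iso(\widetilde P,g_{\widetilde P})\times\Iso(\widetilde N,g_{\widetilde N})$, which is the key structural fact underlying \eqref{menahem1}.

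Next I would establish the two ``coordinate'' memberships separately. For the $\widetilde P$-factor note that $\eta_{\widetilde P}={\pi}_{\widetilde Q}^{\star}\Theta$ is by construction the pullback of a $1$-form living on $Q$, hence is $\Gamma$-invariant as a form on $\widetilde Q$; viewed through the projection onto the first factor this gives $\gamma_1^{\star}\eta_{\widetilde P}=\eta_{\widetilde P}$, i.e. $\gamma_1\in\Aut(\widetilde P,g_{\widetilde P},\eta_{\widetilde P})$. For the $\widetilde N$-factor the second identity in \eqref{proj-aa} says $\gamma$ commutes with $R^{\widetilde Q}_t$ for all $t\in\bbR$; using the product decomposition $R^{\widetilde Q}_t=R^{\widetilde P}_t\times R^{\widetilde N}_t$ from Proposition~\ref{split-Ra}(i), this forces $\gamma_2$ to commute with $R^{\widetilde N}_t$ and hence with the induced $G_A=\bbR/K_A$-action. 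This gives $\gamma_2\in C_{\Iso(\widetilde N,g_{\widetilde N})}(G_A)$ and completes \eqref{menahem1}.

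The passage to \eqref{menahem2} requires showing that $\gamma_1$ descends from $\widetilde P$ to a holomorphic isometry of $(\widetilde M,g_{\widetilde M},J_{\widetilde M})$. Exactly the same commutation argument applied to the first factor yields that $\gamma_1$ commutes with $R^{\widetilde P}_t$, hence with the free and proper $G_A$-action of Proposition~\ref{split-Ra}(ii); since $\widetilde M=\widetilde P/G_A$, $\gamma_1$ induces a diffeomorphism $\bar\gamma_1$ of $\widetilde M$. Because $\gamma_1$ preserves the Sasakian triple $(g_{\widetilde P},\eta_{\widetilde P},\xi_{\widetilde P})$ and the metric $g_{\widetilde M}$ is the transverse metric of the Reeb foliation, $\bar\gamma_1$ is an isometry; similarly, preservation of $\di\eta_{\widetilde P}=-\pi_{\widetilde P}^{\star}\omega_{\widetilde M}$ forces $\bar\gamma_1^{\star}\omega_{\widetilde M}=\omega_{\widetilde M}$, so $\bar\gamma_1$ preserves $J_{\widetilde M}=g_{\widetilde M}^{-1}\omega_{\widetilde M}$.

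I expect the main subtlety to lie in this last descent step, specifically in justifying that the transverse structures $(g_{\widetilde M},J_{\widetilde M})$ are genuinely recovered from the Sasakian data on $\widetilde P$ in a way compatible with the $\gamma_1$-action — but Proposition~\ref{split-Ra}(iii)-(iv), together with the fact that $\gamma_1$ preserves both the horizontal distribution $\ker\eta_{\widetilde P}$ and the Reeb direction, makes this essentially automatic. The $\widetilde N$-factor in \eqref{menahem2} is unchanged, being inherited directly from \eqref{menahem1}.
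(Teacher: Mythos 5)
Your proof is correct and follows essentially the same route as the paper: split $\Gamma$ via Lemma~\ref{iso-s}, then use the commutation with $R^{\tQ}_t$ from \eqref{proj-aa} to place the $\tN$-component in $C_{\Iso(\tN,g_{\tN})}(G_A)$ and to descend the $\tP$-component through the regular Reeb fibration to an automorphism of $(\tM,g_{\tM},J_{\tM})$. The only (harmless) variation is that you obtain $\gamma_1^{\star}\eta_{\tP}=\eta_{\tP}$ directly from the deck-invariance of $\pi_{\tQ}^{\star}\Theta$, whereas the paper derives it from the observation that an isometry commuting with the Reeb flow preserves the Reeb field and hence the contact form.
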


\begin{proof}
	The action of $\Gamma$ and $ R^{\tQ}_t$ on $\tilde Q$ commute. Moreover, both actions split with respect to  \eqref{ma-split}, and hence the actions of $\Gamma$ and $R^{\tQ}_t$ on each factor must commute. Therefore on the factor $\tilde N$, $\Gamma$ acts as a subgroup of the centraliser $C_{\Iso (\tilde N, g_{\tilde N})}(G_A)$. Similarly,  $\Gamma$ acts on $\tilde P$ as a subgroup of $\Iso(\tilde P, g_{\tilde P})$ commuting with the Reeb flow $(R^{\tP}_t)$. Since an isometry which preserves the Reeb field of a Sasakian structure preserves the contact form and the transverse complex structure too, \eqref{menahem1} is proven. 
	
	The Reeb flow $(R^{\tP}_t)$ is regular by Proposition \ref{split-Ra}  (iii), thus elements in $\Aut(\widetilde P, g_{\widetilde P}, \eta_{\tP})$ project onto elements in $\Aut(\widetilde M, g_{\widetilde M}, J_{\widetilde M})$, hence \eqref{menahem2}.
\end{proof}

Based on the above, we can prove the main global result of this section.

\begin{thm}\label{global-carac}
Let $Z$ be a compact K\"ahler manifold endowed with a totally geodesic, holomorphic and homothetic foliation. Assume the cohomology class $c$ in  \eqref{co-class} is integral. Then the universal cover $\tZ$ with the pulled-back K\"ahler structure is either
\begin{itemize}
\item[(i)] obtained from the Weinstein construction

\noindent or satisfies
 
\item[(ii)] $\pi_{\tZ}^\star (c)=0$ and $\tZ=\tM\times \tN$ where $\tM, \tN$ are simply connected complete K\"ahler manifolds with $\om_{\tM}$ exact; the K\"ahler structure on $\tZ$ is given by the local Weinstein construction in Section \ref{loc_wein_def}. The fundamental group $\pi_1(Z)$ acts on $\tZ$ as in \eqref{menahem2}.
\end{itemize}
In particular, if $Z$ is simply connected, then it is obtained by the Weinstein construction.
\end{thm}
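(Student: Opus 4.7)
The plan is to leverage all the machinery built in Subsections \ref{tls}--\ref{actiunii} and convert the de Rham splitting of $(\widetilde{Q},g_{\widetilde{Q}})$ into a K\"ahler description of the universal cover $\tZ$. Recall that by construction $\tZ=\widetilde{Q}\slash G_A=\widetilde{P}\times_{G_A}\widetilde{N}$, with $\widetilde{P}$ Sasakian, $\widetilde{N}$ K\"ahler, and $G_A$ acting diagonally by the product $(R^{\widetilde{P}}_t,R^{\widetilde{N}}_t)$ from Proposition \ref{split-Ra}(i). By Lemma \ref{pZ}, $\pi_{\tZ}:\tZ\to Z$ is a holomorphic local isometry, so the pulled-back K\"ahler structure on $\tZ$ is exactly $(\tilde{g},\tilde{J})$. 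The key dichotomy is whether the central element $a\in\Gamma$ of Lemma \ref{actiuni}(i) has finite or infinite order, as dictated by Lemma \ref{actiuni}(iii).

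\textbf{Case (i): $\mathrm{ord}(a)=p<\infty$.} Then $G_A\cong\bbS$ and $\widetilde{P}\xrightarrow{\pi_{\tP}}\widetilde{M}$ is a principal circle bundle. First, I would verify that $\widetilde{M}$ is polarised by $\widetilde{P}$: the Sasakian connection form $\eta_{\tP}$ has curvature $-\di\eta_{\tP}=\pi_{\tP}^{\star}\om_{\tM}$ (Proposition \ref{split-Ra}(iv)), hence $c_1(\widetilde{P})=[\om_{\tM}]$ modulo normalisation. Second, since $K_A=p\bbZ$, the lifted $\bbR$-action $R^{\widetilde{N}}_t$ on $\widetilde{N}$ descends to a genuine $\bbS$-action which is Hamiltonian by Proposition \ref{split-Ra}(i), with strictly positive momentum map $z_{\widetilde{N}}$ (this positivity follows from $z>0$ on $Z$ and the identification $\tilde z=\pi_{\tQ}^{\star}(z\circ\pi_Q)$ given in Theorem \ref{loc-desc}(ii)). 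I then match the data: the base $\widetilde{M}$ with polarising bundle $\widetilde{P}$ and the fibre $\widetilde{N}$ with its Hamiltonian circle action produce via the Weinstein construction of Subsection \ref{wei_con} a K\"ahler manifold $Z(\widetilde{N},\widetilde{M},\widetilde{P})$ which agrees tautologically with $\widetilde{Q}\slash G_A$ as a bundle, and whose K\"ahler form and complex structure are determined by the same formulas \eqref{omega-def} and the construction in Proposition \ref{pro1-c}.

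\textbf{Case (ii): $\mathrm{ord}(a)=\infty$.} Then $G_A\cong\bbR$, and Proposition \ref{split-Ra}(iii) yields a differentiable trivialisation $\widetilde{P}\simeq\widetilde{M}\times\bbR$. Under such a trivialisation the Sasakian contact form reads $\eta_{\tP}=\di t+\alpha_{\tM}$ for some primitive $\alpha_{\tM}$ of $\om_{\tM}$, so in particular $\om_{\tM}$ is exact. Thus $\tZ=\widetilde{P}\times_{\bbR}\widetilde{N}\simeq\widetilde{M}\times\widetilde{N}$ equipped with the product K\"ahler structure of Subsection \ref{loc_wein_def}, which follows by inspection of the formulas \eqref{local_wein}--\eqref{local_wein_distr} using $\pi_{\tZ}^{\star}\om=\iota_{\om_M}^2\tau_{\om_M}$-type identities (here these simplify since the bundle is trivial). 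The vanishing $\pi_{\tZ}^{\star}(c)=0$ is automatic: indeed, $c$ is represented by the curvature of the circle bundle $Q\to Z$, and the pullback of $Q$ to the universal cover $\tZ$ is precisely $\widetilde{Q}\to\tZ$, which is trivial as a $G_A=\bbR$ bundle.

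\textbf{The $\pi_1(Z)$-action.} In both cases, the covering action of $\Gamma_a=\pi_1(Z)$ on $\tZ$ is induced by the commuting action of $\Gamma$ on $\widetilde{Q}$; Proposition \ref{prop_split} provides the required splitting \eqref{menahem2}, completing the last assertion. Finally, for $Z$ simply connected one has $\pi_{\tZ}=\mathrm{id}$ and, since $c$ is nonzero in $H^2(Z,\bbR)$ by Remark \ref{cn0}, only case (i) can occur, so $Z$ itself is obtained by the Weinstein construction. The main technical obstacle is in case (i), where one must carefully verify that the abstract quotient $\widetilde{P}\times_{G_A}\widetilde{N}$ carries exactly the K\"ahler structure produced by the Weinstein recipe — i.e.\ that the symplectic/metric/complex identifications pulled back from $Z$ coincide with $\iota^2_{\om_{\tM}}\circ\tau_{\om_{\tM}}$ applied to $z_{\widetilde{N}}\om_{\tM}+\om_{\widetilde{N}}$ — rather than just being isomorphic as principal bundles.
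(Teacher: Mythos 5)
Your proposal is correct and follows essentially the same route as the paper: the dichotomy on the order of the central element $a$ from Lemma \ref{actiuni}, with the finite-order case handled by the de Rham splitting of $(\widetilde Q,g_{\widetilde Q})$ together with Lemma \ref{pZ}, the infinite-order case via the trivialisation $\widetilde P\simeq\widetilde M\times\bbR$ and the contact form $\di t+\alpha_{\tM}$, and the $\pi_1(Z)$-action supplied by Proposition \ref{prop_split}. The only cosmetic imprecision is the phrase identifying $\pi_{\tZ}^{\star}Q$ with the $\bbR$-bundle $\widetilde Q\to\tZ$ (they are an associated circle bundle and its $\bbR$-cover, respectively), but the conclusion $\pi_{\tZ}^{\star}(c)=0$ stands since $\om_{\tM}=-\di\alpha_{\tM}$ is exact.
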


\begin{proof}
(i) corresponds to $\mathrm{ord}(a)=p$, in which case  $G_A$ is topologically a circle, see Lemma \ref{actiuni} (iii), and the result is proved by the considerations above.\\
 (ii) If $\pi_{\tZ}^\star(c)=0\in H_{dR}(\tZ)$, then 	$\mathrm{ord}(a)=\infty$.  In this case $\tP=\tM\times\bbR$ as principal $\bbR$-bundles, see Proposition \ref{split-Ra} (iii).
The $\bbR$-action on $\tP\times \tN$ then reads
$$(\tilde m, t,\tilde n)s=(\tilde m, t+s, R_s^{\tN}\tilde n),$$
and hence the map $F: (\tM\times\bbR)\times\tN\longrightarrow \tM\times \tN$ given by $F(\tilde m, t,\tilde n)=(\tilde m, R_{-t}^{\tN}\tilde n)$ is invariant and 
 provides an identification $f:\tZ\longrightarrow \tM\times\tN$.

 The Sasakian structure on $\tP$ is given by the contact form $\di t+\alpha_{\tM}$, where $\alpha_{\tM}\in\Lambda^1\tM$ does not depend on $\bbR$ and satisfies $-\di \alpha_{\tM}=\om_{\tM}$. The contact distribution is then $$\tilde\H=\{X-\alpha_{\tM}(X)\partial_t\ ;\ X\in\T \tM\}.$$
  Therefore, on $\tM\times\tN$ we have: 
\begin{equation*}
\begin{split}
\tilde\D_-&=\di F(\tilde\H)=\{X+\alpha_{\tM}(X)K_{\tN} ; X\in\T \tM\},\\
\tilde\D_+&=\di F(\T \tN)=\T \tN.
\end{split}
\end{equation*} 
The K\"ahler form and the complex structure on $\tM\times\tN$ are then given by \eqref{local_wein}. 
The action of $\pi_1(Z)$ on $\tM \times \tN$ is described in Proposition \ref{prop_split}. Note that the metric on $\tilde M$ is complete because the metric on $\tilde P$ is complete and $\tilde P\to\tilde M$ is a Riemannian submersion.
\end{proof}

\begin{rem}
	However, not even in case (ii) of Theorem \ref{global-carac} $Z$ itself is necessarily constructible from the Weinstein Ansatz. The obstruction is that from \eqref{menahem2} one cannot deduce that $\Gamma$ splits as a product $\Gamma_{\tM}\times\Gamma_{\tN}$ with $\Gamma_{\tM}$ respectively $\Gamma_{\tN}$ acting trivially on $\tN$ respectively $\tM$. We provide below an example for this situation.
\end{rem}

\begin{exo}
	Examples of group actions as in Proposition \ref{prop_split} can be obtained  from stable bundles over Riemann surfaces as follows. Let $\Sigma$ be  a Riemann surface of genus at least 2 and fundamental group $\Gamma$. Let $E\rightarrow \Sigma$ be a stable vector bundle over $\Sigma$ of rank $r+1$. Then the  projectivisation $\mathbb{P}(E)$ can be identified (see \cite[Theorem 2.7]{kob}) with $(\tilde \Sigma\times \mathbb{P}^r)/\Gamma$, modulo a representation $\rho$ of $\Gamma$ in $\mathrm{PU}(r+1)$ (here $\tilde \Sigma$ is the universal cover of $\Sigma$). Explicitly, this action reads: $ (\tilde s, [z])\gamma=(\tilde s\gamma, \rho(\gamma)^{-1}[z])$. 
\end{exo}

\begin{rem} \label{i-vs-h}
\begin{itemize}
\item[(i)] In case $C_{\Iso (\tN, g_{\tN})}(G_A) \subseteq \Aut(\tN,J_{\widetilde{N}})$ the quotient $(\widetilde M\times\widetilde N)/\Gamma$ is a locally product K\"ahler orbifold which, in general, is not globally a product of two K\"ahler orbifolds.
\item[(ii)] In case the action of $C_{\Iso (\widetilde N, g_{\widetilde N})}(G_A)$ on $\tN$ has fixed points an argument similar to that in Proposition \ref{split-Ra} (ii) shows that $\Gamma$ acts freely on $\widetilde{P}$. 
\item[(iii)] Since $\tM$ is simply connected it is well known that the projection map 
$$\Aut(\tP,g_{\tP},\eta_{\tP}) \to \Aut(\tM,g_{\tM},J_{\tM})$$ is 
surjective.
\end{itemize}
\end{rem}

\hfill

\noindent{\bf Acknowledgments.} It is a pleasure to thank Florin Belgun, Andrei Moroianu and Christina W. Tønnesen-Friedman for useful 
exchanges during the preparation of this paper.

\hfill

\end{document}